\newcommand{\etalchar}[1]{$^{#1}$}" write$ newline$ }
\providecommand{\bysame}{\leavevmode\hbox to3em{\hrulefill}\thinspace}"
\providecommand{\MR}{\relax\ifhmode\unskip\space\fi MR }"
\providecommand{\href}[2]{#2}"
\newif\ifenotelinks
\newcounter{Hendnote}
\def\endnotemark{%
\@ifnextchar[\@xendnotemark{%
\stepcounter{endnote}%
\protected@xdef\@theenmark{\theendnote}%
\protected@xdef\@theenvalue{\number\c@endnote}%
\@endnotemark
}%
}%
\def\@xendnotemark[#1]{%
\begingroup\c@endnote#1\relax
\unrestored@protected@xdef\@theenmark{\theendnote}%
\unrestored@protected@xdef\@theenvalue{\number\c@endnote}%
\endgroup
\@endnotemark
}%
\def\endnotetext{%
\@ifnextchar[\@xendnotenext{%
\protected@xdef\@theenmark{\theendnote}%
\protected@xdef\@theenvalue{\number\c@endnote}%
\@endnotetext
}%
}%
\def\@xendnotenext[#1]{%
\begingroup
\c@endnote=#1\relax
\unrestored@protected@xdef\@theenmark{\theendnote}%
\unrestored@protected@xdef\@theenvalue{\number\c@endnote}%
\endgroup
\@endnotetext
}%
\def\endnote{%
\@ifnextchar[\@xendnote{%
\stepcounter{endnote}%
\protected@xdef\@theenmark{\theendnote}%
\protected@xdef\@theenvalue{\number\c@endnote}%
\@endnotemark\@endnotetext
}%
}%
\def\@xendnote[#1]{%
\begingroup
\c@endnote=#1\relax
\unrestored@protected@xdef\@theenmark{\theendnote}%
\unrestored@protected@xdef\@theenvalue{\number\c@endnote}%
\show\@theenvalue
\endgroup
\@endnotemark\@endnotetext
}%
\def\@endnotemark{%
\leavevmode
\ifhmode
\edef\@x@sf{\the\spacefactor}\nobreak
\fi
\ifenotelinks
\expandafter\@firstofone
\else
\expandafter\@gobble
\fi
{%
\Hy@raisedlink{%
\hyper@@anchor{Hendnotepage.\@theenvalue}{\empty}%
}%
}%
\hyper@linkstart{link}{Hendnote.\@theenvalue}%
\makeenmark
\hyper@linkend
\ifhmode
\spacefactor\@x@sf
\fi
\relax
}%
\long\def\@endnotetext#1{%
\if@enotesopen
\else
\@openenotes
\fi
\immediate\write\@enotes{%
\@doanenote{\@theenmark}{\@theenvalue}%
}%
\begingroup
\def\next{#1}%
\newlinechar='40
\immediate\write\@enotes{\meaning\next}%
\endgroup
\immediate\write\@enotes{%
\@endanenote
}%
}%
\def\theendnotes{%
\immediate\closeout\@enotes
\global\@enotesopenfalse
\begingroup
\makeatletter
\edef\@tempa{`\string>}%
\ifnum\catcode\@tempa=12
\let\@ResetGT\relax
\else
\edef\@ResetGT{\noexpand\catcode\@tempa=\the\catcode\@tempa}%
\@makeother\>%
\fi
\def\@doanenote##1##2##3>{%
\def\@theenmark{##1}%
\def\@theenvalue{##2}%
\par
\smallskip 
\begingroup
\def\href{\expandafter\savedhref}%
\def\url{\expandafter\savedurl}%
\@ResetGT
\edef\@currentlabel{\csname p@endnote\endcsname\@theenmark}%
\enoteformat
}%
\def\@endanenote{%
\par\endgroup
}%
\renewcommand*\@makeenmark{%
\hbox{\normalfont\@theenmark~}%
}%
\enoteheading
\enotesize
\input{\jobname.ent}%
\endgroup
}%
\def\enoteformat{%
\rightskip\z@
\leftskip1.8em
\parindent\z@
\leavevmode\llap{%
\setcounter{Hendnote}{\@theenvalue}%
\addtocounter{Hendnote}{-1}%
\refstepcounter{Hendnote}%
\ifenotelinks
\expandafter\@secondoftwo
\else
\expandafter\@firstoftwo
\fi
{\@firstofone}%
{\hyperlink{Hendnotepage.\@theenvalue}}%
{\makeenmark}%
}%
}%
\DeclareSymbolFont{AMSb}{U}{msb}{m}{n}
\patchcmd\longtable{\par}{\if@noskipsec\mbox{}\fi\par}{}{}
\def\maxwidth{\ifdim\Gin@nat@width>\linewidth\linewidth\else\Gin@nat@width\fi}
\def\maxheight{\ifdim\Gin@nat@height>\textheight\textheight\else\Gin@nat@height\fi}
\def\fps@figure{htbp}
\providecommand{\tightlist}{%
  \setlength{\itemsep}{0pt}\setlength{\parskip}{0pt}}
\let\origfigure\figure
\let\endorigfigure\endfigure
\renewenvironment{figure}[1][2] {
   \expandafter\origfigure\expandafter[H]
} {
    \endorigfigure
}
\g@addto@macro\normalsize{%
    \setlength\belowdisplayskip{-0pt}
}
\theoremstyle{plain}
\newtheorem{lemma}{Lemma}
\newtheorem{theorem}{Theorem}
\newtheorem*{theorem*}{Theorem}
\newtheorem*{corollary}{Corollary}
\newtheorem*{fact*}{Fact}
\newtheorem*{assumptions*}{Assumptions}
\theoremstyle{definition}
\newtheorem*{definition}{Definition}
\newtheorem*{notation}{Notation}
\newtheorem*{example*}{Example}
\newtheoremstyle{break}
  {}
  {}
  {}
  {}
  {\bfseries}
  {.}
  {\newline}
  {}
\theoremstyle{break}
\newtheorem{algorithm}{Algorithm}
\newtheorem*{algorithm*}{Algorithm}
\theoremstyle{remark}
\newtheorem*{remark}{Remark}
\theoremstyle{plain}
\newtheorem{proposition}{Proposition}
\newtheorem*{proposition*}{Proposition}
\renewcommand*{\enoteheading}{}
  \def\MR#1{}
\DeclareRobustCommand{\SkipTocEntry}[5]{}
\newcommand{\isomto}{\stackrel{\sim}{\smash{\longrightarrow}\rule{0pt}{0.4ex}}}
\newcommand{\embedM}{\Lambda}
\newcommand{\mdim}{n}
\newcommand{\rdim}{D}
\newcommand{\vol}[1]{\nu_g(#1)}
\newcommand{\GLap}{\Delta}
\newcommand{\GAve}{A}
\newcommand{\AvOp}{\mathscr{A}}
\newcommand{\Kconst}{K_{\mathcal{M},k,p}}
\newcommand{\supp}{\operatorname{supp}}
\newcommand{\Op}{\operatorname{Op}}
\newcommand{\PDO}{{\Psi\text{DO}}}
\newcommand{\leftmath}[1]{\mathmakebox[0pt][l]{#1}}
\newcommand{\pwb}{\gamma^{\operatorname{pw}}}
\newcommand{\CSeps}[2]{\psi_{h,#2,#1}}
\newcommand{\CSepsN}[2]{\psi_{h,#2,#1,N}}
\newcommand{\Ueps}[2]{U_{\lambda,\epsilon}^{(#1) #2}}
\newcommand{\UepsN}[2]{U_{\lambda,\epsilon,N}^{(#1) #2}}
\newcommand{\Flow}{\Theta}
\title[On a quantum-classical correspondence]{On a quantum-classical correspondence: from graphs to manifolds}
\author{Akshat Kumar}
\date{Updated: December 13, 2022}
\address{Department of Mathematics, Clarkson University, Potsdam, NY 13699 USA}
\address{Instituto de Telecomunicações, Lisbon, Portugal}
\email{akumar@clarkson.edu}
\begin{document}
\begin{abstract}
We establish conditions for which graph Laplacians \(\Delta_{\lambda,\epsilon}\) on compact, boundaryless, smooth submanifolds \(\mathcal{M}\) of Euclidean space are semiclassical pseudodifferential operators (\(\Psi\)DOs): essentially, that the graph Laplacian's kernel bandwidth (\emph{bias term}) \(\sqrt{\epsilon}\) decays faster than the semiclassical parameter \(h\), \emph{i.e.}, \(h \gg \sqrt{\epsilon}\) and we compute the symbol. Coupling this with Egorov's theorem and coherent states \(\psi_h\) localized at \((x_0, \xi_0) \in T^*\mathcal{M}\), we show that with \(U_{\lambda,\epsilon}^t := e^{-i t \sqrt{\Delta}_{\lambda,\epsilon}}\) spectrally defined, the (co-)geodesic flow \(\Gamma^t\) on \(T^*\mathcal{M}\) is approximated by \(\langle U_{\lambda,\epsilon}^{-t} \operatorname{Op}_h(a) U_{\lambda,\epsilon}^t \psi_h, \psi_h \rangle = a \circ \Gamma^t(x_0, \xi_0) + O(h)\). Then, we turn to the discrete setting: for \(\Delta_{\lambda,\epsilon,N}\) a normalized graph Laplacian defined on a set of \(N\) points \(x_1, \ldots, x_N\) sampled \emph{i.i.d.} from a probability distribution with smooth density, we establish Bernstein-type lower bounds on the probability that \(||U_{\lambda,\epsilon,N}^t[u] - U_{\lambda,\epsilon}^t[u]||_{L^{\infty}} \leq \delta\) with \(U_{\lambda,\epsilon,N}^t := e^{-i t \sqrt{\Delta}_{\lambda,\epsilon,N}}\). We apply this to coherent states to show that the geodesic flow on \(\mathcal{M}\) can be approximated by matrix dynamics on the discrete sample set, namely that \emph{with high probability}, \(c_{t,N}^{-1} \sum_{j=1}^N |U_{\lambda,\epsilon,N}^t[\psi_h](x_j)|^2 u(x_j) = u(x_t) + O(h)\) for \(c_{t,N} := \sum_{j=1}^N |U_{\lambda,\epsilon,N}^t[\psi_h](x_j)|^2\) and \(x_t\) the projection of \(\Gamma^t(x_0, \xi_0)\) onto \(\mathcal{M}\).
\end{abstract}
\maketitle

\tableofcontents
\hypertarget{introduction}{%
\section{Introduction}\label{introduction}}

Given a pointset \(\embedM_N := \{ X_1, \ldots, X_N \} \subset \mathbb{R}^{\rdim}\) that is assumed to lie on a submanifold \(\embedM \supset \embedM_N\), a natural inverse problem asks to approximate the \emph{intrinsic} geometry of \(\embedM\), using only this data. Implicitly, there is an isometric embedding \(\iota : \mathcal{M} \to \embedM \subset \mathbb{R}^{\rdim}\) of an abstract manifold \(\mathcal{M}\), both of which are unknown. Granted sufficient smoothness, such a reconstruction may consist of: an atlas that is prescribed by intrinsic data, \emph{e.g.}, a system of normal coordinates, or operators that are intimately tied to the intrinsic geometry, as in the Laplace-Beltrami\footnote{In the following, we define the Laplace-Beltrami operator to be signed so it is \emph{positive semi-definite}: that is, in local coordinates, \(\Delta_{\mathcal{M}} := -\frac{1}{\sqrt{|g|}} \sum_{j,\ell} \partial_j \sqrt{|g|} g^{j,\ell} \partial_{\ell}\), wherein \(g\) is the Riemannian metric on \(\mathcal{M}\) and \(g^{j,\ell}\) are the \((j,\ell)\) components of the inverse, \(g^{-1}\).} operator \(\Delta_{\mathcal{M}}\), to name a couple of examples related to the study in this paper. Further, the restriction of functions on \(\embedM\) to the pointset may be modeled by identifying \(\embedM_N\) with the canonical basis in \(\mathbb{C}^N\), which gives a complex vector space \(\mathcal{H}_N \cong \mathbb{C}^N\). Then, for example, a vector in \(\mathcal{H}_N\) may be seen as the restriction to \(\embedM_N\) of a linear combination of \(N\) bump functions \(u_1, \ldots, u_N\) with \(u_j(X_j) = 1\) or \(0\) and pairwise disjoint supports, \(\supp u_j \cap \supp u_{\ell} = \emptyset\) for all \(j \neq \ell\). Hence, the question arises as to how the dynamics of functions on \(\mathcal{M}\) can be approximated by dynamics in \(\mathcal{H}_N\). In many ways the geometry of manifolds is intimately tied with dynamics on them and so these two kinds of problems are interconnected.

Over the last two decades, the theory of graph Laplacians has advanced, giving convergence rates for the recovery of \(\Delta_{\mathcal{M}}\) from samples \(\embedM_N\) drawn from a probability distribution \(P\) supported on \(\embedM\), as \(N \to \infty\), under various assumptions on \(\mathcal{M}\), \(\embedM\), \(P\) and the kernels used to construct the graph Laplacians \citep{hein2005graphs, singer2006, von2008consistency, calder2019improved}. A common schema is described in \citep{nadler2006diffusion} as follows: using a positive, \emph{essentially} locally supported\footnote{This means that if \(k \in C^m(\mathbb{R}_+)\), then there are \(R_1, \ldots, R_m > 0\) such that \(\partial^j k\) decays exponentially fast outside of \([0, R_j]\).} smooth function \(k : \mathbb{R}_+ \to \mathbb{R}\), an \emph{averaging} matrix \((\AvOp_{\epsilon,N})_{j,\ell} := k(||X_j - X_{\ell}||_{\mathbb{R}^{\rdim}}^2/\epsilon)\) is defined that after row normalisation represents a Markov chain process in discrete time and space. This normalized matrix \(\GAve_{\epsilon,N}\) converges in such settings, with high probability as \(N \to \infty\), to an operator \(\GAve_{\epsilon}\) that represents a Markov process in continuous space \(\mathcal{M}\) and discrete time with steps of size \(\epsilon\); its transition probability from \(x \in \mathcal{M}\) to \(y \in \mathcal{M}\) is proportional to \(k_{\epsilon}(x, y) := k(||\iota(x) - \iota(y)||_{\mathbb{R}^{\rdim}}^2/\epsilon)\). The infinitesimal generator of this Markov process is \(\Delta_{\mathcal{M}} + O(\partial^1)\) with \(O(\partial^1)\) denoting terms with differential operators of order at most one. Further, in the limit \(\epsilon \to 0\), this Markov process converges to a diffusion process with the same infinitesimal generator. The (\emph{random walk}) \emph{discrete} and \emph{continuum} graph Laplacians here are equal, up to a constant factor, to \((I - \GAve_{\epsilon,N})/\epsilon\) and and \((I - \GAve_{\epsilon})/\epsilon\), respectively, and they give the concrete approximations to the infinitesimal generator.

In this work, we study for smooth, compact, boundaryless manifolds \(\mathcal{M}\), with some basic assumptions on \(\embedM\) and \(P\), the inclusion of graph Laplacians in the framework of semiclassical analysis and switch perspective from diffusion processes to quantum dynamics. The geometric data we are interested in recovering are the geodesics on \(\mathcal{M}\), which can be cast as projections of the intrinsic \emph{(co)-geodesic} flow \(\Gamma^t : T^*\mathcal{M} \to T^*\mathcal{M}\) governed by the Hamiltonian \(|\xi|_{g_x} := \langle g_x^{-1} \xi, \xi \rangle^{\frac{1}{2}}\), wherein \(g_x\) is the Riemannian metric at \(x \in \mathcal{M}\). This Hamiltonian flow is \emph{quantized} by \(e^{-i t \sqrt{\Delta_{\mathcal{M}}}}\), in that for times \(|t|\) smaller than the injectivity radius at \(x_0\), it moves the support of wavepackets localized at \((x_0, \xi_0)\) in \emph{phase space} \(T^*\mathcal{M}\) to those localized at \(\Gamma^t(x_0, \xi_0)\), which ultimately leads to a linear PDEs approach to accessing geodesics. We use the framework of semiclassical pseudodifferential operators (\(\PDO\)s) to recover this behaviour with continuum graph Laplacians in place of \(\Delta_{\mathcal{M}}\) and a manifold generalization of coherent states for the localized wavepackets. Then, we find probabilistic convergence rates so geodesics can be recovered with matrix dynamics on \(\mathcal{H}_N\) using discrete graph Laplacians in place of their continuum counterparts and the restrictions of coherent states to the sample points \(\embedM_N\). Along the way, we establish consistency between solutions to discrete wave equations on \(\embedM_N\) and solutions to wave equations on \(\mathcal{M}\).

The dichotomy in the continuum and discrete settings requires completely different considerations. In \protect\hyperlink{from-graph-laplacians-to-geodesic-flows}{Section \ref{from-graph-laplacians-to-geodesic-flows}}, we realise continuum graph Laplacians, denoted \(\GLap_{\lambda,\epsilon}\) with \(\lambda \geq 0\) a normalization parameter (such that \(\lambda = 0\) gives the random walk continuum graph Laplacian) --- following \citep{hein2005graphs}, we define these in \protect\hyperlink{laplacians-from-graphs-to-manifolds}{Section \ref{laplacians-from-graphs-to-manifolds}} ---, as semiclassical pseudodifferential operators (\(\PDO\)s). Then, the quantum dynamical perspective is driven by Egorov's theorem, which tells more generally that if for \(h \in [0, h_0)\) with \(h_0 > 0\), \(Q\) is an order one \(\PDO\) on \(\mathcal{M}\) with real-valued principal symbol \(q \in C^{\infty}(T^*\mathcal{M} \times [0, h_0)_h)\) that generates a Hamiltonian flow \(\Theta_q^t : T^*\mathcal{M} \to T^*\mathcal{M}\) on \(T^*\mathcal{M}\) and \(\hat{O}\) is a \(\PDO\) with symbol \(a \in C^{\infty}(T^*\mathcal{M} \times [0, h_0)_h)\), then the operator dynamics \(\hat{O}_t := e^{\frac{i}{h} t Q} \hat{O} e^{-\frac{i}{h} t Q}\) remains pseudodifferential and is, up to lower order, the quantization of \(a \circ \Theta_q^t\). In this sense, we say that \(e^{-\frac{i}{h} t Q}\) \emph{quantizes} the flow \(\Theta_q^t\). To retrieve the flow of symbols from their quantized operators, we make use of \emph{coherent states}, which are wavepackets of the form \(\tilde{\psi}_h := e^{-\frac{i}{h} \bar{\phi}(\cdot ; x_0, \xi_0)}\) localized at \((x_0, \xi_0) \in T^*\mathcal{M}\) with a complex phase \(\phi\) satisfying certain \emph{admissibility conditions} as defined in \protect\hyperlink{fbi-transform}{Section \ref{fbi-transform}} that say roughly that \(\tilde{\psi}_h\) locally resembles \(e^{-\frac{||\cdot - x_0||^2}{2h}} e^{\frac{i}{h} \langle (\cdot - x_0), \xi_0 \rangle}\). The coherent states are essentially Schwartz kernels of an FBI transform defined on manifolds \citep{wunsch2001fbi} and since the latter are well-known to essentially \emph{diagonalize} \(\PDO\)s over \(T^*\mathcal{M}\), we can recover the flow of symbols through the diagonal matrix elements of \(\PDO\)s in the basis of coherent states. That is, if \(\psi_h := \tilde{\psi}_h / ||\tilde{\psi}_h||_{L^2}\) is a normalized coherent state, then the flow is approximated by \(\langle \hat{O}_t \psi_h, \psi_h\rangle_{L^2}\) in the sense that \(|\langle \hat{O}_t \psi_h, \psi_h\rangle_{L^2} - a \circ \Theta_q^t(x_0, \xi_0)| \leq C h\) for some constant \(C > 0\) and \(h \in [0, h_0)\) (see \protect\hyperlink{semi-classical-measures-of-coherent-states}{Section \ref{semi-classical-measures-of-coherent-states}}). This relationship between operator dynamics in the space of \(\PDO\)s and Hamiltonian mechanics on \emph{phase space} \(T^*\mathcal{M}\) is an instance of \emph{quantum-classical correspondence}.

The order one \(\PDO\) \(h \sqrt{\Delta}_{\mathcal{M}}\) quantizes \(|\xi|_{g_x}\) and therefore, the geodesic flow is quantized by \(U^t := e^{-i t \sqrt{\Delta_{\mathcal{M}}}}\), which can be defined by spectral theory. While previous works (\emph{e.g.}, \citep{hein2005graphs}) have shown that \(\GLap_{\lambda,\epsilon} = \Delta_{\mathcal{M}} + O(\partial^1) + O(\epsilon)\) and it is certainly true that Egorov's theorem is agnostic to the \(O(\partial^1)\) terms since the semiclassical symbol of \(h \sqrt{\GLap_{\mathcal{M}} + O(\partial^1)}\) is just \(|\xi|_{g_x}\), the method of approximation goes through a Taylor series expansion so that the \(O(\epsilon)\) term consists of higher order operators. This indeed raises an issue in both, including \(h^2 \GLap_{\lambda,\epsilon}\) in the class of \(\PDO\)s and quantizing the geodesic flow through it, which can briefly be put as: the semiclassical parameter \(h > 0\) sets the wavelength at which the pseudodiffferential calculus resolves functions, so if the diffusion scale \(\sqrt{\epsilon}\) is greater than the semiclassical wavelength \(h\), then the corresponding symbol is concentrated \emph{below} this wavelength and hence, its content goes undetected by the quantization procedure. This perspective is expounded in more detail and drives the analysis in \protect\hyperlink{from-graph-laplacians-to-geodesic-flows}{Section \ref{from-graph-laplacians-to-geodesic-flows}}.

A perhaps more direct way to see the issue in approximating \(U^t\) with \(U_{\lambda,\epsilon}^t := e^{-i t \sqrt{\GLap}_{\lambda,\epsilon}}\) is the following: it is well-known from microlocal analysis, or indeed discernible from Egorov's theorem that \(U^t\) propagates wavepackets localized in phase space along geodesics. That is, if the FBI transform (or physically, the \emph{Husimi function}, which is its squared modulus) at wavelength \(h\) of \(u_h \in C^{\infty}(\mathcal{M} \times (0, h_0)_h)\) is localized to a \(\sqrt{h}\) neighbourhood of \((x_0, \xi_0) \in T^*\mathcal{M}\), then for \(t\) smaller than the injectivity radius at \(x_0\), the Husimi function of \(U^t[u_h]\) is localized to roughly a \(\sqrt{h}\) neighbourhood of \(\Gamma^t(x_0, \xi_0)\). On the other hand, by functional calculus, we find that
\begin{equation} \label{eq:prop-plwp-glap-decomp}
U_{\lambda,\epsilon}^t[u_h] = f(0) u_h + A_{\lambda,\epsilon}[Df(A_{\lambda,\epsilon})[u_h]],
\end{equation}
wherein \(f(z) := e^{-i t c \sqrt{(1 - z)/\epsilon}}\) for a constant \(c > 0\) and \(Df := (f(z) - f(0))/z\). Recall that for \(\lambda = 0\), \(A_{0,\epsilon} = A_{\epsilon}\) represents a Markov process with transition probabilities being close to one in roughly \(\sqrt{\epsilon}\)-balls and decaying exponentially outside of them; in fact, this is true for all \(\lambda \geq 0\). Therefore, the second term of the right-hand side of \(\eqref{eq:prop-plwp-glap-decomp}\) will be diffused to at least a \(\sqrt{\epsilon}\)-ball and we can calculate that the Husimi function at wavelength \(h\) of the Schwartz kernel for \(\GAve_{\lambda,\epsilon} : u(\cdot) \mapsto \int_{\mathcal{M}} k_{\lambda,\epsilon}(\cdot,y) u(y) ~ p(y) d\nu_g(y)\) (we assume \(P\) has smooth density \(p\) with respect to the volume form \(\nu_g\) on \(\mathcal{M}\)) at a fixed \(y \in \mathcal{M}\) is localized in phase space to balls of radius roughly \(h/\sqrt{\epsilon}\) about \(0 \in T_{\alpha_x}^*\mathcal{M}\) for all \(\alpha_x\) in a neighbourhood about \(y\). Hence, if \(\epsilon \gtrsim h^2\), then for \(h\) sufficiently small, the Husimi function for \(U_{\lambda,\epsilon}^t[u_h]\) will retain the support of its initial condition at all times, which is in stark contrast to the behaviour of \(U^t[u_h]\). On the other hand, if \(\epsilon \ll h^2\), then the Husimi functions of the two terms on the right-hand side of \(\eqref{eq:prop-plwp-glap-decomp}\) can interact and there is chance for cancellations to occur.

In \protect\hyperlink{from-graph-laplacians-to-geodesic-flows}{Section \ref{from-graph-laplacians-to-geodesic-flows}}, we find that essentially \(\epsilon \ll h^2\) is also sufficient for \(h^2 \GLap_{\lambda,\epsilon}\) to be a \(\PDO\). Its symbol approximates \(|\xi|_{g_x}^2\) within roughly a ball of radius \(h/\sqrt{\epsilon}\) about the zero section in \(T^*\mathcal{M}\) and outside of this, it is of order zero for any fixed \(h\). Still, as a \emph{semiclassical} \(\PDO\), this makes the graph Laplacian an order \emph{two} \(\PDO\), but clearly not elliptic. We find that on application to coherent states \(\psi_h\), due to their localizing of \(\PDO\)s to \(\sqrt{h}\)-balls in phase space, we can approximately quantize \(\Gamma^t\) with \(U_{\lambda,\epsilon}^t\). Indeed, in \protect\hyperlink{thm:sym-cs-glap-psido}{Theorem \ref{thm:sym-cs-glap-psido}} we show under the assumptions

\begin{enumerate}
\def\labelenumi{\arabic{enumi}.}
\tightlist
\item
  \(\mathcal{M}\) is a compact, boundaryless \(C^{\infty}\) manifold and
\item
  \(P\) is a probability distribution on \(\embedM\) so that with respect to the volume form \(\nu_g\) on \(\mathcal{M}\), there is a positive density \(p := dP \circ \iota/d\nu_g \in C^{\infty}\)
\end{enumerate}

\noindent that the following quantum-classical correspondence holds:

\begin{theorem*}[{\emph{Quantum-Classical Correspondence} for the graph Laplacian}] Let \(\lambda \geq 0\) and \(h \in (0, 1]\). Then, for all \(\alpha \geq 1\), \(h^2 \GLap_{\lambda,h^{2 + \alpha}}\) is a semiclassical \(\PDO\) in \(h^0 \Psi^2\) as defined in \protect\hyperlink{quantization-and-symbol-classes}{Section \ref{quantization-and-symbol-classes}}, whose symbol in a fixed neighbourhood of \(0 \in T^*\mathcal{M}\) is \(|\xi|_{g_x}^2 + O(h)\). Furthermore, let \(\psi_h\) be an \(L^2\) normalized coherent state localized at \((x_0, \xi_0) \in T^*\mathcal{M} \setminus 0\) and denote by \(\Gamma^t : T^*\mathcal{M} \to T^*\mathcal{M}\) the (co-)geodesic flow. Then, there exists \(h_0 > 0\) such that given \(a \in C^{\infty}(T^*\mathcal{M})\) belonging to the symbol class \(h^0 S^0\) with its \emph{quantization} \(A := \Op_h(a) \in h^0 \Psi^0\) as per \protect\hyperlink{quantization-and-symbol-classes}{Section \ref{quantization-and-symbol-classes}} and \(|t|\) smaller than the injectivity radius at \(x_0\), we have for all \(h \in (0, h_0]\),
\begin{equation}\begin{aligned}
\langle U_{\lambda,h^{2 + \alpha}}^{-t} A U_{\lambda,h^{2 + \alpha}}^t \psi_h(\cdot ; x_0, \xi_0), \psi_h(\cdot ; x_0, \xi_0) \rangle_{L^2(\mathcal{M})} = a \circ \Gamma^t(x_0, \xi_0) + O(h).
\end{aligned}  \nonumber  \end{equation}

\end{theorem*}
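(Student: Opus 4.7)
The plan is to prove the two assertions separately: the first (semiclassical $\Psi$DO status of $h^2 \GLap_{\lambda,h^{2+\alpha}}$) is the genuinely technical step, after which the second (quantum-classical correspondence) follows from standard machinery.

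\textbf{Step 1: Symbol calculation.} Working in local coordinates on $\mathcal{M}$ (geodesic normal coordinates centered at the base point), I would express the averaging operator $\GAve_{\lambda,\epsilon}$ as an integral operator with kernel proportional to $k_{\epsilon}(x,y) p(y)\,d\nu_g(y)$, divided by the $\lambda$-dependent normalization from Section~\ref{laplacians-from-graphs-to-manifolds}. The semiclassical symbol is extracted via
\begin{equation*}
a(x,\xi;h) \;=\; \int e^{-i z\cdot \xi/h}\, K_{\lambda,\epsilon}(x, x - z)\, dz,
\end{equation*}
and then a substitution $z = \sqrt{\epsilon}\,w$ is made. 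The extrinsic distance is expanded intrinsically as $\|\iota(x) - \iota(x - \sqrt{\epsilon}w)\|^2 = \epsilon |w|_{g_x}^2 + O(\epsilon^2 |w|^4)$ (from the second fundamental form identity). Because $\sqrt{\epsilon}/h = h^{\alpha/2}$, the Fourier-conjugate variable $\eta := \sqrt{\epsilon}\,\xi/h = h^{\alpha/2}\xi$ stays bounded on any compact set in $\xi$. Taylor expanding the scaled Fourier transform $F(\eta; x) := \int e^{-i\eta\cdot w} k(|w|_{g_x}^2)\,dw$ about $\eta = 0$ and combining with moment identities for $k$ (standard in the graph Laplacian literature; cf.\ \citep{hein2005graphs}), one obtains that $h^2 \GLap_{\lambda,\epsilon}$ has symbol $|\xi|_{g_x}^2 + O(h^{\alpha}) + O(h)$ on any fixed compact $\xi$-region. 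Membership in $h^0 \Psi^2$ requires checking Kohn--Nirenberg-type seminorms, which follow from the smoothness of $F$ and the rapid decay of $F$ and its derivatives at large $|\eta|$ (inherited from the essentially local support of $k$).

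\textbf{Step 2: Functional calculus, Egorov, and coherent-state pairing.} Once $h^2 \GLap_{\lambda,\epsilon} \in h^0 \Psi^2$ with principal symbol agreeing with $|\xi|_{g_x}^2$ on a fixed neighborhood $U$ of $(x_0,\xi_0)$, the spectrally defined $Q_{\lambda,\epsilon} := h\sqrt{\GLap_{\lambda,\epsilon}}$ is, microlocally on $U \setminus 0$, a semiclassical $\PDO$ of order $1$ with principal symbol $|\xi|_{g_x}$. This can be argued via Helffer--Sj\"ostrand calculus applied to the resolvent expansion of $h^2\GLap_{\lambda,\epsilon}$, where the square root is smooth on the spectrum away from $0$. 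Since the geodesic flow $\Gamma^t$ is exactly the Hamiltonian flow of $|\xi|_{g_x}$, Egorov's theorem yields
\begin{equation*}
U_{\lambda,\epsilon}^{-t}\, A\, U_{\lambda,\epsilon}^{t} \;=\; \Op_h(a \circ \Gamma^t) + h\,R_t,
\end{equation*}
with $R_t \in h^0\Psi^0$ bounded uniformly for $|t|$ smaller than the injectivity radius at $x_0$. Pairing against $\psi_h(\cdot\,;x_0,\xi_0)$ and invoking the fact (Section~\ref{semi-classical-measures-of-coherent-states}) that $\langle \Op_h(b)\psi_h, \psi_h\rangle_{L^2} = b(x_0,\xi_0) + O(h)$ for $b \in h^0 S^0$ yields the claim.

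\textbf{Main obstacle.} The genuine difficulty lies in Step~1: the graph Laplacian $h^2 \GLap_{\lambda,\epsilon}$ is \emph{not} a standard elliptic semiclassical $\PDO$. Its symbol saturates at large $|\xi|$ --- specifically, for $|\xi| \gg h^{-\alpha/2}$, the Fourier transform of $k$ has decayed and the symbol behaves like a bounded function rather than growing quadratically --- and only in a bounded $\xi$-region does it genuinely reproduce $|\xi|_{g_x}^2$. Thus the assertion that $h^2\GLap_{\lambda,\epsilon} \in h^0 \Psi^2$ must be interpreted carefully: one likely writes the symbol as a piece well-behaved near the zero section plus a remainder bounded uniformly in $h$, and verifies the Kohn--Nirenberg seminorm estimates compatible with order-$2$ growth. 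Correspondingly, the functional calculus producing $Q_{\lambda,\epsilon}$ as a first-order $\PDO$ is only microlocally valid near $(x_0,\xi_0)$, and Egorov must be applied with cutoffs, controlling the contribution from outside $U$ by standard propagation-of-singularities arguments combined with the phase-space localization of $\psi_h$ to a $\sqrt{h}$-neighborhood of $(x_0,\xi_0)$. The hypothesis $\alpha \geq 1$ is precisely what guarantees $\sqrt{\epsilon} \ll h$, placing the kernel's support below the semiclassical resolution and so yielding the desired $O(h)$ error.
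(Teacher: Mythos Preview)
Your overall strategy is correct and matches the paper's: Step~1 (symbol computation via rescaling by $\sqrt{\epsilon}$ and Taylor-expanding the Fourier transform of the kernel) is essentially what the paper does in Lemmas~\ref{lem:intrinsic-diffusion-is-psido}--\ref{lem:averaging-op-is-psido} and Theorem~\ref{thm:sym-renorm-graph-lap}, though the paper separates the intrinsic kernel $k(d_g(x,y)^2/\epsilon)$ from the extrinsic correction more systematically. You also correctly identify the main obstacle: $h^2\GLap_{\lambda,\epsilon}$ is of order~$2$ but not elliptic, its symbol saturating at large~$|\xi|$, so the square root is not globally a $\PDO$.

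Where your proposal is underspecified is in the resolution of this obstacle. You propose to treat $h\sqrt{\GLap_{\lambda,\epsilon}}$ as a first-order $\PDO$ ``microlocally on $U\setminus 0$'' via Helffer--Sj\"ostrand, then apply Egorov ``with cutoffs'' and control the remainder by propagation of singularities. The difficulty is that $U_{\lambda,\epsilon}^t$ is generated by an operator that is not a standard $\PDO$ outside the cutoff region, so propagation-of-singularities estimates for it are not off-the-shelf, and a symbol cutoff does not commute with the dynamics. The paper's device (Theorem~\ref{thm:sym-cs-glap-psido}) is more specific: it first uses the FBI transform to show $\psi_h = \Pi\psi_h + O_{L^2}(h)$ where $\Pi = \chi(h^2\GLap_{\lambda,\epsilon})$ for a compactly supported $\chi$, then adjusts $\chi$ so that $\Pi$ is an honest \emph{spectral} projector of $\GLap_{\lambda,\epsilon}$. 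Because $\Pi$ now commutes exactly with any function of $\GLap_{\lambda,\epsilon}$, one has $U_{\lambda,\epsilon}^t\Pi = e^{i(t/h)(h^2\GLap_{\lambda,\epsilon}\Pi)^{1/2}}\Pi$, and $h^2\GLap_{\lambda,\epsilon}\Pi$ is a genuine smoothing operator with compactly supported symbol~$|\xi|_{g_x}^2\chi(|\xi|_{g_x}^2)$. Helffer--Sj\"ostrand then gives its square root as a $\PDO$ in $h^0\Psi^{-\infty}$, and the paper's Egorov theorem (Theorem~\ref{thm:egorov}), stated only for compactly supported generators, applies directly. This spectral-projector trick is the step your sketch does not capture; without it, the passage from ``microlocal $\PDO$'' to ``Egorov holds modulo $O(h)$ when paired against $\psi_h$'' would require substantial additional argument.
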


As an immediate application of this, we study in \protect\hyperlink{propagation-of-coherent-states}{Section \ref{propagation-of-coherent-states}} the localization properties of the propagation \(U_{\lambda,\epsilon}^t[\psi_h]\) of a coherent state. Namely, we let \(A\) be the multiplication operator by a smooth approximation to a point mass at \(x \in \mathcal{M}\), say \(\rho_{x,\varepsilon}\) and consider the \emph{density} \(|U_{\lambda,\epsilon}^t[\psi_h]|^2(x) = \lim_{\varepsilon \to 0} \langle \rho_{x,\varepsilon} U_{\lambda,\epsilon}^t \psi_h, U_{\lambda,\epsilon}^t \psi_h \rangle\). By appealing to FBI transforms, we arrive at a characterization in \protect\hyperlink{prop:glap-prop-cs-localized}{Proposition \ref{prop:glap-prop-cs-localized}} that gives a rather explicit form of this function, up to a term that is \(L^{\infty}\) bounded by \(Ch\) for some \(C > 0\). It further tells that the propagated state is localized to roughly a \(\sqrt{h}\)-ball about \(x_t\), the projection onto \(\mathcal{M}\) of \(\Gamma^t(x_0, \xi_0)\).

These properties are put to use in \protect\hyperlink{from-graphs-to-manifolds}{Section \ref{from-graphs-to-manifolds}}, wherein we study the convergence of the discrete counterparts to the preceding constructions in the continuum setting. Namely, the propagator we use is \(U_{\lambda,\epsilon,N}^t := e^{-i t \sqrt{\GLap}_{\lambda,\epsilon,N}}\), which is defined by functional calculus and acts on \(\mathcal{H}_N\), which is equipped with the inner product \(\langle u, v \rangle_N := \frac{1}{N} \sum_{j=1}^N u(x_j) \overline{v(x_j)}\) that gives the norm \(||\cdot||_N := \langle \cdot, \cdot \rangle_N^{\frac{1}{2}}\). This vector space structure limits to \(L^2(\mathcal{M}, p d\nu_g)\) as \(N \to \infty\), so it becomes important to normalize vectors to reduce the effect of the sampling density \(p\). The matrix \(U_{\lambda,\epsilon,N}^t\) is not unitary, so the initial state is a \emph{time-dependent} normalization of the coherent state \(\psi_h\), that is, we set \(\psi_{h,t,N} := \psi_h / ||U_{\lambda,\epsilon,N}^t [\psi_h]||_N\). Then, under the additional assumption that \(\embedM_N\) is a set of \emph{i.i.d.} random vectors on \(\mathbb{R}^{\rdim}\) with law \(P\) and \(\iota\) has a bounded second fundamental form, in \protect\hyperlink{thm:mean-prop-geoflow-consistency}{Theorem \ref{thm:mean-prop-geoflow-consistency}} we arrive at the following consistency result:

\begin{theorem*}[{Quantum-Classical Correspondence for the discrete graph Laplacian}] Let \(\lambda \geq 0\), \(\alpha \geq 1\), \((x_0, \xi_0) \in T^*\mathcal{M}\) and \(|t|\) be smaller than the injectivity radius at \(x_0\). Then, for \(\psi_h\) a coherent state localized about \((x_0, \xi_0)\) and given \(u \in C^{\infty}\), there are constants \(h_0, C > 0\) such that we have for all \(h \in (0, h_0]\) and \(\epsilon := h^{2 + \alpha}\),
\begin{equation}\begin{aligned}
\Pr [| \langle |U_{\lambda,\epsilon,N}^t[\psi_{h,t,N}](x_j)|^2, u\rangle_N - u(x_t) | > h ] \leq e^{-\frac{N h^{2(\mdim + 2)} \epsilon^{\frac{5}{2}\mdim + 4}}{C}} .
\end{aligned}  \nonumber  \end{equation}

\end{theorem*}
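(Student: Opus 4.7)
The plan is to bridge through the continuous propagator by writing the statistic as a ratio and approximating its numerator and denominator separately by their continuum counterparts. Set $V_{t,N} := |U_{\lambda,\epsilon,N}^t[\psi_h]|^2$ and $V_t := |U_{\lambda,\epsilon}^t[\psi_h]|^2$; using the time-dependent normalisation $\psi_{h,t,N} = \psi_h/\|U_{\lambda,\epsilon,N}^t[\psi_h]\|_N$, the quantity of interest is the ratio
\begin{equation*}
R_N := \frac{\langle V_{t,N}, u\rangle_N}{\langle V_{t,N}, 1\rangle_N},
\end{equation*}
which I would compare to the continuum ratio $\int_\mathcal{M} V_t\, u\, p\, d\nu_g / \int_\mathcal{M} V_t\, p\, d\nu_g$. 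The latter equals $u(x_t) + O(h)$ by Proposition~\ref{prop:glap-prop-cs-localized}: that proposition characterises $V_t$ as a Gaussian-type bump of width $\sqrt{h}$ centred at $x_t$ up to an $L^\infty$ remainder of size $O(h)$, so a Taylor expansion of $u$ about $x_t$ yields the identification (the first-order term vanishing by Gaussian symmetry and the second-order term contributing $h$ times a bounded Gaussian second moment).

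For the discretisation I decompose
\begin{equation*}
\langle V_{t,N}, u\rangle_N - \int_\mathcal{M} V_t\, u\, p\, d\nu_g = \langle V_{t,N} - V_t, u\rangle_N + \Bigl(\langle V_t, u\rangle_N - \int_\mathcal{M} V_t\, u\, p\, d\nu_g\Bigr),
\end{equation*}
and likewise for the denominator with $u \equiv 1$. The second bracket is the empirical fluctuation of the i.i.d.\ random variables $V_t(X_j) u(X_j)$, whose sup-norm and variance are of order $\|V_t\|_\infty \sim h^{-\mdim/2}$, controlled by a standard Bernstein inequality for sums of bounded random variables. For the first bracket, the identity $|a|^2 - |b|^2 = \operatorname{Re}((a-b)\overline{(a+b)})$ gives
\begin{equation*}
\|V_{t,N} - V_t\|_\infty \leq \bigl(\|U_{\lambda,\epsilon,N}^t[\psi_h]\|_\infty + \|U_{\lambda,\epsilon}^t[\psi_h]\|_\infty\bigr) \cdot \|U_{\lambda,\epsilon,N}^t[\psi_h] - U_{\lambda,\epsilon}^t[\psi_h]\|_\infty,
\end{equation*}
and since Proposition~\ref{prop:glap-prop-cs-localized} yields $\|U_{\lambda,\epsilon}^t[\psi_h]\|_\infty \lesssim h^{-\mdim/4}$, this bracket is controlled once $\|U_{\lambda,\epsilon,N}^t[\psi_h] - U_{\lambda,\epsilon}^t[\psi_h]\|_\infty \leq \delta$ for $\delta \sim h^{1+\mdim/4}$, an event whose probability is lower-bounded by the Bernstein-type inequality on discrete graph-Laplacian propagators promised in the introduction.

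On the intersection of the favourable events for numerator and denominator, the quotient estimate $|a/b - a'/b'| \leq |a-a'|/|b'| + |a'/b'|\cdot|b-b'|/|b|$ transfers the $O(h)$ deviations to $|R_N - u(x_t)| = O(h)$, using that $\int_\mathcal{M} V_t\, p\, d\nu_g$ is bounded away from zero uniformly in small $h$. A union bound, combined with the explicit $h$- and $\epsilon$-dependence of the Bernstein rates, produces the claimed tail. The main obstacle is the bookkeeping: one must calibrate the tolerances so that the loss of $h^{-\mdim/4}$ from coherent-state amplitudes, the loss of $h^{-\mdim/2}$ from $\|V_t\|_\infty$ in the empirical-mean variance, and the polynomial losses in $\epsilon^{-1}$ coming from norms of the kernel $k_\epsilon$ in the propagator-comparison Bernstein bound (which degrade as $\epsilon \to 0$ because $k_\epsilon$ concentrates on the $\sqrt{\epsilon}$-scale and its empirical surrogate has correspondingly large fluctuations) combine consistently; after squaring these tolerances and folding them through the sub-Gaussian rate, one recovers the precise exponent $h^{2(\mdim+2)}\epsilon^{5\mdim/2+4}$ appearing in the probability bound.
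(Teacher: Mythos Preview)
Your approach is correct and matches the paper's strategy in its essentials: bridge through the continuum propagator, split the error into a discrete-vs-continuum propagation term (controlled by Theorem~\ref{thm:halfwave-soln}) plus an empirical-mean fluctuation (controlled by a Bernstein bound as in Lemma~\ref{lem:l2-consistency}), and combine via a quotient estimate. The paper packages this as Lemma~\ref{lem:prop-cs-funcexpect-consistency} followed by the short proof of Theorem~\ref{thm:mean-prop-geoflow-consistency}; your decomposition is the same up to the cosmetic choice of carrying the $L^2$-normalized $\psi_h$ throughout rather than the un-normalized $\tilde\psi_h$ with separate tracking of $c_N = \|U_{\lambda,\epsilon,N}^t[\tilde\psi_h]\|_N^2$ and $c = \|U_{\lambda,\epsilon}^t[\tilde\psi_h]\|_p^2$, which is how the paper organises the bookkeeping.

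The one genuine difference is how you obtain the continuum identity $\int V_t\, u\, p\, d\nu_g \big/ \int V_t\, p\, d\nu_g = u(x_t) + O(h)$. You reach for Proposition~\ref{prop:glap-prop-cs-localized} and argue by Taylor expanding $u$ against the explicit Gaussian bump $|\psi_h^t|^2$ of~\eqref{eq:psi_h-t-full}, invoking parity to kill the $O(\sqrt h)$ first moment. This is valid, but it forces you to unpack the stationary-phase structure of $|\psi_h^t|^2$ (the amplitude $|c_h^{-t}|^2$, the quartic corrections to $\Im\Phi$, the volume form) and check that all odd-in-$(v,\zeta)$ contributions vanish against the centred Gaussian---essentially re-running the computation behind Theorem~\ref{thm:sym-cs-psido}. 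The paper instead uses Lemma~\ref{lem:prop-cs-mean-consistency}, which gets the identity in one line from Egorov's theorem (Theorem~\ref{thm:sym-cs-glap-psido}) together with~\eqref{eq:glap-prop-adjoint-identity}; this route is cleaner because Egorov already absorbs the symmetry argument into the symbol calculus and handles the sampling density $p$ and the renormalization $p_{\lambda,\epsilon}$ without a separate expansion. Either way lands at the same $O(h)$ error, and the final exponent you quote is exactly what the paper derives in Section~\ref{summary-of-convergence-rates}.
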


As an application of this, we recover in \(\mathcal{X}_N := \iota^{-1}[\embedM_N]\) the geodesic flow on \(\mathcal{M}\). That is, using \emph{coordinate functions} with either local coordinates or the extrinsic coordinates \(\iota_1, \ldots, \iota_{\rdim}\), we find in \protect\hyperlink{prop:mean-geodesic-recover-consistency}{Proposition \ref{prop:mean-geodesic-recover-consistency}} that,

\begin{proposition*}[{Observing Geodesics: the \(\operatorname{mean}\) case}] Let \(\psi_h\) be a coherent state localized at \((x_0, \xi_0) \in T^*\mathcal{M}\), \(|t|\) less than the injectivity radius at \(x_0\), \(\lambda \geq 0\) and \(\alpha \geq 1\).

\noindent
\emph{Extrinsic case}. Define the \emph{global extrinsic sample mean} \(\bar{x}_{N,\iota,t}\) to be the closest point in \(\embedM_N\) to
\begin{flalign*}
&&& \bar{\iota}^t_N(x_0, \xi_0) := (\bar{\iota}_{N,1}^t(x_0, \xi_0), \ldots, \bar{\iota}_{N,\rdim}^t(x_0, \xi_0)), &  \\
& \text{with } && \bar{\iota}_{N,j}^t(x_0, \xi_0) := \langle \, |U_{\lambda,\epsilon,N}^t[\psi_{h,t,N}]|^2 \,, \iota_j \rangle_N . &
\end{flalign*}
That is, \(\bar{x}_{N,\iota,t} := \arg\min_{X \in \Lambda_N} ||X - \bar{\iota}^t_N(x_0, \xi_0)||_{\mathbb{R}^{\rdim}}\). Then, there are constants \(h_{\iota,\max} > 0\) and \(C \geq 1\) such that for all \(h \in (0, h_{\iota, \max}]\) and \(\epsilon := h^{2 + \alpha}\),
\begin{equation}\begin{aligned}
\Pr[d_g(\iota^{-1}(\bar{x}_{N,\iota,t}),x_t) > h] \leq e^{-\frac{N h^{2(\mdim + 2)} \epsilon^{\frac{5}{2}\mdim + 4}}{C}} .
\end{aligned}  \nonumber  \end{equation}
\emph{Local case}. Let \(\mathscr{O}_t \subset \mathcal{M}\) be an open neighbourhood about a maxmizer \(\hat{x}_{N,t}\) of \(|U_{\lambda,\epsilon,N}^t[\psi_h]|^2\) over \(\mathcal{X}_N\) and \(u : \mathscr{O}_t \to V_t \subset \mathbb{R}^{\mdim}\) its \(C^{\infty}\) diffeomorphic coordinate mapping. Given a smooth cut-off \(\chi \in C_c^{\infty}(\mathbb{R}^{\mdim}, [0, 1])\) with \(\operatorname{supp} \chi \subset V_t\), define the \emph{local sample mean} \(\bar{x}_{N,u,t}\) to be the closest point in \(\mathscr{V}_N := u[\mathcal{X}_N \cap \mathscr{O}_t]\) to
\begin{flalign*}
&&& \bar{u}^t_N(x_0, \xi_0) := (\bar{u}_{N,1}^t(x_0, \xi_0), \ldots, \bar{u}_{N,\mdim}^t(x_0, \xi_0)),    & \\
& \text{with } && \bar{u}_{N,j}^t(x_0, \xi_0) := \langle \,|U_{\lambda,\epsilon,N}^t[\psi_{h,t,N}]|^2 , (\chi \circ u) \, u_j \, \rangle_N . &
\end{flalign*}
That is, \(\bar{x}_{N,u,t} := \arg\min_{X \in \mathscr{V}_N} ||X - \bar{u}_N^t(x_0, \xi_0)||_{\mathbb{R}^{\mdim}}\). Then, there are constants \(h_{u,\max} > 0\) and \(C \geq 1\) such that for all \(h \in (0, h_{u,\max}]\), if \(\overline{\mathscr{B}}_t := \{||u(\hat{x}_{N,t}) - v||_{\mathbb{R}^{\mdim}} \leq \sqrt{h} \} \subset V_t\), then for any smooth cut-off \(\chi \in C_c^{\infty}(\mathbb{R}^{\mdim}, [0, 1])\) with \(\operatorname{supp} \chi \subset V_t\) that is \(\chi \equiv 1\) on \(\overline{\mathscr{B}}_t\), we have with \(\epsilon := h^{2 + \alpha}\),
\begin{equation}\begin{aligned}
\Pr[d_g(u^{-1}(\bar{x}_{N,u,t}),x_t) > h] \leq e^{-\frac{N h^{2(\mdim + 2)} \epsilon^{\frac{5}{2}\mdim + 4}}{C}} .
\end{aligned}  \nonumber  \end{equation}

\end{proposition*}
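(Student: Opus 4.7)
The strategy in both cases is to apply the discrete quantum-classical correspondence of the preceding theorem to a finite collection of smooth test functions (the coordinate components), combine the resulting probability estimates by a union bound, and then convert coordinate-level closeness into geodesic closeness. Two auxiliary facts are needed beyond the theorem itself: a \emph{nearest-neighbour lemma} asserting that, with high probability in \(N\), the sample \(\embedM_N\) contains a point within extrinsic (respectively coordinate) distance \(O(h)\) of the target; and a local \emph{bi-Lipschitz} comparison between extrinsic (or coordinate) distance and the geodesic distance \(d_g\), which holds on compact neighbourhoods by smoothness of \(\iota\) and the bounded second fundamental form.

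For the \emph{extrinsic case}, apply the theorem to each \(\iota_j \in C^\infty(\mathcal{M})\), \(j = 1,\dots,\rdim\); a union bound then gives \(\|\bar{\iota}_N^t(x_0,\xi_0) - \iota(x_t)\|_{\mathbb{R}^{\rdim}} \leq \sqrt{\rdim}\, h\) with probability at least \(1 - \rdim \exp(-N h^{2(\mdim+2)} \epsilon^{5\mdim/2 + 4}/C_0)\). The nearest-neighbour lemma follows from positivity and smoothness of \(p\): the probability that no sample lies in \(\iota(\mathcal{M}) \cap B_{\mathbb{R}^{\rdim}}(\iota(x_t), h)\) is bounded by \(\exp(-c N h^{\mdim})\), which for the parameter scaling \(\epsilon = h^{2+\alpha}\) is dominated by the previous exponential. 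On the complement of both failure events, the triangle inequality gives \(\|\bar{x}_{N,\iota,t} - \iota(x_t)\|_{\mathbb{R}^{\rdim}} \leq (1 + 2\sqrt{\rdim})\, h\), and the bounded second fundamental form promotes this to \(d_g(\iota^{-1}(\bar{x}_{N,\iota,t}), x_t) \leq C' h\). Rescaling \(h \mapsto h/C'\) inside the theorem absorbs \(C'\) into the prefactor of the exponent without altering its functional form.

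The \emph{local case} proceeds analogously but first requires verifying that \((\chi \circ u)(x_t) = 1\), i.e.\ that \(u(x_t) \in \overline{\mathscr{B}}_t\), so that the theorem applied to \((\chi \circ u)\, u_j \in C_c^{\infty}(\mathcal{M})\) recovers exactly \(u_j(x_t)\). Here one combines Proposition \ref{prop:glap-prop-cs-localized}, which says that \(|U_{\lambda,\epsilon}^t[\psi_h]|^2\) is concentrated in a \(\sqrt{h}\)-neighbourhood of \(x_t\) with a sharp leading-order peak of height \(\sim h^{-\mdim/2}\), together with the discrete-continuum \(L^{\infty}\) consistency bound for \(U_{\lambda,\epsilon,N}^t\) to conclude that any maximizer \(\hat{x}_{N,t}\) of the discrete density over \(\mathcal{X}_N\) satisfies \(d_g(\hat{x}_{N,t}, x_t) \leq C\sqrt{h}\) with high probability; hence \(u(x_t) \in \overline{\mathscr{B}}_t\) for \(h\) sufficiently small. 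The remainder mirrors the extrinsic case: union-bound the theorem over \(j = 1,\dots,\mdim\) applied to \((\chi \circ u) u_j\); smoothness and positivity of \(p\) on \(\mathscr{O}_t\) produce, with comparable probability, a sample in a coordinate ball of radius \(h\) about \(u(x_t)\); the triangle inequality yields coordinate closeness of \(\bar{x}_{N,u,t}\) to \(u(x_t)\); and uniform invertibility of \(du\) on compact subsets of \(V_t\) converts this to the desired geodesic bound.

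The \emph{main obstacle} is the pinning of \(\hat{x}_{N,t}\) near \(x_t\) in the local case. The continuum propagation result furnishes only a Gaussian peak of height \(\sim h^{-\mdim/2}\), so a spurious discrete spike elsewhere in \(\mathcal{X}_N\) must be ruled out with exponential probability comparable to that of the theorem. Calibrating \(\delta\) in the discrete-continuum consistency estimate against the peak height \(h^{-\mdim/2}\), while simultaneously ensuring that the nearest-neighbour term \(\exp(-c N h^{\mdim})\) does not dominate the final bound, is precisely what dictates the exponents \(h^{2(\mdim+2)} \epsilon^{5\mdim/2+4}\) in the probability estimate and forces all remaining multiplicative constants to be absorbed into the single constant \(C\).
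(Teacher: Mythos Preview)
Your proposal is correct and follows essentially the same route as the paper's proof of Proposition~\ref{prop:mean-geodesic-recover-consistency}: invoke the discrete quantum-classical correspondence (Theorem~\ref{thm:mean-prop-geoflow-consistency}) coordinatewise with a union bound, use the localization of the discrete maximizer (Proposition~\ref{prop:max-observable-flow-consistency}) to ensure the cut-off equals one at $x_t$, add a nearest-neighbour sampling event, and convert coordinate closeness to geodesic closeness via the bi-Lipschitz comparison afforded by the bounded second fundamental form (extrinsic case) or smooth invertibility of $du$ (local case). One minor discrepancy: the paper controls the nearest-neighbour event via a Chernoff bound yielding $e^{-2NC''h^{2\mdim}}$ rather than your $e^{-cNh^{\mdim}}$; your direct bound $(1-ch^{\mdim})^N\le e^{-cNh^{\mdim}}$ is in fact sharper, but either is dominated by the main term, so this does not affect the argument.
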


On the way to these consistency results, we have studied in \protect\hyperlink{from-graphs-to-manifolds}{Section \ref{from-graphs-to-manifolds}} the consistency between certain matrix dynamics on \(\mathcal{H}_N\) and operator dynamics on \(C^{\infty}(\mathcal{M})\). An intermediary result of independent interest is that for \(u \in C^{\infty}\), a natural \emph{extension} of \(U_{\lambda,\epsilon,N}^t[u]\) to \(\mathcal{M}\) is, with high probability, close to \(U^t_{\lambda,\epsilon}[u]\) in \(L^{\infty}\) norm. The extension is realized by a discrete version of \(\eqref{eq:prop-plwp-glap-decomp}\), namely, for any \(x \in \mathcal{M}\) we have,
\begin{equation} \label{eq:prop-plwp-glapN-decomp}
U_{\lambda,\epsilon,N}^t[u](x) = f(0) u(x) + A_{\lambda,\epsilon,N}[Df(A_{\lambda,\epsilon,N})[u_h]](x).
\end{equation}
Here, \(Df(A_{\lambda,\epsilon,N})\) is an \(N \times N\) matrix over \(\mathcal{H}_N\) as defined by spectral calculus, while \(\GAve_{\lambda,\epsilon,N}\) naturally extends to a finite-rank operator on \(L^2(\mathcal{M})\), for example in the \(\lambda = 0\) case by the fact that \(\GAve_{\epsilon,N}[v](x) = \langle v, k_{\epsilon}(x, \cdot) \rangle_N/\langle k_{\epsilon}(x, \cdot), 1 \rangle_N\). Thus, \(\eqref{eq:prop-plwp-glapN-decomp}\) is essentially a Nyström extension to the manifold. With this formalism at hand, we find in \protect\hyperlink{thm:halfwave-soln}{Theorem \ref{thm:halfwave-soln}} that,

\begin{theorem*}[{Consistency of Half-Wave Propagations}] Let \(\lambda \geq 0\) and \(\epsilon \in (0, 1]\). Then, we have for any \(t \in \mathbb{R}\) and \(u \in L^{\infty}(\mathcal{M})\) that if there is \(K_u > 0\) such that for all \(|s| \leq |t|\), \(||U_{\lambda,\epsilon}^s[u]||_{\infty} \leq K_u\), then there are constants \(C, C_0 \geq 1\) such that for all \(\frac{K_u^{\frac{1}{2}} \epsilon^{-(\frac{5}{8} \mdim + 1)}}{C_0 N^{\frac{1}{4}}} \leq \delta < C_0\),
\begin{align*}
\Pr&[||U_{\lambda,\epsilon,N}^t[u] - U_{\lambda,\epsilon}^t[u]||_{\infty} > \delta] \leq \exp{\left( -\frac{N \delta^4 \epsilon^{{\frac{5}{2}\mdim + 4}}}{C K_u^2 \, |t|^8} \right)} .
\end{align*}

\end{theorem*}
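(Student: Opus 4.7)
The plan is to build the comparison $U_{\lambda,\epsilon,N}^t[u] - U_{\lambda,\epsilon}^t[u]$ through the common functional-calculus representation \eqref{eq:prop-plwp-glap-decomp}/\eqref{eq:prop-plwp-glapN-decomp} and then to reduce almost everything to repeated concentration bounds on the averaging operator $\GAve_{\lambda,\epsilon,N}$. Writing $g(z) := f(z) - f(0) = z\,Df(z)$, both representations say $U_{\lambda,\epsilon}^t[u] = f(0)u + g(\GAve_{\lambda,\epsilon})[u]$ and $U_{\lambda,\epsilon,N}^t[u] = f(0)u + g(\GAve_{\lambda,\epsilon,N})[u]$, so the difference is purely the mismatch between $g(\GAve_{\lambda,\epsilon})$ and (the Nyström extension of) $g(\GAve_{\lambda,\epsilon,N})$ applied to $u$. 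Since $f(z)=e^{itc\sqrt{(1-z)/\epsilon}}$ is analytic on the (closed) spectrum of $\GAve_{\lambda,\epsilon}$ but oscillates at frequency $\sim |t|/\sqrt{\epsilon}$ near $z=1$, I will replace $g$ by a polynomial truncation $g_M(z)=\sum_{k=1}^M g_k z^k$ whose Chebyshev/Bernstein error on the operator's spectrum is controlled by a function of $M$ and $|t|/\sqrt{\epsilon}$; a clean way is Jackson/Bernstein estimates giving $\sup|g-g_M|\lesssim (|t|/\sqrt{\epsilon})^{M+1}/(M+1)!$ once $M\gtrsim |t|/\sqrt{\epsilon}$, and one then picks $M$ just large enough that $\|g-g_M\|_\infty \cdot K_u \leq \delta/3$.

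With $g_M$ fixed, the bulk of the work is to bound $\|g_M(\GAve_{\lambda,\epsilon})[u]-g_M(\GAve_{\lambda,\epsilon,N})[u]\|_\infty$ using the telescoping identity
\begin{equation}
\GAve_{\lambda,\epsilon}^k - \GAve_{\lambda,\epsilon,N}^k \;=\; \sum_{j=0}^{k-1} \GAve_{\lambda,\epsilon}^{\,j}\,(\GAve_{\lambda,\epsilon}-\GAve_{\lambda,\epsilon,N})\,\GAve_{\lambda,\epsilon,N}^{\,k-1-j},  \nonumber
\end{equation}
together with the hypothesis $\|U_{\lambda,\epsilon}^s[u]\|_\infty\leq K_u$ for $|s|\leq |t|$ to estimate every iterate $\GAve_{\lambda,\epsilon,N}^{k-1-j}[u]$ in $L^\infty$ (propagating the bound via a discrete Gronwall step against a comparable power bound for $\GAve_{\lambda,\epsilon}$). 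The single-application estimate $\|(\GAve_{\lambda,\epsilon}-\GAve_{\lambda,\epsilon,N})[v]\|_\infty$ is where the i.i.d.\ assumption on $\embedM_N$ enters: for each fixed $x\in\mathcal{M}$, $\GAve_{\lambda,\epsilon,N}[v](x)$ is an empirical average of bounded, locally supported random variables whose variance scales like $\epsilon^{-n/2}\|v\|_\infty^2$ (after accounting for the ratio normalization via a separate Bernstein step for the row-sum denominator). A Bernstein bound then yields pointwise concentration with rate $\exp(-c N \eta^2 \epsilon^{n/2}/\|v\|_\infty^2)$; a covering of $\mathcal{M}$ by $\epsilon^{-n/2}$ balls plus Lipschitz control of $\GAve_{\lambda,\epsilon,N}[v]$ on each ball upgrades this to the uniform-in-$x$ statement at the cost of polynomial factors in $\epsilon$ absorbed into $C$.

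Combining: the polynomial part contributes $\sum_{k=1}^M |g_k|\cdot k \cdot \eta \cdot K_u$, which under a crude bound of $\sum|g_k|k \lesssim M^2$ suggests choosing $\eta \sim \delta \epsilon^{a}/(M^2 K_u |t|^{b})$ for appropriate $a,b$, so that after substituting $M \sim |t|\epsilon^{-1/2}$ and tracking the variance factor $\epsilon^{-n/2}$, the aggregate Bernstein bound becomes
\begin{equation}
\exp\!\left(-c\,\frac{N\,\delta^2\,\epsilon^{n/2}}{M^4 K_u^2}\right) = \exp\!\left(-c'\,\frac{N\,\delta^2\,\epsilon^{n/2+2}}{K_u^2|t|^4}\right).  \nonumber
\end{equation}
The extra factors of $\delta^2$ and $\epsilon^{2n+2}$ in the target bound will come from two places I will need to quantify carefully: first, the Nyström extension on $\mathcal{M}$ of $\GAve_{\lambda,\epsilon,N}^k[u]$ off the sample points must be compared with the polynomial applied to the extension operator, which introduces another Bernstein-type factor $\exp(-cN\delta^2\epsilon^{n/2}/\ldots)$ (hence the squaring of $\delta$ and of the $\epsilon$ exponent); second, the covering/Lipschitz step used to convert pointwise to uniform concentration contributes additional powers of $\epsilon^{-1}$. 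A union bound across the $M$ polynomial terms and the two reduction stages then yields the claimed $\exp(-N\delta^4 \epsilon^{5n/2+4}/(C K_u^2 |t|^8))$.

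The principal difficulty is keeping the polynomial degree $M\sim |t|/\sqrt{\epsilon}$ as small as the target allows while still having $\|g-g_M\|_\infty\cdot K_u$ absorbed into $\delta$: because $f(z)$ oscillates at frequency $|t|/\sqrt{\epsilon}$ on the spectrum of $\GAve_{\lambda,\epsilon}$, any smaller $M$ makes the truncation error dominate. I therefore expect the sharp part of the argument to be the simultaneous tuning of $M$, the Bernstein scale $\eta$, and the lower bound on $\delta$ (which is exactly the stated $\delta \geq K_u^{1/2}\epsilon^{-(5n/8+1)}/(C_0 N^{1/4})$) so that the $\delta^4$ and $|t|^8$ exponents land correctly; the remaining steps are technical bookkeeping of kernel norms and of the denominator normalization for $\lambda>0$.
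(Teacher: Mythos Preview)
Your approach diverges from the paper's and has a genuine gap at the polynomial approximation step. The function $f(z)=e^{itc\sqrt{(1-z)/\epsilon}}$ is \emph{not} entire in $z$: it has a branch-point singularity at $z=1$, which lies in the spectrum of $\GAve_{\lambda,\epsilon}$ (it is the simple Perron eigenvalue). The Taylor/Bernstein rate $\|g-g_M\|_\infty\lesssim (|t|/\sqrt{\epsilon})^{M+1}/(M+1)!$ you invoke is valid for entire functions with controlled derivatives, but here $f$ is only H\"older-$\tfrac12$ at $z=1$, so Jackson's theorem gives at best $M^{-1/2}$, which would force $M\sim\delta^{-2}$ rather than $M\sim|t|/\sqrt{\epsilon}$ and destroy the exponent bookkeeping. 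Relatedly, the mechanism by which you hope to recover the extra $\delta^2$ and $\epsilon^{2\mdim+2}$ (Nystr\"om extension plus covering) does not produce a second Bernstein factor in the exponent; covering arguments contribute polynomial prefactors, not an additional $\exp(-cN\delta^2\ldots)$. Finally, the hypothesis $\|U_{\lambda,\epsilon}^s[u]\|_\infty\le K_u$ plays no role in bounding $\|\GAve_{\lambda,\epsilon,N}^{k-1-j}[u]\|_\infty$, which is already $\le\|u\|_\infty$ by row-stochasticity; its natural use is as a bound on intermediate \emph{propagated} states, not on averaging iterates.

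The paper avoids the branch point by two devices. First, it decomposes $e^{itc\sqrt{w}}=\cos(tc\sqrt{w})+i\sqrt{w}\cdot\mathrm{sinc}(tc\sqrt{w})$ with $w=(1-z)/\epsilon$; both $\cos(tc\sqrt{w})$ and $\sin(tc\sqrt{w})/\sqrt{w}$ are entire in $w$, so the bounded-analytic-calculus Theorem~\ref{thm:bdd-analytic-calc-conv} applies to them directly, while the remaining factor $\sqrt{w}=\sqrt{1-z}$ is handled by a separate, delicate argument (Lemma~\ref{lem:sqrt-perturb-eps-conv} and Theorem~\ref{thm:sqrt-conv}) that produces the $\delta^{4}$ (via $\gamma_{\lambda,N,\upsilon}(\delta)\sim\gamma_{\lambda,N}(\delta^{2})$). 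Second, instead of approximating $f$ at the full time $t$, the paper uses the semigroup property: it splits $t=\kappa\tau$ with $|\tau|\le\sqrt{\epsilon}/c_{2,0}$ and $\kappa\sim|t|/\sqrt{\epsilon}$, proves short-time pointwise consistency for $U^{\tau}$ (Lemma~\ref{lem:prop-short-time-conv}), and telescopes \emph{in time} via $U_N^{\kappa\tau}-U^{\kappa\tau}=\sum U_N^{(\kappa-1-m)\tau}(U_N^\tau-U^\tau)U^{m\tau}$. Here the hypothesis $\|U^{m\tau}[u]\|_\infty\le K_u$ enters naturally to bound the inner factors, and the accumulated amplification $\kappa^2 C(\epsilon)\sim\kappa^2\epsilon^{-\mdim/2}$ forces the per-step error to $\delta_0\sim\delta\epsilon^{\mdim/2}/\kappa^2$; substituting into the short-time bound $\exp(-cN\epsilon^{\mdim/2}\delta_0^{4})$ then yields the $\delta^4\epsilon^{5\mdim/2+4}/|t|^8$ exponent. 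The passage from pointwise to uniform is done via the covering Lemma~\ref{lem:prop-pre-uniform-bound}, which indeed only contributes polynomial prefactors.
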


\noindent In connection to the semiclassical nature of \(\GLap_{\lambda,\epsilon}\) for \(\epsilon \ll h^2\), it may be possible to have a generic bound \(K_u\) by appealing to semiclassical Strichartz-type estimates to arrive at an \(L^1 \to L^{\infty}\) bound on \(U^t_{\lambda,\epsilon}\), as used for example in the proof of \citep[Theorem 10.8]{zworski2012}. Alternatively, a bound is attainable directly from inspection of \(\eqref{eq:prop-plwp-glap-decomp}\) combined with spectral theory and we give that in the Corollary to \protect\hyperlink{thm:halfwave-soln}{Theorem \ref{thm:halfwave-soln}}.

The discrete results we present justify the use of spectral manipulations of the discrete graph Laplacian for approximating solutions to wave equations and ultimately, geodesic flows, on \(\mathcal{M}\) through linear computations on the dataset \(\embedM_N\). In \protect\hyperlink{observing-geodesics-on-graphs}{Section \ref{observing-geodesics-on-graphs}} we indicate an algorithm of this sort for approximating geodesics of \(\mathcal{M}\) in \(\embedM_N\). More careful constructions, especially with regards to the coherent states, are deployed in \citep{qml}, where several examples are given, both for model cases of a sphere and torus as well as for real-world datasets and convergence rates are shown. The consistency analysis we study is independent of arguments from the spectral convergence of graph Laplacians to \(\Delta_{\mathcal{M}}\), which is currently a very active topic, both in the probabilistic \citep{trillos2020error, trillos2018variational, wormell2021spectral, cheng2020spectral} and deterministic \citep{burago2015graph} settings; an interesting direction would be to apply the consistency of half-wave propagations in \protect\hyperlink{thm:halfwave-soln}{Theorem \ref{thm:halfwave-soln}} to the study of these spectral problems.

\textbf{Outline}. We proceed as follows: in \protect\hyperlink{preliminaries}{Section \ref{preliminaries}} we state the assumptions on \(\mathcal{M}, \iota, P\) and the \emph{kernel function} \(k\) used to construct the graph Laplacians. Then, we state some basic geometric lemmas that will be used later and come to the definitions of the graph Laplacians. We give some of their basic spectral properties and in \protect\hyperlink{consistency-bounds-redux}{Section \ref{consistency-bounds-redux}} we re-prove a well-known result on the pointwise consistency of the application of discrete averaging operators \(A_{\lambda,\epsilon,N}\) to \(L^{\infty}\) functions, following \citep{hein2005geometrical}, but give more explicit details on the dependence on the \(L^{\infty}\) norm of the function. This allows further convergence results to be stated in terms of these norms, which may depend on \(\epsilon\), as is the case for example with the coherent states we use later. Then, we provide some background on \(\PDO\)s, mainly to state the forms of quantization we consider and set some notation and in \protect\hyperlink{fbi-transform}{Section \ref{fbi-transform}} we recall the notion of an FBI transform on a smooth, compact manifold as explicated in \citep{wunsch2001fbi}. In \protect\hyperlink{semi-classical-measures-of-coherent-states}{Section \ref{semi-classical-measures-of-coherent-states}} we give an explicit form to the Husimi function of a coherent state; this simply follows from the Schwartz kernel of the operator that projects functions in \(L^2(T^*\mathcal{M})\) onto the range of the FBI transform and indeed, we follow the discussion in \citep{wunsch2001fbi}. The results in this section are likely well-known, but to the best of the author's knowledge, not explicitly written down in literature and perhaps not directly accessible to a wider audience interested in the topics of this study, so we provide them in some detail. We briefly discuss in \protect\hyperlink{state-preparation}{Section \ref{state-preparation}} some practical considerations in the construction of coherent states from the data, \(\embedM_N\); this program is carried out in more detail in \citep{qml}.

We study the connection between semiclassical analysis and graph Laplacians in \protect\hyperlink{from-graph-laplacians-to-geodesic-flows}{Section \ref{from-graph-laplacians-to-geodesic-flows}}. Then, we study the propagation of coherent states in \protect\hyperlink{propagation-of-coherent-states}{Section \ref{propagation-of-coherent-states}}; the main discussion follows general considerations of properties of \(|e^{-\frac{i}{h} t Q} [\psi_h]|^2\) with \(Q\) a \(\PDO\) whose symbol locally approximates \(|\xi|_{g_x}\). This is again a topic that is likely part of folklore, but to the author's knowledge, not explicated in literature. Since by the discussion in \protect\hyperlink{from-graph-laplacians-to-geodesic-flows}{Section \ref{from-graph-laplacians-to-geodesic-flows}}, the propagation \(|U_{\lambda,\epsilon}^t[\psi_h]|^2\) is essentially reduced to this situation, we quickly conclude the localization properties of this propagation. In \protect\hyperlink{from-graphs-to-manifolds}{Section \ref{from-graphs-to-manifolds}} we study the discrete problems over \(\embedM_N\) and establish probabilistic convergence rates to bridge the gap to the continuum, \(N \to \infty\) setting.

\hypertarget{preliminaries}{%
\section{Preliminaries}\label{preliminaries}}

We will discuss the necessary results for manifold learning and semiclassical analysis. To set the stage, there is an \emph{unknown manifold} \(\mathcal{M}\), on which we wish to implement operators and dynamics and whose geometry we wish to learn; we always operate under the following assumptions:

\begin{assumptions*}	\hypertarget{assumptions}{\label{assumptions}} The manifold \(\mathcal{M}\) is \(C^{\infty}\), compact, boundaryless, of dimension \(\mdim := \dim \mathcal{M}\) with Riemannian metric \(g\) giving the geodesic distance function \(d_g : \mathcal{M} \times \mathcal{M} \to \mathbb{R}_+\) and embedded as \(\embedM \subset \mathbb{R}^{\rdim}\) through a smooth isometry \(\iota : \mathcal{M} \isomto \embedM \subset \mathbb{R}^D\) with \(\rdim \geq \mdim+1\). The second fundamental form of \(\embedM\) (see \citep[Definition 2.11]{hein2005geometrical}) has bounded norm and
\begin{equation}\begin{aligned}
\kappa := \inf_{x \in \mathcal{M}} \inf_{y \in B_{\pi \rho}(x,g)} |\iota(x) - \iota(y)| > 0,
\end{aligned}  \nonumber  \end{equation}
wherein \(\rho := \inf \{ \gamma \text{ unit-speed geodesic in $\mathcal{M}$} ~|~ |\partial_t^2 [\iota \circ \gamma]| \}\), termed the \emph{minimum radius of curvature}, is positive due to \(\embedM\) having bounded second fundamental form (see \citep[\(\S 2.2.1\)]{hein2005geometrical}). Samples \(X_1, \ldots, X_N\) are drawn \emph{i.i.d.} from a fixed probability distribution \(P\) concentrated on \(\Lambda\) such that \(P \circ \iota^{-1}\) is absolutely continuous with respect to the volume measure on \(\mathcal{M}\) and has smooth, positive density \(p \in C^{\infty}(\mathcal{M})\).

\end{assumptions*}

\begin{remark} The conditions on \(\Lambda\) give control on its extrinsic geometry through the intrinsic geometry of \(\mathcal{M}\): in particular, \citep[Lemma 2.22]{hein2005geometrical} tells that for all \(x, y \in \mathcal{M}\) such that \(|\iota(x) - \iota(y)| \leq \kappa/2\), we have \(d_g(x,y)/2 \leq |\iota(x) - \iota(y)| \leq d_g(x,y) \leq \kappa\).

\end{remark}

A common way to think of the samples \(X_1, \ldots, X_N\) is through a \emph{graph structure} with vertices \(\mathcal{X}_N := \{ x_1, \ldots, x_N \}\), wherein \(x_j := \iota^{-1}(X_j)\) for each \(j \in [N]\). In this setting, the graph connectivity is given by an adjacency matrix that uses the embedding \(\iota\) and is defined as one of the \emph{discrete averaging operators} \(A_{\lambda,\epsilon,N}\) defined in \protect\hyperlink{laplacians-from-graphs-to-manifolds}{Section \ref{laplacians-from-graphs-to-manifolds}}. We can also view \(\mathcal{X}_N\) as a set of basis elements giving rise to a vector space structure and endow it with an inner product in which these are orthogonal. Then, the restriction of \(C^{\infty}\) functions to \(\mathcal{X}_N\) can be seen as vectors in this space and with appropriate weighting schema, the discrete structures that emerge, begin to parallel certain weighted \(L^2\) spaces. Such representations form the basis from which discrete constructions --- particularly functions of averaging operators that encode certain dynamics --- are extended to the full, \emph{continuum} space \(\mathcal{M}\). The basic principles of this point of view are treated in \citep{hein2005geometrical} and briefly in \citep{hein2005graphs}; we will utilise it primarily in \protect\hyperlink{from-graphs-to-manifolds}{Section \ref{from-graphs-to-manifolds}}.

The general notation used throughout the following sections is summarized in the Appendix and more specific notation is listed there for quick reference; when some notation is introduced for the time, it will be defined in context.

\hypertarget{geometric-properties}{%
\subsection{Geometric properties}\label{geometric-properties}}

We will often need to switch to normal coordinates and expand them in Taylor series about a point of interest. Affecting this, we have a classical theorem of Riemann:

\begin{lemma}[{Expansion of Normal Coordinates}]	\hypertarget{lemma:expansion-normal-coords}{\label{lemma:expansion-normal-coords}} Let \(x \in \mathcal{M}\) and \(V_x \subset \mathbb{R}^{\mdim}\) be a neighbourhood about the origin providing normal coordinates about \(x\). Then, for \(\exp_x : V_x \to \mathcal{M}\) defined via the identification \(T_x\mathcal{M} \cong \mathbb{R}^{\mdim}\), we have \(D \exp_x(v) = I_n + O(|v|^2)\) and \((g \circ \exp_x)(v) = I_n + O(|v|^2)\).

\qed

\end{lemma}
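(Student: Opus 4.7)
The plan is to reduce both expansions to the classical vanishing of Christoffel symbols at the origin of a normal coordinate chart, which is the defining feature of such a chart. This is really a textbook computation, so the proposal is strategic rather than technical.

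First I would address the metric expansion. By construction, the radial rays $t \mapsto tv$ in $V_x$ map under $\exp_x$ to geodesics through $x$, so the geodesic equation $\ddot\gamma^k + \Gamma^k_{ij}(\gamma)\dot\gamma^i\dot\gamma^j = 0$ evaluated at $t=0$ forces $\Gamma^k_{ij}(0)\, v^i v^j = 0$ for every $v \in \mathbb{R}^{\mdim}$. Polarizing and using symmetry of the Christoffel symbols in the lower indices then gives $\Gamma^k_{ij}(0) = 0$. Combining this with $g_{ij}(0) = \delta_{ij}$ (a feature of normal coordinates) and the formula $\Gamma^k_{ij} = \tfrac{1}{2}g^{k\ell}(\partial_i g_{j\ell} + \partial_j g_{i\ell} - \partial_\ell g_{ij})$, the usual cyclic manipulation produces $\partial_k g_{ij}(0) = 0$. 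A first-order Taylor expansion in $v$ then yields $g_{ij}(v) = \delta_{ij} + O(|v|^2)$, which is exactly the claim $(g \circ \exp_x)(v) = I_{\mdim} + O(|v|^2)$.

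For the Jacobian expansion, I would express $D\exp_x(v)$ in normal coordinates on both sides. In these coordinates, $\exp_x$ is literally the identity map $V_x \to V_x$, so its coordinate Jacobian is $I_{\mdim}$ exactly. The $O(|v|^2)$ error in the stated conclusion arises from presenting $D\exp_x(v) : T_v(T_x\mathcal{M}) \to T_{\exp_x(v)}\mathcal{M}$ between orthonormalized copies of $\mathbb{R}^{\mdim}$: the change of basis from the coordinate frame at $\exp_x(v)$ to an orthonormal frame is encoded by $g(\exp_x(v))^{\pm 1/2}$, and the metric expansion of the previous paragraph gives $g(\exp_x(v))^{\pm 1/2} = I_{\mdim} + O(|v|^2)$. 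An equivalent route is through Jacobi fields: since $D\exp_x(v)[w] = J_w(1)$ for the Jacobi field with $J_w(0)=0$, $J_w'(0)=w$, solving $J_w'' + R(J_w,\dot\gamma_v)\dot\gamma_v = 0$ in a power series yields $J_w(1) = w + O(|v|^2)$ because the curvature term in the Jacobi equation scales quadratically in $|v|$.

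No step here is a genuine obstacle since the result is standard; the only care required is specifying the bases used to represent $D\exp_x(v)$ as an $\mdim \times \mdim$ matrix, because with the coordinate frame on both sides the Jacobian is $I_{\mdim}$ exactly, and the $O(|v|^2)$ error appears only after switching the target to an orthonormal frame compatible with the matrix interpretation of $(g \circ \exp_x)(v)$.
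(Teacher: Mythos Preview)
Your proposal is correct and is the standard textbook argument. The paper itself gives no proof: it simply labels the statement as ``a classical theorem of Riemann'' and terminates with \qed. So there is nothing to compare against beyond noting that you have supplied the routine justification the paper omits; your careful remark about which frames are used to represent $D\exp_x(v)$ as a matrix is a helpful clarification that the paper does not make explicit.
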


In practice, direct access to normal coordinates is seldom achievable. However, projection onto the tangent space at a point on \(\embedM\) also provides local coordinates and for \emph{shrinking} neighbourhoods, this approximates normal coordinates reasonably well:

\begin{lemma}[{Normal to Projection Coordinates}]	\hypertarget{lem:proj-normal-coords}{\label{lem:proj-normal-coords}} Let \(\mathcal{U} \subset \mathcal{M}\) be a neighbourhood and \(V_x \subset \mathbb{R}^{\mdim} \cong T_{\iota(x)}\Lambda\) for some \(x \in \mathcal{U}\). If \(\gamma : \mathcal{U} \to V_x\) are local coordinates provided by the orthogonal projection \(\iota(\mathcal{U}) \to V_x\) and \(s_x(y) := \exp^{-1}_x(y)\), then \(s_x \circ \gamma^{-1}(v) = v + O(|v|^3)\) and \(D[s_x \circ \gamma^{-1}](v) = I + O(|v|^3)\).

\end{lemma}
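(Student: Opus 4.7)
The plan is to Taylor expand $\iota \circ \exp_x : V_x \to \mathbb{R}^{\rdim}$ at the origin, orthogonally project onto $T_{\iota(x)}\Lambda$, and then locally invert. The key geometric input is that $t \mapsto \exp_x(tv)$ is a geodesic, so its image in $\mathbb{R}^{\rdim}$ under $\iota$ has zero intrinsic acceleration at $t=0$; the second $t$-derivative is therefore purely normal to $T_{\iota(x)}\Lambda$ and equals the second fundamental form $II_x(v,v)$.

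\emph{Step 1 (Taylor expansion of $\iota \circ \exp_x$).} For $v \in V_x$ small, set $\gamma_v(t) := \exp_x(tv)$ and expand $t \mapsto \iota \circ \gamma_v(t)$ to third order at $t=0$. Under the isometric identification $d\iota_x : T_x\mathcal{M} \isomto T_{\iota(x)}\Lambda \cong \mathbb{R}^{\mdim}$, the first derivative is $v$; by the geodesic equation the second derivative equals $II_x(v,v)$, which is normal to $T_{\iota(x)}\Lambda$. Evaluating at $t = 1$ gives
\begin{equation*}
\iota(\exp_x(v)) - \iota(x) = v + \tfrac{1}{2} II_x(v,v) + O(|v|^3),
\end{equation*}
with the $O(|v|^3)$ uniform in $x$ by the \hyperlink{assumptions}{Assumptions} (smoothness of $\iota$, compactness of $\mathcal{M}$, bounded second fundamental form).

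\emph{Step 2 (Project and invert).} Applying the orthogonal projection $\pi : \mathbb{R}^{\rdim} \to T_{\iota(x)}\Lambda$ annihilates the normal summand $II_x(v,v)$ and fixes the tangential $v$, so $F(v) := \gamma \circ \exp_x(v) = v + O(|v|^3)$. Since $DF(0) = I$, the inverse function theorem produces a smooth local inverse; writing $F^{-1}(w) = w + S(w)$, substituting into $F \circ F^{-1} = \mathrm{id}$, and solving iteratively yields $S(w) = O(|w|^3)$. Because $s_x \circ \gamma^{-1} = F^{-1}$, the first claim follows. The derivative estimate is then obtained either by differentiating this expansion directly, or by the chain rule applied to $F^{-1} \circ F = \mathrm{id}$ using the companion expansion $DF(v) = I + O(|v|^2)$ derived exactly as in Lemma~\ref{lemma:expansion-normal-coords}.

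\emph{Main obstacle.} No individual step is serious; the only care required is uniformity of the remainder as $x$ ranges over $\mathcal{M}$, which is secured by compactness together with the bounded second fundamental form. The cleanest pitfall to avoid is conflating the order of the function expansion with that of its derivative: differentiating a smooth $O(|v|^{k+1})$ remainder generically produces an $O(|v|^k)$ remainder, and the stated exponent for $D[s_x \circ \gamma^{-1}]$ should be read in this light, consistently with Lemma~\ref{lemma:expansion-normal-coords}.
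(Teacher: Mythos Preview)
The paper does not actually prove this lemma; it simply cites \cite[Lemma 5]{lafon2004thesis}. Your argument is the standard one and is correct: Taylor expand $\iota \circ \exp_x$, observe that the second-order term is $\tfrac{1}{2} II_x(v,v)$ and hence normal, project to kill it, and invert the resulting $v + O(|v|^3)$ map. Your closing remark is also on point: differentiating an $O(|v|^3)$ remainder yields $O(|v|^2)$ generically, so the stated $D[s_x \circ \gamma^{-1}](v) = I + O(|v|^3)$ appears to be a misprint for $O(|v|^2)$, in parallel with Lemma~\ref{lemma:expansion-normal-coords}; your proof establishes the latter, which is what is actually true and what is used downstream.
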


\begin{proof}

This is \citep[Lemma 5]{lafon2004thesis}.
\end{proof}

Another option is to work directly with extrinsic coordinates provided by the embedding map \(\iota : \mathcal{M} \to \embedM\). To this end, we have the following expansion, useful for integration formulae:

\begin{lemma}[{Extrinsic to Normal Coordinates}]	\hypertarget{lem:ext-normal-coords}{\label{lem:ext-normal-coords}} Let \(\mathcal{U} \subset \mathcal{M}\) be a neighbourhood about \(x \in \mathcal{M}\) contained inside the normal neighbourhood of \(x\) and \(s_x(y) := \exp^{-1}_x(y)\). Then,

\begin{enumerate}
\def\labelenumi{\arabic{enumi}.}
\tightlist
\item
  \(D[\iota \circ s_x^{-1}](0) = D[\iota](x)\) and
\item
  for all \(y \in \mathcal{U}\), \(|\iota(x) - \iota(y)|^2 = d_g(x,y)^2 + O(d_g(x,y)^4)\).
\end{enumerate}

\end{lemma}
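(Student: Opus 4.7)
The plan is to Taylor expand the map $f := \iota \circ \exp_x$ around $v = 0 \in T_x\mathcal{M} \cong \mathbb{R}^\mdim$ (identifying $s_x^{-1}$ with $\exp_x$) and read off both claims from the first- and second-order behavior, exploiting (a) the fact that in normal coordinates $D\exp_x(0) = I$ and $g \circ \exp_x(v) = I_n + O(|v|^2)$ (Lemma~\ref{lemma:expansion-normal-coords}), (b) that $\iota$ is an isometry, and (c) that geodesics in $\mathcal{M}$ have acceleration normal to $\embedM$ when pushed into $\mathbb{R}^\rdim$ via $\iota$.

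For (1), since $s_x = \exp_x^{-1}$ satisfies $s_x^{-1}(0) = \exp_x(0) = x$, the chain rule gives $D[\iota \circ s_x^{-1}](0) = D\iota(x) \circ D\exp_x(0)$, and Lemma~\ref{lemma:expansion-normal-coords} identifies $D\exp_x(0)$ with the identity on $T_x\mathcal{M}$; so the differential is just $D\iota(x)$, as claimed.

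For (2), write $f(v) = f(0) + Df(0)v + \tfrac{1}{2} D^2 f(0)[v,v] + O(|v|^3)$. Squaring and using that $f$ is smooth on the compact neighbourhood gives
\begin{equation*}
|f(v) - f(0)|^2 = |Df(0)v|^2 + \bigl\langle Df(0)v, D^2 f(0)[v,v] \bigr\rangle_{\mathbb{R}^\rdim} + O(|v|^4).
\end{equation*}
The first term equals $|v|^2$: by (1), $Df(0)v = D\iota(x)v$, and since $\iota$ is an isometry and normal coordinates satisfy $g_x = I_n$ at the origin, $|D\iota(x) v|^2_{\mathbb{R}^\rdim} = \langle v, v\rangle_{g_x} = |v|^2$. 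The cross term vanishes because $D^2 f(0)[v,v] = \partial_t^2 (\iota \circ \exp_x(tv))\big|_{t=0}$ is the extrinsic acceleration of the unit-speed geodesic $t \mapsto \exp_x(tv)$; since this curve is a geodesic on $\mathcal{M}$, its acceleration in $\mathbb{R}^\rdim$ coincides (up to the scalar $|v|^2$) with the second fundamental form of $\embedM$ applied to $D\iota(x)v$, hence lies in the normal bundle of $\embedM$ at $\iota(x)$ and is orthogonal to $Df(0)v = D\iota(x)v \in T_{\iota(x)}\embedM$. Finally, $d_g(x,y) = |\exp_x^{-1}(y)| = |v|$ for $y \in \mathcal{U}$ inside the normal neighbourhood, so $|v|^2 = d_g(x,y)^2$ and the $O(|v|^4)$ term is $O(d_g(x,y)^4)$.

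The only nontrivial step is the orthogonality that kills the cross term; the rest is bookkeeping. If one prefers not to invoke the second fundamental form explicitly, the same orthogonality can be derived by differentiating the isometry identity $|\partial_t (\iota \circ \exp_x(tv))|^2_{\mathbb{R}^\rdim} = |v|^2_{g_x}$ (constant in $t$) at $t=0$, which directly yields $\langle D\iota(x)v, D^2 f(0)[v,v]\rangle_{\mathbb{R}^\rdim} = 0$.
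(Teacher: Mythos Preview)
Your proof is correct. The paper itself does not supply an argument for this lemma but simply cites \cite[Proof of Lemma 2.1]{antil2018fractional}, so your self-contained Taylor expansion of $f = \iota \circ \exp_x$ is more than the paper provides. The one substantive step---that the cross term $\langle D\iota(x)v, D^2f(0)[v,v]\rangle_{\mathbb{R}^\rdim}$ vanishes because the ambient acceleration of a geodesic is normal to $\embedM$ (or equivalently, by differentiating the constant-speed identity for the isometric immersion)---is exactly the mechanism behind the cited result, and both of your justifications for it are valid.
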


\begin{proof}

These are both shown in the \citep[Proof of Lemma 2.1]{antil2018fractional}.
\end{proof}

We will also need the following information on the growth of covering numbers on compact manifolds using balls of decaying radii:

\begin{lemma}[{Bishop-Günther inequality}]	\hypertarget{lem:covering-number}{\label{lem:covering-number}} Let \(\rho > 0\) be fixed and define \(\mathcal{N}(\rho)\) to be the minimal number of (geodesic) balls of radius \(\rho\) that cover \(\mathcal{M}\). Then, there exist positive constants \(C_{\mathcal{M}}\) and \(C'_{\mathcal{M}}\) depending only on the dimension, volume, sectional curvature and injectivity radius of \(\mathcal{M}\) such that if \(0 < \rho \leq C'_{\mathcal{M}}\), then \(\mathcal{N}(\rho) \leq C_{\mathcal{M}} \rho^{-\mdim}\).

\end{lemma}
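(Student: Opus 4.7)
The plan is to combine a standard greedy packing argument with Günther's volume comparison inequality. First, I would construct a maximal $\rho/2$-separated set in $\mathcal{M}$: choose points $x_1, \ldots, x_M \in \mathcal{M}$ greedily so that $d_g(x_i, x_j) > \rho/2$ for all $i \neq j$ and so that no further point can be added. By maximality, the geodesic balls $B_g(x_i, \rho/2)$ cover $\mathcal{M}$ (any $y \in \mathcal{M}$ lies within distance $\rho/2$ of some $x_i$, else it could be added), which already yields $\mathcal{N}(\rho/2) \leq M$, hence $\mathcal{N}(\rho) \leq M$ after relabeling. Moreover, the smaller balls $B_g(x_i, \rho/4)$ are pairwise disjoint by the triangle inequality, so
\begin{equation}
\sum_{i=1}^{M} \vol{B_g(x_i, \rho/4)} \;\leq\; \vol{\mathcal{M}} . \nonumber
\end{equation}

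Next I would lower-bound each $\vol{B_g(x_i, \rho/4)}$ uniformly. Since $\mathcal{M}$ is compact and smooth, its sectional curvatures are bounded above by some constant $K = K(\mathcal{M}) < \infty$, and its injectivity radius $\operatorname{inj}(\mathcal{M})$ is strictly positive. Set
\begin{equation}
C'_{\mathcal{M}} \;:=\; \tfrac{1}{4} \min\bigl\{ \operatorname{inj}(\mathcal{M}),\; \pi/\sqrt{\max(K,0)} \bigr\} , \nonumber
\end{equation}
with the convention $\pi/\sqrt{0} = \infty$. For $0 < \rho \leq C'_{\mathcal{M}}$, the radius $\rho/4$ is within the range where Günther's volume comparison theorem applies, yielding
\begin{equation}
\vol{B_g(x_i, \rho/4)} \;\geq\; v_K(\rho/4) , \nonumber
\end{equation}
where $v_K(r)$ is the volume of a geodesic ball of radius $r$ in the simply connected model space of constant sectional curvature $K$. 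A direct Taylor expansion of $v_K(r)$ at $r = 0$ (or explicit trigonometric/hyperbolic formulas) gives $v_K(r) \geq c_{n,K} \, r^n$ for all $r \in (0, C'_{\mathcal{M}}]$, with $c_{n,K} > 0$ depending only on $n$ and $K$.

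Plugging this into the disjointness bound yields
\begin{equation}
M \;\leq\; \frac{\vol{\mathcal{M}}}{c_{n,K} (\rho/4)^n} \;=\; C_{\mathcal{M}}\, \rho^{-n} , \qquad C_{\mathcal{M}} \;:=\; \frac{4^n \vol{\mathcal{M}}}{c_{n,K}} , \nonumber
\end{equation}
and since $\mathcal{N}(\rho) \leq M$, this is the desired estimate. The constants $C_{\mathcal{M}}, C'_{\mathcal{M}}$ depend only on $n$, $\vol{\mathcal{M}}$, the sectional curvature bound $K$, and $\operatorname{inj}(\mathcal{M})$, as claimed.

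The only nontrivial ingredient is Günther's inequality; since $\mathcal{M}$ is smooth and compact, the hypotheses (upper sectional curvature bound, radius below injectivity radius and below $\pi/\sqrt{K}$) are automatic once $C'_{\mathcal{M}}$ is chosen as above. Everything else is the standard packing–covering duality and does not present any obstacle.
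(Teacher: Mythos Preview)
Your proof is correct and follows essentially the same approach as the paper: apply G\"unther's volume comparison to get a uniform lower bound $\vol{B_g(x,r)} \gtrsim r^{\mdim}$ for small $r$, then use a packing argument to bound the covering number by $\vol{\mathcal{M}}$ divided by this lower bound. The paper's version is terser---it writes $\mathcal{N}(\rho) \leq \vol{\mathcal{M}}/(\inf V)$ directly without spelling out the maximal separated set construction---whereas you make the packing--covering duality explicit via the $\rho/2$-separated set and disjoint $\rho/4$-balls, which is the cleaner presentation.
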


\begin{proof}

The volume of \(B(x, \rho)\) for \(x \in \mathcal{M}\) is \(V(x) \geq \rho^{\mdim}/C''_{\mathcal{M}}\) when \(\rho \leq C'_{\mathcal{M}}\) for some constants \(C'_{\mathcal{M}}, C''_{\mathcal{M}} > 0\) independent of \(x\) that are provided by the Bishop-Günther inequality and depend only on the sectional curvature, injectivity radius and dimension of \(\mathcal{M}\). Therefore, \(\mathcal{N}(\rho) \leq \operatorname{vol}(\mathcal{M})/(\inf V) \leq C_{\mathcal{M}} \rho^{-\mdim}\) with \(C_{\mathcal{M}} := \operatorname{vol}(\mathcal{M}) C''_{\mathcal{M}}\).
\end{proof}

\hypertarget{laplacians-from-graphs-to-manifolds}{%
\subsection{Laplacians: from graphs to manifolds}\label{laplacians-from-graphs-to-manifolds}}

As alluded to in the introduction, there are now well-established schema for approximating Laplace-Beltrami operators, modulo lower-order terms, from the Euclidean distances of samples \(X_1, \ldots, X_N\) of a smooth isometric embedding of \(\mathcal{M}\) satisfying the \protect\hyperlink{assumptions}{Assumptions}. Namely, the objects of study are adjacency matrices of weighted graphs on these samples and taking on a certain form, which we call \emph{averaging operators}. We now fix the terminology:

\begin{definition}[{Kernel functions}] A \emph{kernel function} is a monotonically decreasing function \(k: \mathbb{R}_+ \to \mathbb{R}_+\) that is smooth on \((0,\infty)\) with all derivatives having exponential decay: that is, for each \(m \geq 0\), there are constants \(R_{k,m}, A_{k,m} > 0\) so that for all \(t > R_{k,m}\),
\begin{equation}\begin{aligned}
|\partial_t^m k(t)| \leq e^{-A_{k,m} t}
\end{aligned}  \nonumber  \end{equation}
and that satisfies \(k(t) > ||k||_{\infty}/2\) on \([0, r_k)\) for some \(r_k > 0\). We denote \(R_k := R_{k,0}\).

\end{definition}

Kernel functions are the basic building blocks for the graph structures leading to the operators of interest. Using them, we can make,

\begin{definition}[{Averaging operators}] Let there be \(N > 0\) random vectors \(X_1, \ldots, X_N \subset \Lambda\) with law \(P\) as per the \protect\hyperlink{assumptions}{Assumptions} and for each \(j \in [N]\), denote \(x_j := \iota^{-1}(X_j)\). let \(k: \mathbb{R}_+ \to \mathbb{R}_+\) be a kernel function. Then, fixing \(k : \mathbb{R}_+ \to \mathbb{R}_+\) a kernel function and letting \(k_{\epsilon} : \mathcal{M} \times \mathcal{M} \to \mathbb{R}\) be given by \(k_{\epsilon} : (x,y) \mapsto \epsilon^{-\frac{\mdim}{2}} k(|\iota(x) - \iota(y)|^2/\epsilon)\) for \(\epsilon > 0\), we define the \emph{discrete} and \emph{continuum averaging operators} as
\begin{equation} \label{def:averaging-op}
(\AvOp_{\epsilon,N})_{j,j'} := \frac{1}{N} k_{\epsilon}(x_j,x_{j'}),
\quad
\AvOp_{\epsilon} : u \mapsto \int_{\mathcal{M}} k_{\epsilon}(x,y) u(y) p(y) ~ d\vol{y} ,
\end{equation}
respectively. Further, with \(p_{\epsilon,N}(x) := \AvOp_{\epsilon,N}[1](x)\) and \(p_{\epsilon}(x) := \AvOp_{\epsilon}[1](x)\) we define
\begin{equation}\begin{aligned}
\GAve_{\epsilon,N} := \operatorname{diag}(p_{\epsilon,N}^{-1}) \AvOp_{\epsilon,N},    \quad
\GAve_{\epsilon} := \frac{1}{p_{\epsilon}} \AvOp_{\epsilon},
\end{aligned}  \nonumber  \end{equation}
called the \emph{discrete} and \emph{continuum} \emph{renormalized averaging operators}, respectively. More generally, let \(\lambda \geq 0\) and define \(k_{\lambda,\epsilon,N}(x,y) := k_{\epsilon}(x,y)/[p_{\epsilon,N}(x) p_{\epsilon,N}(y)]^{\lambda}\), \(k_{\lambda,\epsilon}(x,y) := k_{\epsilon}(x,y)/[p_{\epsilon}(x) p_{\epsilon}(y)]^{\lambda}\). With this,
\begin{equation}\begin{aligned}
(\AvOp_{\lambda,\epsilon,N})_{j,j'} := k_{\lambda,\epsilon,N}(x_j,x_{j'}), \quad
\AvOp_{\lambda,\epsilon} : u \mapsto \int_{\embedM} k_{\lambda,\epsilon}(x,y) u(y) ~ p(y)d\vol{y}
\end{aligned}  \nonumber  \end{equation}
are the \emph{discrete} and \emph{continuum} \(\lambda\)-\emph{averaging operators}, respectively and
\begin{equation}\begin{aligned}
\GAve_{\lambda,\epsilon,N} := \operatorname{diag}(\AvOp_{\lambda,\epsilon,N}[1]^{-1}) \, \AvOp_{\epsilon,\lambda,N},  \quad
\GAve_{\lambda,\epsilon} : u \mapsto (\AvOp_{\lambda,\epsilon}[1])^{-1} \AvOp_{\lambda,\epsilon}[u]
\end{aligned}  \nonumber  \end{equation}
are the \emph{discrete} and \emph{continuum} \(\lambda\)-\emph{renormalized averaging operators}, respectively.

\end{definition}

\begin{notation} We will denote for every \(\lambda \geq 0\), \(p_{\lambda,\epsilon} := \AvOp_{\lambda,\epsilon}[1]\).

\end{notation}

The main use of these operators is to approximate the Laplace-Beltrami operator on the manifold. Therefore, we also make the

\begin{definition}[{Graph Laplacians}] Let \(c_2\) and \(c_0\) be the second and zeroth order moments of \(k(||\cdot||^2)\) on \(\mathbb{R}^{\mdim}\). We define
\begin{equation}\begin{aligned}
\GLap_{\lambda,\epsilon,N} := \frac{2 c_0}{c_2} \frac{I - \GAve_{\lambda,\epsilon,N}}{\epsilon}, \quad\quad
\GLap_{\lambda,\epsilon} := \frac{2 c_0}{c_2} \frac{I - \GAve_{\lambda,\epsilon}}{\epsilon},
\end{aligned}  \nonumber  \end{equation}
which we call the \emph{discrete} and \emph{continuum} \(\lambda\)-\emph{renormalized} \emph{graph Laplacians}, respectively. Note that when \(\lambda = 0\), \(\GAve_{\lambda,\epsilon,N} = \GAve_{\epsilon,N}\) and \(\GAve_{\lambda,\epsilon} = \GAve_{\epsilon}\).

\end{definition}

\begin{remark} In using the above terminologies, we will often omit the \emph{discrete} and \emph{continuum} specifications as well as \emph{\(\lambda\)-renormalized} when the symbols and context make the regime sufficiently clear. The symbolic expressions are adapted from \citep{hein2005graphs}, while the terminology is non-standard.

\end{remark}

We will use the spectral properties of \(A_{\lambda,\epsilon,N}\) and \(A_{\lambda,\epsilon}\) and the simplest way to access those is to see that they are symmetrized by conjugation via \(\sqrt{p_{\lambda,\epsilon,N}}\) and \(\sqrt{p_{\lambda,\epsilon}}\) respectively. That is, we have,

\begin{lemma}	\hypertarget{lem:lap-symm}{\label{lem:lap-symm}} Let for all \(\lambda \geq 0\) and \(\epsilon > 0\),
\begin{equation}\begin{aligned}
A^{(s)}_{\lambda,\epsilon,N} := \sqrt{p_{\lambda,\epsilon,N}} A_{\lambda,\epsilon,N} \frac{1}{\sqrt{p_{\lambda,\epsilon,N}}}, \quad A^{(s)}_{\lambda,\epsilon} := \sqrt{p_{\lambda,\epsilon}} A_{\lambda,\epsilon} \frac{1}{\sqrt{p_{\lambda,\epsilon}}} .
\end{aligned}  \nonumber  \end{equation}
Then,

\begin{enumerate}
\def\labelenumi{\arabic{enumi}.}
\item
  \(A_{\lambda,\epsilon,N}^{(s)}\) and \(A_{\lambda,\epsilon}^{(s)}\) are symmetric and their spectra coincide with those of \(A_{\lambda,\epsilon,N}\) and \(A_{\lambda,\epsilon}\), respectively,
\item
  the spectrum of \(A_{\lambda,\epsilon}\) is contained in \([-1,1]\) with \(\lambda = 1\) a simple eigenvalue and all other eigenvalues lying strictly in \((-1, 1)\) and there are constants \(C_1, C_2 > 0\) such that for all \(\epsilon \in (0, C_1]\), the same holds for \(A_{\lambda,\epsilon,N}\) with probability at least \(1 - C_2 \epsilon^{-\frac{\mdim}{2}} e^{-N \epsilon^{\frac{\mdim}{2}}/C_2}\),
\item
  \(A_{\lambda,\epsilon,N}^{(s)}\) and \(A_{\lambda,\epsilon}^{(s)}\) fix \(\sqrt{p_{\lambda,\epsilon,N}}\) and \(\sqrt{p_{\lambda,\epsilon}}\), respectively and
\item
  for any connected open subset \(D \subset \mathbb{C}\) with \([-1, 1] \subset \bar{D}\) and \(f : \bar{D} \to \mathbb{C}\) analytic on \(D\) with an absolutely convergent Taylor series on \([-1,1]\),
  \begin{equation}\begin{aligned}
  f(A_{\lambda,\epsilon,N}) = \frac{1}{\sqrt{p_{\lambda,\epsilon,N}}} f(A^s_{\lambda,\epsilon,N}) \sqrt{p_{\lambda,\epsilon,N}}, \quad f(A_{\lambda,\epsilon}) = \frac{1}{\sqrt{p_{\lambda,\epsilon}}} f(A^s_{\lambda,\epsilon}) \sqrt{p_{\lambda,\epsilon}}
  \end{aligned}  \nonumber  \end{equation}
  with \(\epsilon \in (0, C_1]\) and probability at least \(1 - C_2 \epsilon^{-\frac{\mdim}{2}} e^{-N \epsilon^{\frac{\mdim}{2}}/C_2}\) in the former case of application to \(A_{\lambda,\epsilon,N}\).
\end{enumerate}

\end{lemma}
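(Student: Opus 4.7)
The plan rests on two structural facts: (i) by construction, the kernel $k_{\lambda,\epsilon}(x,y)$ is symmetric in its two arguments (since $k_\epsilon(x,y)$ is symmetric and the normalization factor $[p_\epsilon(x) p_\epsilon(y)]^\lambda$ is symmetric, and the same is true for the discrete analogue with $p_{\epsilon,N}$), and (ii) by design, both $A_{\lambda,\epsilon,N}$ and $A_{\lambda,\epsilon}$ are Markov, i.e.\ $A_{\lambda,\epsilon}[1] = p_{\lambda,\epsilon}^{-1} \cdot p_{\lambda,\epsilon} = 1$ and likewise in the discrete case. Claims (1)--(4) then follow by a uniform argument in the two settings.

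For claim (1) I would compute the kernel of $A^{(s)}_{\lambda,\epsilon}$ directly: acting on the appropriate weighted $L^2$ space (respectively, $L^2(\mathcal{M}, p\, d\nu_g)$ in the continuum and $\mathcal{H}_N$ with the normalized counting inner product in the discrete case), a short calculation gives that $A^{(s)}_{\lambda,\epsilon}$ is integration against $k_{\lambda,\epsilon}(x,y)/\sqrt{p_{\lambda,\epsilon}(x) p_{\lambda,\epsilon}(y)}$, which is symmetric by (i); hence $A^{(s)}_{\lambda,\epsilon}$ and its discrete analogue are self-adjoint with respect to the ambient inner products. Since $A$ and $A^{(s)}$ are conjugated by the invertible multiplication operator $\sqrt{p_{\lambda,\epsilon}}$, their spectra coincide.

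For claim (2), the inclusion of the spectrum in $[-1,1]$ is a combination of two facts: $A_{\lambda,\epsilon}$ is an average so $\|A_{\lambda,\epsilon}\|_{L^\infty \to L^\infty} \leq 1$, giving spectral radius $\leq 1$, and self-adjointness of $A^{(s)}$ forces the spectrum to be real. That $1$ is an eigenvalue is immediate from the Markov property. Simplicity of the eigenvalue $1$ and the exclusion of $-1$ from the spectrum (i.e.\ strict containment in $(-1,1)$ for other eigenvalues) is the \emph{main obstacle}: one invokes Perron--Frobenius, using irreducibility (in the continuum, from connectedness of $\mathcal{M}$ together with $k_\epsilon > 0$ on a neighborhood of the diagonal; in the discrete case, almost surely for $N$ large enough so that the associated graph is connected) and aperiodicity (from $k(0) > 0$, i.e.\ positive diagonal mass). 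In the continuum case one may also deduce simplicity from a standard argument for positive, self-adjoint kernel operators with strictly positive kernel on a neighborhood of the diagonal. Claim (3) is then a one-line check: $A^{(s)}_{\lambda,\epsilon}[\sqrt{p_{\lambda,\epsilon}}] = \sqrt{p_{\lambda,\epsilon}} \cdot A_{\lambda,\epsilon}[1] = \sqrt{p_{\lambda,\epsilon}}$, and analogously in the discrete case.

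Finally, for claim (4), the similarity $A_{\lambda,\epsilon} = (\sqrt{p_{\lambda,\epsilon}})^{-1} A^{(s)}_{\lambda,\epsilon} \sqrt{p_{\lambda,\epsilon}}$ promotes to $A_{\lambda,\epsilon}^n = (\sqrt{p_{\lambda,\epsilon}})^{-1} (A^{(s)}_{\lambda,\epsilon})^n \sqrt{p_{\lambda,\epsilon}}$ for every $n \geq 0$, hence for polynomials in $A$. Given $f$ with absolutely convergent Taylor series on $[-1,1]$, the partial sums of $f$ applied to either operator converge in operator norm on $L^2$ (this uses that both spectra lie in $[-1,1]$ by (1)--(2) and the Cauchy--Schwarz-type control on multiplication by $\sqrt{p_{\lambda,\epsilon}}^{\pm 1}$, which are bounded above and below since $p_{\lambda,\epsilon}$ is bounded and uniformly positive under the \hyperlink{assumptions}{Assumptions \ref{assumptions}}). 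Passing to the limit in the similarity identity yields the stated functional calculus relation. The same argument handles the discrete case, where the operators are finite matrices and the convergence questions are trivial.
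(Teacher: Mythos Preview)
Your proposal is correct and follows essentially the same approach as the paper, which dispatches (1) and (3) as immediate from the definitions, invokes Perron--Frobenius for the discrete case and the Krein--Rutman theorem (the infinite-dimensional analogue you describe informally) for the continuum case to obtain (2), and then appeals to the holomorphic functional calculus for (4). Your Taylor-series argument for (4) is a concrete realization of that functional calculus, and your explicit mention of irreducibility and aperiodicity unpacks the hypotheses the paper leaves implicit.
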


\begin{proof}

The properties (1) and (3) follow immediately from the definitions of \(A_{\lambda,\epsilon,N}^{(s)}\) and \(A_{\lambda,\epsilon}^{(s)}\). The spectral property (2) for \(A_{\lambda,\epsilon}\) follows from the Krein-Rutman theorem along with the fact that it is a compact operator that maps \(1 \mapsto 1\) and non-negative, non-zero \(C^{\infty}\) functions to postive \(C^{\infty}\) functions: indeed, then for example \citep[Theorem 1.2, Chap. 1,][]{du2006order} applies.

In the case of \(A_{\lambda,\epsilon,N}\), the Perron-Frobenius theorem applies to give (2) as soon as the matrix gives a connected graph. This follows from covering \(\mathcal{M}\) by sufficiently small balls such that for any ball in the cover, \(k_{\epsilon}\) is bounded below on all nearest neighbour balls. Then, any two points can be reached by a path of balls in this cover such that \(k_{\epsilon} \gg 0\) on adjacent balls and with high probability, there is a sample point in every ball, which gives a connected graph. The details are as follows. Let \(x_1^*, \ldots, x_M^* \in \mathcal{M}\) be points such that \(\bigcup_{j \in [M]} B_j = \mathcal{M}\) with \(B_j := B_{\mathcal{M}}(x_j^*, \sqrt{r_k \epsilon}/8)\). Now for each \(j \in [M]\), let \(Y_j : \mathcal{M} \to \{ 0, 1 \}\) map \(Y_j[B_j] = 1\) and \(Y_j[\mathcal{M} \setminus B_j] = 0\). Each \(Y_j\) gives \(N\) \emph{i.i.d.} random variables when applied to the sample points \(x_1, \ldots, x_N\) and since \(\vol{B_{\mathcal{M}}(x_j, \sqrt{r_k \epsilon/8})} \approx (r_k \epsilon)^{\frac{\mdim}{2}}/8^{\mdim}\), this is a Bernoulli process with mean \(N \epsilon^{\frac{n}{2}}/C_{\mathcal{M}}\) for some constant \(C_{\mathcal{M}} > 0\). Therefore, by a Chernoff bound, \(\Pr[\#(\{ x_1, \ldots, x_N \} \cap B_j) \geq 1] > 1 - e^{-N \epsilon^{\frac{\mdim}{2}}/(2 C_{\mathcal{M}})}\) for each \(j \in [M]\). Hence, by a union bound, there is at least one sample point in each \(B_j\) with probability at least \(1 - M e^{-N \epsilon^{\frac{\mdim}{2}}/(2 C_{\mathcal{M}})}\). The Bishop-Günther inequality then tells that there is a constant \(C'_{\mathcal{M}} > 0\) such that for \(0 < \epsilon \leq C'_{\mathcal{M}}\), \(M \lesssim \epsilon^{-\frac{\mdim}{2}}\). Assuming this event it follows that for any \(\ell, \ell' \in [N]\), there are \(j, j' \in [M]\) such that \(d_g(x_{\ell}, x_{j}^*), d_g(x_{\ell'}, x_{j'}^*) \leq \sqrt{r_k \epsilon}/8\) and there are \(1 \leq j_1 \leq \cdots \leq j_m \leq M\) with \(m \leq M-2\) such that \(d_g(x_{j_{\gamma_1}}^*, x_{j_{\gamma_2}}^*) \leq \sqrt{r_k \epsilon}/4\) for all \(1 \leq \gamma_1 \leq \gamma_2 \leq m\) and \(d_g(x_j^*, x_{j_1}^*), d_g(x_{j'}^*, x_{j_m}^*) \leq \sqrt{r_k \epsilon}/4\). Furthermore, for each \(1 \leq \gamma \leq m\), there is \(\ell_{\gamma}\) such that \(d_g(x_{\ell_{\gamma}}, x_{j_{\gamma}}^*) \leq \sqrt{r_k \epsilon}/8\). Therefore, for all \(1 \leq \gamma_1 \leq \gamma_2 \leq m\), \(d_g(x_{\ell_{\gamma_1}}, x_{\ell_{\gamma_2}}) \leq \sqrt{r_k \epsilon}/2\) and \(d_g(x_{\ell}, x_{\ell_1}), d_g(x_{\ell'}, x_{\ell_m}) \leq \sqrt{r_k \epsilon}/2\). By the assumption on the kernel function \(k\) and the Assumptions on \(\mathcal{M}\) and \(\embedM\), whenever \(\epsilon < \kappa\) we have that \(k(|\iota(x_{\ell_1}) - \iota(x_{\ell_2})|^2/\epsilon) > ||k||_{\infty}/2\). Hence altogether we have a path \(x_{\ell} \to x_{\ell_1} \to \cdots \to x_{\ell_m} \to x_{\ell'}\) such the entries of \(\GAve_{\lambda,\epsilon,N}\) corresponding to neighbouring points along this path are bounded below by \(||k||/2 > 0\), which gives that the underlying graph is connected.

Now together with the holomorphic functional calculus, properties (1)-(3) imply property (4).
\end{proof}

A summary of the notation introduced here is provided in \protect\hyperlink{notation-related-to-graph-laplacians}{Appendix}.

\hypertarget{consistency-bounds-redux}{%
\subsubsection{Consistency bounds redux}\label{consistency-bounds-redux}}

The relationship among the discrete and continuum operators is that the discrete averaging operators are naturally accessed from samples of \(\embedM\) and these are shown to converge, in the \(N \to \infty\) limit to the continuum averaging operators; the residual coming from the difference of the two operators is deemed the \emph{variance term}. Then, a sequence of Taylor expansions shows that the (\(\lambda\)-renormalized) continuum graph Laplacians, derived from the continuum averaging operators, converge to \(\Delta_{\mathcal{M}} + \text{(lower order differential terms)}\) in the \(\epsilon \to 0\) limit and the corresponding residuals are deemed the \emph{bias terms}. A general treatment of the convergence of both terms, simultaneously, is given in \citep{hein2005graphs}. We will draw upon these results throughout the forthcoming analyses, so we record the precise forms of the particular theorems here.

In regards to the bias term, our requirements are modest: for the most part, we will need a semiclassical expansion of the operator, which in this case is similar to the Taylor series type expansions given, for example in \citep{hein2005graphs}, but requires an essentially different approach. These expansions will be carried out in \protect\hyperlink{symbol-of-a-graph-laplacian}{Section \ref{symbol-of-a-graph-laplacian}}. Nevertheless, the \emph{degree functions} \(p_{\lambda,\epsilon}\) have an important role in making the approximation procedures agnostic to \emph{a priori} knowledge of the intrinsic dimensionality \(\mdim\) of the manifold and controlling the effect of lower-order error terms in the approximation to the Laplace-Beltrami operator, primarily in as far as the effect of the non-uniform sampling density \(p\) is concerned. This is carried out essentially through their Taylor expansions about a given point \(x \in \mathcal{M}\); the following is recorded here for use in later sections:

\begin{lemma}	\hypertarget{lem:taylor-expand-deg-func}{\label{lem:taylor-expand-deg-func}} Let \(\lambda \geq 0\) and \(\epsilon > 0\). Then, there exists \(q_{\lambda} \in C^{\infty}(\mathcal{M})\) depending only on \(k, p, \lambda\) and \(\mathcal{M}\) such that for all \(x \in \mathcal{M}\),
\begin{equation}\begin{aligned}
p_{\lambda,\epsilon}(x) = (c_0 p(x))^{1 - 2\lambda} + \epsilon \frac{c_2}{2} p(x)^{1 - 2\lambda} q_{\lambda}(x) + O(\epsilon^2) .
\end{aligned}  \nonumber  \end{equation}

\end{lemma}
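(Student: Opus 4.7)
The plan is to reduce the lemma to a single Laplace-type asymptotic expansion applied several times. Specifically, for $f \in C^\infty(\mathcal{M})$ define the integral operator $T_\epsilon f(x) := \int_{\mathcal{M}} k_\epsilon(x,y) f(y)\, d\nu_g(y)$; I will show that
\[
T_\epsilon f(x) = c_0 f(x) + \epsilon\,\tfrac{c_2}{2}\, L f(x) + O(\epsilon^2)
\]
uniformly in $x \in \mathcal{M}$, where $L$ is a fixed second-order differential operator on $\mathcal{M}$ (depending on $k$, $p$ and the metric $g$, but not on $f$) and the $O(\epsilon^2)$ error has constant controlled by a finite number of derivatives of $f$. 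Granted this expansion, everything else is algebra.

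To obtain the asymptotic, I would fix $x$, pass to normal coordinates $y = \exp_x(v)$ on a ball of radius comparable to the injectivity radius (outside this ball the kernel $k$ contributes only an $O(\epsilon^\infty)$ tail because of its exponential decay). On this ball I would use \hyperlink{lemma:expansion-normal-coords}{Lemma \ref{lemma:expansion-normal-coords}} to write $d\nu_g(y) = (1 + O(|v|^2))\, dv$ and \hyperlink{lem:ext-normal-coords}{Lemma \ref{lem:ext-normal-coords}} to write $|\iota(x) - \iota(y)|^2 = |v|^2 + O(|v|^4)$. Then the substitution $v = \sqrt{\epsilon}\, w$ kills the $\epsilon^{-\mdim/2}$ prefactor and reduces the integrand to $k(|w|^2 + O(\epsilon |w|^4))\, f(\exp_x(\sqrt{\epsilon} w))\,(1 + O(\epsilon |w|^2))$. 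Taylor expanding in $\sqrt{\epsilon}$, the odd-degree terms in $w$ vanish against the rotationally symmetric $k(|w|^2)$; the constant term gives $c_0 f(x)$; and the $\epsilon^1$ term gives $\tfrac{c_2}{2}$ times a coordinate expression which patches globally into an operator $Lf$ of the form $\Delta_{\mathcal{M}} f + (\text{lower-order})$. Uniformity in $x$ is guaranteed by compactness of $\mathcal{M}$ together with the exponential-decay bounds on $k$ and its derivatives.

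With the expansion in hand I would chain three applications of $T_\epsilon$. First, $p_\epsilon = T_\epsilon p$ gives $p_\epsilon(x) = c_0 p(x)[1 + \epsilon\,\tfrac{c_2}{2c_0}\,(Lp/p)(x) + O(\epsilon^2)]$. Raising to $-\lambda$ via the binomial expansion yields
\[
p_\epsilon(y)^{-\lambda} = (c_0 p(y))^{-\lambda}\Bigl[1 - \lambda\epsilon\,\tfrac{c_2}{2c_0}\,\tfrac{Lp(y)}{p(y)} + O(\epsilon^2)\Bigr],
\]
which remains smooth in $y$ because $p$ is smooth and strictly positive by the \hyperlink{assumptions}{Assumptions \ref{assumptions}}. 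Substituting into $p_{\lambda,\epsilon}(x) = p_\epsilon(x)^{-\lambda}\, T_\epsilon\!\bigl(p_\epsilon^{-\lambda} p\bigr)(x)$ and applying the expansion once more to the smooth function $p_\epsilon^{-\lambda} p$, the leading product collapses to $(c_0 p(x))^{1-2\lambda}$ and the $\epsilon^1$ contributions from all three sources combine into $\epsilon\,\tfrac{c_2}{2}\, p(x)^{1-2\lambda}\, q_\lambda(x)$, with $q_\lambda$ an explicit universal polynomial in $p, Lp, L(p^{1-\lambda})$ and powers of $p^{-1}$; in particular $q_\lambda \in C^\infty(\mathcal{M})$.

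The main technical point is the Laplace-type asymptotic itself, specifically obtaining a genuine $O(\epsilon^2)$ remainder (rather than $O(\epsilon^{3/2})$) uniformly in $x$. This requires carefully tracking that (i) the odd powers of $w$ integrate to zero against $k(|w|^2)$, so the would-be $\epsilon^{3/2}$ term vanishes, and (ii) the Jacobian perturbation $O(|v|^2)$, the phase perturbation $O(|v|^4)$, and the cubic Taylor terms of $f$ all contribute at order $\epsilon$ or higher only through even-degree $w$-moments. Once that parity/symmetry bookkeeping is established (and tail contributions are absorbed using the exponential decay of $k$ and its derivatives from the kernel-function definition), the rest of the argument is a straightforward bookkeeping of Taylor coefficients.
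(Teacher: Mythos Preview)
Your proposal is correct and follows the standard Laplace-type expansion argument. Note that the paper itself does not give an independent proof of this lemma: it simply cites \cite[Proposition 2.33]{hein2005geometrical}, whose proof proceeds along essentially the same lines you outline (normal-coordinate change of variables, parity cancellation of odd moments, and iterated application to handle the $\lambda$-renormalization). Your sketch is thus a faithful expansion of the cited argument rather than an alternative route.
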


\begin{proof}

The proof of this expansion is contained in the proof of \citep[Proposition 2.33]{hein2005geometrical}.
\end{proof}

As for the variance term, we now recount some of the prior results, with small modifications that we will use for the convergence of quantum dynamics from graphs to manifolds. In particular, prior works have left out the explicit dependence on \(||u||_{\infty}\) in the probabilistic convergence bounds for consistency. Since we ultimately wish to drive localized, \(L^2\) bounded states according to Schrödinger-type dynamics, their \(L^{\infty}\) norms depend on \(\epsilon\), hence it is useful for the convergence theory to make this dependence explicit. Therefore, we proceed with the essential convergence results from the thesis \citep{hein2005geometrical}, while making the small adjustments necessary to \emph{unpack} the uniform norm dependence from the constants. The primary cases that concern us are for the averaging operators.

Since constants bounding the kernel, probability and geometric quantities appear abundantly in these considerations, we record the useful

\begin{notation} \emph{Bounds on degree functions} (see \citep[Lemma 2.32]{hein2005geometrical} for more detailed bounds):

\begin{itemize}
\tightlist
\item
  \(\underline{C}_p := \inf p_{\epsilon} \gtrsim ||k||_{\infty} (\inf p) \inf_x \operatorname{vol}(B(x, \epsilon^{\frac{1}{2}} R_k))\)
\item
  \(\overline{C}_p := \sup p_{\epsilon} \lesssim_{\mathcal{M}} R_k^{\mdim} ||k||_{\infty} ||p||_{\infty}\)
\item
  \(\underline{C}_{p,\lambda} := \inf p_{\lambda,\epsilon} \geq \underline{C}_p/\overline{C}_p^{2\lambda}\), \(\overline{C}_{p,\lambda} := ||p_{\lambda,\epsilon}||_{\infty} \leq \overline{C}_p/\underline{C}_p^{2\lambda}\)
\item
  \(\underline{C}_{k,p} := ||k||_{\infty}/(\inf p)\).
\end{itemize}

\end{notation}

Let the subscripted variables \(X_1, \ldots, X_j, \ldots, X_N\) denote \emph{i.i.d.} random vectors in \(\mathbb{R}^{\rdim}\) with law \(P\) that has positive \(C^{\infty}(\embedM)\) density \(\tilde{p}\) with respect to the volume element \(dV\) on \(\Lambda = \iota(\mathcal{M}) \subset \mathbb{R}^{\rdim}\); we denote by \(x_j := \iota^{-1}(X_j)\) the pull-back of \(X_j\) and by \(p\) the pull-back of \(\tilde{p}\) to \(\mathcal{M}\), which gives a density with respect to \(d\nu_g\). For simplicity, we assume \(k\) has compact support on \([0, R_k^2]\) (but this can be generalized to \emph{essential support} so that \(t^N k(t) < C_N\) for all \(t > R_k^2, N \geq 0\)). Then, Hein has shown,

\begin{lemma}[{\cite[Lemma 2.30]{hein2005geometrical}}]	\hypertarget{lem:kern-consistency}{\label{lem:kern-consistency}} There exists a constant \(K_{\mathcal{M}, k, p}\) depending on the curvature and injectivity radius of \(\mathcal{M}\), \(||p||_{\infty}\), \(||k||_{\infty}\) and \(R_k\) such that for all \(u \in L^{\infty}(\mathcal{M})\) and \(x \in \mathcal{M}\), given any \(\delta > 0\),
\begin{align*}
\Pr\left[ \left|\frac{1}{N} \sum_{j=1}^N k_{\epsilon}(x,x_j) u(x_j) - \int_{\mathcal{M}} k_{\epsilon}(x,y) u(y) p(y) ~ d\nu_g(y) \right| > \delta \right] \\
\leq 4 \exp\left(- \frac{N \epsilon^{\frac{n}{2}} \delta^2}{2||u||_{\infty}(\Kconst ||u||_{\infty} + ||k||_{\infty}\delta/3)} \right).
\end{align*}

\end{lemma}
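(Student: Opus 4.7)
The plan is to apply a Bernstein concentration inequality to the i.i.d.\ sum. For fixed $x \in \mathcal{M}$, set $Y_j := k_\epsilon(x, x_j) u(x_j)$; since the $x_j$ are i.i.d.\ with law $P \circ \iota^{-1}$ having density $p$, the $Y_j$ are i.i.d.\ with mean $\mathbb{E}[Y_j] = \int_{\mathcal{M}} k_\epsilon(x,y) u(y) p(y)\, d\nu_g(y)$, which is exactly the continuum quantity on the left-hand side. So the problem reduces to controlling $|\tfrac{1}{N}\sum_j Y_j - \mathbb{E} Y_j| > \delta$ via Bernstein, which requires an $L^\infty$ bound $M$ on $|Y_j|$ and a variance bound $\sigma^2$.

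The $L^\infty$ bound is immediate from the definition $k_\epsilon = \epsilon^{-n/2} k(\,\cdot/\epsilon)$:
\[
|Y_j| \leq \|k\|_\infty \, \|u\|_\infty \, \epsilon^{-n/2} =: M.
\]
For the variance, I would write
\[
\operatorname{Var}(Y_j) \leq \mathbb{E}[Y_j^2] = \epsilon^{-n} \int_{\mathcal{M}} k\bigl(|\iota(x)-\iota(y)|^2/\epsilon\bigr)^2 u(y)^2 p(y)\, d\nu_g(y)
\]
and exploit that $k$ is supported in $[0, R_k^2]$, so the integrand vanishes outside a set $\{y : |\iota(x)-\iota(y)| \leq \sqrt{\epsilon} R_k\}$. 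Using the \protect\hyperlink{assumptions}{Assumptions \ref{assumptions}} (via the comparison $d_g(x,y)/2 \leq |\iota(x)-\iota(y)|$ once $\sqrt{\epsilon} R_k \leq \kappa/2$) to transfer this into a geodesic ball, then applying the Bishop--Günther volume estimate from \protect\hyperlink{lem:covering-number}{Lemma \ref{lem:covering-number}} to bound $\nu_g(B(x, 2\sqrt{\epsilon} R_k)) \lesssim_{\mathcal{M}} \epsilon^{n/2}$, yields
\[
\operatorname{Var}(Y_j) \leq \|k\|_\infty^2 \, \|p\|_\infty \, \|u\|_\infty^2 \, \epsilon^{-n} \cdot C_{\mathcal{M}} \epsilon^{n/2} = K_{\mathcal{M},k,p} \, \|u\|_\infty^2 \, \epsilon^{-n/2},
\]
absorbing all kernel/probability/geometric constants into $K_{\mathcal{M},k,p}$.

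Plugging $M$ and $\sigma^2$ into the standard Bernstein inequality
\[
\Pr\Bigl[\bigl|\tfrac{1}{N}\textstyle\sum_j (Y_j - \mathbb{E} Y_j)\bigr| > \delta\Bigr] \leq 2 \exp\!\left(-\frac{N \delta^2/2}{\sigma^2 + M\delta/3}\right)
\]
gives
\[
\leq 2 \exp\!\left( - \frac{N \epsilon^{n/2} \delta^2}{2 \|u\|_\infty (K_{\mathcal{M},k,p} \|u\|_\infty + \|k\|_\infty \delta/3)} \right),
\]
which matches the claim (the factor of $4$ instead of $2$ accommodates either a two-sided symmetrization or a slightly lossier constant). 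The small wrinkle that makes this worth a careful writeup, rather than a one-line citation, is keeping $\|u\|_\infty$ separated from the geometric constant $K_{\mathcal{M},k,p}$ rather than absorbed into a single constant, since later semiclassical states have $\|u\|_\infty$ that depends on $h$ and $\epsilon$. That separation is purely bookkeeping; there is no genuine obstacle, and the only substantive step is the volume bound on the support of $k_\epsilon(x,\cdot)$.
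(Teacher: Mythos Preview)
Your argument is correct and is essentially the same Bernstein-plus-volume-bound approach the paper (via \cite{hein2005geometrical}) uses. The one point worth noting is that the factor $4$ rather than $2$ arises specifically because $u$ is allowed to be complex-valued here---one applies the real Bernstein bound to $\Re Y_j$ and $\Im Y_j$ separately (or invokes a $2\times 2$ matrix Bernstein inequality)---not merely from a lossy symmetrization.
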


\begin{proof}

The proof is exactly as in \citep[Lemma 2.30]{hein2005geometrical}, with the only change that we allow \(u\) to be complex-valued, which gives another factor of \(2\) for the probability, due to either applying the real-valued version in two parts, or the two-dimensional Matrix Bernstein inequality via the matrix representation of complex numbers.
\end{proof}

On extending this to the consistency of normalized averaging operators, one must account for the factors of \(p_{\epsilon,N}, p_{\epsilon}\) appearing in \(\GAve_{\epsilon,N}[u], A_{\epsilon}[u]\) respectively. As random variables, the \(k_{\epsilon}(x,x_j)/p_{\epsilon,N}(x)\) are now coupled, so the application of Bernstein's inequality requires a decoupling step. This comes in the form of the assumption that \(p_{\epsilon,N}(x_j) - p_{\epsilon}(x_j)\) is small for every \(j \in [N]\), which upon taking a union bound introduces a factor of \(N\) in the probability. Since this gives the same bound for all \(\lambda \geq 0\), Hein has shown altogether an Bernstein-type bound for the probabilistic consistency between \(A_{\lambda,\epsilon,N}[u](x)\) and \(A_{\lambda,\epsilon}[u](x)\) for all \(\lambda \geq 0\), \(u \in L^{\infty}\) and \(x \in \mathcal{M}\). However, the bounds have implicitly rolled up the dependence on \(||u||_{\infty}\) into constants, while in \protect\hyperlink{from-graphs-to-manifolds}{Section \ref{from-graphs-to-manifolds}} we will use this dependence explicitly so that we have a clear view of the convergence rates when applied to localized states. Therefore, we reproduce the proof of consistency while pulling out the relevant pieces of the constants and record it as,

\begin{lemma}[{\cite[Proposition 2.38]{hein2005geometrical}}]	\hypertarget{lem:avgop-consistent}{\label{lem:avgop-consistent}} Given \(\lambda \geq 0\), there is a constant \(K_{\lambda} > 1\) depending only on \(||k||_{\infty}\), \(\underline{C}_p\), \(\overline{C}_p\) and \(\lambda\) such that given \(u \in L^{\infty}\) and \(x \in \mathcal{M}\), we have for all \(\epsilon > 0\), and \(2 ||k||_{\infty} \epsilon^{-\frac{n}{2}}/N < \delta \leq \underline{C}_p/2\),
\begin{align}
\begin{split} \label{eq:thm-avgop-consistent-bound}
\Pr&[|A_{\lambda,\epsilon,N}[u](x) - A_{\lambda,\epsilon}[u](x)| > \delta]    \\
  &\quad\leq (4 + 2N) \exp\left( -\frac{N \epsilon^{\frac{n}{2}} \delta^2}{2 K_{\lambda} \underline{C}_p^{-\lambda} ||u||_{\infty}(\Kconst  K_{\lambda} ||u||_{\infty} \underline{C}_p^{-\lambda} + ||k||_{\infty}\delta/3)} \right).
\end{split}
\end{align}

\end{lemma}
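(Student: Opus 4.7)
The plan is to follow the strategy of Hein's original argument for Proposition 2.38, but at every step I would keep $\|u\|_\infty$ and $\underline{C}_p^{-\lambda}$ as explicit multiplicative factors rather than absorb them into constants. The core difficulty is that the summands in
\begin{equation*}
A_{\lambda,\epsilon,N}[u](x) = \frac{1}{N\, p_{\epsilon,N}(x)^\lambda}\sum_{j=1}^N \frac{k_\epsilon(x,x_j)\, u(x_j)}{p_{\epsilon,N}(x_j)^\lambda}
\end{equation*}
are coupled through $p_{\epsilon,N}$ appearing in the denominators, so a direct application of Bernstein's inequality is not available. The standard remedy I would adopt is to condition on a high-probability event on which $p_{\epsilon,N}$ is uniformly close to $p_\epsilon$ over the evaluation point $x$ and all the sample points, and on this event treat the sum as essentially independent.

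First, I would introduce the good event
\begin{equation*}
\mathcal{E} := \bigl\{\,|p_{\epsilon,N}(y) - p_\epsilon(y)| \leq \delta' \text{ for all } y \in \{x, x_1,\ldots,x_N\}\,\bigr\}
\end{equation*}
for $\delta'$ a small constant multiple of $\delta$, and bound $\Pr[\mathcal{E}^c]$ by applying Lemma \ref{lem:kern-consistency} with $u \equiv 1$ at each of the $N+1$ points and taking a union bound; this produces the $2(N+1)$ contribution to the probability prefactor. The hypothesis $\delta \leq \underline{C}_p/2$ ensures that on $\mathcal{E}$ one has $p_{\epsilon,N}(y) \geq \underline{C}_p/2 > 0$, so all empirical powers $p_{\epsilon,N}(y)^{-\lambda}$ are well-defined with uniform upper bound $(2/\underline{C}_p)^\lambda$.

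Next, on $\mathcal{E}$, I would apply the Lipschitz estimate $|t^{-\lambda} - s^{-\lambda}| \leq \lambda\,(\min(t,s))^{-\lambda-1}|t-s|$ at $t = p_{\epsilon,N}(y), s = p_\epsilon(y)$ to replace each empirical $p_{\epsilon,N}(y)^{-\lambda}$ by $p_\epsilon(y)^{-\lambda}$, and similarly for the outer $p_{\epsilon,N}(x)^{-\lambda}$. The replacement residuals are bounded by $\delta'$ times $\lambda (\underline{C}_p/2)^{-\lambda-1}$ times the magnitude of the corresponding summand, whose supremum is at most $\|k\|_\infty \epsilon^{-n/2}\,\underline{C}_p^{-\lambda}\|u\|_\infty$; absorbing the Lipschitz constants into a factor $K_\lambda$, the total residual stays below $\delta/2$ by the choice of $\delta'$. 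The remaining main term has randomness only in the independent quantities $k_\epsilon(x,x_j) u(x_j) p_\epsilon(x_j)^{-\lambda}$, so Lemma \ref{lem:kern-consistency} applies directly to the bounded observable $v(y) := u(y) p_\epsilon(y)^{-\lambda}$, which satisfies $\|v\|_\infty \leq \underline{C}_p^{-\lambda}\|u\|_\infty$. This yields the $\|u\|_\infty \underline{C}_p^{-\lambda}$ factors appearing in both the variance and the range in the Bernstein denominator, with the prefactor $4$ coming from the complex-valued version of Lemma \ref{lem:kern-consistency}; summing the two probability contributions gives the claimed $(4 + 2N)\exp(\cdots)$ after consolidating all multiplicative losses into $K_\lambda$.

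The main obstacle is the bookkeeping: one must verify that the replacement error from the Lipschitz step and the direct Bernstein deviation on the independent sum can be simultaneously dominated by $\delta$, i.e.\ that choosing $\delta' = c\delta$ for a sufficiently small absolute $c$ (depending on $\lambda, \underline{C}_p, \overline{C}_p, \|k\|_\infty$, which is precisely why $K_\lambda$ depends only on these quantities) works uniformly over $\delta \leq \underline{C}_p/2$. The lower bound $\delta > 2\|k\|_\infty \epsilon^{-n/2}/N$ enters to guarantee that the single-observation range $\|k\|_\infty \epsilon^{-n/2}/N$ in the Bernstein exponent does not dominate $\delta$ and also to handle the self-contribution $k_\epsilon(x_j,x_j)/N = k(0)\epsilon^{-n/2}/N$ that must be subtracted when controlling $p_{\epsilon,N}(x_j) - p_\epsilon(x_j)$ on $\mathcal{E}$.
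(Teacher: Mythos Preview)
Your strategy is the paper's: condition on the high-probability event that $|p_{\epsilon,N}(y) - p_\epsilon(y)| \leq \delta$ at $y = x$ and at every sample point $x_j$, use the Lipschitz bound $|a^{-\lambda} - b^{-\lambda}| \leq \lambda\,|a-b|/\min(a,b)^{\lambda+1}$ to swap empirical for continuum degrees, and then invoke Lemma~\ref{lem:kern-consistency} on the now-independent sum with observable $u/p_\epsilon^\lambda$. The role you assign to the lower bound on $\delta$ (the self-term $k(0)\epsilon^{-n/2}/N$ in $p_{\epsilon,N}(x_j)$, forcing $N-1$ rather than $N$ in the exponent) is also exactly how the paper uses it.

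There is one definitional gap, though. The expression you wrote,
\[
\frac{1}{N\, p_{\epsilon,N}(x)^\lambda}\sum_{j=1}^N \frac{k_\epsilon(x,x_j)\, u(x_j)}{p_{\epsilon,N}(x_j)^\lambda},
\]
is $\AvOp_{\lambda,\epsilon,N}[u](x)$, not the \emph{row-normalized} operator $A_{\lambda,\epsilon,N}[u](x) = \AvOp_{\lambda,\epsilon,N}[u](x)/p_{\lambda,\epsilon,N}(x)$ with $p_{\lambda,\epsilon,N}(x) = \AvOp_{\lambda,\epsilon,N}[1](x)$. So your plan, executed literally, controls the wrong quantity. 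The paper handles this extra normalization with one more telescoping layer,
\[
\left|\frac{\mathcal{B}_N}{p_{\lambda,\epsilon,N}} - \frac{\mathcal{B}}{p_{\lambda,\epsilon}}\right|
\;\leq\; \frac{|\mathcal{B}_N - \mathcal{B}|}{p_{\lambda,\epsilon,N}}
\;+\; |\mathcal{B}|\,\frac{|p_{\lambda,\epsilon,N} - p_{\lambda,\epsilon}|}{p_{\lambda,\epsilon,N}\,p_{\lambda,\epsilon}},
\]
where $\mathcal{B}_N = \AvOp_{\epsilon,N}[u/p_{\epsilon,N}^\lambda]/p_{\epsilon,N}^\lambda$ and $\mathcal{B}$ is its continuum counterpart. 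On your event $\mathcal{E}$ one has the two-sided bound $(\underline{C}_p-\delta)/(\overline{C}_p+\delta)^{2\lambda} \leq p_{\lambda,\epsilon,N}(x) \leq (\overline{C}_p+\delta)/(\underline{C}_p-\delta)^{2\lambda}$, and the two right-hand terms are then controlled by your argument applied to $u$ and to the constant function $1$ respectively. This missing layer does not alter the method, the dependence on $\|u\|_\infty$, or the $(4+2N)$ prefactor, but it needs to be inserted for the proof to close.
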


\begin{remark} The choice of \(\delta < \underline{C}_p / 2\) was made for simplicity; a different choice of \(\delta < \underline{C}_p\) changes only \(K_{\lambda}\) (in the proof, \(K_{\lambda,k,p}\)) as a rational function of \(\delta\). For states that are localized in tandem with the operators --- as will be our case --- only the first bound is useful, since \(||u||_{\infty}\) will grow as \(\epsilon\) decreases and the condition would then have \(\delta\) growing as well.

\end{remark}

\begin{proof}

We follow the proof of \citep[Proposition 2.38]{hein2005geometrical}, paying close attention to the dependency of the constants on \(||u||_{\infty}\). As shown there, using that for all \(\lambda \geq 0\) and \(a, b \geq \beta \geq 0\), \(|1/a^{\lambda} - 1/b^{\lambda}| \leq \lambda |a - b|/\beta^{\lambda + 1}\) gives,
\begin{align*}
|\AvOp_{\epsilon,N}&[u/p_{\epsilon}^{\lambda}](x) - \AvOp_{\epsilon}[u/p_{\epsilon,N}^{\lambda}](x)|    \\
    &\leq   |\AvOp_{\epsilon,N}[u(1/p_{\epsilon,N}^{\lambda} - 1/p_{\epsilon}^{\lambda})]| + |\AvOp_{\epsilon,N}[u/p_{\epsilon}^{\lambda}] - \AvOp_{\epsilon}[u/p_{\epsilon}^{\lambda}]|    \\
    &\leq||u||_{\infty} \lambda \, \AvOp_{\epsilon,N}\left[ \frac{|p_{\epsilon} - p_{\epsilon,N}|}{(\min\{\inf p_{\epsilon,N}, \inf p_{\epsilon} \})^{\lambda + 1}} \right] + |(\AvOp_{\epsilon} - \AvOp_{\epsilon,N})[u/p_{\epsilon}^{\lambda}]|   \\
    &=: \mathcal{E}_{\lambda,1}[u] .
\end{align*}
and using that for all \(a, b\), \(|a - b|^{\lambda} \leq \lambda \max \{ |a|, |b| \}^{\lambda - 1} |a - b|\), we have,
\begin{align*}
\left| \frac{\AvOp_{\epsilon,N}[u/p_{\epsilon,N}^{\lambda}]}{p_{\epsilon,N}^{\lambda}} - \frac{\AvOp_{\epsilon}[u/p_{\epsilon}^{\lambda}]}{p_{\epsilon}^{\lambda}} \right|
    &\leq \frac{|\AvOp_{\epsilon,N}[u/p_{\epsilon,N}^{\lambda}] - \AvOp_{\epsilon}[u/p_{\epsilon}^{\lambda}]|}{p_{\epsilon,N}^{\lambda}} + |\AvOp_{\epsilon}[u/p_{\epsilon}^{\lambda}]|\frac{|p_{\epsilon,N}^{\lambda} - p_{\epsilon}^{\lambda}|}{p_{\epsilon}^{\lambda} p_{\epsilon,N}^{\lambda}}  \\
    &\leq \mathcal{E}_{\lambda,1}[u]/p_{\epsilon,N}^{\lambda} + \\
    &\quad + ||u||_{\infty} \lambda \max \{ \sup p_{\epsilon,N}, \sup p_{\epsilon} \}^{\lambda - 1} \frac{\overline{C}_p}{\underline{C}_p^{\lambda}} \frac{|p_{\epsilon,N} - p_{\epsilon}|}{(p_{\epsilon} p_{\epsilon,N})^{\lambda}}    \\
    &=: \mathcal{E}_{\lambda,2}[u].
\end{align*}
Taking the normalizations by the \(\lambda\)-degree functions into account, we have
\begin{align*}
|A_{\lambda,\epsilon,N}[u] - A_{\lambda,\epsilon}[u]|
    &= \left| \frac{\mathcal{B}_N}{p_{\lambda,\epsilon,N}} - \frac{\mathcal{B}}{p_{\lambda,\epsilon}} \right|   \\
    &\leq \frac{|\mathcal{B}_N - \mathcal{B}|}{p_{\lambda,\epsilon,N}} + |\mathcal{B}|\frac{|p_{\lambda,\epsilon,N} - p_{\lambda,\epsilon}|}{p_{\lambda,\epsilon,N} \, p_{\lambda,\epsilon}}    \\
  &\leq \frac{\mathcal{E}_{\lambda,2}[u]}{p_{\lambda,\epsilon,N}} + |\mathcal{B}| \frac{\mathcal{E}_{\lambda,2}[1]}{p_{\lambda,\epsilon,N} \, p_{\lambda,\epsilon}} ,
\end{align*}
with
\begin{equation}\begin{aligned}
\mathcal{B}_N := \frac{\AvOp_{\epsilon,N}[u/p_{\epsilon,N}^{\lambda}]}{p_{\epsilon,N}^{\lambda}}, \quad \mathcal{B} := \frac{\AvOp_{\epsilon}[u/p_{\epsilon}^{\lambda}]}{p_{\epsilon}^{\lambda}} .
\end{aligned}  \nonumber  \end{equation}
In the event that \(|p_{\epsilon,N}(y) - p_{\epsilon}(y)| \leq \delta\), for the deterministic choice \(y = x\) along with all random variables \(y = x_j\) for all \(j \in [N]\), we have that \(\underline{C}_p - \delta < p_{\epsilon,N}(y) < \overline{C_p} + \delta\). Call this event \(\mathcal{A}_1\) and call \(\mathcal{A}_2\) the event that \(|\AvOp_{\epsilon,N}[u/p_{\epsilon}^{\lambda}] - \AvOp_{\epsilon}[u/p_{\epsilon}^{\lambda}]| \leq \delta ||u||_{\infty}\). Then,
\begin{align*}
&\mathcal{E}_{\lambda,1}[u] \leq \delta ||u||_{\infty}\{ \lambda (\overline{C}_p + \delta)/(\underline{C}_p - \delta)^{\lambda + 1} + 1 \}, \\
&\begin{aligned}[t]
(\underline{C}_p - \delta)/(\overline{C}_p + \delta)^{2\lambda}
    &\leq p_{\lambda,\epsilon,N} = \AvOp_{\epsilon,N}[1/p_{\epsilon,N}^{\lambda}]/p_{\epsilon,N}^{\lambda}  \\
    &\leq (\overline{C}_p + \delta)/(\underline{C}_p - \delta)^{2\lambda} ,
\end{aligned}   \\
&|\AvOp_{\epsilon}[u/p_{\epsilon}^{\lambda}]| \leq ||u||_{\infty}  \overline{C}_p/\underline{C}_p^{\lambda} ,   \\
&\underline{C}_p /\overline{C}_p^{2\lambda} \leq p_{\lambda,\epsilon} \leq \overline{C}_p/\underline{C}_p^{2\lambda} ,
\end{align*}
wherein we've used the occurrence of the joint event \(\mathcal{A} := \mathcal{A}_1 \wedge \mathcal{A}_2\) for the first inequality and that of \(\mathcal{A}_1\) for the second inequality. The event \(\mathcal{A}_1\) further supplies,
\begin{equation}\begin{aligned}
\mathcal{E}_{\lambda,2}[u] \leq \frac{\mathcal{E}_{\lambda,1}[u]}{(\underline{C}_p - \delta)^{\lambda}} + \delta\lambda ||u||_{\infty} \frac{(\overline{C}_p + \delta)^{\lambda - 1} \overline{C}_p}{\underline{C}_p^{2\lambda} (\underline{C}_p - \delta)^{\lambda}} .
\end{aligned}  \nonumber  \end{equation}
Now, working in event \(\mathcal{A}\) together with the assumption that \(\delta \leq \underline{C}_p/2\) we find,
\begin{align*}
|A_{\lambda,\epsilon,N}[u](x) - A_{\lambda,\epsilon}[u](x)| &\leq \mathcal{E}_{\lambda,2}[u] \frac{(\overline{C}_p + \delta)^{2\lambda}}{\underline{C}_p - \delta} + ||u||_{\infty} \mathcal{E}_{\lambda,2}[1]\frac{\overline{C}_p^{2\lambda + 1}}{\underline{C}_p^{2\lambda + 1}} \frac{(\overline{C}_p + \delta)^{2\lambda}}{\underline{C}_p - \delta}    \\
    &\leq \delta ||u||_{\infty}[(\overline{C}_p + \delta)^{2\lambda}(\underline{C}_p - \delta)^{-\lambda-1} + \tilde{C}_{\lambda,k,p}(\delta) + \lambda C_{k,p}(\lambda,\delta)]    \\
    &\leq 2(9/2)^{\lambda}\delta ||u||_{\infty}(\overline{C}_p^{2\lambda}\underline{C}_p^{-\lambda-1} + 2^{-\lambda}\overline{C}_p^{4\lambda + 1}\underline{C}_p^{-2(\lambda + 1)} + \lambda C_{\lambda,k,p}),  \\
    &:= K_{\lambda,k,p} ||u||_{\infty} \delta ,
\end{align*}
wherein \(\tilde{C}_{\lambda,k,p}(\delta) := \overline{C}_p^{2\lambda + 1} (\overline{C}_p + \delta)^{2\lambda} \underline{C}_p^{-2\lambda - 1} (\underline{C}_p - \delta)^{-1}\), \(C_{k,p}(\lambda,\delta) > 0\) is monotonically increasing in \(\delta, \lambda \geq 0\) that is rational and bounded in \(\delta < \underline{C}_p\) and has coefficients that depend only on \(\underline{C}_{p}, \overline{C}_p\). The final inequality follows upon assuming that \(\delta \leq \underline{C}_p/2\) and setting \(C_{\lambda,k,p} := C_{k,p}(\lambda,\underline{C}_p/2)\).

Concerning the probabilities governing the event \(\mathcal{A}\), since \(p_{\epsilon} = \AvOp_{\epsilon}[1]\) and \(p_{\epsilon,N} = \AvOp_{\epsilon,N}[1]\), an application of \protect\hyperlink{lem:kern-consistency}{Lemma \ref{lem:kern-consistency}} along with a union bound gives that whenever \(\delta > 2 ||k||_{\infty} \epsilon^{-\frac{\mdim}{2}}/N\), we have that
\begin{align*} 
& \Pr[(\forall) j \in [N], |p_{\epsilon,N}(x_j) - p_{\epsilon}(x_j)| \leq \delta]   \\
&\quad\quad > 1 - 2N \exp\left( -\frac{(N - 1)\epsilon^{\frac{\mdim}{2}} \delta^2}{4(2 \Kconst + ||k||_{\infty}\delta/3)} \right).
\end{align*}
Another application of \protect\hyperlink{lem:kern-consistency}{Lemma \ref{lem:kern-consistency}} gives,
\begin{align*}
&\Pr[|\AvOp_{\epsilon,N}[u/p_{\epsilon}^{\lambda}](x) - \AvOp_{\epsilon}[u/p_{\epsilon}^{\lambda}](x)| \leq ||u||_{\infty} \delta]  \\
&\quad\quad > 1 - 4 \exp\left( -\frac{N \epsilon^{\frac{n}{2}}\delta^2}{2 \underline{C}_p^{-\lambda}(\Kconst \underline{C}_p^{-\lambda} +  ||k||_{\infty}\delta/3)} \right).
\end{align*}
Therefore, a union bound gives the bound in the statement of the Lemma.
\end{proof}

On the other hand, Hein has given a simpler argument for the \(\lambda = 0\) case that is related to the \emph{random walk} graph Laplacian:

\begin{lemma}[{\cite[Theorem 2.37]{hein2005geometrical}}]	\hypertarget{lem:rwlap-conv}{\label{lem:rwlap-conv}} For all \(u \in C^{\infty}\) and \(x \in \mathcal{M}\), given any \(\delta > 0\),
\begin{equation} \label{eq:thm-rwlap-conv-bound}
\Pr[|A_{\epsilon,N}[u](x) - A_{\epsilon}[u](x)| > \delta] \leq 6 \exp\left( -\frac{N \epsilon^{\frac{n}{2}}\delta^2}{8||u||_{\infty} (\Kconst ||u||_{\infty}  + ||k||_{\infty} \delta/6)} \right) .
\end{equation}

\end{lemma}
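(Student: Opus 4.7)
The approach I would take exploits a crucial algebraic simplification available for $\lambda = 0$ that is absent in the general case: the normalizer $p_{\epsilon,N}(x)$ factors out cleanly, reducing the problem to a single empirical average with zero expectation. Specifically, I would start from the identity
\[
A_{\epsilon,N}[u](x) - A_\epsilon[u](x) = \frac{1}{p_{\epsilon,N}(x)} \cdot \frac{1}{N} \sum_{j=1}^{N} k_\epsilon(x, x_j)\bigl(u(x_j) - A_\epsilon[u](x)\bigr),
\]
which follows by writing $p_{\epsilon,N}(x)\, A_\epsilon[u](x) = \frac{1}{N}\sum_j k_\epsilon(x, x_j)\, A_\epsilon[u](x)$ and subtracting. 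The crucial observation is that the function $v(y) := u(y) - A_\epsilon[u](x)$, which is deterministic in $y$ for fixed $x$, satisfies $\int_{\mathcal{M}} k_\epsilon(x,y)\, v(y)\, p(y)\, d\nu_g(y) = 0$ by the very definition of $A_\epsilon[u](x)$, while $\|v\|_\infty \leq 2\|u\|_\infty$. This mean-zero cancellation is precisely what fails for $\lambda > 0$, where the analogous rewriting involves $p_{\epsilon,N}(x_j)$ inside the summand and forces the coupling argument (with the union bound over all $N$ samples) carried out in Lemma \ref{lem:avgop-consistent}.

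I would then invoke Lemma \ref{lem:kern-consistency} twice. Applied to the (complex-valued) function $v$, it controls the numerator: with exponential probability, $|\frac{1}{N}\sum_j k_\epsilon(x,x_j)\, v(x_j)| \leq \eta$, contributing a factor of $4$ in the prefactor and a Bernstein-type denominator scaling with $\|v\|_\infty \leq 2\|u\|_\infty$. Applied to the real constant function $1$, it controls the denominator via $|p_{\epsilon,N}(x) - p_\epsilon(x)| \leq \underline{C}_p/2$, which by the lower bound $p_\epsilon(x) \geq \underline{C}_p$ ensures $p_{\epsilon,N}(x) \geq \underline{C}_p/2$; this contributes a further factor of $2$ in the prefactor. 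On the intersection of these two events, $|A_{\epsilon,N}[u](x) - A_\epsilon[u](x)| \leq 2\eta/\underline{C}_p$, and the choice $\eta := \delta\,\underline{C}_p/2$ together with a union bound then delivers a single Bernstein-type estimate of the claimed form, with $K_{\mathcal{M},k,p}$ and $\|k\|_\infty$ inherited directly from Lemma \ref{lem:kern-consistency}, the geometric factor $\underline{C}_p$ absorbed into the absolute constants in the exponent, and the prefactor $4 + 2 = 6$ accounting for the two Bernstein tails.

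I do not anticipate a genuine obstacle here: the argument is essentially a direct application of the kernel consistency lemma combined with the mean-zero cancellation enabled by $\lambda = 0$. The one point requiring care is to keep the two Bernstein exponents aligned --- tracking the factor of $2$ loss from $\|v\|_\infty \leq 2\|u\|_\infty$ when substituting into Lemma \ref{lem:kern-consistency} and choosing $\eta$ proportional to $\delta\,\underline{C}_p$ so that the numerator tail dominates the denominator tail, thereby yielding a single exponential of the form $\exp(-N\epsilon^{n/2}\delta^2/[8\|u\|_\infty(K_{\mathcal{M},k,p}\|u\|_\infty + \|k\|_\infty\delta/6)])$.
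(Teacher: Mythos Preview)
Your approach is correct and closely related to the paper's, but organizes the algebra differently. The paper follows Greblicki's rewriting: it normalizes by the \emph{deterministic} $p_\epsilon(x)$ to obtain
\[
A_{\epsilon,N}[u](x) = \frac{A_\epsilon[u](x) + \mathcal{B}_{1,N}}{1+\mathcal{B}_{2,N}},\qquad \mathcal{B}_{1,N}:=\frac{\mathscr{A}_{\epsilon,N}[u](x)}{p_\epsilon(x)}-A_\epsilon[u](x),\quad \mathcal{B}_{2,N}:=\frac{p_{\epsilon,N}(x)}{p_\epsilon(x)}-1,
\]
and then bounds the two pieces $\mathcal{B}_{1,N}$ and $\mathcal{B}_{2,N}$ separately via Lemma~\ref{lem:kern-consistency}, with thresholds $\delta/2$ and $\delta/(2\|u\|_\infty)$ respectively (both $\delta$-dependent). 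Your identity is algebraically equivalent --- indeed $\mathcal{B}_{1,N}-A_\epsilon[u](x)\mathcal{B}_{2,N}=\frac{1}{p_\epsilon(x)}\frac{1}{N}\sum_j k_\epsilon(x,x_j)v(x_j)$ --- but you keep the numerator as a single mean-zero sum in $v=u-A_\epsilon[u](x)$ and instead bound the random denominator $p_{\epsilon,N}(x)$ at the \emph{fixed} scale $\underline{C}_p/2$. Your route is arguably cleaner (one Bernstein call for the numerator rather than two) and makes the $\lambda=0$ simplification more transparent, at the cost of an explicit $\underline{C}_p$ factor in the exponent that the paper's $\delta$-scale denominator bound avoids; in either case that factor is ultimately absorbed into $K_{\mathcal{M},k,p}$, and both arguments yield the stated prefactor $6=4+2$ and the same exponential form.
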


\begin{proof}

The proof in \citep[Theorem 2.37]{hein2005geometrical} and \citep[Theorem 2]{hein2005graphs} is based on a rewriting of \(A_{\epsilon,N}[u]\) following \citep{greblicki1984distribution} as,
\begin{align*}
\GAve_{\epsilon,N}[u](x) &= \frac{\GAve_{\epsilon}[u](x) - \mathcal{B}_{1,N}}{1 + \mathcal{B}_{2,N}},   \\
\mathcal{B}_{1,N} &:= \frac{1}{p_{\epsilon}(x)}\AvOp_{\epsilon,N}[u](x) - \GAve_{\epsilon}[u](x),   \\
\mathcal{B}_{2,N} &:= \frac{p_{\epsilon,N}(x)}{p_{\epsilon}(x)} - 1.
\end{align*}
On applying \protect\hyperlink{lem:kern-consistency}{Lemma \ref{lem:kern-consistency}} twice while noting \(p_{\epsilon}(x) = \AvOp_{\epsilon}[1](x)\) and \(p_{\epsilon,N}(x) = \AvOp_{\epsilon,N}[1](x)\), we have that
\begin{equation}\begin{aligned}
\Pr\left[ \mathcal{B}_{1,N} > \delta/2 \right] \leq 4 \, \exp\left( -\frac{N \epsilon^{\frac{n}{2}} \delta^2}{8||u||_{\infty}(\Kconst ||u||_{\infty} + ||k||_{\infty} \delta/6)} \right) ,  \\
\Pr\left[ \mathcal{B}_{2,N} > \frac{\delta}{2||u||_{\infty}} \right] \leq 2 \, \exp\left( -\frac{N \epsilon^{\frac{n}{2}} \delta^2}{8 ||u||_{\infty} (\Kconst ||u||_{\infty} + ||k||_{\infty} \delta/6)} \right) .
\end{aligned}  \nonumber  \end{equation}
We also see,
\begin{equation}\begin{aligned}
|\GAve_{\epsilon,N}[u](x) - \GAve_{\epsilon}[u](x)| \leq \left( \left| \frac{\mathcal{B}_{1,N}}{1 + \mathcal{B}_{2,N}} \right| + ||u||_{\infty} \left| \frac{\mathcal{B}_{2,N}}{1 + \mathcal{B}_{2,N}} \right| \right).
\end{aligned}  \nonumber  \end{equation}
Thus taking a union bound gives,
\begin{equation}\begin{aligned}
\Pr\left[ |\GAve_{\epsilon,N}[u](x) - \GAve_{\epsilon}[u](x)| > \delta \right] \leq 6 \exp\left( -\frac{N \epsilon^{\frac{n}{2}}\delta^2}{8||u||_{\infty} (\Kconst ||u||_{\infty}  + ||k||_{\infty} \delta/6)} \right).
\end{aligned}  \nonumber  \end{equation}
\end{proof}

\begin{remark} In our application, when \(u = \psi_h\) is a state localized to an \(O(\sqrt{h})\)-ball with unit \(L^2\) norm, we will have that \(||\psi_h||_{\infty} \geq h^{-\frac{n}{2}}\), so this will give a convergence rate of \(e^{-\Omega(N \epsilon^{\frac{\mdim}{2}} h^{\mdim} \delta^2)}\) for \(\GAve_{\epsilon,N}[\psi_h](x) \to \GAve_{\epsilon}[\psi_h](x)\) as \(N \to \infty\) and \(\epsilon, h \to 0\).

\end{remark}

\hypertarget{quantization-and-symbol-classes}{%
\subsection{Quantization and Symbol classes}\label{quantization-and-symbol-classes}}

The study of pseudodifferential calculus is rooted in the relationships between certain classes of smooth functions on phase space and linear operators acting on smooth functions on configuration space, called \emph{pseudodifferential operators} (\(\PDO\)s) and in relation to the phase space functions, often called their \emph{quantizations}. The classes of functions under study are called \emph{symbols} and they are typically bounded on configuration space with well-behaved (\emph{e.g.}, polynomially bounded) momentum space asymptotics. Then, of interest are the relationships between \(\PDO\)s and the symbols they quantize. An additional \emph{small parameter} \(h \in (0, 1]\) enters the picture when we work in the \emph{semiclassical} regime and serves essentially to rescale the unit length in momentum space, so the asymptotics we are concerned with happen simultaneously at scales of vanishing \(h\) and increasing momentum variable.

The formal classes of functions that we quantize, as well as the exact quantization procedures, are standard and described, for example, in \citep[\(\S 14.2\)]{zworski2012}. These procedures amount to locally pushing the symbols onto Euclidean space by charts and seeing these local pieces as quantizations on open balls in \(\mathbb{R}^{\mdim}\), then gluing them together through an atlas. Hence, the key definitions are specified on \(\mathbb{R}^{\mdim}\). To summarize the idea briefly, we make

\begin{definition}[{Symbols \& quantization on \(\mathbb{R}^{\mdim}\)
\cite[$\S 4$]{zworski2012}}] Let \(h_0 \in (0, 1]\). A smooth function \(a \in C^{\infty}(\mathbb{R}^{2\mdim} \times (0, h_0])\) belongs to the \(h\)-\emph{symbol class of order} \((\ell, m) \in \mathbb{N}_0 \times \mathbb{Z}\) on \(\mathbb{R}^{\mdim}\), \(h^{\ell} S^m(\mathbb{R}^{2n})\) if for all \(\alpha, \beta \in \mathbb{N}_0^{\mdim}\), \(|\partial_x^{\alpha} \partial_{\xi}^{\beta} a(x,\xi ; h)| \lesssim_{\alpha,\beta} h^{\ell} \langle \xi \rangle^{m - \beta}\). Then, a \emph{quantization} of \(a \in h^{\ell} S^m\) is an operator acting on \(u \in \mathscr{S}(\mathbb{R}^{\mdim})\) given by,
\begin{equation}\begin{aligned}
\Op_h^t(a) := \frac{1}{(2 \pi h)^{\mdim}} \int_{\mathbb{R}^{2\mdim}} e^{\frac{i}{h} \langle x - y, \xi \rangle} a(tx + (1 - t)y, \xi) u(y) ~ d\xi dy ,
\end{aligned}  \nonumber  \end{equation}
for any \(t \in [0, 1]\). We call the quantization with \(t = 1\) the \emph{adjoint} quantization, which we will primarily use and hence we denote it simply as \(\Op_h := \Op_h^1\), while the case \(t = 0\) is called the \emph{Kohn-Nirenberg} quantization and we will denote it \(\Op_h^{KN} := \Op_h^0\). A highly used case is \(t = 1/2\) and called the \emph{Weyl} quantization; while it is very useful in proving various properties of pseudodifferential operators, we will not make explicit use of this.

An operator of the form \(\Op_h^t(a)\) is called a \emph{(semiclassical) pseudodifferential operator of order} \((\ell,m)\) and sometimes (to evoke a physical interpretation) a \emph{quantum observable}, while its symbol \(a\) is sometimes called a \emph{classical observable}. We also say that \(a \in h^{\ell} S^m\) is a \emph{symbol of order} \((\ell,m)\).

\end{definition}

A key property in the definition of the symbol class is that under a \emph{symplectic} change of variables \(\tilde{\gamma} := (\gamma, [D \gamma^{-1}|_{\gamma}]^T)\) on \(\mathbb{R}^{2\mdim}\) for any diffeomorphism \(\gamma : \mathbb{R}^{\mdim} \to \mathbb{R}^{\mdim}\), the symbol class \(h^{\ell} S^m\) is invariant \emph{to top order}, meaning that if \(a \in h^{\ell} S^m\) then \(a \circ \tilde{\gamma} \in h^{\ell} S^m\) and \(u \mapsto \Op^t_h(a)[ u \circ \gamma] \circ \gamma^{-1} = \Op^t_h(a_{\gamma})[u]\) with \(a_{\gamma}(\gamma(x), \xi) = a(x, [D \gamma|_x]^T \xi) + b(x,\xi)\) for \(b \in h^{\ell + 1} S^{m-1}\) (see \citep[\(\S 9\)]{zworski2012}). While the asymptotic behaviour required in \(\xi\) is simple to understand, we offer the notion about the joint asymptotics with \(h\) that a symbol should have, asymptotically, at most polynomial variation in cubes of side-length \(h\) in momentum space.

The basic calculus of pseudodifferential operators and symbols on \(\mathbb{R}^{\mdim}\) is exposed in detail in \citep{zworski2012} and \citep{dimassi1999spectral}. Now using this, we can give meaning to quantization on a compact manifold \(\mathcal{M}\), following \citep{zworski2012}:

\begin{definition}[{Symbols \& quantization on \(\mathcal{M}\)
\cite[$\S 14.2.2$]{zworski2012}}] A linear operator \(A : C^{\infty}(\mathcal{M}) \to C^{\infty}(\mathcal{M})\) is a \emph{pseudodifferential operator} on \(\mathcal{M}\) if

\begin{enumerate}
\def\labelenumi{\arabic{enumi}.}
\tightlist
\item
  given an atlas \(\{ (\gamma : \mathcal{M} \supset U_{\gamma} \to V_{\gamma} \subset \mathbb{R}, U_{\gamma}) ~|~ \gamma \in \mathcal{F} \}\), there is \((\ell,m) \in \mathbb{N}_0 \times \mathbb{Z}\) and \(t \in \mathbb{R}\) such that for each \(\gamma \in \mathcal{F}\), there is \(a_{\gamma} \in h^{\ell} S^m\) so that we may write, given any \(\chi_1, \chi_2 \in C_c^{\infty}(U_{\gamma})\) and \(u \in C^{\infty}(\mathcal{M})\),
  \begin{equation}\begin{aligned}
  \chi_1 A[\chi_2 u] = \chi_1 \gamma^* \Op_h^t[a_{\gamma}] \circ (\gamma^{-1})^*[\chi_2 u] ,
  \end{aligned}  \nonumber  \end{equation}
  wherein \((\gamma^{-1})^* : C^{\infty}(U_{\gamma}) \to C^{\infty}(V_{\gamma})\) denotes the \emph{pullback}, \((\gamma^{-1})^* : u \mapsto u \circ \gamma^{-1}\) and similarly \(\gamma^*\) the pullback from \(V_{\gamma}\) to \(U_{\gamma}\) and
\item
  \(A\) is \emph{pseudolocal}, meaning: given any \(\chi_1, \chi_2 \in C^{\infty}(\mathcal{M})\) with \(\supp \chi_1 \cap \chi_2 = \emptyset\) and any \(N \in \mathbb{N}\), we have \(||\chi_1 A \chi_2||_{H^{-N}(\mathcal{M}) \to H^N(\mathcal{M})} = O(h^{\infty})\), wherein \(H^{\pm N}(\mathcal{M})\) Sobolev spaces as defined in \citep[\(\S 14.2.1\)]{zworski2012}.
\end{enumerate}

We say then that the pseudodifferential operator \(A\) is of \emph{order} \((\ell,m)\) and belongs to \(h^{\ell} \Psi^m\) and for brevity, we call it a \(\PDO\).

A function \(a \in C^{\infty}(T^*\mathcal{M})\) belongs to the \(h\)-\emph{symbol class of order} \((\ell,m) \in \mathbb{N}_0 \times \mathbb{Z}\) on \(\mathcal{M}\), \(h^{\ell} S^m(T^*\mathcal{M})\) if given an atlas \(\mathcal{F}\), for each \(\gamma \in \mathcal{F}\) and \(\chi \in C_c^{\infty}(U_{\gamma})\) the pullback \(a_{\gamma} \in C^{\infty}(V_{\gamma} \times \mathbb{R}^{\mdim})\) of \(a\chi\) under the induced change of coordinates \(V_{\gamma} \times \mathbb{R}^n \to T^*(U_{\gamma})\) belongs to \(h^{\ell} S^m(\mathbb{R}^{2\mdim})\). In short, we say that \(a\) is a \emph{symbol of order} \((\ell, m)\).

\end{definition}

The fact that \(h^{\ell} \Psi^{m}\) forms a bi-filtered (in the order \((\ell,m)\)) algebra respecting the properties of a \emph{quantization map} and moreover, that quantized operators extend to Sobolev spaces, mapping \(A : H_h^s \to H_h^{s - m}\) continuously whenever \(A \in h^0 \Psi^m\) and \(A : L^2 \to L^2\) whenever \(A \in h^0 \Psi^0\) are standard results from pseudodifferential calculus \citep{zworski2012, dimassi1999spectral}. We will make liberal use of these properties along with the following basic \emph{symbol calculus}:

\begin{fact*}[{\cite[Theorem 14.1]{zworski2012}}] There are linear maps
\begin{equation}\begin{aligned}
\operatorname{Sym} : h^{\ell} \Psi^m \to h^{\ell} S^m(T^*\mathcal{M}) / h^{\ell + 1} S^{m - 1}(T^*\mathcal{M})
\end{aligned}  \nonumber  \end{equation}
and for all \(t \in [0, 1]\),
\begin{equation}\begin{aligned}
\Op_h^t : h^{\ell} S^m \to h^{\ell} \Psi^m
\end{aligned}  \nonumber  \end{equation}
such that
\begin{equation}\begin{aligned}
\operatorname{Sym}(A_1 A_2) = \operatorname{Sym}(A_1) \operatorname{Sym}(A_2)
\end{aligned}  \nonumber  \end{equation}
and
\begin{equation}\begin{aligned}
\operatorname{Sym}[\Op^t_h(a)] = [a] \in h^{\ell} S^m(T^*\mathcal{M}) / h^{\ell + 1} S^{m - 1}(T^* \mathcal{M}),
\end{aligned}  \nonumber  \end{equation}
wherein \([a]\) denotes the equivalence class of \(a\) in the quotient space \(h^{\ell} S^m / h^{\ell + 1} S^{m - 1}\).

\end{fact*}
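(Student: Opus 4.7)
The plan is to construct both maps chart-by-chart, leveraging the Euclidean theory that (by assumption for this paper) has already been developed in Zworski. Fix a finite atlas $\{(\gamma_j : U_j \to V_j)\}_{j \in \mathcal{F}}$ on $\mathcal{M}$ together with a subordinate partition of unity $\{\chi_j\}$ and a family $\{\tilde{\chi}_j\}$ with $\tilde{\chi}_j \equiv 1$ on $\supp \chi_j$ and $\supp \tilde{\chi}_j \Subset U_j$. Given $a \in h^{\ell} S^m(T^*\mathcal{M})$, push $a \, \tilde{\chi}_j$ onto $V_j \times \mathbb{R}^{\mdim}$ by the induced symplectic map on $T^*U_j$ to obtain $a_j \in h^{\ell} S^m(\mathbb{R}^{2\mdim})$; then define
\begin{equation*}
\Op_h^t(a)[u] := \sum_{j \in \mathcal{F}} \tilde{\chi}_j \cdot \gamma_j^*\bigl[\Op_h^t(a_j)(\gamma_j^{-1})^*(\chi_j u)\bigr].
\end{equation*}
Pseudolocality is immediate from the Euclidean non-stationary phase lemma applied to the off-diagonal kernel, so this genuinely lies in $h^{\ell}\Psi^m$.

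To build $\operatorname{Sym}$ going the other way, I would read off local symbols $a_{\gamma}$ from the defining relation $\chi_1 A \chi_2 = \chi_1 \gamma^* \Op_h^t(a_{\gamma})(\gamma^{-1})^*\chi_2$ that characterizes $A \in h^{\ell}\Psi^m$. The crucial input is the coordinate-change formula: if $\kappa : V \to V'$ is a diffeomorphism and $\tilde{\kappa}(x,\xi) = (\kappa(x), [D\kappa(x)^{-1}]^T \xi)$ is its symplectic lift, then $\Op_h^t(a) \circ \kappa^* = \kappa^* \circ \Op_h^t(a \circ \tilde{\kappa}^{-1} + r)$ with $r \in h^{\ell+1} S^{m-1}$. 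This is obtained by writing the kernel of the pulled-back operator as an oscillatory integral with phase $\langle \kappa(x) - \kappa(y), \xi\rangle/h$, making the substitution $\xi = [D\kappa(x)^{-1}]^T \eta$, Taylor-expanding the phase about $y = x$, and applying stationary phase in $\eta$ to absorb the quadratic corrections into a symbol that is one semiclassical order lower. Consequently the collection $\{a_{\gamma}\}$ descends to a well-defined element $[a] \in h^{\ell} S^m(T^*\mathcal{M}) / h^{\ell+1} S^{m-1}(T^*\mathcal{M})$, and independence of the atlas and partition of unity follows by the same computation. Applying this to the $\Op_h^t(a)$ built in the first paragraph, the identity $\operatorname{Sym}[\Op_h^t(a)] = [a]$ reduces to showing that the commutator $[\tilde{\chi}_j, \Op_h^t(a_j)]$ lies in $h^{\ell+1}\Psi^{m-1}$, which is the standard first term of the Moyal-type expansion.

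For multiplicativity, the question is local: in a chart and with cutoffs absorbed, one computes the kernel of $\Op_h^t(a_1)\Op_h^t(a_2)$ as a double oscillatory integral in $(y,\eta_1,\eta_2,z)$ with phase $\langle x - y, \eta_1\rangle + \langle y - z, \eta_2\rangle$, scaled by $1/h$. Stationary phase in $(y, \eta_1 - \eta_2)$ (the only non-degenerate directions modulo the diagonal) collapses this to $\Op_h^t(a_1 \# a_2)$ where $a_1 \# a_2 = a_1 a_2 + O_{h^1 S^{m_1 + m_2 - 1}}(1)$ (with Poisson bracket corrections whose precise form depends on $t$). Both sides project to $[a_1][a_2]$ in the quotient, giving $\operatorname{Sym}(A_1 A_2) = \operatorname{Sym}(A_1)\operatorname{Sym}(A_2)$.

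The main obstacle is the coordinate-invariance step in paragraph two: the careful bookkeeping in the stationary phase expansion of a pushforward of an oscillatory integral, tracking exactly how many powers of $h$ accompany derivatives of $a$ and the Jacobian, is where the subleading order $(h^{\ell+1}, m-1)$ is truly earned. Everything else — definition of the maps, pseudolocality, the compatibility identity, and multiplicativity — follows from this invariance together with a single application of stationary phase. Since the statement is invoked as a black box from \citep[Theorem 14.1]{zworski2012}, the appropriate level of detail in the paper is simply to cite it; a self-contained proof would essentially recapitulate the first two chapters of Euclidean semiclassical calculus followed by the one-page globalization argument just sketched.
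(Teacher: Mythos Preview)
Your sketch is a reasonable outline of the standard globalization argument, and you correctly identify at the end what actually happens in the paper: this statement is not proved at all. It is presented as a \texttt{fact*} environment carrying the citation \cite[Theorem 14.1]{zworski2012} and is immediately followed by commentary on the principal symbol map, with no proof or proof sketch given. So there is nothing to compare your approach against; the paper simply imports the result as a black box, exactly as you anticipated in your final paragraph.
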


We often call \(\operatorname{Sym}(A)\) the \emph{principal symbol} of \(A\) and note that by the invariance properties of \(h^{\ell} S^m\) under changes of coordinates as discussed above, this equivalence class is defined invariantly of the choice of coordinates. In fact, this is independent of the particular quantization \(t \in [0, 1]\) as well (see \citep[Theorem 9.10]{zworski2012}).

A useful sub-class of \(h^{\ell} S^m\) is the class of semiclassical \emph{polyhomogeneous} symbols. The quantizations of these symbols are more direct generalizations of partial differential operators, whose symbols can be seen to behave like polynomials in the momentum \(\xi\) variable. The polyhomogeneous symbols enjoy many nice properties through their asymptotic expansions, including that we can specify an exact principal symbol. We restrict to this class primarily when dealing with the phases and amplitudes appearing in the FBI transform, following \citep{wunsch2001fbi} (discussed below, in \protect\hyperlink{semi-classical-measures-of-coherent-states}{Section \ref{semi-classical-measures-of-coherent-states}}). In particular, we will use such symbols in the phases defining coherent states, for then we have a good interaction between them and the FBI transform. Assuming the definitions of \emph{classical} polyhomogeneous symbols, as given in \citep{hormander_vol_III}, the definitions of their semiclassical counterparts are as follows:

\begin{definition}[{following \cite{wunsch2001fbi}}]	\hypertarget{def:phg-psidos}{\label{def:phg-psidos}} A smooth function \(a \in C^\infty(T^*\mathcal{M} \times \mathcal{M} \times [0,h_0))\) for some \(h_0 > 0\) belongs to the \(h\)-\emph{polyhomogeneous symbol class of order} \((\ell,m)\), \(h^{\ell} S^m_{\text{phg}}(T^*\mathcal{M} \times \mathcal{M}) \subset C^\infty(T^*\mathcal{M} \times \mathcal{M} \times [0, h_0))\), if for every \(j \geq 0\), there exists a polyhomogeneous symbol \(a_j(x,\xi,y)\) of degree \(j\) in \(\xi\) and a constant \(C_j > 0\) such that for all \(|\xi| > 1\) and \(h \in [0, h_0)\), the following asymptotic expansion holds:
\begin{equation} \label{eq:asymp-h-symbol-phg}
|a(x,\xi,y;h) - h^{\ell}(a_m + h\, a_{m-1} + \cdots + h^{\ell}\, a_{m-j})| \leq C_j h^{\ell+j+1} |\xi|^{m-j-1} .
\end{equation}
We abbreviate this as,
\begin{equation}\begin{aligned}
a(x, \xi, y ; h) \sim h^{\ell}(a_m(x,\xi,y) + h \, a_{m-1}(x,\xi,y) + \cdots )
\end{aligned}  \nonumber  \end{equation}
and call \(a_m\) the \emph{principal part} of \(a\). Similarly, we define the \(h\)-symbol class \(h^{\ell} S^m_{\text{phg}}(T^*\mathcal{M})\) as the subset of the above class without dependence on the \(y\) variable. When ambiguity is not an issue, we drop the explicit dependence on the space and use simply the notation \(h^{\ell} S^m_{\text{phg}}\) to denote either of the two classes of symbols.

The \emph{quantization} of \(a \in h^{\ell} S^m_{\text{phg}}(T^* \mathcal{M} \times \mathcal{M})\) is the operator \(\operatorname{Op}_h(a) : C^\infty(\mathcal{M}) \to C^\infty(\mathcal{M})\) given by
\begin{equation} \label{def:op-quantization}
\operatorname{Op}_h(a)[u](x) := \frac{1}{(2\pi h)^n} \int_{T^*\mathcal{M}} e^{\frac{i}{h} \langle \exp_y^{-1}(x), \xi \rangle_{g_y}} a(y,\xi,x;h) \chi(x,y) u(y) ~ dy d\xi,
\end{equation}
wherein \(\chi\) is a smooth cut-off near \(x = y\). We denote by \(h^{\ell} \Psi_{\text{phg}}^m(\mathcal{M})\) the algebra of all operators quantizing symbols \emph{modulo} \(h^\infty\), viz., \(A \in h^{\ell} \Psi_{\text{phg}}^m(\mathcal{M})\) iff. \(A = \operatorname{Op}_h(a) + R\) for some \(a \in h^{\ell} S^m_{\text{phg}}(T^*\mathcal{M} \times \mathcal{M})\) and \(R : C^{-\infty}(\mathcal{M}) \to C^\infty(\mathcal{M})\) of order \(O(h^{\infty})\). When the polyhomogeneous and space contexts are clear, we will shorten the notation to just \(h^{\ell} \Psi_{\text{phg}}^m\).

The \emph{symbol map} of order \((\ell,m)\) is the linear operator \(\operatorname{Sym}_{\ell,m} : h^{\ell} \Psi_{\text{phg}}^m \to h^{\ell} S^m_{\text{phg}} / h^{\ell-1} S^{m-1}_{\text{phg}}\) such that the \emph{principal symbol} of an operator \(A = \operatorname{Op}_h(a) \in h^{\ell} \Psi_{\text{phg}}^m\) is given by \(\operatorname{Sym}_{\ell,m}(A) = [a]\). This gives a short exact sequence,
\begin{equation}\begin{aligned}
0 \to h^{\ell-1}\Psi_{\text{phg}}^{m-1} \hookrightarrow h^{\ell} \Psi_{\text{phg}}^m \xrightarrow{\operatorname{Sym}_{\ell,m}} h^{\ell} S^m_{\text{phg}}/h^{\ell-1}S^{m-1}_{\text{phg}} \to0.
\end{aligned}  \nonumber  \end{equation}
An \emph{elliptic symbol} is a symbol \(a \in h^{\ell}{S^m}_{\text{phg}}\) whose principal part follows \(|a_k| \sim \langle \xi \rangle\) uniformly in all other variables.

\end{definition}

Symbols that are functions of both, the integrand as well as the output parameter with respect to quantization, such as \(a \in h^{\ell} S_{\text{phg}}^m(T^*\mathcal{M} \times \mathcal{M})\) are sometimes called \emph{amplitudes} (see \emph{e.g.}, \citep{taylor81}). The phases appearing in coherent states are naturally of this sort.

We aim to bring graph Laplacians into the context of pseudodifferential operators and by their basic characteristics, we find that they must belong to a more general class than the polyhomogeneous. Indeed, the polyhomogeneous class is too restrictive to support symbols localized in phase-space \(T^*\mathcal{M}\), which is a stepping-stone since we go through the diffusive averaging operators \(\AvOp_{\lambda,\epsilon}\); nevertheless, we will have solace in the class \(h^{\ell} S^m\), which is able to support this while remaining invariant to coordinate transformations.

\hypertarget{fbi-transform}{%
\subsection{FBI Transform}\label{fbi-transform}}

We follow \citep{wunsch2001fbi} to state the fundamental results about FBI transforms on smooth, compact manifolds. They define the following:

\begin{definition}[{Admissible Phase \& FBI Transform}]	\hypertarget{def:FBI-adm-phase}{\label{def:FBI-adm-phase}} A smooth function \(\phi : T^* \mathcal{M} \times \mathcal{M} \to \mathbb{C}\) is an \emph{admissible phase function} if it satisfies all of the properties:

\begin{enumerate}
\def\labelenumi{\arabic{enumi}.}
\tightlist
\item
  \(\phi(x,\xi,y)\) is an elliptic polyhomogeneous symbol of order one in \(\xi\),
\item
  \(\Im(\phi) \geq 0\),
\item
  \(D_y \phi|_{(x,\xi,x)} = -\xi \, dy\),
\item
  \(D_y^2 \Im(\phi)|_{(x,\xi,x)} \sim \langle \xi \rangle\),
\item
  and \(\phi|_{(x,\xi,x)} = 0\);
\end{enumerate}

\noindent herein we have denoted by \(\varphi|_{(x,\xi,x)}\) the restriction of \(\varphi:T^*\mathcal{M} \times \mathcal{M} \to \mathbb{C}\) to the set \(\{(x,\xi,y) ~|~ x = y \}\).

An \emph{FBI transform} is a map \(T_h[\cdot ; \phi,a] : C^\infty(\mathcal{M}) \to C^\infty(T^* \mathcal{M})\) given by,
\begin{equation}\begin{aligned}
T_h[u;\phi,a](x) := \int_{\mathcal{M}} e^{\frac{i}{h}\phi(x,\xi,y)} a(x,\xi,y;h) \chi(x,\xi,y) u(y) \;dy,
\end{aligned}  \nonumber  \end{equation}
with \(\phi\) a fixed admissible phase function, \(a \in h^{-\frac{3n}{4}} S^{\frac{n}{4}}_{\text{phg}}\) is elliptic and \(\chi\) is a cut-off function near \(\{(x,\xi,y) ~|~ x = y \}\).

\end{definition}

We will need the following basic properties of the FBI transform, as it relates to quantization:

\begin{theorem}[{\cite{wunsch2001fbi}}]	\hypertarget{thm:FBI-basic}{\label{thm:FBI-basic}} Let \(q \in h^{\ell} S^m(T^*\mathcal{M})\) and fix both, an admissible phase function \(\phi\) and an elliptic symbol \(a \in h^{-\frac{3n}{4}} S^{\frac{n}{4}}_{\text{phg}}(T^* \mathcal{M} \times \mathcal{M})\). Denote \(T_h := T_h[\cdot ; \phi, a]\). Then, there exists \(b_0 \in h^{\frac{3n}{2}} S_{\text{phg}}^{-\frac{n}{2}}(T^*\mathcal{M} \times \mathcal{M})\) positive, elliptic and depending only on \(\phi\), such that \(T_h^* q T_h \in h^{\ell} \Psi^m\) and
\begin{equation}\begin{aligned}
T_h^* q T_h - \operatorname{Op}_h(|a|^2 b_0 \, q) \in h^{\ell+1} \Psi^{m-1}.
\end{aligned}  \nonumber  \end{equation}
Furthermore, specializing to \(a = b_0^{-\frac{1}{2}}\), there exists an elliptic \(b \in h^{\frac{3n}{2}} S_{\text{phg}}^{-\frac{n}{2}}\) depending only on \(\phi\) with a positive principal symbol such that setting \(T_h := T_h[\cdot; \phi, b^{-\frac{1}{2}}]\) gives
\begin{equation}\begin{aligned}
T_h^* q T_h - \operatorname{Op}(q) \in h^{\ell+1}\Psi^{m-1},  \\
T_h^*T_h - I \in h^{\infty}\Psi_{\text{phg}}^{-\infty} .
\end{aligned}  \nonumber  \end{equation}
Whenever \(q\) is polyhomogeneous, then so is the symbol being quantized by the FBI transform, above. The operator \(T_h : L^2(\mathcal{M}) \to L^2(T^*\mathcal{M})\) is bounded for all \(h \in [0,h_0)\).

\end{theorem}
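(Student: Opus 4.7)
The plan is to reduce everything to an application of complex stationary phase to the Schwartz kernel of $T_h^* q T_h$ and then extract the leading-order symbol.

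\textbf{Step 1: Write the kernel as an oscillatory integral.} Working in a coordinate patch and using the definition of $T_h$ in \protect\hyperlink{def:FBI-adm-phase}{Definition \ref{def:FBI-adm-phase}}, for $u \in C^{\infty}(\mathcal{M})$ and suitable test $v$,
\begin{equation*}
\langle T_h^* q T_h u, v \rangle = \iiiint e^{\frac{i}{h}[\phi(x,\xi,y) - \overline{\phi(x,\xi,y')}]} \, q(x,\xi)\, a(x,\xi,y;h)\, \overline{a(x,\xi,y';h)} \,\chi^2\, u(y) \overline{v(y')} \, dy\, dy'\, dx\, d\xi .
\end{equation*}
Localizing by the cut-off $\chi$, the integrand is supported near the diagonal $\{x = y = y'\}$, where I will apply stationary phase in the $(x,\xi)$ variables with $(y,y')$ as parameters.

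\textbf{Step 2: Critical set and Hessian of the combined phase.} Set $\Phi(x,\xi,y,y') := \phi(x,\xi,y) - \overline{\phi(x,\xi,y')}$. Using the admissibility conditions $\phi|_{y=x} \equiv 0$ and $D_y\phi|_{y=x} = -\xi\,dy$, a direct computation shows $\partial_x\Phi = \partial_\xi\Phi = 0$ exactly when $y = y'$ and the diagonal pairing $(x,\xi) \in T^*_y\mathcal{M}$ holds. The mixed Hessian $\partial_x\partial_\xi\Phi$ at the critical set is $\operatorname{id}$ to leading order (again by $D_y\phi|_{y=x} = -\xi\,dy$), while the block in $x$ is controlled by $D_y^2 \Im \phi|_{y=x} \sim \langle\xi\rangle$ from admissibility condition (4). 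This positivity is precisely what licenses complex stationary phase along the contour deformation prescribed by H\"ormander (cf. \citep{hormander_vol_III}, Section 7.7).

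\textbf{Step 3: Stationary phase and identification of $b_0$.} Applying the complex stationary phase lemma in the $(x,\xi)$ variables, the leading contribution reduces the double integral to an oscillatory integral in $y, y'$ with phase $\frac{i}{h}\langle \exp_{y'}^{-1}(y), \xi'\rangle$ (after a change of variables through the implicit critical-point map). The resulting amplitude is, to top order, $|a(x_c,\xi_c,y;h)|^2\, b_0(y,\xi';h)\, q(x_c,\xi_c)$, where $b_0$ is built from the determinant factor $(\det (\Phi''/i))^{-1/2}$ evaluated at the critical point. Tracking homogeneity: the Hessian scales as $\langle\xi\rangle^{2n}$, the prefactor $(2\pi h)^{2n}$ from stationary phase contributes $h^{3n/2}$ relative to the normalization in \protect\hyperlink{def:op-quantization}{equation \ref{def:op-quantization}}, and the orders of $\phi, a, q$ combine to give $b_0 \in h^{3n/2} S^{-n/2}_{\mathrm{phg}}$, elliptic and positive, and depending only on $\phi$. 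The lower-order terms in the stationary phase expansion contribute to $h^{\ell+1}\Psi^{m-1}$, yielding the first identity.

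\textbf{Step 4: The choices $a = b_0^{-1/2}$ and $b^{-1/2}$, and $L^2$-boundedness.} Substituting $a = b_0^{-1/2}$ (an elliptic element of $h^{-3n/4}S^{n/4}_{\mathrm{phg}}$) into the result of Step 3 gives $T_h^* q T_h - \operatorname{Op}_h(q) \in h^{\ell+1}\Psi^{m-1}$. Applied with $q \equiv 1$ this gives $T_h^*T_h = I + R_1$ with $R_1 \in h\Psi^{-1}$. To upgrade to $h^\infty\Psi_{\mathrm{phg}}^{-\infty}$, construct $b \sim b_0 + h b_1 + h^2 b_2 + \cdots$ iteratively: given that $T_h[\,\cdot\,;\phi,b^{-1/2}]^*T_h[\,\cdot\,;\phi,b^{-1/2}] - I \in h^k\Psi^{-k}$, use the symbol of this remainder together with Step 3 applied to the perturbation to choose $b_k$ so that the remainder becomes $O(h^{k+1}\Psi^{-k-1})$; a Borel summation of the $b_k$ yields the desired $b$. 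Finally, $L^2$-boundedness of $T_h$ follows from $T_h^* T_h \in \Psi^0 + h^\infty\Psi^{-\infty}$ being $L^2$-bounded by the Calder\'on--Vaillancourt theorem.

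\textbf{Main obstacle.} The delicate point is Step 2--3: carrying out complex stationary phase rigorously in the polyhomogeneous $h$-calculus and, in particular, verifying that the Hessian factor $b_0$ is genuinely an elliptic polyhomogeneous symbol of the stated order uniformly as $|\xi| \to \infty$, together with asymptotic control of all lower-order stationary phase terms so that they assemble into a single operator in $h^{\ell+1}\Psi^{m-1}$ (rather than just a formal symbol). Everything else is bookkeeping once this is in hand.
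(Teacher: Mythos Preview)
The paper does not actually prove this theorem: it is quoted as a result of Wunsch--Zworski \cite{wunsch2001fbi}, and the only ``proof'' in the paper is the one-line Remark that the argument of \cite[Proposition~3.1]{wunsch2001fbi} goes through verbatim when $q$ is taken in the more general class $S^m$ rather than $S^m_{\mathrm{phg}}$. So there is nothing substantive in the paper to compare your plan against.

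That said, your outline is precisely the Wunsch--Zworski strategy: write the Schwartz kernel of $T_h^* q T_h$ as an oscillatory integral, apply complex stationary phase in the fibre variables using the admissibility conditions on $\phi$ to guarantee a nondegenerate Hessian with positive imaginary part, read off $b_0$ from the Hessian determinant, and then iterate to build $b$. Your identification of the main obstacle (uniform symbolic control of the stationary-phase remainder as $|\xi|\to\infty$) is also the right one. One small correction: in Step~2 you describe the stationary phase as being carried out in the $(x,\xi)$ variables with $(y,y')$ as parameters, but in practice (and in \cite{wunsch2001fbi}) one integrates out the $\xi$ variable first, which produces the phase $\langle \exp_y^{-1}(y'),\xi'\rangle$ directly; the $x$-integration is then a Gaussian-type integral governed by $\Im\phi$. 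This is a minor reorganization of the same computation, not a gap.
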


\begin{remark} The proof given in \citep[Proposition 3.1]{wunsch2001fbi} goes through for \(q\) taken to be a symbol belonging to the more general class we consider here, \(S^m\).

\end{remark}

\hypertarget{semi-classical-measures-of-coherent-states}{%
\section{Semi-classical measures of coherent states}\label{semi-classical-measures-of-coherent-states}}

The basic properties of FBI transforms, as in \protect\hyperlink{thm:FBI-basic}{Theorem \ref{thm:FBI-basic}} show that much like the Fourier transform in flat space, they give a \emph{diagonal} representation of pseudodifferential operators through their principal symbols on phase space. On the other hand, as a map, \(T_h\) also \emph{lifts} an \(L^2(\mathcal{M})\) element to \(L^2(T^*\mathcal{M})\). These lifts allow us to access certain \emph{statistics} on phase space through inner products of the original (configuration-space) functions with quantized operators. To give some concreteness, let \(u_h \in L^2(\mathcal{M})\) for every \(h \in (0, h_0)\). Then, the \emph{Husimi function} for \(u_h\) is \(|T_h[u_h]|^2 \in L^2(T^*\mathcal{M})\) and clearly, \(\mu_h[u_h] := |T_h[u_h]|^2 \; dxd\xi\) defines a sequence of measures on the cotangent bundle in the parameter \(h \in (0, h_0)\). If \(\sup_{0 \leq h < h_0} ||u_h||_{L^2}^2 < \infty\), then also \(\sup _{0 \leq h < h_0}||T_h[u_h]||_{L^2(T^*\mathcal{M})} < \infty\) so by a diagonal argument, given any sequence \(h_j \to 0\), there exists a subsequence \(h_{j_k} \to 0\) and a non-negative finite Borel measure \(\mu\) on \(L^2(T^*\mathcal{M})\) such that \(\mu_{h_{j_k}}[u_{h_{j_k}}] \to \mu\) weakly; this is often called a \emph{semi-classical (defect) measure} \citep{zworski2012}. Therefore, \protect\hyperlink{thm:FBI-basic}{Theorem \ref{thm:FBI-basic}} allows us to take expectations of symbols with respect to Husimi functions in a simple way:
\begin{flalign}
&&\langle u_h |\operatorname{Op}_h(q)|u_h \rangle_{L^2(\mathcal{M})} &= \langle u_h | T_h^*[\cdot;\phi,b^{-\frac{1}{2}}] \, q \, T_h[\cdot;\phi,b^{-\frac{1}{2}}] | u_h \rangle + O(h) &&   \nonumber \\
&&  &= \langle T_h[u_h] | \;q\; | T_h[u_h] \rangle_{L^2(T^*\mathcal{M})} + O(h) \nonumber \\
&&  &= \int_{T^*\mathcal{M}} q(x,\xi) \; |T_h[u_h]|^2(x,\xi) ~ dxd\xi + O(h), \label{eq:expectation-FBI}\\
&\leftmath{\text{hence, }} & \langle u_{h_{j_k}} |\operatorname{Op}_h(q)|u_{h_{j_k}} \rangle_{L^2(\mathcal{M})} &\xrightarrow[h_{j_k} \to \; 0]{} \int_{T^*\mathcal{M}} q(x,\xi) ~d\mu(x,\xi). \nonumber
\end{flalign}
Now, of particular interest as a step towards \emph{quantum-classical correspondence} is if a state \(u_h\) could have a Dirac mass as semi-classical measure, for then we could \emph{access} the \emph{classical} space of symbols by probing pseudodifferential operators via inner products. There are in fact many such candidate states and we can construct and understand them through stationary phase analysis of the FBI transform. Indeed, a particularly simple case is essentially to use the kernel itself: that is, to consider the interaction between the state
\begin{equation}\begin{aligned}
\psi_h(x;x_0,\xi_0) := e^{-\frac{i}{h} \bar{\phi}(x_0,\xi_0,x)}
\end{aligned}  \nonumber  \end{equation}
with a fixed admissible phase \(\phi\) and \emph{localization} \((x_0, \xi_0) \in T^*\mathcal{M}\) and the FBI transform \(T_h := T_h[\cdot ; \phi, a]\) with the same phase and some fixed elliptic \(a \in h^{-\frac{3n}{4}} S^{\frac{n}{4}}\). Then, we would like to know the Husimi function for \(\psi_h\) and the \emph{ansatz} from looking at the flat case is that this is indeed a Dirac mass at \((x_0, \xi_0)\), weighted by an order zero symbol. In fact, the Husimi function is just the modulus square of the Schwartz kernel for the projection operator \(T_h T_h^*\), up to an amplitude factor: indeed,
\begin{gather*}
T_h T_h^*(x,\xi, x',\eta) ~ dx'd\eta = \\
\int_{\mathcal{M}}e^{\frac{i}{h}(\phi(x,\xi;y) - \bar{\phi}(x',\eta;y))} a(x,\xi,y;h) \bar{a}(x',\eta,y;h) \chi(x,\xi,y) \bar{\chi}(x',\eta,y) ~ dy ~ dx'd\eta ,
\end{gather*}
while
\begin{equation}\begin{aligned}
T_h[\psi_h(\cdot ; x_0,\xi_0)](x,\xi) = \int_{\mathcal{M}} e^{\frac{i}{h}(\phi(x,\xi; y) - \bar{\phi}(x_0,\xi_0; y))} a(x,\xi,y;h) \chi(x,\xi,y) ~ dy.
\end{aligned}  \nonumber  \end{equation}
The computation of the Schwartz kernel for \(T_h T_h^*\) is carried out in \citep[Theorem 4.4]{wunsch2001fbi} and the corresponding stationary phase analysis yields also \(T_h[\psi_h]\). The proof of \citep[Theorem 4.4]{wunsch2001fbi} goes on to elucidate the real (oscillatory) part of the resulting phase, which is more than what's necessary to understand the Husimi function \(|T_h[\psi_h]|^2\); therefore, although the changes are straight-forward, for completeness we will recount their proof, with the necessary modifications, stopping just at the relevant outcome for our context and record the resulting characterization as,

\begin{lemma}[{\cite[Theorem 4.4.]{wunsch2001fbi}}]	\hypertarget{thm:wunsch-zworski}{\label{thm:wunsch-zworski}} Let \(\phi\) be an admissible phase function and \(a \in h^{-\frac{3n}{4}} S_{\text{phg}}^{\frac{n}{4}}\) be an elliptic amplitude such that \(h^{\frac{3\mdim}{4}} a(x_0,\xi_0,x_0;h)|_{h=0} \neq 0\). Then, there exist \(\Phi \in C^\infty(T^*\mathcal{M} \times T^* \mathcal{M}_{(x_0,\xi_0)})\) and \(c \in C^\infty(T^*\mathcal{M} \times T^* \mathcal{M}_{(x_0,\xi_0)} \times [0,h_0))\) such that

\begin{enumerate}
\def\labelenumi{\arabic{enumi}.}
\item
  \(c(x,\xi;x_0,\xi_0;h)\) is order zero in \(h\), supported in a neighbourhood of \((x_0,\xi_0)\) in the first variable and \(c(x_0,\xi_0;x_0,\xi_0;0) \neq 0\),
\item
  there is a positive-definite matrix \(H\) depending only on \(x_0,\xi_0\) such that
  \begin{equation}\begin{aligned}
  \Im \Phi = \frac{1}{2}\langle (x - x_0, \xi - \xi_0) | H | (x - x_0, \xi - \xi_0)\rangle +O(|x - x_0|^4 + |\xi - \xi_0|^4)
  \end{aligned}  \nonumber  \end{equation}
  and
  \begin{equation}\begin{aligned}
  T_h[\psi_h](x,\xi) = h^{-\frac{n}{4}} c(x,\xi;x_0,\xi_0;h) e^{\frac{i}{h}\Phi(x,\xi ; x_0,\xi_0)} + O(h^{\infty}).
  \end{aligned}  \nonumber  \end{equation}
\end{enumerate}

\end{lemma}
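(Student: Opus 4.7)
The plan is to apply the method of \emph{complex stationary phase} (as in H\"ormander, Vol.~I, Ch.~7) to the oscillatory integral
\begin{equation*}
T_h[\psi_h](x,\xi) = \int_{\mathcal{M}} e^{\frac{i}{h} \Psi(y; x,\xi, x_0, \xi_0)} a(x,\xi,y;h) \chi(x,\xi,y) \, dy,
\qquad \Psi := \phi(x,\xi;y) - \bar\phi(x_0,\xi_0;y),
\end{equation*}
near the base point $(x,\xi) = (x_0,\xi_0)$. The phase $\Psi$ can be realized, in local coordinates near $x_0$, as an almost-analytic extension to a complex neighbourhood of $\mathbb{R}^{\mdim}_y$, which is what complex stationary phase requires.

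First I would locate the critical point. The admissibility condition $D_y\phi|_{(x,\xi,x)} = -\xi\, dy$ (property 3) gives $D_y\bar\phi|_{(x_0,\xi_0,x_0)} = -\xi_0\, dy$ and $D_y\phi|_{(x_0,\xi_0,x_0)} = -\xi_0\, dy$, so $y = x_0$ is a (real) critical point when $(x,\xi) = (x_0,\xi_0)$. The Hessian $D_y^2 \Psi|_{y=x_0} = D_y^2 \phi(x_0,\xi_0;x_0) - D_y^2 \bar\phi(x_0,\xi_0;x_0)$ has imaginary part equal to $2 D_y^2 \Im\phi|_{(x_0,\xi_0,x_0)}$, which is strictly positive definite by property 4. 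Hence the full Hessian is non-singular with positive-definite imaginary part, and the (holomorphic) implicit function theorem yields a unique smooth critical point $y_c(x,\xi; x_0,\xi_0) \in \mathbb{C}^{\mdim}$ defined for $(x,\xi)$ in a small neighbourhood of $(x_0,\xi_0)$, depending smoothly on all parameters, with $y_c(x_0,\xi_0;x_0,\xi_0) = x_0$.

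Next I would invoke the complex stationary phase expansion to get, modulo $O(h^\infty)$,
\begin{equation*}
T_h[\psi_h](x,\xi) = (2\pi h)^{\mdim/2} \left(\det \tfrac{-i \Psi''(y_c)}{2\pi}\right)^{-1/2} a(x,\xi,y_c;h)\chi(x,\xi,y_c)\, e^{\frac{i}{h}\Phi(x,\xi;x_0,\xi_0)},
\end{equation*}
with $\Phi := \Psi(y_c)$. The amplitude $a$ is of order $(-3\mdim/4, \mdim/4)$, so combining its $h^{-3\mdim/4}$ prefactor with the $h^{\mdim/2}$ from stationary phase produces the overall $h^{-\mdim/4}$; collecting the determinant factor, the evaluation of $a$ at $y_c$, and the cut-off defines $c(x,\xi;x_0,\xi_0;h)$, which is a symbol of order zero in $h$ and nonvanishing at $(x,\xi) = (x_0,\xi_0)$ by the ellipticity hypothesis on $a$. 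The cut-off $\chi$ restricts the support of $c$ to a neighbourhood of $(x_0,\xi_0)$ in the first pair of variables, and away from $y = y_c$ the integrand is non-stationary so contributes only $O(h^\infty)$.

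Finally I would verify the claimed form of $\Im \Phi$. Since $\phi|_{(x,\xi,x)} = 0$ (property 5), we have $\Phi(x_0,\xi_0;x_0,\xi_0) = 0$; differentiating this diagonal identity yields $D_x\phi|_{(x,\xi,x)} = \xi\, dx$ and $D_\xi\phi|_{(x,\xi,x)} = 0$, which are real, so $D\Phi|_{(x_0,\xi_0)}$ is real and contributes nothing to $\Im\Phi$ to first order. The Hessian of $\Phi$ at $(x_0,\xi_0)$ is computed by implicit differentiation: using $D_y\Psi(y_c) \equiv 0$, one finds $D^2\Phi = D^2_{(x,\xi,x_0,\xi_0)}\Psi\bigl|_{y=y_c} - (D_{y\,(x,\xi,x_0,\xi_0)}\Psi)^{T} (D_y^2\Psi)^{-1}(D_{y\,(x,\xi,x_0,\xi_0)}\Psi)$. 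Its imaginary part is controlled by $D_y^2\Im\phi|_{(x_0,\xi_0,x_0)} \sim \langle\xi_0\rangle > 0$ and by the non-degenerate mixed derivatives $D_{y\xi}\phi|_{(x,\xi,x)} = -I$ implied by property 3, which together force the imaginary Hessian $H$ in $(x-x_0, \xi-\xi_0)$ coordinates to be positive definite. The Taylor remainder is then $O(|x-x_0|^4 + |\xi-\xi_0|^4)$ as stated.

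The main obstacle is the justification of complex stationary phase: one must choose representatives of $\phi$ that admit almost-analytic extensions to a complex neighbourhood of the real diagonal $\{y = x\}$, and verify that the resulting $y_c$ lies within this neighbourhood. This is standard but technical, and relies essentially on all five admissibility conditions. The computation of $\Im\Phi$ to fourth order is also somewhat delicate, as it requires exploiting the diagonal vanishing of $\phi$ together with properties 3--5 simultaneously; however, once the critical point and Hessian are in hand, this reduces to a finite Taylor expansion.
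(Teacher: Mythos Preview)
Your outline follows the same strategy as the paper --- complex stationary phase in the sense of H\"ormander~7.7.12 --- and the localization of the critical point together with the extraction of the $h^{-\mdim/4}$ prefactor are correct. The substantive gap is in part~(2): the sentence ``$D_y^2\Im\phi \sim \langle\xi_0\rangle$ and $D_{y\xi}\phi = -I$ \ldots together force the imaginary Hessian $H$ to be positive definite'' is not a proof. If you actually carry out the implicit differentiation you propose, writing $Q_0 := D_y^2\phi|_{(x_0,\xi_0,x_0)} = S + iT$ so that $D_y^2\Psi|_{y_c=x_0} = 2iT$, and using $\phi|_{y=x}\equiv 0$ to compute $D_{xy}\phi|_{y=x} = -Q_0$, $D_x^2\phi|_{y=x} = Q_0$, $D_{x\xi}\phi|_{y=x} = I$, $D_\xi^2\phi|_{y=x}=0$, you find
\[
\Im D^2_{(x,\xi)}\Phi \;=\; \tfrac{1}{2}\begin{pmatrix} T + ST^{-1}S & ST^{-1} \\ T^{-1}S & T^{-1}\end{pmatrix}
\;=\; \tfrac{1}{2}\begin{pmatrix}I & 0 \\ S & I\end{pmatrix}^{\!\!T}\!\begin{pmatrix}T & 0 \\ 0 & T^{-1}\end{pmatrix}\!\begin{pmatrix}I & 0 \\ S & I\end{pmatrix},
\]
and \emph{this factorization} (congruence to $\operatorname{diag}(T,T^{-1})$ with $T\sim\langle\xi_0\rangle I>0$) is what yields positive definiteness. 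The paper, following Wunsch--Zworski, reaches exactly this block decomposition by a more explicit route: it Taylor-expands the phase in $y$, passes to difference coordinates $(\vartheta,\omega,\zeta)=(x_0-y,\,x_0-x,\,\xi_0-\xi)$, and instead of a holomorphic implicit function theorem (which does not directly apply, the phase being merely $C^\infty$) uses the Malgrange preparation theorem to solve $\vartheta \equiv X_\vartheta(\omega,\zeta)$ modulo the critical ideal $\langle\partial_{\vartheta_j}\tilde\varphi_0\rangle$, then substitutes to obtain $\Im\Phi$ explicitly as a quadratic form. Your abstract route would land on the same Hessian, but you must produce and factorize it; asserting positive definiteness from the ingredients alone does not suffice.

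A smaller point: the quartic remainder $O(|x-x_0|^4+|\xi-\xi_0|^4)$ is not ``the Taylor remainder'' --- after a quadratic form the generic remainder is cubic. In the paper's computation the error $O(|\omega|^2+|\zeta|^2)^2$ arises structurally because $X_\vartheta$ is linear in $(\omega,\zeta)$ modulo $O(|\omega|^2+|\zeta|^2)$ and $\Phi_0$ is assembled from quadratic combinations of $X_\vartheta,\omega,\zeta$; you would need an analogous argument for why no cubic contribution survives in $\Im\Phi$.
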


\begin{proof}

We follow closely the proof of \citep[Theorem 4.4]{wunsch2001fbi} to compute
\begin{equation}\begin{aligned}
T_h[\psi_h(\cdot;x_0,\xi_0)](x,\xi) = \int_{\mathcal{M}} e^{\frac{i}{h} [\phi(x,\xi;y) - \bar{\phi}(x_0,\xi_0;y)]} a(x,\xi,y;h) ~ dy,
\end{aligned}  \nonumber  \end{equation}
wherein we have absorbed the cut-off \(\chi\) into the symbol \(a\). The conditions of an admissible phase allow us to write locally, by Taylor expansion in \(y\) near \(\{(x, \xi, y) \in T^*\mathcal{M} \times \mathcal{M} ~|~ x = y \}\),
\begin{equation}\begin{aligned}
\phi(x,\xi;y) = \xi \cdot (y-x) + \frac{1}{2} \langle Q(y;x,\xi) (x-y), x-y\rangle + O(|x-y|^3),
\end{aligned}  \nonumber  \end{equation}
wherein \(Q\) is a complex-symmetric matrix symbol of degree one in \(\xi\) such that \(\Im Q|_{x = y} \sim \langle\xi\rangle I\) and \(\langle \cdot , \cdot \rangle\) denotes the \emph{real} inner product. Since \(T_h\) localizes to a neighbourhood of \(x\), we can therefore write (up to \(O(|x-y|^3)\) terms),
\begin{alignat*}{4}
\varphi_0(x,\xi,x_0,\xi_0;y) &:= \phi(x,\xi;y) - \bar{\phi}(x_0,\xi_0;y) \\
    &= (x_0-y)\cdot\xi_0 - (x-y)\cdot\xi + \\
    &\quad + \frac{1}{2}\langle Q(y;x_0,\xi_0) (x_0-y),(x_0-y) \rangle - \frac{1}{2} \langle \bar{Q}(y;x,\xi)(x-y),(x-y)\rangle .
\end{alignat*}
The integrand is localized about \(x = x_0 = y\), and since the integral decays rapidly as \(O(h^{\infty})\) away from the neighbourhood of points where the phase is real and stationary, we also reduce to localization about \(\xi = \xi_0\). Hence, we switch to the coordinates,
\begin{equation}\begin{aligned}
\vartheta := x_0 - y, \quad \omega := x_0 - x, \\
\zeta:= \xi_0 - \xi ,
\end{aligned}  \nonumber  \end{equation}
giving
\begin{alignat*}{4}
\tilde{\varphi}_0(\vartheta,\omega,\zeta; x_0,\xi_0) & := && \; \varphi_0(x_0 - \omega, \xi_0 - \zeta,x_0, \xi_0;x_0 - \vartheta)   \\
    &= && \; \vartheta \cdot \xi_0 - (\vartheta - \omega)\cdot(\xi_0 - \zeta) + \\
    &\; && + \frac{1}{2}\langle Q(x_0 - \vartheta;x_0,\xi_0)\vartheta,\vartheta\rangle - \\
&   &&- \frac{1}{2}\langle \bar{Q}(x_0 - \vartheta;x_0 - \omega,\xi_0 - \zeta)(\vartheta - \omega),\vartheta - \omega \rangle .
\end{alignat*}
We wish to apply the method of complex stationary phase, as per \citep[Theorem 7.7.12]{hormander_vol_I}. To see that this can be done, just note that the change of variables \(y \mapsto \vartheta\) along with the admissibility conditions on \(\varphi\) and \(\bar{\varphi}\) imply that \(\tilde{\varphi}_0\) satisfies the hypotheses of that theorem around \(\{\vartheta = \omega = \zeta = 0\}\). The result is determined up to the ideal \(I_{\tilde{\varphi}_0} := \langle \partial_{\vartheta_1} \tilde{\varphi}_0, \ldots, \partial_{\vartheta_n} \tilde{\varphi}_0 \rangle\) --- the generators of which, if we expand in Taylor series about \(\vartheta = \omega = \zeta = 0\), are given by the components of the vector,
\begin{equation}\begin{aligned}
D_{\vartheta} \tilde{\varphi}_0 = \zeta + Q(x_0;x_0,\xi_0) \cdot \vartheta - \bar{Q}(x_0;x_0,\xi_0) \cdot (\vartheta - \omega) + O(|\vartheta|^2 + |\omega|^2).
\end{aligned}  \nonumber  \end{equation}
Thus, we can proceed to apply the Malgrange preparation theorem, which allows us to divide \(\partial_{\vartheta_1} \tilde{\varphi}_0, \ldots, \partial_{\vartheta_n} \tilde{\varphi}_0\) by each of the coordinates \(\vartheta_1, \ldots, \vartheta_n\) to yield (complex-valued) smooth remainder functions \(X_{\vartheta_1}, \ldots, X_{\vartheta_n}(\omega,\zeta,x_0,\xi_0)\) in a neighbourhood of \(\omega = \zeta = 0\); \emph{viz}.,
\begin{equation}\begin{aligned}
\vartheta \equiv X_{\vartheta} \pmod{ I_{\tilde{\phi}_0}} .
\end{aligned}  \nonumber  \end{equation}
We now work modulo the ideal \(I_{\tilde{\varphi}_0}\). We can resolve \(X_{\vartheta}\) via the Taylor expansion of \(D_{\vartheta} \tilde{\varphi}_0\) in a neighbourhood of \(\vartheta = \omega = \zeta = 0\): set \(Q_0 := Q(x_0;x_0,\xi_0)\), then
\begin{align*}
&   && 0 \equiv D_{\vartheta}\tilde{\varphi}_0 \equiv \zeta + 2i \Im Q_0 \cdot X_\vartheta + \bar{Q}_0 \cdot \omega + O(|X_{\vartheta}|^2 + |\omega|^2),    \\
\text{hence, } & && X_{\vartheta}(\omega,\zeta;x_0,\xi_0) \equiv \frac{i}{2}(\Im Q_0)^{-1}(\zeta + \bar{Q}_0 \cdot \omega + O(|\zeta|^2 + |\omega|^2))
\end{align*}
(wherein, the \(O(|\zeta|^2 + |\omega|^2)\) term comes from recursively substituting the expression for \(X_{\vartheta}\) in the higher order term). Now, we may expand \(\tilde{\varphi}_0\) in Taylor series about \(\{\vartheta = \omega = \zeta = 0 \}\) and substitute \(X_{\vartheta}\) for \(\vartheta\), to find the phase resulting from the integration:
\begin{align*}
\Phi_0(\omega,\zeta;x_0,\xi_0) := &\; X_{\vartheta} \cdot \zeta + \omega \cdot (\xi_0 - \zeta) + \frac{1}{2}\langle Q_0 X_{\vartheta}, X_{\vartheta} \rangle - \frac{1}{2}\langle \bar{Q}_0 (X_{\vartheta} - \omega), X_{\vartheta} - \omega \rangle    \\
    &+ O(|\omega|^2 + |\zeta|^2)^2 \equiv \tilde{\varphi}_0 \pmod{I_{\tilde{\varphi}_0}}
\end{align*}
(wherein, \(|X_{\vartheta}|^2 \lesssim |\omega|^2 + |\zeta|^2\) gives size of the higher order terms).

We elucidate this phase a bit: note that
\begin{equation}\begin{aligned}
X_{\vartheta} - \omega = \frac{i}{2}(\Im Q_0)^{-1}(\zeta + \Re Q_0 \cdot \omega)  - \frac{1}{2} \omega = \bar{X}_{\vartheta} .
\end{aligned}  \nonumber  \end{equation}
Letting \(S := \Re Q_0\) and \(T := \Im Q_0\), this gives
\begin{align*}
\Im\Phi_0(\omega,\zeta;x_0,\xi_0) = &\; \frac{1}{2}\langle T^{-1}\zeta,\zeta \rangle + \frac{1}{2}\langle T^{-1}S\, \omega, \zeta\rangle    \\
    &- \frac{1}{4} \Im\langle T^{-1}Q_0 T^{-1}(\zeta + \bar{Q}_0 \omega), \zeta + \bar{Q}_0 \omega \rangle  \\
    &+ O(|\omega|^2 + |\zeta|^2)^2
\end{align*}
and after expanding fully, we find that the Hessian of \(\Im \Phi_0\) at \((\omega,\zeta)=(0,0)\) is,
\begin{equation}\begin{aligned}
\frac{1}{4}\begin{pmatrix}
T^{-1} & T^{-1} S   \\
S T^{-1} & T + ST^{-1}S
\end{pmatrix} =
\frac{1}{4}\begin{pmatrix}
I & 0   \\
S   & I
\end{pmatrix}
\begin{pmatrix}
T^{-1} & 0  \\
0   & T
\end{pmatrix}
\begin{pmatrix}
I & S   \\
0   & I
\end{pmatrix} .
\end{aligned}  \nonumber  \end{equation}
By the admissibile phase assumption, \(T \sim \langle \xi_0 \rangle I\) at \((\omega,\zeta) = (0,0)\), therefore this decomposition shows that this Hessian \(H_0\) of \(\Im \Phi_0\) is also positive definite, whence, there exists a constant \(C > 1\) such that
\begin{equation}\begin{aligned}
C^{-1} (|\omega|^2 + |\zeta|^2) \leq \Im \Phi_0 = \langle(\omega,\zeta) | H_0 |(\omega,\zeta)\rangle + O(|\omega|^2 + |\zeta|^2)^2 \leq C(|\omega|^2 + |\zeta|^2) .
\end{aligned}  \nonumber  \end{equation}
This proves part (2) with \(H := 2H_0\).

By \citep[Theorem 7.7.12]{hormander_vol_I}, there are differential operators \(L_j\) of order \(2j\) acting in the \(\omega,\zeta\) variables, with coefficients that are dependent only on \(\tilde{\varphi}_0\) such that
\begin{equation}\begin{aligned}
\left|T_h[\psi_h] - e^{\frac{i}{h} \Phi_0(\omega,\zeta;x_0,\xi_0)} \sum_{j=0}^{N-1} \tilde{b}^0 (L_j[a ])^0(\omega,\zeta;x_0,\xi_0;h) h^j \right| \lesssim_N h^{N + n/2}.
\end{aligned}  \nonumber  \end{equation}
Here, \(\tilde{b} := (\det[\partial_{\vartheta}^2 \tilde{\varphi}_0/(2\pi i h)])^{-\frac{1}{2}}\) and (following Hörmander) we denote for a function \(G(x,\xi,y;x_0,\xi_0)\), its reduction modulo \(I_{\tilde{\varphi}_0}\) by \(G^0 \equiv G \pmod{I_{\tilde{\varphi}_0}}\), which can be determined by a change of variables to \(G(\vartheta,\omega,\zeta;x_0,\xi_0;h) := G(x_0 - \omega, \xi_0 - \zeta,x_0-\vartheta;x_0, \xi_0;h)\), followed by Taylor expansion about \((\vartheta,\omega,\zeta) = 0\) and substitution of \(X_{\vartheta}\) for \(\vartheta\). We note that \(\tilde{b}^0 \equiv (h/\pi)^{\frac{n}{2}} [(\det H_0)^{-\frac{1}{2}} + O(|\omega|^2 + |\zeta|^2)]\). Then, the series can be summed, upon taking \(N \to \infty\), to \(h^{-\frac{n}{4}}\) times a smooth function \(c_0(\omega,\zeta;x_0,\xi_0;h)\) of order zero in \(h\) in a neighbourhood of \((\omega,\zeta) = (0,0)\), which is non-vanishing at \(\{(\omega, \zeta,x,\xi, h) ~|~ \omega = \zeta = 0, h = 0 \}\) due to the non-vanishing of \(a\) and the Hessian of \(\tilde{\varphi}_0\) there. After re-labelling the variables to give \(\Phi(x,\xi;x_0,\xi_0) := \Phi_0(\omega,\zeta;x,\xi)\) and \(c(x,\xi,x_0,\xi_0;h) := c_0(\omega,\zeta;x_0,\xi_0;h)\), altogether this brings us to
\begin{equation}\begin{aligned}
T_h[\psi_h] = h^{-\frac{n}{4}} c(x,\xi;x_0,\xi_0;h) e^{\frac{i}{h}\Phi(x,\xi;x_0,\xi_0)} + O(h^{\infty}) .
\end{aligned}  \nonumber  \end{equation}
\end{proof}

\begin{remark} The statement of the theorem expresses the phase and symbol of the result of \(T_h[\psi_h]\) in terms of \((x,\xi;x_0,\xi_0)\), but since we know that the functions are localized to a neighbourhood of \((x_0,\xi_0)\) in the variable \((x,\xi)\), we can just as well switch to local coordinates and write these as functions of \((\omega,\zeta;x_0,\xi_0)\) localized about \((\omega,\zeta) = (0,0)\) as in the proof. This will be useful at times in the forthcoming calculations.

\end{remark}

We now have quite precise information about the \emph{localization} of \(T_h[\psi_h]\), but we are left with the symbol \(c\) as a factor, which persists when trying to access the value of the symbol of a \(\PDO\) at \((x_0, \xi_0) \in T^*\mathcal{M}\) through \(\eqref{eq:expectation-FBI}\); this would later be obstructive to locating geodesic points through operator expectations. To alleviate this, it is sufficient to use \(L^2\)-normalized states \(\psi_h/||\psi_h||\): by \protect\hyperlink{thm:FBI-basic}{Theorem \ref{thm:FBI-basic}} and the Lemma just proved, using \(T_h := T_h[\cdot;\phi,b^{-\frac{1}{2}}]\) gives,
\begin{equation}    \label{eq:norm-coherent-FBI-xi-dep-2}
\langle \psi_h | \psi_h \rangle = \langle\psi_h| T_h^* T_h | \psi_h \rangle + O(h^\infty) = h^{-\frac{n}{2}}||c e^{\frac{i}{h}\Phi}||^2_{(x,\xi)}(x_0,\xi_0;h) + O(h^\infty),
\end{equation}
where \(|| \cdot ||_{(x,\xi)}\) denotes the norm in \(L^2(T^*\mathcal{M})\) in the variables \((x,\xi)\) and the form on the right-hand side leads to the following,

\begin{theorem}[{Symbol from C-S expectation}]	\hypertarget{thm:sym-cs-psido}{\label{thm:sym-cs-psido}} Let \(\phi\) be an admissible phase, and \(\psi_h(x;x_0,\xi_0) := C_h(x_0,\xi_0) e^{-\frac{i}{h} \bar{\phi}(x_0,\xi_0 ; x)}\) be normalized to \(||\psi_h||_{L^2(\mathcal{M})} = 1\). Then, for all \(m \in \mathbb{Z}\) and \(a \in h^0 S^m(T^*\mathcal{M})\),
\begin{equation}\begin{aligned}
\langle \psi_h | \operatorname{Op}_h(a) | \psi_h\rangle = a(x_0,\xi_0;h) + O(h).
\end{aligned}  \nonumber  \end{equation}

\end{theorem}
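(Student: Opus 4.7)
The strategy is to push the expectation through an FBI transform and reduce the problem to a Gaussian integral on phase space that can be evaluated by Laplace's method. Fix $T_h := T_h[\cdot;\phi,b^{-1/2}]$ from Theorem \ref{thm:FBI-basic}, which satisfies $T_h^*T_h - I \in h^{\infty}\Psi^{-\infty}_{\text{phg}}$ and $T_h^* a T_h - \Op_h(a) \in h \Psi^{m-1}$ (the latter uses the remark extending the FBI calculus to $h^0 S^m$). Writing
\begin{equation*}
\langle \psi_h, \Op_h(a)\psi_h \rangle = \langle T_h\psi_h, a\, T_h\psi_h \rangle_{L^2(T^*\mathcal{M})} + \langle \psi_h, r_h \psi_h \rangle_{L^2(\mathcal{M})},
\end{equation*}
with $r_h \in h\Psi^{m-1}$, I claim the remainder is $O(h)$. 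Since $\psi_h = C_h e^{\frac{i}{h}\phi}$ has uniformly bounded semiclassical Sobolev norms $\|\psi_h\|_{H^k_h}$ of all orders (each $h\partial$ applied to the exponential produces an $O(1)$ symbol factor), and $r_h$ is bounded $H^{m-1}_h \to L^2$ with norm $O(h)$, Cauchy--Schwarz bounds the remainder.

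Next, by linearity applied to Lemma \ref{thm:wunsch-zworski},
\begin{equation*}
T_h[\psi_h](x,\xi) = h^{-n/4} C_h\, c(x,\xi;x_0,\xi_0;h)\, e^{\frac{i}{h}\Phi(x,\xi;x_0,\xi_0)} + O(h^\infty),
\end{equation*}
so the Husimi density is $|T_h\psi_h|^2 = h^{-n/2} |C_h|^2 |c|^2 e^{-\frac{2}{h}\Im\Phi} + O(h^\infty)$. The positive-definite quadratic lower bound on $\Im\Phi$ from property (2) of that lemma renders this density Gaussian-like of width $\sqrt{h}$ and concentrated at $(x_0,\xi_0)$. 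The normalization $\|\psi_h\|^2=1$ combined with $T_h^*T_h = I + O(h^\infty)$ forces $\int |T_h\psi_h|^2 \,dx d\xi = 1 + O(h^\infty)$; Laplace's method fixes the prefactor via $|C_h|^2 |c(x_0,\xi_0;x_0,\xi_0;0)|^2\,\pi^{n}/\sqrt{\det H} = 1 + O(h)$.

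Finally, I evaluate
\begin{equation*}
\langle T_h\psi_h, a\, T_h\psi_h\rangle = h^{-n/2}|C_h|^2 \int_{T^*\mathcal{M}} a(x,\xi;h)\,|c|^2 e^{-\frac{2}{h}\Im\Phi}\,dxd\xi + O(h^\infty)
\end{equation*}
by the substitution $\omega := (x - x_0,\xi - \xi_0) = \sqrt{h}\,\eta$, whose Jacobian $h^n$ cancels the $h^{-n/2}|C_h|^2 \sim h^{-n}$ prefactor to leave an $O(1)$ Gaussian integral. The quartic remainder in $\Im\Phi$ contributes only a multiplicative $1 + O(h)$ factor once rescaled; Taylor-expanding $a$ and $|c|^2$ about $(x_0,\xi_0)$, the linear-in-$\eta$ contributions integrate to zero against the even kernel $e^{-\langle H\eta,\eta\rangle}$, while the quadratic remainders contribute $O(h)$. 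Combining with the normalization identity above yields $a(x_0,\xi_0;h) + O(h)$, completing the proof.

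The principal obstacle is bookkeeping the cancellation of the would-be $O(\sqrt{h})$ term: this rests on the fact that $\Im\Phi$ has no cubic correction at $(x_0,\xi_0)$ in the Wunsch--Zworski normal form, together with the $\eta \mapsto -\eta$ symmetry of the leading Gaussian. A secondary subtlety is tracking $h$-dependences through the normalization constant $C_h$ (which scales like $h^{-n/4}$) to ensure the $O(h)$ remainder is truly uniform and not dressed by hidden divergent powers of $h$.
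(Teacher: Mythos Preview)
Your proposal is correct and follows essentially the same route as the paper: pass through the FBI transform $T_h[\cdot;\phi,b^{-1/2}]$ via Theorem~\ref{thm:FBI-basic}, invoke Lemma~\ref{thm:wunsch-zworski} for the explicit Gaussian form of $T_h[\psi_h]$, and reduce both the normalization and the expectation to Laplace-type integrals whose ratio yields $a(x_0,\xi_0)+O(h)$. Your treatment is in fact slightly more explicit than the paper's on two points the paper leaves implicit: the Sobolev bound $\|\psi_h\|_{H^k_h}=O(1)$ controlling the $h\Psi^{m-1}$ remainder when $m>0$, and the parity argument that kills the would-be $O(\sqrt h)$ contribution from the linear Taylor terms of $a$ and $|c|^2$.
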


\begin{proof}

The preceding theorem tells that on Taylor expanding in a neighbourhood of \(x = x_0\), \(\xi = \xi_0\),
\begin{align*}
T_h[\psi_h]^2 &= e^{-\frac{2}{h} \Im \Phi}  \\
    &= e^{-\frac{1}{h}||(x - x_0,\xi - \xi_0)||_H^2}(1 + \sum_{j=1}^{\infty}O(|x - x_0|^4 + |\xi - \xi_0|^4)^j/h^j ).
\end{align*}
This, together with \(\eqref{eq:norm-coherent-FBI-xi-dep-2}\), Taylor expansion of \(|c|^2\) about \((x,\xi) = (x_0,\xi_0)\) and \(d[{(s_{x_0}^{-1})}^* \nu_g](v) = \sqrt{|g_{s_{x_0}^{-1}(v)}|} dv\) (see \citep[Prop C.III.2]{berger1971spectre}) along with \(d\xi \, dx = (d\xi/\sqrt{|g_x|}) \, d\nu_g(x)\) gives,
\begin{align*}
h^{\frac{n}{2}} C_h(x_0,\xi_0)^{-2} &= ||ce^{\frac{i}{h}\Phi}||^2(x_0,\xi_0)    \\
    &= \int |c|^2(x,\xi;x_0,\xi_0;h) e^{-\frac{2}{h}\Im \Phi (x-x_0,\xi - \xi_0)} ~ dxd\xi \\
    &= \int e^{-\frac{1}{h} ||(x - x_0,\xi-\xi_0)||_H^2}[1 + \sum_{j=1}^{\infty} O(|x - x_0|^4 + |\xi - \xi_0|^4)^j/h^j]    \times  \\
    &\quad\quad \times [|c|^2(x_0,\xi_0;x_0,\xi_0;h) + O(|x - x_0| + |\xi - \xi_0|)] ~ dxd\xi   \\
    & = \int_{\mathbb{R}^n} \int_{V_{x_0}} e^{-\frac{1}{h} ||(v,\zeta)||_H^2}[1 + \sum_{j=1}^{\infty} O(|v|^4 + |\zeta|^4)^j/h^j]   \times  \\
    &\quad\quad \times [|c|^2(x_0,\xi_0;x_0,\xi_0;h) + O(|v| + |\zeta|)] \, ~ dv d\zeta + O(h^{\infty}) \\
    &= h^{n} [m_{\Phi}|c|^2(x_0,\xi_0;x_0,\xi_0;h) + O(h)], \\
m_{\Phi} &:= \int_{\mathbb{R}^{2n}} e^{-||(v,\zeta)||^2_H} dvd\zeta .
\end{align*}
Therefore, another application of the preceding theorem and the same method of Taylor expansions gives, for any \(a \in h^0 S^m\),
\begin{align*}
\langle \psi_h|T_h^* a T_h|\psi_h\rangle &= h^{-\frac{n}{2}}\int_{T^*\mathcal{M}} a(x,\xi) \, C_h(x_0,\xi_0)^2 |c|^2(x,\xi;x_0,\xi_0;h) e^{-\frac{2}{h} \Im \Phi(x-x_0,\xi-\xi_0)} ~ dxd\xi \\
    &= h^{-\frac{n}{2}} \int a\; e^{-\frac{2}{h}\Im\Phi} \frac{|c|^2(x_0,\xi_0,x_0,\xi_0;h) + O(|x-x_0| + |\xi - \xi_0|)}{h^{\frac{n}{2}} [m_{\Phi}|c|^2(x_0,\xi_0,x_0,\xi_0;h) + O(h)]} ~ dxd\xi   \\
    &= h^{-n}\int a\; e^{-\frac{2}{h}\Im\Phi}[1/m_{\Phi} + O(|x - x_0| + |\xi - \xi_0|) + O(h)] ~ dxd\xi    \\
    &= h^{-n}\int e^{-\frac{1}{h}||(x - x_0), (\xi - \xi_0)||_H^2}[a(x_0,\xi_0)/m_{\Phi} + O(|x - x_0| + |\xi - \xi_0|) + O(h)]\times   \\
    &   \quad\quad \times \; [1 + \sum_{j=1}^{\infty} O(|x - x_0|^4 + |\xi - \xi_0|^4)^j/h^j] ~ dx d\xi     \\
    &= h^{-n}\int_{\mathbb{R}^{2n}} [a(x_0,\xi_0)/m_{\Phi} + O(|v| + |\zeta|)] \times   \\
        &\quad\quad \times \; e^{-\frac{1}{h}||v,\zeta||_H^2}[1 + \sum_{j=1}^{\infty} h^{-j} O(|v|^4 + |\zeta|^4)^j][1 + O(|v| + |\zeta|) + O(h)]   \, dv d\zeta    \\
    &= a(x_0,\xi_0) + O(h).
\end{align*}

The domains of integration have been localized and expanded (as in the fourth equality) at will, due to the fact that a symbol \(a \in h^0 S^m\) has at most polynomial growth in the cotangent fibres, so \(|T_h[\psi_h]|^2\) localizes the integrand to an \(O(\sqrt{h})\) ball about \((x_0, \xi_0) \in T^*\mathcal{M}\). On application of \protect\hyperlink{thm:FBI-basic}{Theorem \ref{thm:FBI-basic}}, we have the last part of the statement of the Theorem.
\end{proof}

Having seen the utility of the function \(\psi_h\), we now dignify it with its own,

\begin{definition}[{Coherent States}]	\hypertarget{def:coherent-state}{\label{def:coherent-state}} With \(\phi\) an admissible phase and \(h > 0\), we call \(\psi_h(x ; x_0, \xi_0) := C_h(x_0,\xi_0) e^{-\frac{i}{h} \bar{\phi}(x_0,\xi_0 ; x)}\), with \(C_h := ||e^{\frac{i}{h} \phi}||_{L^2(\mathcal{M})}\), a \emph{coherent state} \emph{localized at} \((x_0,\xi_0) \in T^*\mathcal{M}\), or simply a \emph{coherent state} for brevity. At times, we will also use the \emph{un-normalized coherent state}, \(\tilde{\psi}_h := e^{-\frac{i}{h} \bar{\phi}(x_0, \xi_0 ; x)}\).

\end{definition}

\hypertarget{state-preparation}{%
\subsection{State preparation}\label{state-preparation}}

We discuss briefly some considerations in practically constructing coherent states. At the outset, when the isometry \(\iota : \mathcal{M} \to \mathbb{R}^{\rdim}\) is available, a simple prescription for a phase is \(\phi_{\iota} := \langle \Pi_{\iota}^T(x) \cdot \xi, \iota(x) - \iota(y) \rangle_{\mathbb{R}^{\rdim}} + \frac{i}{2} \frac{\langle \xi \rangle}{\langle \xi_0 \rangle} |\iota(y) - \iota(x)|_{\mathbb{R}^{\rdim}}^2\), with \(\Pi_{\iota}(x) \cong D\iota|_x^T : \mathbb{R}^{\rdim} \supset T^*_{\iota(x)}\embedM \to T^*_x \mathcal{M} \cong \mathbb{R}^{\mdim}\) and a fixed \((x_0, \xi_0) \in T^*\mathcal{M}\). Then,
\begin{equation}\begin{aligned}
D\phi_{\iota}|_{y = x} = -\xi \, dx, \quad\quad D^2 \Im(\phi)|_{y = x} = \frac{\langle \xi \rangle}{\langle \xi_0 \rangle} D\iota|_x^T D\iota|_x \sim \langle \xi \rangle ,
\end{aligned}  \nonumber  \end{equation}
so \(\phi_{\iota}\) is an admissible phase and \(\phi_{\iota}(x_0, \xi_0 ; y) = \langle \Pi_{\iota}^T(x_0) \cdot \xi_0, \iota(x_0) - \iota(y) \rangle_{\mathbb{R}^{\rdim}} + \frac{i}{2} |\iota(y) - \iota(x_0)|^2_{\mathbb{R}^{\rdim}}\). Hence, if \(\psi_{\iota,h}\) is a coherent state localized at \((x_0, \xi_0)\) with phase \(\phi_{\iota}\), then for any \(a \in h^0 S^{m}\) we have by \protect\hyperlink{thm:sym-cs-psido}{Theorem \ref{thm:sym-cs-psido}},
\begin{equation}\begin{aligned}
\langle \psi_{\iota,h} | \Op_h(a) | \psi_{\iota,h} \rangle = a(x_0, \xi_0 ; h) + O(h).
\end{aligned}  \nonumber  \end{equation}
The vector \(\xi_{\iota,0} := \Pi_{\iota}^T(x_0) \cdot \xi_0\) is just the coordinate representation of \(\xi_0\) in the hyperplane tangent to \(\embedM\) with center \(\iota(x_0)\). If normal coordinates are placed at \(x_0\) and \(x_*\) is in a normal neighbourhood of \(x_0\), then we understand that \(\xi_0 := s_{x_0}(x_*)/|s_{x_0}(x_*)|_{\mathbb{R}^{\mdim}}\) is the direction in which a geodesic originating at \(x_0\) must emanate, to reach \(x_*\). Then, given \(\iota(x_*) \in \embedM\) in a small neighbourhood of \(\iota(x_0)\), a reasonable approximation to \(\xi_{\iota,0}\) is given by
\begin{align*}
\xi_* := \frac{\iota(x_*) - \iota(x_0)}{|\iota(x_*) - \iota(x_0)|_{\mathbb{R}^{\rdim}}} &= D\iota|_{x_0} \cdot \frac{s_{x_0}(x_*)}{|s_{x_0}(x_*)|_{\mathbb{R}^{\mdim}}} + O(|\iota(x_*) - \iota(x_0)|_{\mathbb{R}^{\rdim}}) \\
    &= \Pi_{\iota}^T(x_0) \cdot \xi_0 + O(|\iota(x_*) - \iota(x_0)|_{\mathbb{R}^{\rdim}}) ,
\end{align*}
wherein the first equality follows from \protect\hyperlink{lem:ext-normal-coords}{Lemma \ref{lem:ext-normal-coords}} and Taylor's theorem upon expanding \(\iota \circ s_{x_0}^{-1}\) in a neighbourhood of the origin. Now, supposing \(|\iota(x_*) - \iota(x_0)|_{\mathbb{R}^{\rdim}} \lesssim h^{\frac{\mdim}{4} + 2}\) and setting \(\hat{\phi}_{\iota} := \langle \xi, \iota(x) - \iota(y) \rangle_{\mathbb{R}^{\rdim}} + \frac{i}{2} |\iota(y) - \iota(x)|_{\mathbb{R}^{\rdim}}^2\), another application of Taylor's theorem gives,
\begin{align*}
e^{-\frac{i}{h} \bar{\hat{\phi}}_{\iota}(x_0, \xi_* ; x)}
    &= e^{-\frac{i}{h} [\Re(\hat{\phi}_{\iota})(x_0,\xi_{\iota,0} + O(h^2); x) - i\Im(\phi_{\iota})(x_0,\xi_0;x)]}  \\
    &= e^{-\frac{i}{h} \bar{\phi}_{\iota}(x_0, \xi_{\iota,0} ; x)} + O_{L^{\infty}}(h^{\frac{\mdim}{4} + 1}) ,
\end{align*}
wherein we've expanded in a neighbourhood of \(\xi_{\iota,0}\) in increments of the error term coming from the Taylor expansion of \(\xi_*\). Then, \(\hat{\psi}_{\iota,h} := e^{-\frac{i}{h} \bar{\hat{\phi}}_{\iota}(x_0, \xi_* ; x)} / ||e^{\frac{i}{h} \hat{\phi}_{\iota}(x_0, \xi_* ; \cdot)}||_{L^2}\) satisfies \(||\hat{\psi}_{\iota,h} - \psi_{\iota,h}||_{\infty} = O(h)\), from which we recover,
\begin{equation}\begin{aligned}
\langle \hat{\psi}_{\iota,h} | \Op_h(a) | \hat{\psi}_{\iota,h} \rangle = a(x_0, \xi_0 ; h) + O(h) .
\end{aligned}  \nonumber  \end{equation}
This means that we can use points in a sufficiently small neighbourhood of \(\iota(x_0)\) to determine unit momentum vectors at which to recover, to order \(O(h)\), the value of the symbol of a given \(\PDO\).

The specifications of an admissible phase make coherent states particularly amenable to prescription in local coordinate patches. Let \(u : \mathcal{M} \supset \mathscr{O} \to V \subset \mathbb{R}^{\mdim}\) be a diffeomorphism providing local coordinates in a neighbourhood \(\mathscr{O}\) about \(x_0 \in \mathcal{M}\). Then, for a fixed \(\xi_0 \in T^*_{x_0}\mathcal{M}\), a simple construction of a phase is of the sort,
\begin{equation}\begin{aligned}
\phi_u(x, \xi ; y) := \langle Du|_x^{-T}\xi, u(x) - u(y) \rangle_{\mathbb{R}^{\mdim}} + \frac{i}{2} \frac{\langle \xi \rangle}{\langle \xi_0 \rangle}|u(y) - u(x)|^2_{\mathbb{R}^{\mdim}}
\end{aligned}  \nonumber  \end{equation}
and we have,
\begin{equation}\begin{aligned}
D_y \phi_u|_{y = x} = -\xi, \quad\quad D_y^2 \Im(\phi_u)|_{y = x} = \frac{\langle \xi \rangle}{\langle \xi_0 \rangle} Du|_x^T Du|_x ,
\end{aligned}  \nonumber  \end{equation}
so \(\phi_u\) is admissible in \(\mathscr{O}\). Also suppose \(\chi \in C^{\infty}\) is a cut-off in \(\supp \chi \subset \mathscr{O}\) such that \(\chi \equiv 1\) on \(\overline{\mathscr{O}}_0 \subset \mathscr{O}\) with \(\mathscr{O}_0\) an open neighbourhood of \(x_0\). Letting \(\psi_{u,h}\) be the coherent state localized at \((x_0, \xi_0)\) with phase \(\phi_u\), we find that \protect\hyperlink{thm:wunsch-zworski}{Lemma \ref{thm:wunsch-zworski}} holds also for \(T_h[\chi \psi_{u,h}]\) since \(\chi\) cuts off the integration domain, with a slightly different symbol \(c\) that still satisfies all of the properties stated in the Lemma. Further, \(||\psi_{u,h} \chi||_{L^2} = 1 + O(h^{\infty})\), so by \protect\hyperlink{thm:sym-cs-psido}{Theorem \ref{thm:sym-cs-psido}}, we have,
\begin{equation} \label{eq:cs-local-coord-sym}
\langle \psi_{u,h} \chi| \Op_h(a) | \chi \psi_{u,h} \rangle = a(x_0,\xi_0) + O(h).
\end{equation}

\hypertarget{from-graph-laplacians-to-geodesic-flows}{%
\section{From graph Laplacians to geodesic flows}\label{from-graph-laplacians-to-geodesic-flows}}

As discussed in the \protect\hyperlink{introduction}{Introduction}, there is now a wide collection of results on the convergence of graph Laplacians to Laplace-Beltrami operators (modulo lower order terms) on manifolds. These take the following form: starting from a collection of \(N\) samples, they approximate \(\GLap_{\lambda,\epsilon}\) in the \(N \to \infty\) limit and in turn, \(\GLap_{\lambda,\epsilon}\) approximates \(\Delta_{\mathcal{M}} + O(\partial^1)\) in the sense that for each \(u \in C^{\infty}\), \(||\GLap_{\epsilon}u - [\Delta_{\mathcal{M}} + O(\partial^1)]u||_{\infty} = O(\epsilon)\) (we denote by \(O(\partial^1)\) a partial differential operator of order at most one). These are results along the lines of those displayed in \protect\hyperlink{laplacians-from-graphs-to-manifolds}{Section \ref{laplacians-from-graphs-to-manifolds}} and they are closely related to diffusion processes \citep{nadler2006diffusion}. We now wish to bring a \emph{quantum} perspective to graph Laplacians. Methods of semi-classical analysis enable us to instantiate a quantum-classical correspondence via semiclassical \(\Psi\)DOs. Namely, as discussed in \protect\hyperlink{quantization-and-symbol-classes}{Section \ref{quantization-and-symbol-classes}}, a \emph{quantization procedure} assigns a linear operator \(\operatorname{Op}_h(a) : C^{\infty}(\mathcal{M}) \to C^{\infty}(\mathcal{M})\) to a function \(a \in C^{\infty}(T^*\mathcal{M} \times (0, h_0])\) (for some \(h_0 < 1\)) belonging to a class of \emph{symbols} (\emph{classical observables}); these are, roughly speaking, functions that have controlled variation in regions of phase space \(T^*\mathcal{M}\) of unit volume, uniformly with scaling the unit length in each fibre \(T_x^*\mathcal{M}\) as \(1/h\) for \(h \in (0, h_0]\). If a symbol \(q\) is real-valued, then we may treat it as a Hamiltonian that generates a flow \(\Flow_q^t\) with respect to time \(|t| \leq T\) on \(T^*\mathcal{M}\). By Egorov's theorem, we can relate, or \emph{quantize}, the Hamiltonian flow \(\Flow_q^t\) with respect to a real-valued symbols \(q\) to \emph{Heisenberg dynamics} \(A(t) := U_q^{-t} A U_q^t\) of \(A := \Op_h(a)\) for \(U_q^t := e^{-\frac{i}{h} t \operatorname{Op}_h(q)}\), in the sense that the theorem gives conditions on \(q\) and \(a\) such that \(A(t)\) has principal symbol \(a \circ \Flow_q^t\). In particular, using \(q := |\xi|_x\) gives the quantization of the geodesic flow \(\Gamma^t\). Our first aim is to give light to the conjugate flow on \(\Psi\)DOs by \(U_{\lambda,\epsilon}^t := e^{-i t \sqrt{\Delta_{\lambda,\epsilon}}}\) as the quantization of a flow on symbols \(a \circ \Flow^t_q\) for an appropriate \(q\). To this end, we will find that when \(\epsilon = h^{2 + \alpha}\) with \(\alpha > 0\), we have \(h^2 \GLap_{\lambda,\epsilon} = \operatorname{Op}_h(q_{\lambda,\alpha})\) with \(q_{\lambda,\alpha}\) a symbol of \emph{order two} and to order\footnote{We write for \(a \in C^{\infty}\) that \(a = O_{\mathscr{S}}(h)\) whenever there is a constant \(h_0 > 0\) such that for each \(\beta\), there is a constant \(C_{\beta} > 0\) such that for \(h \in [0, h_0)\), \(|\partial^{\beta} a| \leq C_{\beta} h\).} \(O_{\mathscr{S}}(h)\), this agrees with \(|\xi|_x^2\) compact neighbourhoods of the origin in \(T^*\mathcal{M}\) whenever \(\alpha > 1\). In terms of quantization, the scaling \(\epsilon = h^{2 + \alpha}\) has a simple interpretation: since the kernel \(k(x,y;\epsilon)\) is localized to \(\sqrt{\epsilon}\)-balls on \(\mathcal{M}\), the corresponding symbol must be localized to an \(h/\sqrt{\epsilon}\)-ball in phase space, which is above unit volume (\emph{viz.}, the uncertainty principle) when \(\epsilon \in o(h^2)\).

The next task is to extract the geodesic flow: once we have quantized it in a region of phase space about a cosphere bundle \(S_r^*\mathcal{M}\) for \(r > 0\), we can use coherent states localized at \((x_0, \xi_0) \in S_r^*\mathcal{M}\) to retreive \(a \circ \Gamma^t(x_0, \xi_0)\) up to \(O(h)\) error. This works because the localization of a coherent state restricts the action of \(\operatorname{Op}_h(a)\) to that of another whose symbol agrees with \(a\) on an \(O(\sqrt{h})\) neighbourhood of \((x_0, \xi_0)\) and vanishes outside this. Since, as we will see, \(q_{\lambda,\alpha} = |\xi|_x^2 + O_{\mathscr{S}}(h)\) on a fixed size neighbourhood of \(S_r^*{\mathcal{M}}\), this allows us to reduce to \(U_{\lambda,\epsilon}^{-t} \Op_h(a) U_{\lambda,\epsilon}^t | \psi_h \rangle = U_{|\xi|\chi}^{-t} \Op_h(a) U_{|\xi|\chi}^t |\psi_h \rangle + O_{L^2}(h)\) with \(\chi\) an appropriate cut-off equal to one in part of the neighbourhood. Tying this together with Egorov's theorem and the results of \protect\hyperlink{semi-classical-measures-of-coherent-states}{Section \ref{semi-classical-measures-of-coherent-states}} leads to the desired propagation result: \(\langle \psi_h | U_{\lambda,\epsilon}^{-t} \operatorname{Op}_h(a) U_{\lambda,\epsilon}^t | \psi_h \rangle = a \circ \Gamma^t(x_0, \xi_0) + O(h)\).

\hypertarget{symbol-of-a-graph-laplacian}{%
\subsection{Symbol of a graph Laplacian}\label{symbol-of-a-graph-laplacian}}

We now compute the symbol of a graph Laplacian operator. An important aspect of this is that in order to \emph{see} the symbol of a graph Laplacian, the quantization must be done at lengths \(h\) \emph{above} the \emph{sampling density} \(\sqrt{\epsilon}\). We proceed with an \emph{intrinsic} version of the averaging operator, which readily exposes this basic facet of the application of semiclassical analysis: namely, define
\begin{equation} \label{eq:intrinsic-diffop}
\AvOp_{g,\epsilon} : C^{\infty} \ni u \mapsto \epsilon^{-\frac{n}{2}} \int_{\mathcal{M}} k(d_g(\cdot,y)^2/\epsilon) u(y) ~ p(y) d\nu_g(y) \in C^{\infty} .
\end{equation}
This operator takes on the form \(\eqref{def:op-quantization}\) in the following way,
\begin{align*}
\AvOp_{g,\epsilon}&[u](x)   \\
    &\equiv \int_{\mathbb{R}^{\mdim}} \int_{\mathbb{R}^{\mdim}} e^{-\frac{i}{h} v \cdot \xi} \mathcal{F}_h^{-1}[k(|z|^2/\epsilon)]_{z \to \xi}(\xi) \, (u \cdot p \cdot \chi_x) \circ s_x^{-1}(v) \sqrt{|g_{s_x^{-1}(v)}|} ~ d\xi dv    \\
    &\equiv \frac{1}{(2 \pi h)^{\mdim}} \int_{\mathcal{M}} \int_{T_x^* \mathcal{M}} e^{-\frac{i}{h} \langle s_x(y), \xi \rangle_{g_x}} \left( \epsilon^{-\frac{\mdim}{2}} \int_{T_x^*\mathcal{M}} e^{\frac{i}{h} \langle z, \xi \rangle_{g_x}} k(|z|^2_{g_x}/\epsilon) \frac{dz}{\sqrt{|g_x|}} p(y) \right) \\
    &\quad\quad \times u(y) \chi_x(y) ~ \frac{d\xi}{\sqrt{|g_x|}} d\nu_g(y) \\
    &\equiv \frac{1}{(2 \pi h)^{\mdim}} \int_{T^*\mathcal{M}} e^{\frac{i}{h} \langle s_y(x), \eta \rangle_{g_y}} \left( \epsilon^{-\frac{\mdim}{2}} \int_{T_y^*\mathcal{M}} e^{\frac{i}{h} \langle \zeta, \eta \rangle_{g_y}} k(|\zeta|^2_{g_y}/\epsilon) \frac{d\zeta}{\sqrt{|g_y|}} p(y) \right) u(y) \chi_x(y) dyd\eta   \\
  &\equiv \frac{1}{(2 \pi h)^{\mdim}} \int_{T^*\mathcal{M}} e^{\frac{i}{h} \langle s_y(x), \eta \rangle_{g_y}} H_{g,h,\epsilon}(y,\eta) u(y) \chi_x(y) ~ d\eta dy
\end{align*}
with
\begin{align} \label{eq:sym-intrinsic-diff-op}
\begin{split}
H_{g,h,\epsilon}(x,\xi) &:= \epsilon^{-\frac{\mdim}{2}} \int_{T_x^*\mathcal{M}} e^{\frac{i}{h} \langle \zeta,\xi \rangle_{g_x}} k(|\zeta|^2_{g_x}/\epsilon) ~ \frac{d\zeta}{\sqrt{|g_x|}} ~ p(x)    \\
    & = \epsilon^{-\frac{\mdim}{2}} \int_{\mathbb{R}^{\mdim}} e^{\frac{i}{h}\langle \zeta, \xi \rangle} k(|g_x^{\frac{1}{2}} \zeta|^2/\epsilon) ~ d\zeta \sqrt{|g_x|} \, p(x)
\end{split}
\end{align}
and wherein \(\equiv\) denotes equality up to a term of order \(O(\epsilon^{\infty}) \in C^{\infty}\), \(\chi_x(y)\) is a smooth cut-off within the normal neighbourhood of \(x\) such that \(\chi_x(y) = 1\) on a fixed, open neighbourhood of \(x\), \(\mathcal{F}_h^{-1}[\cdot(z)](\xi)\) is the inverse \(h\)-Fourier transform in \(\mathbb{R}^{\mdim}\) with kernel \((2\pi h)^{-\mdim} e^{\frac{i}{h} z \cdot \xi}\) and we have used the parallel transport operator \(\mathcal{T}_{y \to x} : T^*_y\mathcal{M} \to T^*_x\mathcal{M}\) along the unit speed geodesic from \(y\) to \(x\) to make the changes of variables \(\eta := \mathcal{T}_{y \to x} \xi\) and \(\zeta := \mathcal{T}_{y \to x} z\). The features of parallel transport used here are that it is an isometry with \(\mathcal{T}_{y \to x}^* = \mathcal{T}_{x \to y}\), \(\mathcal{T}_{y \to x} s_y(x) = -s_x(y)\) and \(d\mathcal{T}_{x \to y}(\cdot) = (|g_x|/|g_y|)^{\frac{1}{2}} d(\cdot)\). The fact that \(d[{(s_x^{-1})}^* \nu_g](v) = \sqrt{|g_{s_x^{-1}(v)}|} dv\) is another consequence of parallel transport we've used and its justification can be found in \citep[Prop C.III.2]{berger1971spectre}.

The formal symbol \(H_{g,h,\epsilon}\) has
\begin{align*}
\langle \xi \rangle^{|\beta|-m} & |\partial_x^{\gamma}\partial_{\xi}^{\beta} H_{g,h,\epsilon}|  \\
    &= \langle \xi \rangle^{|\beta|-m} \left|\partial_x^{\gamma} \; \int_{\mathbb{R}^{\mdim}} (i \epsilon^{\frac{1}{2}}/h)^{|\beta|} z^{\beta} e^{i \frac{\sqrt{\epsilon}}{h}\langle z, \xi \rangle} k(|g_x^{\frac{1}{2}}z|^2) ~ dz ~ |g_x^{\frac{1}{2}}| p(x) \right|  \\
    &= \left( \frac{\epsilon}{h^2} \right)^{\frac{m}{2}}\left| \int_{\mathbb{R}^{\mdim}} e^{i\frac{\sqrt{\epsilon}}{h}\langle z, \xi \rangle} (\epsilon/h^2 + \Delta_z)^{\frac{|\beta|-m}{2}}\left( z^{\beta} \, \partial_x^{\gamma}[k(|g_x^{\frac{1}{2}} z|^2) |g_x^{\frac{1}{2}}| p(x)] \right) dz \right|,
\end{align*}
which is uniformly bounded in \((0, 1]_h \times T^*\mathcal{M}\) for \(\epsilon = h^{2 + \alpha}\) with \(\alpha \geq 0\) and each \(m, |\gamma|, |\beta| \geq 0\), while taking \(\beta = 0\) with \(m < 0\) and evaluating the right-hand side at \(\xi = 0\) shows that this quantity grows as \(\Omega(h^{m\alpha/2})\) whenever \(\alpha > 0\) and is uniformly bounded when \(\alpha = 0\). Thus, \(H_{g,h^{\alpha}} := H_{g,h,h^{2 + \alpha}} \in h^0 S^0\) for \(\alpha > 0\) and \(H_{g,h^0} \in h^0 S^{-\infty}\). On the other hand if \(\alpha < 0\), then for any \(m > 0, \delta < 0\), taking \(|\beta| = m\) gives
\begin{align*}
\sup_{(x,\xi) \in T^*\mathcal{M}} & h^{\delta} |\partial_{\xi}^{\beta} H_{g,h,h^{2+\alpha}}| \gtrsim_p h^{\alpha \frac{m}{2} + \delta} \left|\int_{\mathbb{R}^n} z^{\beta} k(|g_x^{\frac{1}{2}} z|^2) ~ dz\right| \gtrsim_{p, \beta} h^{\alpha \frac{m}{2} + \delta} ,
\end{align*}
hence \(H_{g,h,h^{2 + \alpha}} \not\in h^k S^m\) for any \(k, m \in \mathbb{Z}\). This shows,

\begin{lemma}	\hypertarget{lem:intrinsic-diffusion-is-psido}{\label{lem:intrinsic-diffusion-is-psido}} Let \(\epsilon, h \in (0,1]\), \(\AvOp_{g,\epsilon}\) be given by \(\eqref{eq:intrinsic-diffop}\) and \(H_{g,h,\epsilon}\) be given by \(\eqref{eq:sym-intrinsic-diff-op}\). Then, \(\AvOp_{g,\epsilon}\) with \(\epsilon = h^{2 + \alpha}\) for \(\alpha \in \mathbb{R}\) is a \(\Psi\text{DO}\) if and only if \(\alpha \geq 0\), in which case \(\AvOp_{g,h^{2 + \alpha}} = \operatorname{Op}_h(H_{g,h^\alpha})\) with \(H_{g,h^{\alpha}} := H_{g,h,h^{2 + \alpha}} \in h^0 S^0\) for \(\alpha > 0\) and \(H_{g,h^{\alpha}} \in h^0 S^{-\infty}\) whenever \(\alpha = 0\).

\qed

\end{lemma}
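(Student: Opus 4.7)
The plan is to harvest the oscillatory-integral representation and the derivative estimates that were already sketched in the discussion preceding the statement, then organize them into an "if and only if" argument about membership in the semiclassical symbol classes.

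First I would record the representation of $\AvOp_{g,\epsilon}$ in the canonical adjoint-quantization form. This is already done in the excerpt via two changes of variables: the pullback through $\exp_x$ to $T_x\mathcal{M}$, followed by parallel transport $\mathcal{T}_{y\to x}$ to transfer the fibre integration variable from $T_x^*\mathcal{M}$ to $T_y^*\mathcal{M}$. The localizer $\chi_x$ supported in a normal neighbourhood of $x$ introduces only $O(h^\infty)$ and $O(\epsilon^\infty)$ remainders (from the essential support of $k$ and the Schwartz decay of the Fourier transform off the diagonal), so on this principal part we may identify $\AvOp_{g,\epsilon}$ with $\Op_h(H_{g,h,\epsilon})$ in the sense of the adjoint quantization (equation \eqref{def:op-quantization}).

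Next I would check that $H_{g,h^\alpha}$ satisfies the $h^0 S^0$ bounds when $\alpha > 0$, and the rapidly decaying $h^0 S^{-\infty}$ bounds when $\alpha = 0$. After the rescaling $\zeta = \sqrt{\epsilon}\,z$ already written out in the excerpt, $H_{g,h,\epsilon}(x,\xi)$ becomes the Fourier transform at frequency $\sqrt{\epsilon}\,\xi/h$ of the Schwartz-class function $z \mapsto k(|g_x^{1/2}z|^2)\sqrt{|g_x|}\,p(x)$. Each $\partial_\xi^\beta$ brings out $(i\sqrt{\epsilon}/h)^{|\beta|} z^\beta$, while $\partial_x^\gamma$ acts smoothly on the $x$-dependent integrand, preserving Schwartz class in $z$. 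Integration by parts with $(1-\Delta_z)^k$, which exchanges powers of $\sqrt{\epsilon}|\xi|/h$ for $z$-derivatives of the amplitude, then yields the uniform bound
\begin{equation*}
\bigl|\partial_x^\gamma \partial_\xi^\beta H_{g,h,\epsilon}(x,\xi)\bigr| \;\lesssim_{k,\gamma,\beta}\; \bigl(\tfrac{\sqrt{\epsilon}}{h}\bigr)^{|\beta|}\bigl(1 + \tfrac{\epsilon}{h^2}|\xi|^2\bigr)^{-k}.
\end{equation*}
Choosing $k = \lceil |\beta|/2 \rceil$ gives $\min\bigl((\sqrt{\epsilon}/h)^{|\beta|},\langle\xi\rangle^{-|\beta|}\bigr)$ up to constants, which for $\alpha \geq 0$ (so $\sqrt{\epsilon}/h \leq 1$) is bounded by $\langle\xi\rangle^{-|\beta|}$ uniformly in $h$; taking $k$ arbitrarily large in the case $\alpha = 0$ yields the $S^{-\infty}$ bounds since then $\epsilon/h^2 = 1$ absorbs no $h$-powers.

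For the converse, the cleanest route is to exhibit growth at $\xi = 0$. Direct differentiation under the integral (no integration by parts) gives, for any multi-index $\beta$ with all even components,
\begin{equation*}
\partial_\xi^\beta H_{g,h,\epsilon}(x,0) \;=\; (i\sqrt{\epsilon}/h)^{|\beta|} \int_{\mathbb{R}^{\mdim}} z^\beta k(|g_x^{1/2}z|^2)\,dz \cdot \sqrt{|g_x|}\,p(x),
\end{equation*}
and the integral is nonzero by the positivity of $k$ near the origin. This equals a nonzero constant times $h^{\alpha|\beta|/2}$. For $\alpha < 0$ and $|\beta|$ chosen large enough that $\alpha|\beta|/2$ beats any fixed $\delta \in \mathbb{N}_0$, this contradicts the uniform bound $|\partial_\xi^\beta H| \lesssim h^\delta \langle\xi\rangle^{m-|\beta|}$ required for $h^\delta S^m$ membership (for any $m \in \mathbb{Z}$, since the bound is evaluated at $\xi = 0$). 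Hence $H_{g,h,h^{2+\alpha}} \notin h^\delta S^m$ for any $\delta \in \mathbb{N}_0,\, m \in \mathbb{Z}$, which by the definition of a pseudodifferential operator rules out $\AvOp_{g,h^{2+\alpha}} \in h^\delta \Psi^m$.

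The main obstacle, such as there is one, is bookkeeping rather than conceptual: tracking that the $O(h^\infty)$ contributions from $\chi_x$ and the essentially-supported tail of $k$ do not spoil the symbol-class membership and do fit into the $h^\infty \Psi^{-\infty}$ residual that the definition of a $\PDO$ on $\mathcal{M}$ tolerates. Once that is in hand, everything else is a reorganization of the estimates already produced in the preceding text.
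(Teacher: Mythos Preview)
Your proposal is correct and follows essentially the same approach as the paper: the paper's proof is precisely the discussion preceding the lemma (note the \qed after the statement), which establishes the $\Op_h$ representation via the exponential map and parallel transport, then verifies the $h^0 S^0$ (resp.\ $h^0 S^{-\infty}$) bounds by an integration-by-parts argument in $z$, and rules out $\alpha<0$ by evaluating $\partial_\xi^\beta H$ at $\xi=0$. Your variant using $(1-\Delta_z)^k$ in place of the paper's $(\epsilon/h^2+\Delta_z)^{(|\beta|-m)/2}$ is a cosmetic difference yielding the same estimates.
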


The \emph{extrinsically defined} averaging operator \(\AvOp_{\epsilon}\) given by \(\eqref{def:averaging-op}\) has small deformations from an isotropic kernel in normal coordinates that must be accounted for in its description as a \(\PDO\). Even so, the regime for \(\epsilon\) and \(h\) giving pseudodifferentiality is the same as in the intrinsic case and the extrinsic deviations only give lower-order terms in the symbol: this we now see in,

\begin{lemma}	\hypertarget{lem:averaging-op-is-psido}{\label{lem:averaging-op-is-psido}} Let \(\epsilon, h \in (0, 1]\). Then, \(\AvOp_{\epsilon}\) with \(\epsilon = h^{2 + \alpha}\) for \(\alpha \in \mathbb{R}\) is a \(\PDO\) if and only if \(\alpha \geq 0\), in which case \(\AvOp_{h^{2 + \alpha}} \equiv \Op_h(H_{g,h^{\alpha}}) \pmod{h^{\ell} \Psi^{-m}}\) with \((\ell, m) = (2 + \alpha(1 - m/2), m)\).

\end{lemma}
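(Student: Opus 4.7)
The strategy is to reduce to the intrinsic case of Lemma \ref{lem:intrinsic-diffusion-is-psido} by Taylor--expanding $k$ about the intrinsic squared distance. By Lemma \ref{lem:ext-normal-coords}, in a normal neighbourhood of $x$ we may write $|\iota(x) - \iota(y)|^2 = d_g(x,y)^2 + R_\iota(x,y)$ with $R_\iota(x,y) = O(d_g(x,y)^4)$ smooth; outside such a neighbourhood, both kernels are exponentially small in $1/\epsilon$ owing to the essential decay of $k$ together with the positive lower bound $\kappa$ on extrinsic distances from the \hyperlink{assumptions}{Assumptions}. A Taylor expansion in the second slot of $k$ then gives, modulo $O(\epsilon^\infty)$,
\[
k\!\left(\tfrac{|\iota(x)-\iota(y)|^2}{\epsilon}\right) = k\!\left(\tfrac{d_g(x,y)^2}{\epsilon}\right) + \sum_{j=1}^{J} \frac{k^{(j)}(d_g(x,y)^2/\epsilon)}{j!} \left(\tfrac{R_\iota(x,y)}{\epsilon}\right)^j + r_J(x,y;\epsilon),
\]
which decomposes $\AvOp_\epsilon = \AvOp_{g,\epsilon} + \sum_{j=1}^{J} \AvOp_{\epsilon,j} + \AvOp_{\epsilon,\mathrm{rem}}$.

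Each correction $\AvOp_{\epsilon,j}$ is handled by the same normal--coordinate and parallel--transport computation that produced \eqref{eq:sym-intrinsic-diff-op}; its symbol takes the form
\[
H_{g,h,\epsilon,j}(x,\xi) = \frac{\epsilon^{-n/2}}{j!}\int e^{\frac{i}{h}\langle\zeta,\xi\rangle_{g_x}} k^{(j)}(|\zeta|_{g_x}^2/\epsilon)\, \tilde R_{\iota,j}(x,\zeta)\, \sqrt{|g_x|}\, p(x)\, d\zeta,
\]
with $\tilde R_{\iota,j}(x,\zeta) = O(|\zeta|^{4j})$ smooth near $\zeta = 0$. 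Rescaling $\zeta = \sqrt{\epsilon}\, z$ turns $\tilde R_{\iota,j}$ into $\epsilon^j$ times a polynomial in $z$ and preserves the Schwartz character of $k^{(j)}(|\cdot|^2)$, so the resulting oscillatory integral is $\epsilon^j = h^{(2+\alpha)j}$ times a Schwartz function of the scaled momentum $\sqrt{\epsilon}\xi/h$, and each $\xi$--derivative brings an extra factor $\sqrt{\epsilon}/h = h^{\alpha/2}$. Exactly as in Lemma \ref{lem:intrinsic-diffusion-is-psido} these estimates qualify as classical symbol bounds only when $\alpha \geq 0$; conversely, when $\alpha < 0$ the leading intrinsic symbol already violates the $h^k S^m$ estimates and the $\epsilon^j$ factors in the corrections cannot compensate for this, yielding the ``only if'' direction.

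To obtain the residue order $(\ell, m) = (2 + \alpha(1 - m/2),\, m)$, fix $m \geq 0$ and convert the Schwartz decay $\langle \sqrt{\epsilon}\xi/h \rangle^{-N}$ in scaled momentum to $\langle\xi\rangle^{-N}$ in unscaled momentum: on the region $|\xi| \gtrsim h/\sqrt{\epsilon}$ this costs a factor $(h/\sqrt{\epsilon})^N = h^{-N\alpha/2}$, while on the complementary region both quantities are comparable. Combined with the $h^{(2+\alpha)j}$ prefactor, the $j$--th correction lies in $h^{(2+\alpha)j - m\alpha/2}\,\Psi^{-m}$. The case $j=1$ realises precisely $h^{2 + \alpha(1 - m/2)}\,\Psi^{-m}$, all subsequent $j \geq 2$ lie in strictly higher $h$--orders (since $2+\alpha > 0$), and taking $J$ sufficiently large depending on $m$ absorbs the Taylor remainder $\AvOp_{\epsilon,\mathrm{rem}}$ into the same residue class.

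The main obstacle is this two--parameter bookkeeping. The class $h^{\ell} S^{-m}$ simultaneously demands polynomial decay $\langle\xi\rangle^{-m-|\beta|}$ and uniform $h^{\ell}$ bounds, whereas the natural scale on which the correction symbols are Schwartz is $\sqrt{\epsilon}\xi/h$: each unit of Schwartz decay traded for a unit of $\langle\xi\rangle^{-1}$ decay is paid for by $h^{-\alpha/2}$, while each $\xi$--differentiation is offset by $+h^{\alpha/2}$. Carefully tracking these opposing contributions, for each fixed $m$, is what produces the exponent $\ell = 2 + \alpha(1 - m/2)$; verifying that no hidden logarithmic losses appear in the transition region $|\xi| \sim h/\sqrt{\epsilon}$ and that the remainder $r_J$ admits a stationary--phase treatment compatible with this scaling is the most delicate part of the argument.
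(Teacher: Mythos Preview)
Your strategy is correct and reaches the right order $(\ell,m)=(2+\alpha(1-m/2),m)$, but the route differs from the paper's in two respects worth noting.

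First, the paper does \emph{not} expand $k$ to order $J$. It writes a single first--order Taylor remainder
\[
k(|\iota(x)-\iota(y)|^2/\epsilon)-k(d_g(x,y)^2/\epsilon)=\frac{E_\iota(x,y)}{\epsilon}\int_0^1 k^{(1)}\bigl(d_g(x,y)^2/\epsilon+tE_\iota(x,y)/\epsilon\bigr)\,dt
\]
and handles the entire difference $\AvOp_\epsilon-\AvOp_{g,\epsilon}=:\AvOp_\epsilon^{(1)}$ as one operator. This avoids your iterated expansion and the need to control $r_J$ separately; the $\epsilon$--prefactor $\epsilon^{1+m/2}/h^m=h^{2+\alpha(1-m/2)}$ falls out directly from one integration by parts. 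Your approach has the virtue of making the hierarchy in $j$ transparent, but it introduces extra bookkeeping that the paper sidesteps.

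Second, and more substantively, the paper does \emph{not} rely on the normal--coordinate/parallel--transport computation that produced \eqref{eq:sym-intrinsic-diff-op}. Instead it verifies the manifold $\PDO$ definition directly: it checks pseudolocality of $\AvOp_\epsilon^{(1)}$ (via the shrinking support of $\tilde k_{\iota,\epsilon}$), then passes to an arbitrary chart $\gamma$ and uses the Kuranishi trick to linearise the phase $\langle\xi,\gamma\circ s_x^{-1}(v)-\gamma\circ s_x^{-1}(0)\rangle$, producing an explicit symbol $a_\gamma$ in the Kohn--Nirenberg quantization on $V\subset\mathbb{R}^n$. Your version assumes that the intrinsic computation transfers wholesale to the correction terms; this is morally right, but the extra factor $\tilde R_{\iota,j}(x,\zeta)$ breaks the exact parallel--transport symmetry that made \eqref{eq:sym-intrinsic-diff-op} land on a symbol of $(y,\eta)$ alone, so one is really producing an amplitude rather than a symbol. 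The paper's chart--based computation with the Kuranishi trick handles this cleanly and simultaneously supplies the pseudolocality check that your sketch omits. A minor point: your displayed formula for $H_{g,h,\epsilon,j}$ is missing the factor $\epsilon^{-j}$ coming from $(R_\iota/\epsilon)^j$; with it, the rescaling produces $\epsilon^{2j}\cdot\epsilon^{-j}=\epsilon^j$ as you then correctly claim.
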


\begin{remark} We note the following cases of the order of the sub-principal symbol of \(\AvOp_{h^{2 + \alpha}}\): we have the extremes \((\ell, m) = (1, 2(1 + 1/\alpha))\) and \((\ell, m) = (2 + \alpha/2, 1)\) and the \emph{balanced case}, \((\ell, m) = (2, 2)\).

\end{remark}

\begin{proof}

We begin with a series expansion of the kernel of \(\AvOp_{\epsilon}\): by \protect\hyperlink{lem:ext-normal-coords}{Lemma \ref{lem:ext-normal-coords}} and Taylor's formula,
\begin{gather*}
|\iota(x) - \iota \circ s_x^{-1}(v)|^2 = |v|^2 + \sum_{|\beta|=4} v^{\beta} \tilde{E}_{\iota,\beta}(x,v) ,  \\
\tilde{E}_{\iota,\beta}(x,v) := \frac{1}{6} \int_0^1 (1 - t)^3 D_v^{\beta}[|\iota(x) - \iota \circ s_x^{-1}(\cdot)|^2]|_{\cdot = tv} ~ dt ,
\end{gather*}
hence denoting \(k^{(1)}(t) := \partial_t k(t)\) and \(E_{\iota}(x,y) := \sum_{|\beta| = 4} s_x(y)^{\beta} \tilde{E}_{\iota,\beta}(x, s_x(y))\) we have,
\begin{gather*}
k(|\iota(x) - \iota(y)|^2/\epsilon) - k(d_g(x,y)^2/\epsilon) = k_{\iota,\epsilon}(x,y), \\
\quad\quad k_{\iota,\epsilon}(x,y) := \frac{E_{\iota}(x,y)}{\epsilon} \int_0^1 k^{(1)}(d_g(x,y)^2/\epsilon + t E_{\iota}(x,y)/\epsilon) ~ dt .
\end{gather*}
There is a constant \(C_{\mathcal{M},\iota} > 0\) depending only on the geometry of \(\mathcal{M}\) and embedding \(\iota\) so that for all pairs \((x,y) \in \mathcal{M}^2\) in a geodesically convex neighbourhood, \(|E_{\iota}(x,y)| \leq C_{\mathcal{M},\iota} d_g(x,y)^4\) and by the defining assumptions on \(k\), there is \(R_k > 0\) such that \(k\) and \(k^{(1)}\) decay exponentially outside of \([0,R_k]\). Therefore, if \(\chi : \mathbb{R} \to \mathbb{R}\) is an even, non-negative smooth cut-off supported in \([-2 R_k, 2 R_k]\) and such that \(\chi(t) = 1\) on \(|t| \leq R_k\), then
\begin{equation}\begin{aligned}
k(|\iota(x) - \iota(y)|^2/\epsilon) - k(d_g(x,y)^2/\epsilon) = \tilde{k}_{\iota,\epsilon}(x,y) + O_{\mathscr{S}}(\epsilon^{\infty}),    \\
\tilde{k}_{\iota,\epsilon}(x,y) := k_{\iota,\epsilon}(x,y) \, \chi(d_g(x,y)^2/\epsilon - C_{\mathcal{M},\iota} d_g(x,y)^4/\epsilon)
\end{aligned}  \nonumber  \end{equation}
and there is a constant \(C > 0\) depending only on \(C_{\mathcal{M}, \iota}\) such that \(\supp[\tilde{k}_{\iota,\epsilon}(x, \cdot)] \subseteq B_{C \sqrt{\epsilon}}(x) =: \tilde{B}_{\sqrt{\epsilon}}(x) \subset \mathcal{M}\).

Now consider the operator
\begin{align*}
\AvOp_{\epsilon} & - \AvOp_{g,\epsilon} \\
    &\quad \equiv \AvOp^{(1)}_{\epsilon} : C^{\infty} \ni u \mapsto \epsilon^{-\frac{n}{2}} \int_{\mathcal{M}} \tilde{k}_{\iota,\epsilon}(\cdot,y) \, u(y) ~ p(y) d\nu_g(y) \in C^{\infty} \pmod{h^{\infty}\Psi^{-\infty}} .
\end{align*}
We wish to see that this is a \(\PDO\) with symbol belonging to \(h^{\ell} S^{-m}\) for some \(\ell, m \geq 1\), which would establish \(\AvOp_{\epsilon}\) as a \(\PDO\) with \(h\)-principal symbol \(H_{g,\epsilon}\). By the definition of \(\PDO\)s on a manifold, we must thus establish that \(\AvOp^{(1)}_{\epsilon}\) is \emph{pseudolocal} and locally resembles a \(\PDO\) on an open subset of Euclidean space. We begin with the former property: suppose \(\chi_1, \chi_2 \in C^{\infty}\) with \(\supp \chi_1 \cap \supp \chi_2 = \emptyset\). Since \(\tilde{k}_{\iota,\epsilon}(x,\cdot) \chi_1(\cdot) = 0\) whenever \(\supp \chi_2 \cap \tilde{B}_{\sqrt{\epsilon}}(x) = \emptyset\), in order for \(\chi_1(x) \tilde{k}_{\iota,\epsilon}(x,\cdot) \chi_1(\cdot) \neq 0\) we must have \(\supp \chi_2 \cap \tilde{B}_{\sqrt{\epsilon}}(x) \neq \emptyset\) and \(x \in \supp \chi_1 \cap \tilde{B}_{\sqrt{\epsilon}}(x) \neq \emptyset\), but when \(\epsilon > 0\) is sufficiently small, \(\tilde{B}_{\sqrt{\epsilon}}(x) \subset \supp \chi_1\), so uniformly over \((0, 1]_{\epsilon} \times \mathcal{M}^2\) we have \(|\chi_1(x) \tilde{k}_{\iota,\epsilon}(x,y) \chi_2(y)| \in O(\epsilon^{\infty})\). The same holds for all derivatives of \(\chi_1(x) \tilde{k}_{\iota,\epsilon}(x,y) \chi_2(y)\) since the support is stable under differentiation; furthermore, due to smoothness of this kernel, we have that \(\chi_1 \AvOp^{(1)}_{\epsilon}[\chi_2 \cdot] \in h^{\infty} \Psi^{-\infty}\), \emph{viz}., \(\AvOp_{\epsilon}^{(1)}\) is \emph{pseudolocal}.

We wish to see now that \(\AvOp_{\epsilon}^{(1)}\) gives, under a change of coordinates within a given patch, a \(\PDO\) on the corresponding open set in \(\mathbb{R}^{\mdim}\). So fix an atlas for \(\mathcal{M}\) with \(\gamma : \mathcal{M} \supset U \to V \subset \mathbb{R}^{\mdim}\) providing local coordinates on the patch \(U\) and let \(\chi_1, \chi_2 \in C^{\infty}\) with \(\supp \chi_1, \supp \chi_2 \subset U\). Then,
\begin{align*}
\AvOp^{(1)}_{\epsilon}[\chi_2 u](x) &\equiv
    \epsilon^{-\frac{\mdim}{2}} \int_{V} \tilde{k}_{\iota,\epsilon}(x,\gamma^{-1}(v)) [(\chi_2 \cdot p \cdot u) \circ \gamma^{-1}](v) \, |\det D\gamma^{-1}(v)| \sqrt{|g_{\gamma^{-1}(v)}|} ~ dv    \\
    &\equiv \frac{1}{(2\pi h)^{\mdim}} \int_{\mathbb{R}^{\mdim}}\int_{\mathbb{R}^{\mdim}} e^{\frac{i}{h}\langle \gamma(x) - v, \xi \rangle} a_{\gamma}(\gamma(x),\xi) \, (\chi_2 \cdot u) \circ \gamma^{-1}(v) ~ dvd\xi ,
\end{align*}
with
\begin{gather*}
a_{\gamma}(\tilde{w},\xi) := \epsilon^{-\frac{\mdim}{2}} \int_{\mathbb{R}^{\mdim}} e^{\frac{i}{h} \langle \xi, w - \tilde{w} \rangle} b_{\gamma,\epsilon}(\tilde{w}, w) ~ dw ,  \\
b_{\gamma,\epsilon}(\tilde{w},w) := \tilde{k}_{\iota,\epsilon}(\gamma^{-1}(\tilde{w}), \gamma^{-1}(w)) \, p \circ \gamma^{-1}(w) \, |\det D\gamma^{-1}(w)| \sqrt{|g_{\gamma^{-1}(w)}|}
\end{gather*}
and wherein \(\equiv\) denotes equality up to a term \(O_{\mathscr{S}}(\epsilon^{\infty})\) that is given by the action of a smoothing operator. The definition of \(a_{\gamma}\) is sensible because whenever \(x \in U\), with \(\epsilon\) sufficiently small we have \(\supp[\tilde{k}_{\iota,\epsilon}(x, \cdot)] \subset \tilde{B}_{\sqrt{\epsilon}}(x) \subset U\).

We now have \(\chi_1 \AvOp_{\epsilon}^{(1)}[\chi_2 u] = \chi_1 \gamma^* \Op_h^{KN}(a_{\gamma}) (\gamma^{-1})^* [\chi_2 u]\) in the formal sense of quantizing a smooth scalar-valued function; we wish to see that for a certain regime of \(\epsilon\) with respect to \(h\), \(a_{\gamma}\) indeed belongs to an appropriate symbol class. We first switch to normal coordinates, wherein homothetic rescalings apply cleanly so that contracting by \(\epsilon^{\frac{1}{2}}\) in these coordinates exposes the decay of \(a_{\gamma}\) in \(\epsilon\). On a change of variables \(w = \gamma \circ s_x^{-1}(v)\) with \(x = \gamma^{-1}(\tilde{w})\), we have
\begin{gather*}
a_{\gamma}(\tilde{w}, \xi) = \epsilon^{-\frac{\mdim}{2}} \int_{\mathbb{R}^{\mdim}} e^{\frac{i}{h} \langle \xi, \gamma \circ s_x^{-1}(v) - \gamma \circ s_x^{-1}(0) \rangle} b_{\iota,\epsilon}(\tilde{w}, v) ~ dv , \\
b_{\iota,\epsilon}(\tilde{w},v) := \tilde{k}_{\iota,\epsilon}(s_x^{-1}(0),s_x^{-1}(v)) \, p \circ s_x^{-1}(v) \, |\det Ds_x^{-1}(v)| \sqrt{|g_{s_x^{-1}(v)}|} .
\end{gather*}
We may now apply the Kuranishi trick to the phase appearing in the representation of \(a_{\gamma}\) by way of the Taylor expansion
\begin{gather*}
\gamma \circ s_x^{-1}(v) - \gamma \circ s_x^{-1}(0) = D[\gamma \circ s_x^{-1}]|_{v = 0} \cdot v + E_{\gamma}(\tilde{w},v) , \\
E_{\gamma}(\tilde{w},v) := \sum_{|\beta| = 2} v^{\beta} \int_0^1 D_v^2[\gamma \circ s_x^{-1}](tv) ~ dt
\end{gather*}
with \(F(\tilde{w}) := D[\gamma \circ s_x^{-1}]|_{v = 0}\) a smooth matrix-valued function, invertible at all \(\tilde{w} \in V\). Since
\begin{gather*}
E_{\gamma}(\tilde{w}, \epsilon^{\frac{1}{2}}v) = \epsilon E_{\gamma,\epsilon}(\tilde{w}, v),    \\
E_{\gamma,\epsilon}(\tilde{w},v) := \sum_{|\beta| = 2} v^{\beta} \int_0^1 D_v^2[\gamma \circ s_x^{-1}](t \epsilon^{\frac{1}{2}} v) ~ dt ,
\end{gather*}
another Taylor expansion then gives,
\begin{gather*}
e^{\frac{i}{h} \langle \xi, F(\tilde{w}) \cdot \epsilon^{\frac{1}{2}} v + E_{\gamma}(\tilde{w}, \epsilon^{\frac{1}{2}} v) \rangle}
    =   e^{i \frac{\sqrt{\epsilon}}{h} \langle \xi, F(\tilde{w}) \cdot v \rangle}\left( 1 + \epsilon^{\frac{1}{2}} \Theta_h(\tilde{w},v,\xi,\epsilon) \right),  \\
\Theta_h(\tilde{w},v,\xi,\epsilon) := \sum_{j=1}^{\infty} \frac{\epsilon^{\frac{j-1}{2}}}{j!} \langle E_{\gamma,\epsilon}(\tilde{w},v), (\epsilon^{\frac{1}{2}}/h) \, i \xi \rangle^{j} .
\end{gather*}
We also have,
\begin{gather*}
\tilde{k}_{\iota,\epsilon}(x, s_x^{-1}(\epsilon^{\frac{1}{2}} v)) = \epsilon \tilde{b}_{\iota,\epsilon}(x,v) ,  \\
\tilde{b}_{\iota,\epsilon}(x,v) := \chi(|v|^2 - \epsilon C_{\mathcal{M},\iota} |v|^4) E_{\iota}(x,s_x^{-1}(v)) \int_0^1 k^{(1)}\left( |v|^2 + \epsilon t E_{\iota}(x, s_x^{-1}(v)) \right) dt .
\end{gather*}
Therefore,
\begin{align*}
a_{\gamma}(\tilde{w},\xi) &= \epsilon \int_{\mathbb{R}^{\mdim}} e^{i \frac{\sqrt{\epsilon}}{h}\langle \xi, v \rangle}\left( 1 + \epsilon^{\frac{1}{2}} \Theta_h(\tilde{w}, \tilde{v}, \xi, \epsilon) \right) \tilde{b}_{\iota,\epsilon}(\gamma^{-1}(\tilde{w}), \tilde{v})  \\
    &\quad\quad \times p \circ s_x^{-1}(\epsilon^{\frac{1}{2}} \tilde{v}) \, |\det D s_x^{-1}(\epsilon^{\frac{1}{2}} \tilde{v})| \, |g_{s_x^{-1}(\epsilon^{\frac{1}{2}}\tilde{v})}|^{\frac{1}{2}} ~ d\tilde{v} ,    \\
\tilde{v} &:= F(\tilde{w})^{-1} \cdot v
\end{align*}
and an integration by parts gives,
\begin{gather*}
a_{\gamma}(\tilde{w},\xi) = \epsilon \int_{\mathbb{R}^{\mdim}} e^{i \frac{\sqrt{\epsilon}}{h} \langle \xi, v \rangle} (B_{\iota,\epsilon}(\tilde{w}, \tilde{v}) + \epsilon^{\frac{1}{2}} \tilde{B}_{\iota,\epsilon}(\tilde{w}, \tilde{v})) ~ d\tilde{v} ,   \\
B_{\iota,\epsilon}(\tilde{w},\tilde{v}) := \tilde{b}_{\iota,\epsilon}(x,\tilde{v}) \, p \circ s_x^{-1}(\epsilon^{\frac{1}{2}} \tilde{v}) \, |\det D s_{x}^{-1}(\epsilon^{\frac{1}{2}} \tilde{v})| \, |g_{s_x^{-1}(\sqrt{\epsilon} \tilde{v})}|^{\frac{1}{2}} ,
\end{gather*}
wherein \(\supp B_{\iota,\epsilon}(\tilde{w}, \cdot) \subset \{ |\tilde{v}| \leq C \}\) and
\begin{equation}\begin{aligned}
\tilde{B}_{\iota,\epsilon}(\tilde{w},\tilde{v}) := \sum_{j=1}^{\infty} \frac{\epsilon^{\frac{j-1}{2}}}{j!} \langle E_{\gamma,\epsilon}(\tilde{w}, \tilde{v}), \nabla_v \rangle^j[B_{\iota, \epsilon}(\tilde{w}, \cdot)](\tilde{v})
\end{aligned}  \nonumber  \end{equation}
is a convergent Taylor series that defines a smooth function, \(\tilde{B}_{\iota,\epsilon}(\tilde{w}, \tilde{v}) \in C_c^{\infty}(V \times \{|\tilde{v}| \leq C \})\). Now we proceed as we've done before:
\begin{align*}
\langle \xi & \rangle^{|\beta|-m} |\partial_{\tilde{w}}^{\vartheta}\partial_{\xi}^{\beta} a_{\gamma}|   \\
    &= \epsilon \langle \xi \rangle^{|\beta|-m} \left|\partial_{\tilde{w}}^{\vartheta} \; \int_{\mathbb{R}^{\mdim}} (i \epsilon^{\frac{1}{2}}/h)^{|\beta|} v^{\beta} e^{i \frac{\sqrt{\epsilon}}{h}\langle \xi, v \rangle} (B_{\iota,\epsilon}(\tilde{w}, \tilde{v}) + \epsilon^{\frac{1}{2}} \tilde{B}_{\iota,\epsilon}(\tilde{w},\tilde{v})) ~ d\tilde{v} \right|     \\
    &= \frac{\epsilon^{1 + \frac{m}{2}}}{h^m} \left| \int_{\mathbb{R}^{\mdim}} e^{i\frac{\sqrt{\epsilon}}{h}\langle \xi, v \rangle} (\epsilon/h^2 + \Delta_v)^{\frac{|\beta|-m}{2}}\left( v^{\beta} \, \partial_{\tilde{w}}^{\vartheta} \left[ (B_{\iota,\epsilon}(\tilde{w}, \tilde{v}) + \epsilon^{\frac{1}{2}} \tilde{B}_{\iota,\epsilon}(\tilde{w},\tilde{v}))/|\det F(\tilde{w})| \right] \right) dv \right|
\end{align*}
so that using \(\epsilon = h^{2 + \alpha}\) gives \(\epsilon^{1 + \frac{m}{2}}/h^m = h^{2 + \alpha(1 + m/2)}\). The Fourier transform is of a smooth, compactly supported function, hence it is uniformly bounded in \((0, 1]_h \times T^*\mathcal{M}\). Thus, if \(\alpha > 0\) then \(a_{\gamma} \in h^{\ell} S^{-m}\) for \((\ell, m) = (2 + \alpha(1 - m/2), m)\) with \(m \in \mathbb{N}\), \(m \leq 2(1 + 1/\alpha)\) and we may change quantizations to the adjoint form by a transformation of the symbol \citep[Theorem 4.13]{zworski2012}, which retains its order.
\end{proof}

\begin{example*} The simplest case is to take for \(\AvOp_{g,\epsilon}\) the operator with Schwartz kernel \(k_{g,\epsilon}(x,y) := \epsilon^{-\frac{\mdim}{2}} e^{-d_g(x,y)^2/\epsilon}\) and uniform density \(p \equiv 1\). Then, an \(h\)-Fourier transformation shows that \(H_{g,h,\epsilon} := \pi^{\frac{3\mdim}{2}} e^{-\frac{\epsilon}{4 h^2} |\xi|_{g_x}^2} |g_x|^{-\frac{1}{2}}\). Recall that due to the uncertainty principle, a semiclassical \(\Psi\)DO is defined for symbols with small variation in regions of phase space with unit volume. This symbol \(H_{g,h,\epsilon}\) varies polynomially within a ball of radius \(O(h/\sqrt{\epsilon})\) in each cotangent fibre \(T^*_x\mathcal{M}\) and decays exponentially outside of this ball. When \(\epsilon = h^{2 + \alpha}\), this symbol has exponential decay within a ball of radius \(h^{-\alpha/2 - \delta}\) for \(\delta > 0\), so for \(\alpha < 0\) this is amounts to large variation in sub-unit volume regions of phase space. Another (\emph{dual}) perspective is that in order for \(\AvOp_{g,\epsilon}\) to be regarded as a semiclassical \(\Psi\)DO in quantization scale \(h\), by pseudolocality it must have Schwartz kernel \(k_{g,\epsilon}\) that decays, along with all derivatives, as \(O(h^{\infty} |x - y|^{-\infty})\) outside of the diagonal. The critical rate here is \(\sqrt{\epsilon}\), in the sense that for points with \(0 < d_g(x,y) < \sqrt{\epsilon}\), the Schwartz kernel \(k_{g,\epsilon}\) follows this decay rate if and only if \(h > \sqrt{\epsilon}\). Effectively, if \(h\) remains below this rate, then all of the \emph{content} of the symbol is lost and hence the quantum-classical correspondence breaks down: quantization forces such a symbol to be essentially zero (an impulse at the zero section of the cotangent bundle), while \(\AvOp_{g,\epsilon}\) behaves as \(\epsilon \to 0\) like an operator with symbol having support in all of the cotangent bundle and moreover, going to a constant as \(h \to 0\).

\end{example*}

We may now employ symbol calculus to express graph Laplacians as \(\PDO\)s:

\begin{theorem}	\hypertarget{thm:sym-renorm-graph-lap}{\label{thm:sym-renorm-graph-lap}} Let \(\lambda \geq 0\), \(\epsilon, h \in (0, 1]\) with \(\epsilon = h^{2 + \alpha}\) and \(\alpha > 0\). Then,
\begin{equation}\begin{aligned}
\mathcal{L}_{g,\lambda,h^{\alpha}} := \frac{2 c_0}{c_2} \frac{1 - H_{g,h^{\alpha}}/(p_{\epsilon}^{2\lambda} p_{\lambda,\epsilon})}{h^{\alpha}} \in h^0 S^2
\end{aligned}  \nonumber  \end{equation}
and \(h^2 \GLap_{\lambda,\epsilon} \in h^0 \Psi^2\) with
\begin{equation}\begin{aligned}
h^2 \GLap_{\lambda,\epsilon} \equiv \Op_h(\mathcal{L}_{g,\lambda,h^{\alpha}}) \pmod{h \Psi} .
\end{aligned}  \nonumber  \end{equation}
Moreover, if we fix \(0 \leq \delta \leq 1\) and \(C > 0\) and let \(\chi_{C h^{-\delta}} : \mathbb{R} \to \mathbb{R}\) be a smooth function with \(\supp \chi_{C h^{-\delta}} \subseteq [-C h^{-2\delta}, C h^{-2\delta}]\), then for all pairs \((m,\alpha) \in \mathbb{Z} \times [1, \infty)\) that satisfy \(\alpha \geq 1 + \delta(4 + m)\), we have
\begin{equation}\begin{aligned}
\Op_h(\mathcal{L}_{g,\lambda,h^{\alpha}} \chi_{C h^{-\delta}}(|\xi|_{g_x}^2)) \equiv \Op_h(|\xi|_{g_x}^2 \chi_{C h^{-\delta}}(|\xi|_{g_x}^2)) \pmod{h \Psi^{-m}}
\end{aligned}  \nonumber  \end{equation}
and in particular, when \(\delta = 0\), this holds with \(m = \infty\) for all \(\alpha \geq 1\).

\end{theorem}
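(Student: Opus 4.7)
The plan is to express $\AvOp_{\lambda,\epsilon}$ as a composition of the pseudodifferential representation of $\AvOp_\epsilon$ from \protect\hyperlink{lem:averaging-op-is-psido}{Lemma \ref{lem:averaging-op-is-psido}} with smooth multiplication operators encoding the $\lambda$-renormalization, and then to extract the symbol of $h^2 \GLap_{\lambda,\epsilon}$ by explicit Taylor expansion. From the definition we have $\AvOp_{\lambda,\epsilon} = M_{1/(p_\epsilon^\lambda p_{\lambda,\epsilon})} \circ \AvOp_\epsilon \circ M_{1/p_\epsilon^\lambda}$, where $M_f$ denotes multiplication by $f$; since such multiplications are $\PDO$s of order $(0,0)$ and $\AvOp_\epsilon \equiv \Op_h(H_{g,h^\alpha}) \pmod{h^\ell \Psi^{-m}}$ with $H_{g,h^\alpha} \in h^0 S^0$, the composition formula gives
\begin{equation*}
I - \AvOp_{\lambda,\epsilon} \equiv \Op_h\!\left( 1 - \frac{H_{g,h^\alpha}}{p_\epsilon^{2\lambda}\, p_{\lambda,\epsilon}} \right) \pmod{h\,\Psi^0}.
\end{equation*}
Scaling by $(2c_0/c_2) h^{-\alpha}$ then yields $h^2 \GLap_{\lambda,\epsilon} \equiv \Op_h(\mathcal{L}_{g,\lambda,h^\alpha}) \pmod{h\Psi}$, which establishes the second assertion once $\mathcal{L}_{g,\lambda,h^\alpha} \in h^0 S^2$ is proven.

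To verify $\mathcal{L}_{g,\lambda,h^\alpha} \in h^0 S^2$, I would use the Fourier change of variables that writes
\begin{equation*}
H_{g,h^\alpha}(x,\xi) = p(x)\, \hat k_s\bigl(h^{\alpha/2}\, g_x^{-1/2}\xi\bigr), \quad \hat k_s(\eta) := \int_{\mathbb{R}^{\mdim}} e^{i\langle \tilde\zeta,\eta\rangle}\, k(|\tilde\zeta|^2)\, d\tilde\zeta .
\end{equation*}
Since $k$ is essentially compactly supported, $\hat k_s$ is Schwartz with Taylor expansion $\hat k_s(\eta) = c_0 - \tfrac{c_2}{2}|\eta|^2 + |\eta|^4\, R(\eta)$ for smooth bounded $R$. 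Combined with the expansion $p_\epsilon^{2\lambda}\, p_{\lambda,\epsilon} = c_0 p(x) + h^{2+\alpha}\, Q_\lambda(x) + O(h^{2(2+\alpha)})$ from \protect\hyperlink{lem:taylor-expand-deg-func}{Lemma \ref{lem:taylor-expand-deg-func}}, this yields the pointwise identity
\begin{equation*}
\mathcal{L}_{g,\lambda,h^\alpha}(x,\xi) = |\xi|^2_{g_x} - \tfrac{2}{c_2} h^\alpha |\xi|^4_{g_x}\, \tilde R\bigl(h^{\alpha/2} g_x^{-1/2}\xi\,;\, x\bigr) + h^2\, \tilde Q_\lambda(x) + O_{\mathscr{S}}\!\bigl(h^{2+\alpha}\bigr),
\end{equation*}
with smooth $x$-dependent coefficients. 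The $S^2$ bound follows by splitting into two regimes: for $|\xi| \lesssim h^{-\alpha/2}$, polynomial behaviour of $\hat k_s$ gives $|\mathcal{L}| \lesssim |\xi|^2 + h^2 \lesssim \langle\xi\rangle^2$; for $|\xi| \gtrsim h^{-\alpha/2}$, Schwartz decay forces $H_{g,h^\alpha} \to 0$ so $\mathcal{L} \approx 2c_0/(c_2 h^\alpha) \lesssim \langle\xi\rangle^2$. Fibre derivatives $\partial_\xi$ each pull out a factor $h^{\alpha/2}$ (bounded for $\alpha > 0$) multiplying a Schwartz derivative of $\hat k_s$, producing the $\langle\xi\rangle^{-|\beta|}$ decay required of $S^2$.

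For the cut-off statement, I decompose $\mathcal{L}_{g,\lambda,h^\alpha} - |\xi|^2_{g_x} = E_1 + E_2$ with $E_1 := h^2 \tilde Q_\lambda(x) + O_{\mathscr{S}}(h^{2+\alpha})$ (independent of $\xi$ to leading order) and $E_2 := -\tfrac{2}{c_2} h^\alpha |\xi|^4_{g_x}\, \tilde R(h^{\alpha/2} g_x^{-1/2}\xi; x)$. On $\supp \chi_{Ch^{-\delta}}(|\xi|^2_{g_x}) \subseteq \{ |\xi|^2_{g_x} \leq C h^{-2\delta} \}$ we have $\langle\xi\rangle^{-m} \gtrsim h^{m\delta}$, and the worst case for $|E_2|$ arises near the boundary $|\xi|_{g_x} \approx h^{-\delta}$, where accounting for both the polynomial and Schwartz regimes of $\tilde R$ gives $|E_2| \lesssim h^{\alpha - 4\delta}$. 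Membership in $h\,\Psi^{-m}$ therefore requires $h^{\alpha-4\delta} \lesssim h^{1+m\delta}$, equivalently $\alpha \geq 1 + \delta(4+m)$, precisely the stated hypothesis; the $E_1$ contribution is in $h^2 \Psi^0$ and satisfies the same bound trivially. When $\delta = 0$ the cut-off localizes $\xi$ to a compact set, so both remainders are Schwartz in $\xi$, placing them in $h\,\Psi^{-\infty}$ for every $\alpha \geq 1$.

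The main obstacle will be uniform control of the $h^0 S^2$ seminorms \emph{in every} $(x,\xi)$-derivative as $h \to 0$, not merely of $\mathcal{L}$ itself. Derivatives $\partial_\xi^\beta$ falling on the rescaled argument $h^{\alpha/2} g_x^{-1/2}\xi$ of $\hat k_s$ produce factors $h^{|\beta|\alpha/2}$ multiplying Schwartz derivatives, and these must interact with the two-regime split to yield the $\langle\xi\rangle^{-|\beta|}$ decay uniformly across both regimes. Moreover, the sub-principal terms from the symbol composition formula and the $x$-derivatives acting jointly on $g_x^{-1/2}$, $p(x)$, $p_{\lambda,\epsilon}(x)$ and the rescaled fibre variable must be tracked carefully to ensure the $h\Psi$ and $h\Psi^{-m}$ error orders hold with constants uniform in $h$.
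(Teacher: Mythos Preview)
Your overall strategy mirrors the paper's: decompose $A_{\lambda,\epsilon}$ via multiplications and $\AvOp_\epsilon$, invoke Lemma~\ref{lem:averaging-op-is-psido}, expand $H_{g,h^\alpha}$ as a rescaled Fourier transform, use the degree-function expansion from Lemma~\ref{lem:taylor-expand-deg-func}, and verify the $S^2$ seminorms by a two-regime split. The cut-off argument you sketch is also essentially the paper's, and your closing paragraph correctly anticipates that all $(x,\xi)$-derivatives must be checked, which the paper does term by term.

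There is, however, a genuine gap in your first paragraph. You assert that the composition formula yields
\[
I - A_{\lambda,\epsilon} \equiv \Op_h\bigl(1 - H_{g,h^\alpha}/(p_\epsilon^{2\lambda} p_{\lambda,\epsilon})\bigr) \pmod{h\,\Psi^0},
\]
and then that scaling by $h^{-\alpha}$ gives an error in $h\,\Psi$. But dividing an $h\,\Psi^0$ remainder by $h^\alpha$ produces a remainder in $h^{1-\alpha}\Psi^0$, which for $\alpha>0$ is \emph{not} contained in $h\,\Psi$. The paper avoids this by exploiting a structural feature of the adjoint quantization used in~\eqref{def:op-quantization}: since the symbol is evaluated at the integration variable $y$, \emph{right} composition with a multiplication operator is exact, i.e.\ $\AvOp_{g,\epsilon}\circ M_{1/(p_\epsilon^{2\lambda}p_{\lambda,\epsilon})} = \Op_h\bigl(H_{g,h^\alpha}/(p_\epsilon^{2\lambda}p_{\lambda,\epsilon})\bigr)$ modulo $h^\infty\Psi^{-\infty}$. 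This lets one rewrite
\[
h^{-\alpha}(I - A_{\lambda,\epsilon}) \equiv (p_\epsilon^\lambda p_{\lambda,\epsilon})^{-1}\,\Op_h(L_{g,\lambda,h^\alpha})\,(p_\epsilon^\lambda p_{\lambda,\epsilon}) - h^{-\alpha}A^{(1)}_{\lambda,\epsilon},
\]
that is, as a \emph{conjugation} of the already-divided operator. Once $L_{g,\lambda,h^\alpha}\in h^0 S^2$ is established, the conjugation by $M_f\in h^0\Psi^0$ contributes an error in $h\,\Psi^1$ (commutator of order-two with order-zero), and the extrinsic remainder $h^{-\alpha}A^{(1)}_{\lambda,\epsilon}$ lands in $h^{2-\alpha m/2}\Psi^{-m}$, both acceptable. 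The order of operations---divide first, then compose---is what makes the error bookkeeping close; your ordering reverses this and the stated conclusion does not follow.
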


\begin{remark} The picture to keep in mind for the symbol is, for example, given in Figure \(\ref{fig:glaps-symbol}\) for the kernel \(k(d_g(x,y)^2/\epsilon) := \exp(-d_g(x,y)^2/\epsilon)\) with \(\epsilon = h^{2 + \alpha}\). The function \(\mathcal{L}_{g,\lambda,h^{\alpha}}\) approximates \(|\xi|_{g_x}^2\) within a neighbourhood of \(0 \in T^*\mathcal{M}\) that grows as \(h \to 0\). The rate of this approximation governs simultaneously, the symbol class and the momentum magnitude within which we can recover \(|\xi|_{g_x}^2\), set by \(m\) and \(\delta\), respectively. As the figure shows, \(\mathcal{L}_{g,\lambda,h^{\alpha}}\) is clearly not principal in \(h^0 S^2\); hence, as we will see later, dynamics generated by the graph Laplacian connect well to Hamiltonian dynamics on \(T^*\mathcal{M}\) when we can take \(m = \infty\).

\end{remark}

\begin{proof}

Since \protect\hyperlink{lem:averaging-op-is-psido}{Lemma \ref{lem:averaging-op-is-psido}} gives \(\AvOp_{\epsilon}\) as a \(\PDO\) and the operators of multiplication by \(p_{\lambda,\epsilon}^{-1}, p_{\epsilon}^{-\lambda} \in C^{\infty}\) are readily seen to be \(\PDO\)s with symbols \(p_{\lambda,\epsilon}^{-1}, p_{\epsilon}^{-\lambda} \in h^0 S^0\) respectively, we may employ the symbol calculus upon writing \(A_{\lambda,\epsilon}\) as a product of these operators to get:
\begin{gather*}
A^{(1)}_{\lambda,\epsilon} := A_{\lambda,\epsilon} - A_{g,\lambda,\epsilon} = p_{\lambda,\epsilon}^{-1} p_{\epsilon}^{-\lambda} \AvOp^{(1)}_{\epsilon} p_{\epsilon}^{-\lambda} \in h^{\ell} \Psi^{-m} , \\
A_{g,\lambda,\epsilon}[\cdot] := p_{\lambda,\epsilon}^{-1} p_{\epsilon}^{-\lambda} \AvOp_{g,\epsilon}[p_{\epsilon}^{-\lambda} \cdot] \in h^0 \Psi^0
\end{gather*}
with \((\ell, m) \in \mathbb{R} \times \mathbb{Z}\) as in \protect\hyperlink{lem:averaging-op-is-psido}{Lemma \ref{lem:averaging-op-is-psido}} and
\begin{equation}\begin{aligned}
\tilde{\GLap}_{\lambda,h,\alpha} := h^2 \frac{c_2}{2 c_0} \GLap_{\lambda,\epsilon} = \frac{I - A_{\lambda,\epsilon}}{h^{\alpha}} = \frac{I - A_{g,\lambda,\epsilon}}{h^{\alpha}} - h^{-\alpha} A^{(1)}_{\lambda,\epsilon} .
\end{aligned}  \nonumber  \end{equation}
Since \(h^{-\alpha} \GAve^{(1)}_{\lambda,\epsilon} \in h^{\ell'} \Psi^{-m}\) with \((\ell', m) = (2 - \alpha m/2, m)\) and
\begin{equation}\begin{aligned}
\Op_h(H_{g,h^{\alpha}}/(p_{\epsilon}^{2\lambda} p_{\lambda,\epsilon})) \equiv \AvOp_{g,\epsilon} \circ \frac{1}{p_{\epsilon}^{2\lambda} p_{\lambda,\epsilon}} \pmod{h^{\infty} \Psi^{-\infty}},
\end{aligned}  \nonumber  \end{equation}
we have
\begin{gather}
\tilde{\GLap}_{\lambda,h,\alpha} \equiv (p_{\epsilon}^{\lambda} p_{\lambda,\epsilon})^{-1} \tilde{\GLap}_{g,\lambda,h^{\alpha}} \circ p_{\epsilon}^{\lambda} p_{\lambda,\epsilon} - h^{-\alpha} \GAve^{(1)}_{\lambda,\epsilon} \pmod{h^{\infty} \Psi^{-\infty}},    \label{eq:glap-intrinsic-conj-renorm} \\
\tilde{\GLap}_{g,\lambda,h^{\alpha}} := \frac{I - \Op_h(H_{g,h^{\alpha}}/(p_{\epsilon}^{2\lambda} p_{\lambda,\epsilon}))}{h^{\alpha}} , \nonumber
\end{gather}
hence to see the first part of the Theorem, it suffices to show that \(L_{g,\lambda,h^{\alpha}} := (1 - H_{g,h^{\alpha}}/(p_{\epsilon}^{2\lambda} p_{\lambda,\epsilon}))/h^{\alpha} \in h^0 S^2\).

A change of variables in the representation \(\eqref{eq:sym-intrinsic-diff-op}\) shows the form,
\begin{equation}\begin{aligned}
H_{g,h^{\alpha}}(x,\xi) = \int_{\mathbb{R}^{\mdim}} e^{i h^{\frac{\alpha}{2}} \langle z, g_x^{-\frac{1}{2}} \xi \rangle} k(|z|^2) ~ dz ~ p(x)
\end{aligned}  \nonumber  \end{equation}
and combining this with the defining assumption that \(k\) decays at least as fast as an exponential implies that \(H_{g,h^{\alpha}}\) is analytic in \(\xi\). Thus, by a Taylor series at \(\xi = 0\) we have the expansion
\begin{gather*}
(H_{g,h^{\alpha}}/(p_{\epsilon}^{2\lambda} p_{\lambda,\epsilon}))(x,\xi) = \frac{p(x)}{p_{\epsilon}^{2\lambda} p_{\lambda,\epsilon}(x)}\sum_{|\kappa| \geq 0} (-1)^{|\kappa|} h^{\alpha |\kappa|} (g_x^{-\frac{1}{2}}\xi)^{2\kappa} c_{2\kappa}/(2\kappa)! ,    \\
c_{2\kappa} := \int z^{2\kappa} k(|z|^2) ~ dz .
\end{gather*}
By \protect\hyperlink{lem:taylor-expand-deg-func}{Lemma \ref{lem:taylor-expand-deg-func}}, there is a function \(q_{\lambda} \in C^{\infty}\) depending only on \(p\), \(\mathcal{M}\), \(c_0\) and \(\lambda\) such that
\begin{equation}\begin{aligned}
p_{\lambda,\epsilon} = (c_0 p)^{1 - 2\lambda} + \epsilon \, c_2 p^{1 - 2\lambda} q_{\lambda} + O(\epsilon^2).
\end{aligned}  \nonumber  \end{equation}
Applying this to \(p_{\epsilon} = p_{0, \epsilon}\) and taking a Taylor series shows
\begin{gather*}
p_{\epsilon}^{2\lambda} = (c_0 p)^{2\lambda} + \epsilon \, (c_0 p)^{2\lambda - 1} \tilde{q}_{\lambda} + O(\epsilon^2),  \\
\tilde{q}_{\lambda} := 2\lambda c_2 p \, q_0 .
\end{gather*}
Therefore,
\begin{gather*}
p_{\lambda,\epsilon} p_{\epsilon}^{2\lambda} = c_0 p + \epsilon \tilde{q}_{\lambda,0}  + O(\epsilon^2), \\
\tilde{q}_{\lambda,0} := \tilde{q}_{\lambda} + c_0^{2\lambda} c_2 p \, q_{\lambda}
\end{gather*}
and upon taking a geometric series expansion this leads to
\begin{equation}\begin{aligned}
p/(p_{\lambda,\epsilon} p_{\epsilon}^{2\lambda}) = c_0^{-1}[1 - \epsilon \, \tilde{q}_{\lambda,0}/(c_0 p) + O(\epsilon^2)].
\end{aligned}  \nonumber  \end{equation}
Employing this expansion we now have,
\begin{align}   \label{eq:glap-intrinsic-sym-expansion}
\begin{split}
&\frac{1 - H_{g,h^{\alpha}}/(p_{\epsilon}^{2 \lambda} p_{\lambda,\epsilon})}{h^{\alpha}}    \\
    &\quad= h^{-\alpha} - h^{-\alpha} \frac{c_0 p}{p_{\epsilon}^{2 \lambda} p_{\lambda,\epsilon}} + \frac{p}{p_{\epsilon}^{2 \lambda} p_{\lambda,\epsilon}}\sum_{|\kappa| \geq 1} \frac{(-1)^{|\kappa|-1}}{(2\kappa)!} h^{\alpha(|\kappa| - 1)} (g_x^{-\frac{1}{2}} \xi)^{2\kappa} c_{2\kappa} \\
    &\quad= \frac{c_2}{2}\frac{p}{p_{\epsilon}^{2 \lambda} p_{\lambda,\epsilon}}|\xi|^2_{g_x} + h^2 \tilde{q}_{\lambda,0}/(c_0 p) + O_x(h^{4 + \alpha}) \\
  & \quad\quad + \frac{p}{p_{\epsilon}^{2 \lambda} p_{\lambda,\epsilon}}\sum_{|\kappa| \geq 2} (-1)^{|\kappa| - 1} h^{\alpha(|\kappa| - 1)} (g_x^{-\frac{1}{2}} \xi)^{2\kappa} c_{2\kappa}/(2\kappa)! \\
  &\quad= \frac{c_2}{2 c_0} |\xi|^2_{g_x} + O_x(h^2) + \frac{p}{p_{\epsilon}^{2 \lambda} p_{\lambda,\epsilon}}\sum_{|\kappa| \geq 2} \frac{(-1)^{|\kappa| - 1}}{(2\kappa)!} h^{\alpha(|\kappa| - 1)} (g_x^{-\frac{1}{2}} \xi)^{2\kappa} c_{2\kappa} ,
\end{split}
\end{align}
wherein the \(O_x(h^2)\) term denotes that it is a function only of \(x\) and \(h\) and is uniformly bounded (due to compactness of \(\mathcal{M}\)) by a constant multiple of \(h^2\). Denoting \(L_{g,\lambda, h^{\alpha}} := (1 - H_{g,h^{\alpha}}/(p_{\epsilon}^{2\lambda} p_{\lambda,\epsilon}))/h^{\alpha}\), we see from this, that for small \(|\xi| \leq h^{-\frac{\alpha}{2}}\), \(m \geq 2\), \(0 \leq |\beta| \leq m\) and all \(\gamma \geq 0\),
\begin{align*}
\langle \xi \rangle^{|\beta|-m} & |\partial_x^{\gamma} \partial_{\xi}^{\beta} L_{g,\lambda,h^{\alpha}}| \\
    &\lesssim_{\gamma} \langle \xi \rangle^{2-m} + h^2 \langle \xi \rangle^{-m} + \langle \xi \rangle^{|\beta| - m}\sum_{\substack{|\kappa| \geq \\ \max\{ 2, |\beta|/2 \}}} h^{\alpha( |\kappa| - 1)}  |\xi^{2\kappa - \beta}| \, c_{2\kappa}/(2\kappa - \beta)!   \\
    &\lesssim_{\gamma,\beta} (1 + h^{-\alpha})^{\frac{|\beta| - 2 - (m -2)}{2}} (h^\alpha)^{\frac{|\beta|}{2} - 1} \sum_{|\kappa| \geq 2} c_{2\kappa}/(2\kappa - \beta)!    \\
    &\lesssim_{\gamma,\beta} (h^{\alpha} + 1)^{\frac{|\beta|}{2} - 1} (1 + h^{-\alpha})^{-\frac{m-2}{2}}    \\
    &\lesssim_{\gamma,\beta} 1,
\end{align*}
while for large \(|\xi| > h^{-\frac{\alpha}{2}}\),
\begin{align*}
\langle \xi \rangle^{|\beta|-m} &|\partial_x^{\gamma} \partial_{\xi}^{\beta} L_{g,\lambda,h^\alpha}|    \\
        &\lesssim (1 + h^{-\alpha})^{\frac{|\beta| - m}{2}} (h^{\alpha})^{\frac{|\beta|}{2} - 1} \left|\int_{\mathbb{R}^{\mdim}} e^{i h^{\frac{\alpha}{2}}\langle z, \xi \rangle}  z^{\beta} \partial_x^{\gamma} \left( \delta_0(z) - k(|g_x^{\frac{1}{2}} z|^2) |g_x|^{\frac{1}{2}} \frac{p}{p_{\epsilon}^{2\lambda} p_{\lambda,\epsilon}}(x) \right)  ~ dz \right|    \\
        &\lesssim_{\gamma,\beta} (h^{\alpha} + 1)^{\frac{|\beta|}{2}-1} (1 + h^{-\alpha})^{-\frac{m-2}{2}}
\end{align*}
so together this implies \(\langle \xi \rangle^{-m} |\partial_x^{\gamma} \partial_{\xi}^{\beta} L_{g,\lambda,h^\alpha}|\) is uniformly bounded in \((0, 1]_h \times T^*\mathcal{M}\). Further, for \(|\beta| \geq m\) we have
\begin{align}
\langle \xi &\rangle^{|\beta|-m} |\partial_x^{\gamma} \partial_{\xi}^{\beta}L_{g,\lambda,h^{\alpha}}|   \nonumber\\
        &= \left|\int e^{i h^{\frac{\alpha}{2}}\langle z, \xi \rangle}(h^{\alpha})^{\frac{m}{2} - 1} \; (h^{\alpha} + \Delta_z)^{\frac{|\beta|-m}{2}} \left(z^{\beta}  \partial_x^{\gamma} \left[ k(|g_x^{\frac{1}{2}} z|^2) |g_x|^{\frac{1}{2}} \frac{p}{p_{\epsilon}^{2\lambda} p_{\lambda,\epsilon}}(x) \right] \right) ~ dz \right| \label{eq:glap-is-symbol}\\
        &< \infty,  \nonumber
\end{align}
so altogether we find that \(L_{g,\lambda,h^{\alpha}} \in h^0 S^2\). To see that \(L_{g,\lambda,h^{\alpha}}\) has order exactly \((0,2)\) when \(\alpha > 0\), simply evaluate the right-hand side of \(\eqref{eq:glap-is-symbol}\) at \(\xi = 0\) with \(|\beta| \geq 2 > m\).

Now an application of symbol calculus to \(\eqref{eq:glap-intrinsic-conj-renorm}\) shows that for \(\alpha > 0\),
\begin{equation}\begin{aligned}
h^2 \GLap_{\lambda,\epsilon} \equiv \Op_h((2c_0/c_2) \, L_{g,\lambda,h^{\alpha}}) \pmod{h \Psi} .
\end{aligned}  \nonumber  \end{equation}
If we introduce a smooth cut-off \(\chi_{C h^{-\delta}} : \mathbb{R} \to \mathbb{R}\) with \(\supp \chi_{h^{-\delta}} \subseteq C [-h^{-2\delta}, h^{-2\delta}]\) for \(0 < \delta < 1\) and \(C > 0\), then from \(\eqref{eq:glap-intrinsic-sym-expansion}\) we see that for each \(m \in \mathbb{R}\),
\begin{align*}
h^{-1}\langle \xi \rangle^{|\beta| + m} & |\chi_{C h^{-\delta}}(|\xi|_{g_x}^2) \, \partial_x^{\gamma} \partial_{\xi}^{\beta}[L_{g,\lambda,h^{\alpha}} - (c_2/(2 c_0)) |\xi|_{g_x}^2]|   \\
    &\lesssim_{\gamma} h^{-1} (1 + C^2h^{-2\delta})^{\frac{|\beta| + m}{2}} \sum_{\substack{|\kappa| \geq \\ \max\{ 2, |\beta|/2 \}}} h^{\alpha(|\kappa| - 1) - \delta (2|\kappa| - |\beta|)} c_{2\kappa}/(2\kappa - \beta)!    \\
    &\lesssim_{\gamma,\beta} \sum_{\substack{|\kappa| \geq \\ \max\{ 2, |\beta|/2 \}}} h^{-\delta(m + |\beta|)} h^{-1} h^{\alpha(|\kappa| - 1) - \delta (2|\kappa| - |\beta|)} c_{2\kappa}/(2\kappa - \beta)!   \\
    &\lesssim_{\gamma,\beta} \sum_{|\kappa| \geq 2} h^{(|\kappa| - 1)\left( \alpha - \frac{\delta(2|\kappa| + m) + 1}{|\kappa| - 1} \right)} c_{2\kappa}/(2\kappa)! \\
    &\lesssim_{\gamma,\beta} h^{\alpha - (1 + \delta(4 + m))} .
\end{align*}
Since the partial derivatives of \(\chi_{C h^{-\delta}}(|\xi|^2_{g_x})\) are also cut-offs of \(|\xi|^2_{g_x}\) in the same region and the above bound holds with the factor \(\chi_{C h^{-\delta}}\) replaced by any such cut-off, we find that upon summing up via the triangle inequality, we have the same bound up to (new) constants depending on \(\gamma\) and \(\beta\). Therefore,
\begin{equation}\begin{aligned}
\Op_h((2 c_0/c_2) L_{g,\lambda,h^{\alpha}} \chi_{C h^{-\delta}}(|\xi|_{g_x}^2)) \equiv \Op_h(|\xi|_{g_x}^2 \chi_{C h^{-\delta}}(|\xi|_{g_x}^2)) \pmod{h \Psi^{-m}}
\end{aligned}  \nonumber  \end{equation}
whenever \(\alpha \geq 1 + \delta(4 + m)\) so in particular, we may take \(m = \infty\) when \(\delta = 0\).
\end{proof}

\begin{remark} The second part of the Theorem can be generalized to give a lower bound \(\rho + \delta(4 + m)\) on \(\alpha\) if instead of using the unit step size for orders of \(h\) in the pseudodifferential calculus, we take steps of size \(\rho > 0\). In any case, while \(\alpha = 0\) is applicable to the first part of the Theorem --- and then we actually have \(h^2 \GLap_{\lambda,\epsilon} \in h^0 \Psi^{-\infty}\) --- as the second part shows, this would not be able to extract the \emph{kinetic term} \(|\xi|_{g_x}^2\) (for any step size \(\rho > 0\)).

\end{remark}

\begin{figure}[h]
\centering
\includegraphics[width=0.4\textwidth,keepaspectratio]{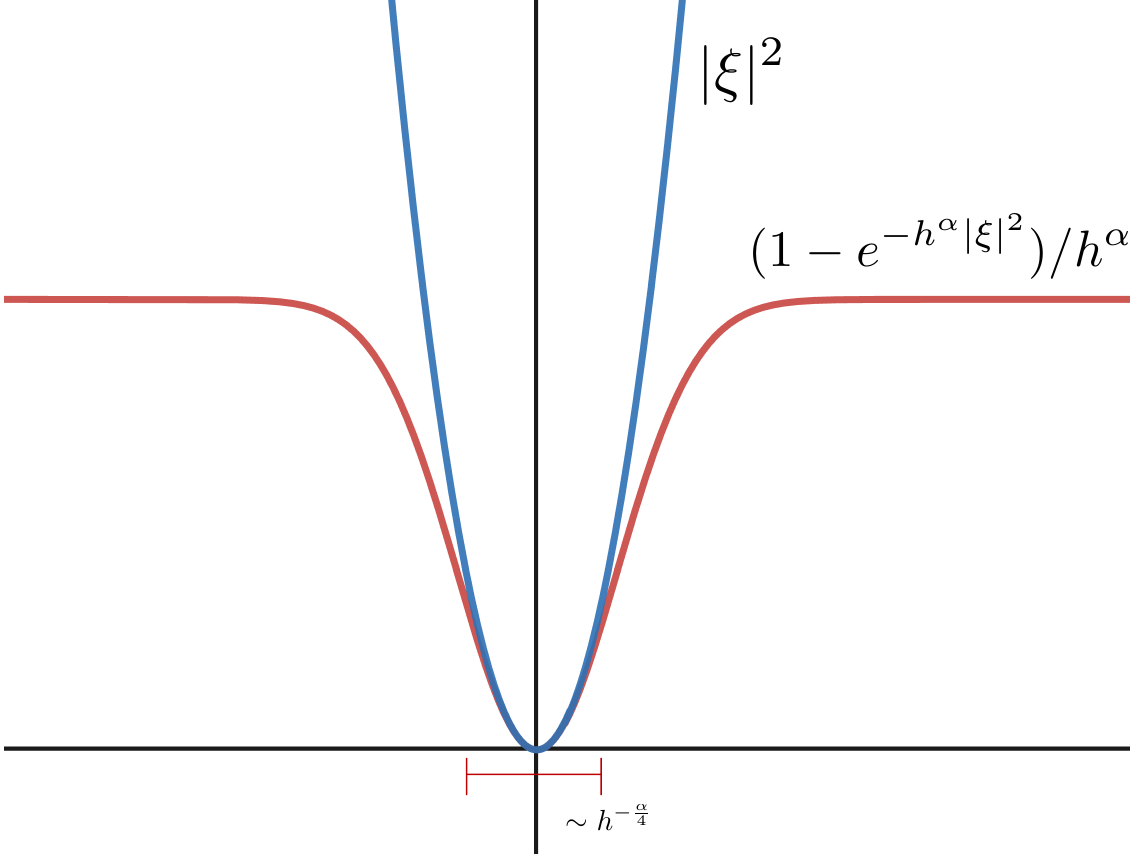}
\caption{The symbol $\mathcal{L}_{g,\lambda,h^{\alpha}}$ for $k(d_g(x,y)^2/h^{2+\alpha}) := \exp(-d_g(x,y)^2/h^{2+\alpha})$ (modulo normalization factors) in comparison with the symbol $|\xi|_{g_x}^2$, depicted in terms of momentum magnitudes $|\xi|$ along the horizontal axis. Note that the modulus of the symbol of the graph Laplacian grows as $\Theta(h^{-\alpha})$ outside of a sufficiently large neighbourhood of $0 \in T^*\mathcal{M}$, which means it is not principal in $h^0 S^2$, but cutting off to a neighbourhood is sufficient for connecting with Hamiltonian mechanics via Egorov's theorem. \label{fig:glaps-symbol}}
\end{figure}

\hypertarget{geodesic-flows-of-symbols}{%
\subsection{Geodesic flows of symbols}\label{geodesic-flows-of-symbols}}

We now extend the relationships between classical observables and their quantized counterparts via coherent states as displayed in \protect\hyperlink{semi-classical-measures-of-coherent-states}{Section \ref{semi-classical-measures-of-coherent-states}}, to Hamiltonian dynamics on the observables and operator dynamics on their quantizations. The basic idea is facilitated by Egorov's theorem, which states roughly that if \(Q\) is a \(\PDO\) with principal symbol \(q_0\) then the \emph{operator dynamics} \(A(t) = e^{-\frac{i}{h} t Q} \Op_h(a) e^{\frac{i}{h} t Q}\) preserves pseudodifferentiality and order, meaning that up to a given time \(T\) constant in \(h\), \(A(0) \in \Psi^m \implies A(t) \in \Psi^m\) for all \(|t| \leq T\) and furthermore, \(A(t) \equiv \Op_h(a \circ \Phi_{q_0}^t) \mod h \Psi^{m-1}\). Therefore, combining with \protect\hyperlink{lem:coherent-localization}{Lemma} leads to \(\langle \psi_h | A(t) | \psi_h \rangle = a \circ \Phi_{q_0}^t + O(h)\). As we have seen, the symbol of a graph Laplacian is locally equal to \(|\xi|_{g_x}^2\). So, for a fixed \((x_0, \xi_0) \in T^*\mathcal{M}\) and \(h\) sufficiently small, we expect the that the Hamiltonian flow given by the square root of (the principal part of) the symbol of a graph Laplacian coincides with the co-geodesic flow in a neighbourhood of this point. An issue, however, is that graph Laplacians are not elliptic, which obstructs directly utilising their square roots. Our way out goes back to the application of coherent states: since \(\psi_h\) is localized about \((x_0, \xi_0)\) in phase space, by going through the FBI transform as in \protect\hyperlink{thm:wunsch-zworski}{Lemma \ref{thm:wunsch-zworski}}, we can approximate it with the application of the quantization of a phase space localized symbol. Upon choosing an appropriate form of this symbol in concert with the spectral gaps in the graph Laplacian, we get approximately a spectral cut-off of the graph Laplacian. Finally, we use this to bring a cut-off into the generator \(Q\), for which we have the following version of Egorov's theorem:

\begin{theorem}[{Egorov}]	\hypertarget{thm:egorov}{\label{thm:egorov}} Let \(q \in C_c^{\infty}(T^*\mathcal{M} \times [0, h_0)) \cap h^0 S^{-\infty}\) for some \(h_0 > 0\) have a real principal symbol \(q_0\) that generates the Hamiltonian flow \(\Phi_{q_0}^t\) on \(T^*\mathcal{M}\). Then, given \(a \in h^{\ell} S^{m}\) and \(T > 0\), for all \(|t| \leq T\), \(a \circ \Phi_{q_0}^t \in h^{\ell} S^{m}\) and
\begin{equation}\begin{aligned}
e^{\frac{i}{h} t Q} \Op_h(a) e^{-\frac{i}{h} t Q} \equiv \Op_h(a \circ \Phi_{q_0}^t) \pmod{h^{\ell + 1} \Psi^{-\infty}}
\end{aligned}  \nonumber  \end{equation}
with \(Q := \Op_h(q)\).

\end{theorem}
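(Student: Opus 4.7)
The plan is to combine Duhamel's formula with the symbol calculus, reducing the identity to a cancellation at the level of principal symbols, and then to iterate to control the remainder at every order. First I would check symbol-class preservation: since $q \in C_c^{\infty}(T^*\mathcal{M})$, its principal symbol $q_0$ is compactly supported, so the Hamiltonian vector field $H_{q_0}$ is compactly supported and $\Phi_{q_0}^t$ equals the identity outside a fixed compact subset of $T^*\mathcal{M}$. Within the support, Gronwall estimates on the variational equation give uniform bounds on all derivatives of $\Phi_{q_0}^t$ for $|t|\leq T$, and since the identity preserves the $\xi$-asymptotics outside, composition with $\Phi_{q_0}^t$ preserves the seminorms defining $h^{\ell} S^m$; hence $a \circ \Phi_{q_0}^t \in h^{\ell} S^m$.

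For the operator identity, set $A(t):=e^{-itQ/h}\Op_h(a)e^{itQ/h}$ and $B(t):=\Op_h(a\circ\Phi_{q_0}^t)$, which agree at $t=0$. Introducing $F(s):=e^{-i(t-s)Q/h}B(s)e^{i(t-s)Q/h}$, the telescoping identity $B(t)-A(t)=F(t)-F(0)$ becomes
\[
B(t)-A(t)=\int_0^t e^{-i(t-s)Q/h}\Bigl(\tfrac{i}{h}[Q,B(s)]+B'(s)\Bigr)e^{i(t-s)Q/h}\,ds.
\]
The Moyal composition formula applied to $q\in h^0 S^{-\infty}$ gives $\tfrac{i}{h}[Q,\Op_h(b)]=\Op_h(\{q_0,b\})+R$ with $R\in h^{\ell+1}\Psi^{-\infty}$ for any $b\in h^{\ell} S^m$, while by the defining property of the Hamiltonian flow (with the sign convention for $\Phi_{q_0}^t$ matched to the direction of conjugation) $\partial_s(a\circ\Phi_{q_0}^s)+\{q_0,a\circ\Phi_{q_0}^s\}=0$. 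The two contributions to the integrand thus cancel at principal order, leaving an integrand that is the conjugation of an operator in $h^{\ell+1}\Psi^{-\infty}$ by the bounded propagators $e^{\mp i(t-s)Q/h}$.

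To conclude that the full integral actually lies in $h^{\ell+1}\Psi^{-\infty}$ rather than merely in a bounded-operator class, I would iterate the construction. The plan is to build a formal expansion $\tilde b(t)\sim a\circ\Phi_{q_0}^t + h\,b_1(t)+ h^2\, b_2(t)+\cdots$ with each $b_j\in h^{\ell} S^{m}$ determined from an inhomogeneous transport equation $\partial_s b_j+\{q_0,b_j\}=f_j$ along $\Phi_{q_0}^t$, where $f_j$ collects the lower-order Moyal terms in $[q,\tilde b_{<j}]$. A Borel summation yields a genuine symbol $\tilde b(t)\in h^{\ell} S^m$, and the same Duhamel argument then shows $A(t)-\Op_h(\tilde b(t))\in h^N\Psi^{-\infty}$ for every $N$. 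Truncating after the first term yields the stated congruence modulo $h^{\ell+1}\Psi^{-\infty}$.

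The main obstacle is precisely this handling of conjugation by the semiclassical propagator, since $e^{\pm isQ/h}$ is not a priori pseudodifferential and a naive operator-norm bound would lose the smoothing order. The feature making it tractable here is that $Q\in h^0\Psi^{-\infty}$ is itself smoothing, so $e^{\pm isQ/h}$ maps Schwartz functions to Schwartz functions via a norm-convergent power series and all formal manipulations above are justified within that setting, with mapping-property bounds on the smoothing remainders uniform in $h$ for $|t|\leq T$. The compact support of $q_0$ in phase space further ensures that $T$ can be taken arbitrarily large without degrading either the flow bounds or the symbol-class preservation.
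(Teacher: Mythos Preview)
Your proposal is correct and follows essentially the same route as the paper: construct the full asymptotic symbol by iterating inhomogeneous transport equations along $\Phi_{q_0}^t$, Borel-sum, and then use a Duhamel argument to show the difference between $A(t)$ and the quantization of this symbol lies in $h^{\infty}\Psi^{-\infty}$. One small correction: the series for $e^{\pm isQ/h}$ is \emph{not} norm-convergent uniformly in $h$ (since $Q/h$ is unbounded as $h\to 0$); the paper instead closes the final step by showing $\tilde R(t)U^{-t}:H_h^{-N}\to H_h^N$ has norm $O(h^\infty)$ via Duhamel and the uniform $H_h^s\to H_h^s$ boundedness of $U^{\pm t}$, which is the estimate you actually need.
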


\begin{proof}

The main differences from standard treatments are that we use the full symbol to generate the operator dynamics, while we give the correspondence with the flow generated by the principal part and we are using the \emph{standard} quantization as opposed to the Weyl quantization for \(Q\). Thus, while the propagator \(e^{\frac{i}{h} t Q}\) solves an operator equation of the same form, it need not be unitary. Another minor difference is that typically Egorov's theorem is given for polyhomogeneous symbols when the operator dynamics uses the full symbol and the correspondence is to a classical flow with only the principal symbol. Nevertheless, the statement holds following the arguments in the proof of \citep[\emph{Egorov's Theorem} in \(\S 1\) of Ch. 8]{taylor81} and importing them into the semiclassical calculus; we carry this out presently.

Let \(A := \Op_h(a)\) and \(A(t) := U^{-t} \Op_h(a) U^t\) for \(U^t := e^{-\frac{i}{h} t Q}\). Then, \(A(t)\) is a solution to the system,
\begin{equation}\begin{aligned}
\partial_t A(t) = \frac{i}{h} [Q, A(t)], \quad A(0) = A .
\end{aligned}  \nonumber  \end{equation}
We wish to construct a solution \(\tilde{A}(t)\) to this system with principal symbol \(a \circ \Phi^t_{q_0}\) such that \(\tilde{A}(t) - A(t) \in h^{\infty} \Psi^{-\infty}\). That is, we look for \(\tilde{A}(t)\) that satisfies
\begin{equation}\begin{aligned}
\partial_t \tilde{A}(t) = \frac{i}{h}[Q, \tilde{A}(t)] + R(t) , \quad
\tilde{A}(0) = A
\end{aligned}  \nonumber  \end{equation}
with \(R(t) \in h^{\infty} \Psi^{-\infty}\). On the symbol side, we look for \(\tilde{a}(t, x, \xi ; h)\) with asymptotic expansion
\begin{equation}\begin{aligned}
\tilde{a}(t,x,\xi ; h) \sim \sum_{j=0}^{\infty} \tilde{a}_j(t, x, \xi ; h)
\end{aligned}  \nonumber  \end{equation}
such that \(\tilde{a}_j \in h^{\ell + j} S^{m - j}\). Then by \citep[Theorem 9.5]{zworski2012}, the symbol of \(\frac{i}{h}[Q, \tilde{A}(t)]\) must have an asymptotic expansion in \(h^{\ell} S^{m}\) given in local coordinates by,
\begin{align*}
\operatorname{Sym}\left[ \frac{i}{h}[Q, \tilde{A}(t)] \right] &\sim \sum_{|\beta| \geq 0} h^{|\beta|-1} \frac{i^{-|\beta|+1}}{\beta!} (\partial_{\xi}^{\beta} \tilde{a}(t,x,\xi ; h) \partial_x^{\beta} q(x,\xi ; h) - \partial_{\xi}^{\beta} q(x,\xi ; h) \partial_x^{\beta}\tilde{a}(t,x,\xi ; h))    \\
    &= \{ q_0, \tilde{a} \} + \{ \tilde{q}_0, \tilde{a} \} + \sum_{|\beta| \geq 2} h^{|\beta| - 1} \frac{i^{-|\beta| + 1}}{\beta!} (\partial_{\xi}^{\beta} \tilde{a} \, \partial_x^{\beta} q - \partial_{\xi}^{\beta} q \, \partial_x^{\beta} \tilde{a})
\end{align*}
with \(\tilde{q}_0 := q - q_0 \in h S^{-\infty}\). Since \(\partial_t \tilde{A}_t = \Op_h(\partial_t \tilde{a}_t)\), it follows that \(\operatorname{Sym}[i h^{-1}[Q, \tilde{A}(t)]] = \partial_t \tilde{a}_t\), hence the asymptotic expansions would coincide. So let \(\tilde{a}_0(t, x, \xi ; h)\) be defined by the transport equation
\begin{equation}\begin{aligned}
\partial_t \tilde{a}_0 = \{ q_0, \tilde{a}_0 \}, \quad\quad \tilde{a}_0(0, x, \xi) = a(x, \xi),
\end{aligned}  \nonumber  \end{equation}
which is sensible as it is solved by \(\tilde{a}_0 = a \circ \Phi_{q_0}^t\) that is a symbol of order \((\ell, m)\) since \(\Phi_{q_0}^t\) is smooth with compact support. The operator \(\tilde{A}_0(t) := \Op_h(\tilde{a}_0)\) has
\begin{align*}
\operatorname{Sym}\left[ \frac{i}{h} [Q, \tilde{A}_0(t)] - \partial_t \tilde{A}_0(t) \right] &\sim \{ \tilde{q}_0, \tilde{a}_0 \} + \sum_{|\beta| \geq 2} h^{|\beta| - 1} \frac{i^{-|\beta| + 1}}{\beta!} (\partial_{\xi}^{\beta} \tilde{a}_0 \partial_x^{\beta} q - \partial_{\xi}^{\beta} q \partial_x^{\beta} \tilde{a}_0)   \\
    &=: r_0(t,x,\xi) \in h^{\ell + 1} S^{-\infty}.
\end{align*}
Now, \(\tilde{a}_1(t, x, \xi ; h)\) defined by the transport equation
\begin{equation}\begin{aligned}
\partial_t \tilde{a}_1 = \{ q_0, \tilde{a}_1 \} - r_0, \quad\quad \tilde{a}_1(0, x, \xi) = 0
\end{aligned}  \nonumber  \end{equation}
is a symbol of order \((\ell + 1, -\infty)\) due to Duhamel's formula and the fact that \(r_0\) and \(\Phi_{q_0}^t\) are smooth and compactly supported. This leads to a recursive procedure to determine \(\tilde{a}_j\) for \(j \geq 1\) via transport equations: suppose that for all \(0 \leq j \leq J-1\) we have \(\tilde{a}_j \in h^{\ell + j} S^{-\infty}\) such that \(\tilde{A}_{J-1}(t) := \sum_{j=0}^{J-1} \Op_h(\tilde{a}_j)\) has \(\sum_{j=0}^{J-1} \tilde{a}_j(0,x,\xi) = a(x,\xi)\) and
\begin{gather*}
\operatorname{Sym}\left[ \frac{i}{h} [Q, \tilde{A}_{J-1}(t)] - \partial_t \tilde{A}_{J-1}(t) \right] \sim r_{J-1} , \\
r_{J-1} := \{ \tilde{q}_0, \tilde{a}_{J-1} \} + \sum_{|\beta| \geq 2} h^{|\beta| - 1} \frac{i^{-|\beta| + 1}}{\beta!} (\partial_{\xi}^{\beta} \tilde{a}_{J-1} \partial_x^{\beta} q - \partial_{\xi}^{\beta} q \partial_x^{\beta} \tilde{a}_{J-1}) \in h^{\ell + J} S^{-\infty}.
\end{gather*}
Then, let \(\tilde{a}_J(t,x,\xi ; h)\) be defined by the transport equation
\begin{equation}\begin{aligned}
\partial_t \tilde{a}_{J} = \{ q_0, \tilde{a}_J \} - r_{J-1}, \quad\quad \tilde{a}_J(0, x, \xi) = 0.
\end{aligned}  \nonumber  \end{equation}
Since \(r_{J-1}\) and \(\Phi_{q_0}^t\) are smooth and compactly supported, by Duhamels formula this is solved with \(\tilde{a}_J\) also of order \((\ell + J, -\infty)\). Moreover, \(A_J(t) := A_{J-1}(t) + \Op_h(\tilde{a}_{J})\) has \(\sum_{j=0}^J \tilde{a}_j(0,x,\xi) = a(x,\xi)\) and
\begin{gather*}
\operatorname{Sym}\left[ \frac{i}{h} [Q, \tilde{A}_{J}(t)] - \partial_t \tilde{A}_{J}(t) \right] \sim r_{J} ,   \\
r_{J} := \{ \tilde{q}_0, \tilde{a}_{J} \} + \sum_{|\beta| \geq 2} h^{|\beta| - 1} \frac{i^{-|\beta| + 1}}{\beta!} (\partial_{\xi}^{\beta} \tilde{a}_{J} \partial_x^{\beta} q - \partial_{\xi}^{\beta} q \partial_x^{\beta} \tilde{a}_{J}) \in h^{\ell + J + 1} S^{-\infty} .
\end{gather*}
Continuing like this, we have for all \(j \geq 1\), \(\tilde{a}_j \in h^{\ell + j} S^{-\infty}\) so by Borel's theorem there is \(\tilde{a}^{(1)} \in h^{\ell + 1} S^{-\infty}\) such that
\begin{equation}\begin{aligned}
\tilde{a}^{(1)} \sim \sum_{j=1}^{\infty} \tilde{a}_j, \quad\quad \tilde{a} := \tilde{a}_0 + \tilde{a}^{(1)} \in h^{\ell} S^m
\end{aligned}  \nonumber  \end{equation}
and therefore, \(\tilde{A}(t) := \Op_h(\tilde{a}(t,x,\xi ; h))\) satisfies
\begin{equation}\begin{aligned}
\partial_t \tilde{A}(t) = \frac{i}{h} [Q, \tilde{A}] + R(t), \quad\quad \tilde{A}(0) = A ,
\end{aligned}  \nonumber  \end{equation}
with \(R(t) \in h^{\infty} \Psi^{-\infty}\).

Now we wish to see that \(\tilde{R}(t) := A(t) - \tilde{A}(t) \in h^{\infty} \Psi^{-\infty}\). Since \(U^{-t} : H_h^s \to H_h^s\) continuously for all \(s \in \mathbb{R}\), it suffices to show that \(U^{-t} A - \tilde{A}(t) U^{-t} = \tilde{R}(t) U^{-t} \in h^{\infty} \Psi^{-\infty}\). Let \(u \in H_h^{-N}\) for \(N \in \mathbb{N}\) and denote \(v(t, x) := \tilde{A}(t) U^{-t}[u]\). Then \(v\) satisfies \(v(0, x) = A[u]\) and
\begin{align*}
\partial_t v &= (\partial_t[\tilde{A}(t)] U^{-t} + \tilde{A}(t) \partial_t[U^{-t}])[u]  \\
    &= \left( \frac{i}{h} [Q, \tilde{A}(t)] + R(t) \right)U^{-t}[u] + \frac{i}{h} \tilde{A}(t) Q U^{-t}[u]  \\
    &= \frac{i}{h} Q \tilde{A}(t)U^{-t}[u]  + R(t) U^{-t}[u]    \\
    &= \frac{i}{h} Q[v] + R(t) U^{-t}[u].
\end{align*}
Thus, \(\tilde{v}(t, x) := v(t,x) - U^{-t} A[u](x)\) satisfies
\begin{equation}\begin{aligned}
\partial_t \tilde{v} = \frac{i}{h} Q[\tilde{v}] + R(t)U^{-t}[u], \quad\quad \tilde{v}(0, x) = 0.
\end{aligned}  \nonumber  \end{equation}
Since \(w(t) := R(t) U^{-t}[u] \in C^{\infty}\) with \(||w(t)||_{H_h^N} = O(h^{\infty})\) and \(U^s : C^{\infty} \to C^{\infty}\) for all \(|s| \leq T\), by Duhamel's formula it follows that \(\tilde{v} = \tilde{R}(t) U^{-t}[u] \in C^{\infty}\) with \(||\tilde{v}||_{H_h^N} = O(h^{\infty})\). Thus, \(||\tilde{R}(t) U^{-t}||_{H_h^{-N} \to H_h^N} = O(h^{\infty})\), whence \(\tilde{R}(t) U^{-t} \in h^{\infty} \Psi^{-\infty}\).
\end{proof}

The equivalence between the operator dynamics given by conjugation with \(e^{-\frac{i}{h} t Q}\) and the dynamics on symbols given by composition with \(\Phi_{q_0}^t\) as guaranteed by Egorov's theorem is a precise form of the physical notion of \emph{quantum-classical correspondence}. This establishes an assurance of the correspondence for a Liouvillian flow on symbols when we use a smoothing generator \(Q\) the operator side, whose real principal symbol serves as a local Hamiltonian for the symbol dynamics. On one hand, the second part of \protect\hyperlink{thm:sym-renorm-graph-lap}{Theorem \ref{thm:sym-renorm-graph-lap}} shows that smoothly cutting-off the principal symbol of \(h^2 \GLap_{\lambda,\epsilon}\) in a sufficiently small region of phase space gives a symbol whose Hamiltonian flow is geodesic for initial points in that region. On the other hand, the first part of that Theorem gives that this operator has order \((0,2)\). Thus, to apply Egorov's theorem we must maintain the former property and simultaneously localize the symbol of our generator to an appropriately small part of the phase space.

We proceed as follows: we first see that when applied to a coherent state, the operator dynamics given by \(U_{\lambda,\epsilon}^t\) is, up to \(O(h)\) error, equal to the dynamics given by a semigroup that is generated by the square root of a phase space localized form of the graph Laplacian. Then, we apply the second part of \protect\hyperlink{thm:sym-renorm-graph-lap}{Theorem \ref{thm:sym-renorm-graph-lap}} and semiclassical functional calculus to see that this generator is pseudodifferential and has a principal symbol, whose Hamiltonian flow is geodesic about the initial point \((x_0, \xi_0)\) where the coherent state is localized. This puts us in a setting to apply Egorov's theorem, which gives the equivalence to the quantization of a symbol propagated along this flow. Finally, taking an inner product with the same coherent state recovers, up to \(O(h)\) error, the symbol propagated along the geodesic, thus giving the desired result, namely:

\begin{theorem}	\hypertarget{thm:sym-cs-glap-psido}{\label{thm:sym-cs-glap-psido}} Let \(\lambda \geq 0\), \(\alpha \geq 1\), \((x_0, \xi_0) \in T^*\mathcal{M} \setminus 0\) and \(|t| \leq \operatorname{inj}(x_0)\). Then, there exists \(h_0 > 0\) such that given \(a \in h^0 S^0\), for all \(h \in (0, h_0]\) and with \(\epsilon := h^{2 + \alpha}\),
\begin{equation} \label{eq:glap-prop-coherent-symbol}
\langle \psi_h(\cdot ; x_0, \xi_0) | U_{\lambda,\epsilon}^{-t} \Op_h(a) U_{\lambda,\epsilon}^t | \psi_h(\cdot ; x_0, \xi_0) \rangle = a \circ \Gamma^t(x_0, \xi_0) + O(h).
\end{equation}
In fact, there is a cut-off \(\chi \in C_c^{\infty}(\mathbb{R}, [0, 1])\) with \(\chi \equiv 1\) on \([-r, r]\) for some \(r > 0\) such that for all \(h \in (0, h_0]\) and with \(\epsilon := h^{2 + \alpha}\),
\begin{equation} \label{eq:glap-prop-microlocal-cut}
U_{\lambda,\epsilon}^{-t} \Op_h(a) U_{\lambda,\epsilon}^t[\psi_h] = U_{\chi}^{-t} \Op_h(a) U_{\chi}^t[\psi_h] + O_{L^2}(h) .
\end{equation}
Here, \(U_{\chi} := e^{\frac{i}{h} t Q}\) with \(Q = \Op_h(q)\) and \(q \in C_c^{\infty}(T^*\mathcal{M} \times [0, h_0)) \cap h^0 S^{-\infty}\) such that \(\operatorname{Sym}[Q] (x,\xi)= |\xi|_{g_x} \, \chi(|\xi|_{g_x}^2 - r_0)\) for \(r_0 := |\xi_0|_{g_{x_0}}^2\).

The above statements hold with \(U_{\lambda,\epsilon}^{\pm (\varepsilon) t} := e^{\mp i t (\GLap_{\lambda,\epsilon} + (2 c_0/c_2) (\varepsilon/\epsilon) I)^{\frac{1}{2}}}\) in place of \(U_{\lambda,\epsilon}^{\pm t}\), whenever \(\varepsilon \in O(h^{1 + \alpha})\).

\end{theorem}

\begin{proof}

The idea is to show first that \(\eqref{eq:glap-prop-microlocal-cut}\) holds so that the form of Egorov's theorem in \protect\hyperlink{thm:egorov}{Theorem \ref{thm:egorov}} can be applied to give \(\eqref{eq:glap-prop-coherent-symbol}\). In order to see \(\eqref{eq:glap-prop-microlocal-cut}\), we use the fact that there exists \(r > 0\) so that the cut-off \(\chi(|\xi|_{g_x}^2 - r_0)\) contains the \emph{essential support} of \(T_h[\psi_h]\), while \(\chi \equiv 0\) on the spectrum of \(\GLap_{\lambda,\epsilon}\) outside of \([r_0 - r, r_0 + r]\), as depicted in Figure \(\ref{fig:glaps-cutoff}\). This ensures that \(\chi(\GLap_{\lambda,\epsilon})\) is a spectral projector and for any operator \(A : L^2 \to L^2\), \(A[\psi_h] = A T_h^* \chi T_h[\psi_h] + O_{L^2}(h^{\infty})\). The Helffer-Sjöstrand formula enables combining these two facts by ensuring that the spectral projector is in \(h^0 \Psi^{-\infty}\), so that \(A[\psi_h] = A \chi(\GLap_{\lambda,\epsilon})[\psi_h] + O_{L^2}(h)\). Then, when we set \(A = U_{\lambda,\epsilon}^{-t} \Op_h(a) U_{\lambda,\epsilon}^t\), we can \emph{commute the spectral projector} into the exponent of the propagator \(U_{\lambda,\epsilon}^t\) and by pseudodifferential calculus achieve \(\eqref{eq:glap-prop-microlocal-cut}\).

We start with the phase-space localization property of coherent states: using the special form of the FBI transform with the adapted phase coinciding with \(\psi_h\) and appropriate amplitude, we have by \protect\hyperlink{thm:FBI-basic}{Theorem \ref{thm:FBI-basic}} and \protect\hyperlink{thm:wunsch-zworski}{Lemma \ref{thm:wunsch-zworski}} that there is \(C > 0\) depending only on \((x_0, \xi_0)\) and \(\mathcal{M}\) such that given any \(\gamma > 0\), if \(\chi \in C_c^{\infty}(\mathbb{R}, [0,1])\) with \(\chi \equiv 1\) on \(\mathcal{I}_{\chi} \supset \mathcal{I}_{\psi_h} := |\xi_0|_{g_{x_0}}^2 + h^{1 - \gamma}[-C^2, C^2]\), then
\begin{align*}
\psi_h &= T_h^* T_h[\psi_h] + O_{L^2}(h^{\infty})   \\
    &= T_h^* \chi(|\xi|_{g_x}^2) T_h[\psi_h] + O_{L^2}(h^{\infty}) .
\end{align*}
On the other hand, by straight-forward modifications to the proof of the first part of \protect\hyperlink{thm:sym-renorm-graph-lap}{Theorem \ref{thm:sym-renorm-graph-lap}} it follows that \(h^{\alpha} \mathcal{L}_{\lambda,\epsilon} \in h^0 S^0\), so we may apply the Helffer-Sjöstrand formula as in \citep[\(\S 8\)]{dimassi1999spectral} to find that upon contracting the support \(\chi_{h^{-\alpha}}(\cdot) := \chi(h^{-\alpha} \cdot)\), \(\Pi_{h,\alpha,\chi} := \chi_{h^{-\alpha}}(\epsilon \GLap_{\lambda,\epsilon}) = \Op_h(q_{\chi,\lambda,\epsilon})\) with \(q_{\chi,\lambda,\epsilon} \in h^0 S^{-\infty}\) and \(q_{\chi,\lambda,\epsilon} \equiv \chi \circ \mathcal{L}_{g,\lambda,h^{\alpha}} \pmod{h S^{-\infty}}\). An application of Taylor's theorem shows that \(\chi \circ \mathcal{L}_{g,\lambda,h^{\alpha}} \equiv \chi(|\xi|_{g_x}^2) \pmod{h^{\alpha} S^{-\infty}}\), so by \protect\hyperlink{thm:FBI-basic}{Theorem \ref{thm:FBI-basic}}, \(\Pi_{h,\alpha,\chi} \equiv T_h^* (\chi \circ |\xi|_{g_x}^2) T_h \pmod{h \Psi^{-\infty}}\). Therefore,
\begin{equation} \label{eq:cs-phase-space-localization}
\psi_h = \Pi_{h,\alpha,\chi}[\psi_h] + O_{L^2}(h).
\end{equation}
Now, as recorded in \protect\hyperlink{lem:lap-symm}{Lemma \ref{lem:lap-symm}}, \(\GAve_{\lambda,\epsilon}\) has a discrete spectrum contained in \((-1, 1]\), so the spectrum of \(h^2 \GLap_{\lambda,\epsilon}\) is also discrete and contained in \([0, 2\tilde{c}_{2,0} \, h^{-\alpha})\), werein we set \(\tilde{c}_{2,0} := c_2/(2 c_0)\), with all eigenvalues isolated, except for \(\tilde{c}_{2,0} h^{-\alpha}\) where there is an accumulation point. Hence, with \(0 < |\xi_0|_{g_{x_0}}^2 \leq \tilde{c}_{2,0} h^{-\alpha}\) and supposing \(\mathcal{I}_{\chi} \subset [0, \tilde{c}_{2,0} h^{-\alpha} + C^2 h^{1 - \gamma}]\), we have \(\underline{\sigma}_{\chi}, \overline{\sigma}_{\chi} \in \operatorname{Spec}(h^2 \Delta_{\lambda,\epsilon}) \cup \{ \tilde{c}_{2,0} h^{-\alpha} + C^2 h^{1 - \gamma} \}\) such that \(\underline{\sigma}_{\chi} = \sup\{ \sigma \in \operatorname{Spec}(h^2 \Delta_{\lambda,\epsilon}) ~|~ \sigma \leq \min \mathcal{I}_{\chi} \}\) and \(\overline{\sigma}_{\chi} = \inf\{ \sigma \in \operatorname{Spec}(h^2 \Delta_{\lambda,\epsilon}) \cup \{ \tilde{c}_{2,0} h^{-\alpha} + C^2 h^{1 - \gamma} \} ~|~ \sigma \geq \max \mathcal{I}_{\chi} \}\). After possibly perturbing \(\chi\) so that \(\underline{\sigma}_{\chi}, \overline{\sigma}_{\chi} \neq \tilde{c}_{2,0} h^{-\alpha}\), we can modify \(\chi\) while keeping \(\chi^{-1}\{ 1 \} = \mathcal{I}_{\chi}\) fixed such that \(\mathcal{I}_{\chi} \subsetneq \supp \chi \subset (\underline{\sigma}_{\chi}, \overline{\sigma}_{\chi})\). Then, denoting by \(\Pi : L^2 \to L^2\) the spectral projector onto the eigenspaces of \(h^2 \GLap_{\lambda,\epsilon}\) given by the eigenvalues in \([\min \mathcal{I}_{\chi}, \max \mathcal{I}_{\chi}]\), we have
\begin{equation}\begin{aligned}
\Pi_{h,\alpha,\chi} = \Pi .
\end{aligned}  \nonumber  \end{equation}
Hence with the short-hand \(A := \Op_h(a)\) we have by the calculus of \(\PDO\)s, including that \(A \in h^0 \Psi^0\) implies \(A\) is bounded on \(L^2(\mathcal{M})\) and by the foregoing considerations that
\begin{align*}
U_{\lambda,\epsilon}^{-t} A U_{\lambda,\epsilon}^t[\psi_h] &= U_{\lambda,\epsilon}^{-t} A \, \Pi^2 \, U_{\lambda,\epsilon}^{t}[\psi_h] + O_{L^2}(h) \\
    &= U_{\lambda,\epsilon}^{-t} (\Pi \, A - [\Pi, A])\Pi \, U_{\lambda,\epsilon}^{t}[\psi_h] + O_{L^2}(h)  \\
    &= U_{\lambda,\epsilon}^{-t} \Pi \, A U_{\lambda,\epsilon}^{t} \Pi [\psi_h] + O_{L^2}(h)    \\
    &= e^{i t(\GLap_{\lambda,\epsilon} \Pi)^{\frac{1}{2}}} \Pi A \Pi \, e^{-i t (\GLap_{\lambda,\epsilon} \Pi)^{\frac{1}{2}}}[\psi_h] + O_{L^2}(h)  \\
    &= e^{i t(\GLap_{\lambda,\epsilon} \Pi)^{\frac{1}{2}}} (A\Pi - [A, \Pi])\Pi \, e^{-i t (\GLap_{\lambda,\epsilon} \Pi)^{\frac{1}{2}}}[\psi_h] + O_{L^2}(h)   \\
    &= e^{i t(\GLap_{\lambda,\epsilon} \Pi)^{\frac{1}{2}}} A \, e^{-i t (\GLap_{\lambda,\epsilon} \Pi)^{\frac{1}{2}}}[\psi_h] + O_{L^2}(h).
\end{align*}
Indeed, by \protect\hyperlink{lem:lap-symm}{Lemma \ref{lem:lap-symm}}, \(||U_{\lambda,\epsilon}^{\pm t}||_{L^2 \to L^2} \leq \overline{C}_{p,\lambda}/\underline{C}_{p,\lambda}\) and \([\Pi, A]\Pi \in h \Psi^{-\infty}\), so \(||U_{\lambda,\epsilon}^{-t} [\Pi, A] \Pi U_{\lambda,\epsilon}^{t}[\psi_h]||_{L^2} = O(h)\) and the remaining equalities use the commutativity among spectral functions of \(\GLap_{\lambda,\epsilon}\) and idempotency of \(\Pi\) as a spectral projector. The ultimate equality is due firstly to \(U^t := e^{i t (\GLap_{\lambda,\epsilon} \Pi)^{\frac{1}{2}}}\) having the bound, again by \protect\hyperlink{lem:lap-symm}{Lemma \ref{lem:lap-symm}}, \(||U^{\pm t}||_{L^2 \to L^2} \leq \overline{C}_{p,\lambda}/\underline{C}_{p,\lambda}\) and \([A,\Pi]\Pi \in h \Psi^{-\infty}\), which gives that the corresponding term is \(O_{L^2}(h)\) and then due to \(\eqref{eq:cs-phase-space-localization}\) so that \(||U^{-t} A U^t(\Pi - I)[\psi_h]||_{L^2} = O(h)\). Since \(|\xi_0|_{g_{x_0}} > 0\), we can choose \(\mathcal{I}_{\chi}\) so that \(\min \mathcal{I}_{\chi} > 0\) and then there is \(h_0 > 0\) such that for all \(h \in [0, h_0]\), \(\mathcal{I}_{\psi_h} \subset \mathcal{I}_{\chi}\). On this interval the square root function is smooth and since \(\Pi \in h^0 \Psi^{-\infty}\), we have that \(\GLap_{\lambda,\epsilon} \Pi = \Op_h(q^2) \in h^0 \Psi^{-\infty}\), wherein due to the second part of \protect\hyperlink{thm:sym-renorm-graph-lap}{Theorem \ref{thm:sym-renorm-graph-lap}}, \(q^2 \equiv |\xi|_{g_x}^2 \chi(|\xi|_{g_x}^2) \pmod{h S^{-\infty}}\). Thus, another application of the Helffer-Sjöstrand formula gives,
\begin{equation}\begin{aligned}
(h^2 \GLap_{\lambda,\epsilon} \Pi)^{\frac{1}{2}} = \sqrt{h^2 \GLap_{\lambda,\epsilon}} \, \Pi \equiv \Op_h(|\xi|_{g_x} \, \chi\circ |\xi|_{g_x}^2)  \pmod{h \Psi^{-\infty}} .
\end{aligned}  \nonumber  \end{equation}
Therefore, combining the above considerations with Egorov's theorem to time \(|t| < \operatorname{inj}(x_0)\) and \protect\hyperlink{thm:sym-cs-psido}{Theorem \ref{thm:sym-cs-psido}}, we have upon recalling \(\mathcal{I}_{\psi_h} \subset \mathcal{I}_{\chi}\) that
\begin{equation}\begin{aligned}
\langle \psi_h | U_{\lambda,\epsilon}^{-t} A U_{\lambda,\epsilon}^t | \psi_h \rangle = a \circ \Phi_{q_0}^t(x_0, \xi_0) + O(h), \\
q_0(x,\xi) = |\xi|_{g_x} \chi(|\xi|_{g_x}^2), \quad (x_t, \xi_t) := \Phi^t_{q_0}(x,\xi).
\end{aligned}  \nonumber  \end{equation}
Since for all \((x,\xi) \in T^*\mathcal{M}\) and all \(t \in \mathbb{R}\), \(q_0 \circ \Phi^t_{q_0}(x,\xi) = q_0(x,\xi)\), we find in particular that \(|\xi_t|_{g_{x_t}}^2 = |\xi|_{g_x}^2\) for all \((x,\xi) \in \mathcal{N}_{\xi_0} := \{|\xi|_{g_x}^2 \in \mathcal{I}_{\chi} \}\). Furthermore, \(q_0(x,\xi) = |\xi|_{g_x}\) in the neighbourhood \(\operatorname{int}(\mathcal{N}_{\xi_0})\) of \((x_0, \xi_0)\), whence we find upon integrating the Liouvillian flow that \(a \circ \Phi_{q_0}^t(x_0, \xi_0) = a \circ \Gamma^t(x_0, \xi_0)\).

The preceding argument applies to \(U_{\lambda,\epsilon}^{(\varepsilon) t}\) as well, since with \(\varepsilon \in O(h^{1 + \alpha})\), \(h^2(\GLap_{\lambda,\epsilon} + (2 c_0/c_2) (\varepsilon/\epsilon) I) = h^2 \GLap_{\lambda,\epsilon} + (2 c_0/c_2) h I\) has the same principal symbol as \(h^2 \GLap_{\lambda,\epsilon}\). This gives the second part of the statement of the Theorem.
\end{proof}

\begin{remark} The proof of the preceding Theorem shows that we may approximate the flow of classical observables along geodesics upon projecting onto any part of the spectrum of \(h^2 \GLap_{\lambda,\epsilon}\) that contains \(\{ \sigma \in \operatorname{Spec}(h^2 \GLap_{\lambda,\epsilon}) ~|~ |\sigma - |\xi_0|_{g_{x_0}}^2| \leq C^2 h^{1 - \gamma} \}\). As \(\epsilon \to 0\), we have spectral convergence of \(\GLap_{\lambda,\epsilon}\) to \(\Delta_{\mathcal{M}} + O(\partial^1)\) so by Weyl's law, roughly speaking, if we order eigenvalues increasingly and \(\sigma_{\epsilon,k} \in \operatorname{Spec}(\GLap_{\lambda,\epsilon})\) is the \(k\)-th eigenvalue then from the bottom, then with \(\epsilon \to 0\) we have \(\sigma_{\epsilon,k + 1} - \sigma_{\epsilon,k} \sim k\). If \(h = C^{\frac{2}{\gamma - 1}} k^{-1}\) and \(|\xi_0|_{g_{x_0}}^2 = 1\), then this is asking for at least \(\operatorname{Spec}(\GLap_{\lambda,\epsilon}) \cap [C' k^2 - k^{1 + \gamma}, C' k^2 + k^{1 + \gamma}]\), which will span an increasing spectral band as \(h \to 0\).

\end{remark}

\includegraphics[width=0.46\textwidth,height=0.46\textheight]{./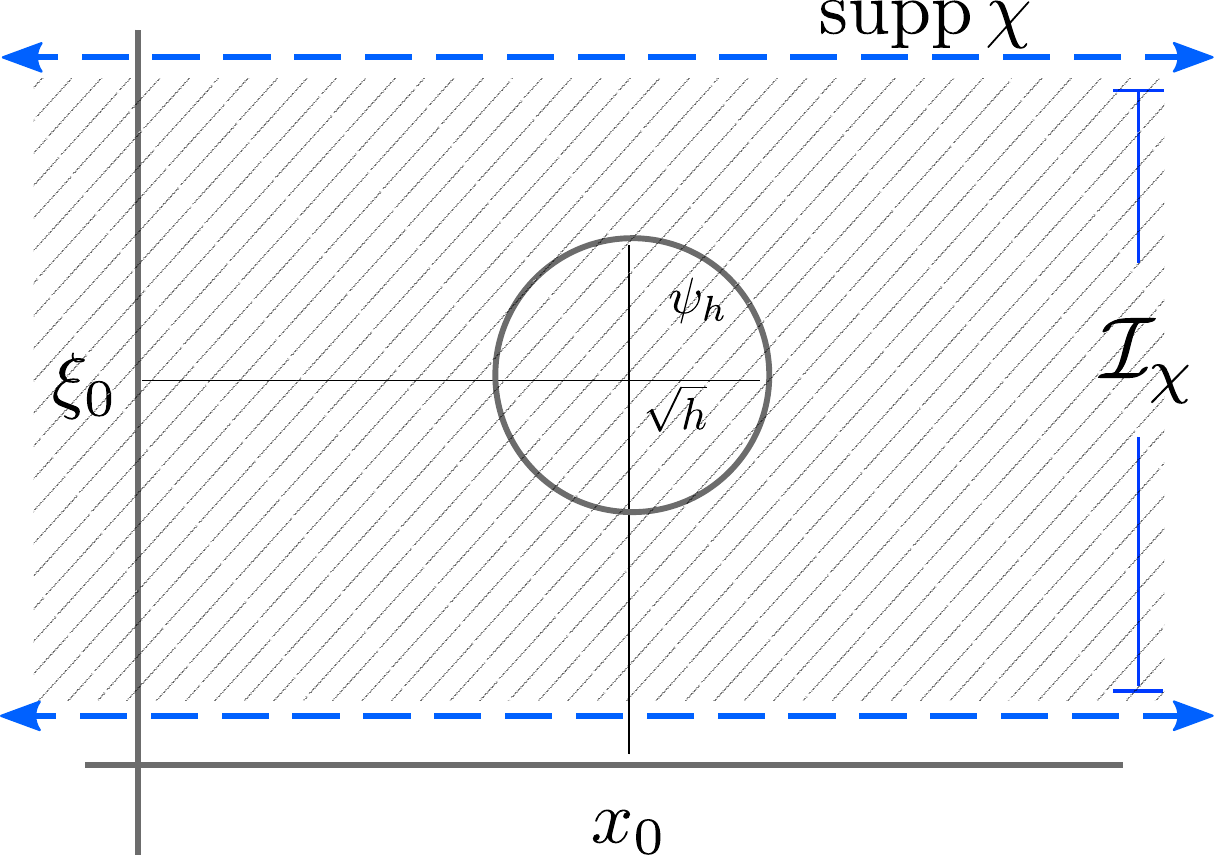}
\hfill
\includegraphics[width=0.46\textwidth,height=0.46\textheight]{./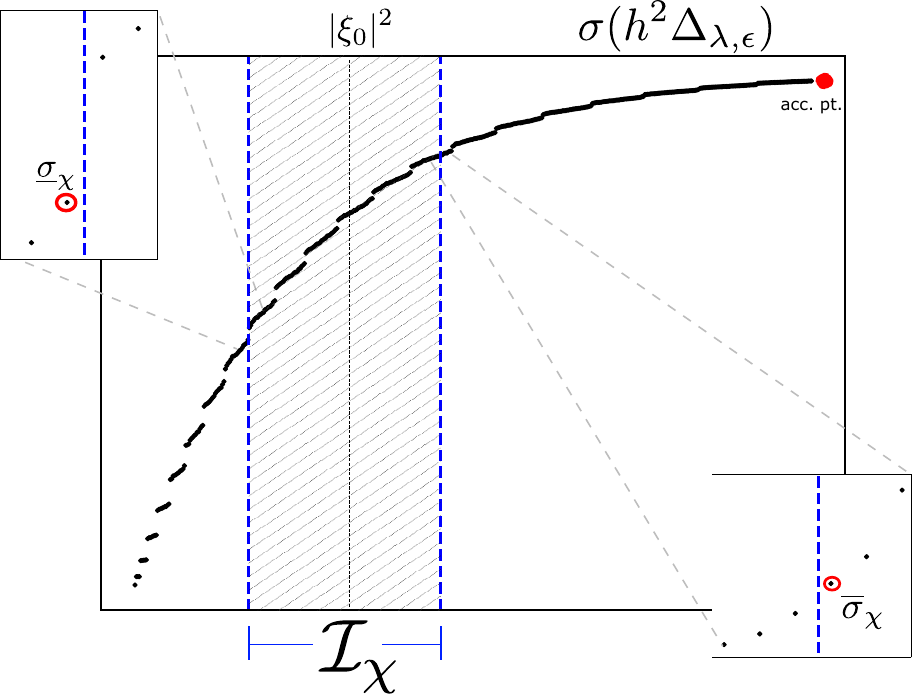}

\begin{figure}[!h]
\caption{The cut-off $\chi$ on phase-space (left) and on the spectrum\protect\footnotemark ~ (right). \label{fig:glaps-cutoff}}
\end{figure}

\footnotetext{The plot is actually of the first $480$ eigenvalues of $\GLap_{1,\epsilon,N}$ for $N = 6000$ uniformly random samples from the unit sphere in $\mathbb{R}^3$ with $\epsilon = e^{-3.4}$.}

\hypertarget{propagation-of-coherent-states}{%
\section{Propagation of coherent states}\label{propagation-of-coherent-states}}

We now discuss briefly the behaviour of a coherent state as it propagates according to \(U^t := e^{-\frac{i}{h} t Q_h}\) for \(Q_h : C^{\infty} \to C^{\infty}\) satisfying the conditions of \protect\hyperlink{thm:egorov}{Theorem \ref{thm:egorov}}, with a generator that locally looks like \(\sqrt{\Delta_{\mathcal{M}}}\), namely:

\begin{enumerate}
\def\labelenumi{\arabic{enumi}.}
\tightlist
\item
  \(Q_h = \Op_h(\tilde{q}) \in h^0 \Psi^{-\infty}\) such that
\item
  \(q = \operatorname{Sym}[Q_h]\) is real-valued and \(\tilde{q} \in C_c^{\infty}(T^*\mathcal{M} \times [0, h_0))\) for some \(h_0 > 0\),
\item
  there are \(r > 0\) and an open neighbourhood \(\mathcal{U} \subset T^*\mathcal{M} \setminus 0\) of the cosphere bundle \(S_r^*\mathcal{M}\) such that \(\mathcal{U} \subset \supp \tilde{q}(\cdot ; h)\) for all \(h \in [0, h_0)\) and
\item
  \(q(\cdot ; h) = |\xi|_{g_x}\) on \(\mathcal{U}\) for all \(h \in [0, h_0)\).
\end{enumerate}

\noindent The \emph{ansatz} is that when \(|\xi_0|_{x_0} = r\), the evolved state \(U^t |\psi_h(\cdot;x_0,\xi_0)\rangle\) to time \(|t| < \operatorname{inj}(x_0)\) is localized about \((x_t, \xi_t) := \Gamma^t(x_0,\xi_0)\) in phase space, and in particular, it is approximately another coherent state (with different amplitude), so a reasonable conjecture would be that it is roughly a Hermite distribution about \(x_t\). This is sensible\footnote{This is worked out explicitly in the case of the Schrödinger equation with \(Q = h^2 \Delta_{\mathcal{M}}\), in \citep{paul_uribe_semiclassical_measures}.}, since the evolution of coherent states under FIOs follows a more general consideration regarding the evolution of \emph{Lagrangian distributions} \citep{boutet_guillemin, paul_uribe_semiclassical_trace, zelditch2017eigenfunctions}. Such descriptions can be likened to the \emph{Schrödinger picture} of quantum dynamics, wherein one is concerned with the explicit description of propagated states \(| \psi_h(t) \rangle\) according to \(i h \partial_t | \psi_h \rangle = Q_h | \psi_h \rangle\). In the present context, we work in the \emph{quantum statistical} or \emph{Heisenberg} framework, using the correspondence between the evolution of symbols and the evolution of their corresponding quantized operators under conjugation by the dynamics \(U^t\), as driven by Egorov's theorem, in order to achieve a weaker description. Ultimately, we arrive at a middle-ground: the quantity \(|U^t[\psi_h]|^2(x)\) is physically interpreted --- as per the \emph{Born rule} --- as the probability that at time \(t\), the particle whose quantum state is described by \(|\psi_h(t)\rangle\) can be found at the position \(x\), while in the following we use quantum-classical correspondence in the Heisenberg picture to turn this quantity into a description about the location of \(x_t\). We will revisit this interpretation in \protect\hyperlink{observing-geodesics}{Section \ref{observing-geodesics}} when we use the \emph{mean} of this distribution as another way (with better error rate) to locate \(x_t\). Hence, presently the main concerns are that \(|U^t[\psi_h]|^2\) can be approximated by a local function and, for consistency with the formulation on graphs that its \(L^{\infty}\) norm can be bounded. These can essentially be resolved using only symbol calculus.

\hypertarget{localization-property}{%
\subsection{Localization property}\label{localization-property}}

We begin with the quantum statistics picture. Start by approximating point-evaluation with integration against a localized function: let \(\varepsilon > 0\) and define \(\rho_{\varepsilon}(x;p) := \exp(-d_g(x,p)^2/\varepsilon)\) for some \(p \in \mathcal{M}\) fixed. Then, given \(\varphi \in C^{\infty}\) and setting \(s_p : x \mapsto \exp_p^{-1}(x)\), we have
\begin{align}
\langle \varphi | \rho_{\varepsilon}(\cdot;p) \varphi\rangle &= \int_\mathcal{M} |\varphi|^2(y) e^{-\frac{d_g(y,p)^2}{\varepsilon}} ~ d\vol{y}  \nonumber \\
    &= \int_{\mathbb{R}^n} (|\varphi|^2\circ s_p^{-1})(u) \; e^{-\frac{||u||_p^2}{\varepsilon}} \sqrt{|g_{s_p^{-1}(u)}|} ~ du + O(\varepsilon^{\infty}) \label{eq:approx-ident-inner-prod} \\
    &= \varepsilon^{\frac{\mdim}{2}} m_0 |\varphi|^2(p) + O(\varepsilon^{\frac{\mdim}{2} + 1}) ,    \nonumber
\end{align}
with \(m_0 := \int_{\mathbb{R}^n} e^{-||u||^2} ~ du\). The second equality is due to \(d[{(s_x^{-1})}^* \nu_g](v) = \sqrt{|g_{s_x^{-1}(v)}|} dv\) (see \citep[Prop C.III.2]{berger1971spectre}) and the ultimate one is due to Taylor expansions and an application of \protect\hyperlink{lemma:expansion-normal-coords}{Lemma \ref{lemma:expansion-normal-coords}}. Now, we quantize \(\rho_{\varepsilon}(\cdot; p)\); this is sensible to do since for any \(u \in C^{\infty}(\mathcal{M})\), \(\partial_{\xi} u = 0\) and \(|u | \in O(1)\) together imply that that \(u \in h^0 S^{0}\). The quantization itself is straightforward:

\begin{align}
\operatorname{Op}_h(\rho_{\varepsilon}(\cdot;p))[\varphi](x) &= \frac{1}{(2\pi h)^n}\int_{T^*\mathcal{M}} e^{\frac{i}{h}s_x(y) \cdot \xi} \rho_{\varepsilon}(y;p) \varphi(y) \chi_x(y) ~ dy\,d\xi   \nonumber\\
    &= \frac{1}{(2\pi h)^n}\int_{\mathbb{R}^n} \int_{\mathbb{R}^n} e^{\frac{i}{h}v\cdot\xi} \rho_{\varepsilon}(s_x^{-1}(v);p)(\varphi \chi_x\circ s_x^{-1})(v) ~ (s_x^{-1})^*[ dy](v) ~ d\xi    \label{eq:quantization-functions} \\
    &= (2\pi)^{-n}\rho_{\varepsilon}(x;p) \varphi(x) ,  \nonumber
\end{align}

wherein \(\chi_x \in C^{\infty}\) is a cut-off supported in a normal neighbourhood of \(x\) and \(\chi_x \equiv 1\) in a smaller neighbourhood. The function \(\rho_{\varepsilon}(\cdot ; p)\) is a symbol of order \((0,0)\) for each \(\varepsilon > 0\) and upon defining \(\tilde{\rho}_{\varepsilon}(\cdot;p) := (2\pi)^{n} \rho_{\varepsilon}(\cdot;p)/||\rho_{\varepsilon}(\cdot;p)||_1\), if for each fixed \(h \geq 0\), we let \(\varepsilon \to 0^+\), then as a sequence of linear maps \(\mathscr{S} \to \mathscr{S}'\) from Schwartz-class functions to tempered distributions, \(\operatorname{Op}_h(\tilde{\rho}_{\varepsilon}(\cdot;p)) \to \hat{\delta}_p : \mathscr{S} \ni \phi \mapsto \delta_p \phi \in \mathscr{S}'\) in the weak-* sense. Therefore, we have that for each \(h \in [0, h_0)\),
\begin{equation}\begin{aligned}
\lim_{\varepsilon \to 0} \langle \varphi | \operatorname{Op}_h(\tilde{\rho}_\varepsilon(\cdot;p))|\varphi\rangle =|\varphi|^2(p) .
\end{aligned}  \nonumber  \end{equation}
This implies, in particular, that
\begin{equation} \label{eq:prop-cs-pos-rep}
\lim_{\varepsilon \to 0} \langle \psi_h(\cdot;x_0,\xi_0) | U^{-t} \operatorname{Op}_h(\tilde{\rho}_{\varepsilon}(\cdot;p))U^t | \psi_h(\cdot;x_0,\xi_0)\rangle = U^t[\psi_h](p) \, \overline{(U^{-t})^*[\psi_h](p)}
\end{equation}
and if \(U\) is unitary, then
\begin{equation}\begin{aligned}
\eqref{eq:prop-cs-pos-rep} = |U^t \psi_h|^2(p).
\end{aligned}  \nonumber  \end{equation}
Now, Egorov's theorem (in the present setting, \protect\hyperlink{thm:egorov}{Theorem \ref{thm:egorov}}) tells that the conjugation by \(U^t\) gives a \(\Psi\)DO with principal symbol that flows along the classical Hamiltonian vector field governed by the principal symbol of the generator of \(U^t\). Let \(\Flow_q^t\) be this flow corresponding to \(q\). Then, upon invoking the FBI transform, we have,
\begin{equation}\begin{aligned}
\langle \psi_h | U^{-t} \operatorname{Op}_h(\rho_{\varepsilon}) U^t|\psi_h\rangle = \langle \psi_h | T_h^* \tilde{\rho}_{\varepsilon}(\Flow_q^t;p)T_h|\psi_h\rangle + O(h,1/\varepsilon)
\end{aligned}  \nonumber  \end{equation}
with \(O(h,1/\varepsilon)\) denoting an error term that is of order \(1\) in \(h\) and order \(-1\) in \(\varepsilon\), due to derivatives of \(\rho_{\varepsilon}\) being taken in the process of converting the left-hand side to the right-hand side. Now we specify to \(|t| < T = \operatorname{inj}(x_0)\) and assume that \(|\xi_0|_{g_{x_0}} = r > 0\). Then, to see that we can still take the limit \(\varepsilon \to 0\), we compute the first term on the right: by \protect\hyperlink{thm:wunsch-zworski}{Lemma \ref{thm:wunsch-zworski}} we have,
\begin{align*}
h^{\frac{\mdim}{2}} \frac{\langle \psi_h | T_h^* \rho_{\varepsilon}(\Flow_q^t;p) T_h|\psi_h \rangle}{C_h(x_0,\xi_0)^2}
    &= \int_{T^*\mathcal{M}} e^{-d_g^2(\Flow_q^t(x,\xi),p)/\varepsilon} e^{-\frac{2}{h}\Im \Phi(s_{x_0}(x),\xi - \xi_0)} |c_h|^2 ~ dx d\xi  \\
    &=:(I),
\end{align*}
wherein the integrand is localized to a ball of radius \(O(h^{\frac{1}{2} - \gamma})\) for any \(\gamma > 0\) about \((x_0, \xi_0) \in T^*\mathcal{M}\) and by assumption, \(q(\cdot ; h) = |\xi|_{g_x}\) for all \(h \in [0, h_0)\), so locally \(\Flow_q^t = \Gamma^t\), hence
\begin{equation}\begin{aligned}
(I) = h^{\frac{\mdim}{2}} \frac{\langle \psi_h | T_h^* \rho_{\varepsilon}(\Gamma_q^t;p) T_h|\psi_h \rangle}{C_h(x_0,\xi_0)^2} + O(h^{\infty}).
\end{aligned}  \nonumber  \end{equation}
Now, working with the first term of the right-hand side we have,
\begin{align*}
h^{\frac{\mdim}{2}} \frac{\langle \psi_h | T_h^* \rho_{\varepsilon}(\Gamma^t;p) T_h|\psi_h \rangle}{C_h(x_0,\xi_0)^2}
    &= \int_{T^*\mathcal{M}} e^{-d_g^2(\Gamma^t(x,\xi),p)/\varepsilon} e^{-\frac{2}{h}\Im \Phi(s_{x_0}(x),\xi - \xi_0)} |c_h|^2 ~ dx d\xi   \\
    &= \int_{T^*\mathcal{M}} e^{-d_g^2(x,p)/\varepsilon} e^{-\frac{2}{h} \Im \Phi[s_{x_0}(\pi_{\mathcal{M}}\Gamma^{-t}(x,\xi)),\pi_{T_x^*\mathcal{M}}\Gamma^{-t}(x,\xi) - \xi_0]} |c_h^{-t}|^2 ~ dx d\xi    \\
    &= \int_{\mathbb{R}^n} \int_{V_p} e^{-||v||^2/\varepsilon} e^{-\frac{2}{h}\Im \Phi[s_{x_0} \circ x^{-t}(s_p^{-1}(v),\xi), \xi^{-t}(s_p^{-1}(v),\xi) - \xi_0]} ~ \times  \\
  & \quad\quad \times |c_h^{-t}|^2 dv d\xi + O(\varepsilon^{\infty}) \\
    &= \int_{\mathbb{R}^n} \left(e^{-\frac{2}{h} \Im \Phi[s_{x_0} \circ x^{-t}(p,\xi), \xi^{-t}(p,\xi) - \xi_0]} \right) \times \\
    & \quad\quad \times \int_{V_p} e^{-||v||^2/\varepsilon} [|c_h^{-t}|^2(p,\xi;x_0,\xi_0) + O(v)] (1 + O(v/h))     \; \times   \\
  & \quad\quad  \quad\quad  \times dv d\xi + O(\varepsilon^{\infty})    \\
    & = \varepsilon^{\frac{n}{2}} \int_{\mathbb{R}^n} e^{-\frac{2}{h}\Im\Phi[s_{x_0} \circ x^{-t}(p,\xi),\xi^{-t}(p,\xi) - \xi_0]} \times   \\
    &\quad\quad \times [|c_h^{-t}|^2(p,\xi;x_0,\xi_0)  + O(\varepsilon/h^2)] d\xi + O(\varepsilon^{\infty}),    \\
\end{align*}
wherein we have written \(x^{-t}(x,\xi) := \pi_{\mathcal{M}} \Gamma^{-t}(x,\xi)\) and \(\xi^{-t}(x,\xi) := \pi_{T_x^*\mathcal{M}} \Gamma^{-t}(x,\xi)\). To pass from the first to second line, we've used the invariance to the geodesic flow of the measure induced by the canonical symplectic form; then, we pass to normal coordinates about \(p\) and localize to its neighbourhood \(V_p \subset \mathbb{R}^n\) of size \(\omega(\sqrt{\varepsilon})\) since the integrand is decaying exponentially outside of that (the factors of the Gaussian in \(\varepsilon\) are bounded and independent of \(\varepsilon\)), which gives the \(O(\varepsilon^{\infty})\) error term. Finally, in the penultimate equality we have Taylor expanded \(|T_h \psi_h|^2 \circ \Gamma^{-t}\) and \(|c_h^{-t}|^2\) at \(x = p\) (in coordinates, \(v = 0\)) and in the last equality we have integrated the \(v\) variables after rescaling them by \(\sqrt{\varepsilon}\) and expanding the Gaussian into a Taylor series. Since now we have only non-negative powers of \(\varepsilon\), this allows to take the limit:
\begin{align}
|\psi_h^t|^2(p) &:= \lim_{\varepsilon \to 0} \langle \psi_h | T_h^* \tilde{\rho}_{\varepsilon}(\Gamma^t;p)T_h|\psi_h \rangle    \nonumber   \\
    &= C_h(x_0,\xi_0)^2 \, h^{-\frac{\mdim}{2}} \int_{\mathbb{R}^n} e^{-\frac{2}{h}\Im\Phi[\Gamma^{-t}(p,\xi) - (x_0,\xi_0)]} |c_h^{-t}|^2(p,\xi;x_0,\xi_0) ~ d\xi .    \label{eq:psi_h-t-full}
\end{align}
On the left-hand side, we have simply introduced new (but suggestive) notation defining a function \(|\psi^t_h|^2\) and on the right-hand side we have written the integrand in local coordinates: this is justified since \(c_h^{-t}\) is a symbol of order zero, which implies that the Gaussian localizes the integral to a \(C_0\sqrt{h}\)-ball \(\tilde{B}_p\) about \(\xi_t(p) := \arg\min_{\xi} ||\Gamma^{-t}(p,\xi) - (x_0,\xi_0)||_H^2\) with \(C_0 := C \sup_{x \in \mathcal{M}} \operatorname{vol} B[(x,0), H]\) since \(\Im\Phi(x,\xi) \leq C' \frac{1}{2} ||(x,\xi)||_H^2 \leq C (|x|^2 + |\xi|^2)\).

Now, we can relate this to the right-hand side of \(\eqref{eq:prop-cs-pos-rep}\) and in case that \(U\) is unitary, to \(|U^t \psi_h|^2\) via the computations above and a sequence of approximations:

\begin{enumerate}
\def\labelenumi{\arabic{enumi}.}
\item
  \(|U^t \psi_h|^2(p) = \langle \psi_h | U^{-t} \operatorname{Op}_h(\tilde{\rho}_{\varepsilon}(\cdot;p)) U^t | \psi_h \rangle + O(\varepsilon)\) due to \(\eqref{eq:approx-ident-inner-prod}\) and \(\eqref{eq:quantization-functions}\);
\item
  upon applying Egorov's theorem to the dominant term on the right-hand side of step (1),
  \begin{equation}\begin{aligned}
  |U^t\psi_h|^2(p) = \langle \psi_h | \operatorname{Op}_h(\tilde{\rho}_{\varepsilon}(\Flow_q^t ; p)) | \psi_h \rangle + O_{2,\varepsilon}(h) + O(\varepsilon),
  \end{aligned}  \nonumber  \end{equation}
  wherein \(O_{2,\varepsilon}(h)\) denotes a dependence of the error term on \(\varepsilon\) and tracks that the error comes from this step (2);
\item
  upon applying \protect\hyperlink{thm:FBI-basic}{Theorem \ref{thm:FBI-basic}} to the main term on the right-hand side of step (2),
  \begin{equation}\begin{aligned}
  |U^t\psi_h|^2(p) = \langle \psi_h | T_h^* \tilde{\rho}_{\varepsilon}(\Flow_q^t;p)T_h | \psi_h \rangle + O_{\varepsilon}(h) + O_{2,\varepsilon}(h) + O(\varepsilon),
  \end{aligned}  \nonumber  \end{equation}
  with the \(O_{\varepsilon}(h)\) error coming from the relation between a \(\Psi\)DO and its conjugation by the FBI transform;
\item
  the computation of the main term on the right-hand side of step (3) was carried out above, so if we pass to the limit \(\varepsilon \to 0\), then we have,
  \begin{align*}
  |U^t \psi_h|^2(p) - |\psi^t_h|^2(p)  &= \lim_{\varepsilon \to 0} [|U^t \psi_h|^2  - \langle \psi_h |T_h^* \tilde{\rho}_{\varepsilon}(\Gamma^t;p)T_h|\psi_h \rangle]  \\
   &= \lim_{\varepsilon \to 0}[O_{\varepsilon}(h) + O_{2,\varepsilon}(h)],
  \end{align*}
  wherein the convergence of the left-hand side gives the existence of the limit of the error terms on the right-hand side and since this only affects the coefficients of the terms dominated by \(h\), the over-all error is still \(O(h)\).
\end{enumerate}

This proves the first part of:

\begin{lemma}	\hypertarget{lem:prop-coherent-state-localized}{\label{lem:prop-coherent-state-localized}} With the notation above, assuming that \(\psi_h\) is localized about \((x_0, \xi_0) \in T^*{\mathcal{M}} \setminus 0\) with \(|\xi_0|_{g_{x_0}}^2 = r > 0\) and setting \(x_t := \pi_{\mathcal{M}}\Gamma^t(x_0,\xi_0)\), we have for all \(h \in (0, h_0]\) and \(|t| \leq \operatorname{inj}(x_0)\),

\begin{enumerate}
\def\labelenumi{\arabic{enumi}.}
\tightlist
\item
  \(|U^t \psi_h|^2(p) = |\psi_h^t|^2(p) + O(h)\),
\item
  there is \(h_1 \in (0, h_0]\) such that for all \(h \in (0, h_1]\), \(|U^t[\psi_h]|^2(x_t) \in \Theta(h^{-\frac{\mdim}{2}})\),
\item
  \(|\psi_h^t|^2(p)\) is localized to (meaning it is \(O(h^{\infty})\) outside of) a ball of radius \(O(h^{\frac{1}{2} - \gamma})\), for any \(\gamma > 0\), about \(x_t\) and
\item
  there is an \(h_{\max} \in (0, h_1]\) such that for all \(h \in [0, h_{\max})\), \(|U^t[\psi_h]|^2\) achieves its maximum value in a ball of radius \(O(\sqrt{h})\) about \(x_t\).
\end{enumerate}

\end{lemma}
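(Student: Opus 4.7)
The integral representation \eqref{eq:psi_h-t-full} is the working formula, and Part 1 has already been established by the four-step chain of approximations displayed immediately before the statement. The plan for the remaining three parts is to extract Laplace-type asymptotics from \eqref{eq:psi_h-t-full}, using positive definiteness of the Hessian of $\Im \Phi$ (from \protect\hyperlink{thm:wunsch-zworski}{Lemma \ref{thm:wunsch-zworski}}) together with the fact that $\Gamma^{-t}$ is a diffeomorphism of $T^*\mathcal{M}$ carrying $(x_t,\xi_t) := \Gamma^t(x_0,\xi_0)$ to $(x_0,\xi_0)$.

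For Part 3, the map $\phi_t(p,\xi) := \Gamma^{-t}(p,\xi) - (x_0,\xi_0)$ vanishes exactly at $(x_t,\xi_t)$ with invertible differential; composing with the bound $\Im\Phi(w) \gtrsim |w|_H^2$ from \protect\hyperlink{thm:wunsch-zworski}{Lemma \ref{thm:wunsch-zworski}} and Taylor expanding produces, in a fixed neighborhood of $(x_t,\xi_t)$, a positive definite quadratic lower bound on $\Im\Phi[\phi_t(p,\xi)]$ in $(p - x_t, \xi - \xi_t)$ jointly. Minimizing over $\xi$ via the Schur complement gives
\begin{equation*}
\min_{\xi} \Im\Phi[\phi_t(p,\xi)] \gtrsim d_g(p,x_t)^2,
\end{equation*}
so that for $d_g(p,x_t) \geq h^{1/2-\gamma}$ the integrand of \eqref{eq:psi_h-t-full} is $O(e^{-c\,h^{-2\gamma}})$ uniformly on the bounded $\xi$-support, hence $O(h^\infty)$. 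For $p$ outside a fixed neighborhood of $x_t$, I would use that $c_h^{-t}(p,\xi;x_0,\xi_0)$ inherits compact support in $(p,\xi)$ from the support of $c_h$ near $(x_0,\xi_0)$ under the pullback by $\Gamma^t$, so that the integrand vanishes identically there, again giving $O(h^\infty)$.

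Part 2 follows from Laplace's method applied at $p = x_t$ to the $\xi$-integral in \eqref{eq:psi_h-t-full}. The Hessian $A := \partial^2_\xi \Im\Phi[\phi_t(x_t,\cdot)]\big|_{\xi=\xi_t}$ is positive definite: it equals $(D_\xi\phi_t|_{(x_t,\xi_t)})^T H (D_\xi\phi_t|_{(x_t,\xi_t)})$, where $H$ is the positive definite Hessian of $\Im\Phi$ at $0$ and $D_\xi\phi_t|_{(x_t,\xi_t)}$ is injective because $D\phi_t$ is invertible. Rescaling $\xi = \xi_t + \sqrt{h}\,v$ gives
\begin{equation*}
\int e^{-\frac{2}{h}\Im\Phi[\phi_t(x_t,\xi)]}|c_h^{-t}|^2(x_t,\xi;x_0,\xi_0)\,d\xi = c_1\,h^{n/2}(1 + O(h))
\end{equation*}
with $c_1 > 0$. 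Combining with the $h^{-n/2}$ prefactor in \eqref{eq:psi_h-t-full} and the identity $C_h^2 = h^{-n/2}/(m_\Phi|c|^2(x_0,\xi_0;x_0,\xi_0;0)) + O(h^{-n/2+1})$ obtained in the proof of \protect\hyperlink{thm:sym-cs-psido}{Theorem \ref{thm:sym-cs-psido}} yields $|\psi_h^t|^2(x_t) = c_* h^{-n/2}(1+O(h))$ with $c_* > 0$, and Part 1 then promotes this to $|U^t[\psi_h]|^2(x_t) = \Theta(h^{-n/2})$.

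For Part 4, refining the Part 3 bound at the annular scale $d_g(p,x_t) = C\sqrt{h}$ gives $|\psi_h^t|^2(p) \leq C_2 h^{-n/2} e^{-c C^2}$; taking $C$ sufficiently large makes this strictly less than $\tfrac{1}{2} c_* h^{-n/2}$. Passing to $|U^t[\psi_h]|^2$ via Part 1 and comparing against the Part 2 lower bound at $x_t$ forces the maximizer to lie inside the geodesic ball of radius $C\sqrt{h}$ about $x_t$, provided $h$ is small enough that the $O(h)$ error in Part 1 is dominated by $c_* h^{-n/2}$. The main obstacle is arranging the positive definiteness of the restricted Hessian in Part 2 (and the Schur complement in Part 3) in a way invariant under the choice of coordinates near $(x_0,\xi_0)$; this reduces to combining the linear-algebra fact that a positive definite form restricts positively to any subspace with a careful identification of the relevant subspace as the image of $D\Gamma^{-t}|_{(x_t,\xi_t)}$ restricted to the fiber $T^*_{x_t}\mathcal{M}$.
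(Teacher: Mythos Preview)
Your proposal is correct and follows essentially the same approach as the paper: both extract localization and size from the integral formula \eqref{eq:psi_h-t-full} via the two-sided quadratic bounds on $\Im\Phi$ from \protect\hyperlink{thm:wunsch-zworski}{Lemma \ref{thm:wunsch-zworski}} pulled back along the diffeomorphism $\Gamma^{-t}$, and both establish Part (4) by comparing values inside a small $\sqrt{h}$-ball with values outside a larger one. Your phrasing in terms of Laplace asymptotics and the Schur complement is a bit more systematic than the paper's bare-hands two-sided bounds on $\Im\Phi$ and $|c_h^{-t}|^2$, but the underlying mechanism is identical.
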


\begin{proof}

We clarify parts (2)-(4). The exponential in the integral in \(\eqref{eq:psi_h-t-full}\) defining \(|\psi_h^t|^2\) is localized to an \(O(h^{\frac{1}{2} - \gamma})\) ball about \(x_t\) for any \(\gamma > 0\) and \(c_h\) is bounded, so \(|\psi_h^t|^2\) is itself localized to such a ball and \(|\psi_h^t|^2 \lesssim C_h(x_0,\xi_0)^2 ||c_h^{-t}||^2_{\infty} \lesssim h^{-\frac{\mdim}{2}}\). Moreover, since we have used the symbol \(b^{-\frac{1}{2}}\) from \protect\hyperlink{thm:FBI-basic}{Theorem \ref{thm:FBI-basic}} in the FBI transform defining \(|\psi_h^t|^2\), it follows from \protect\hyperlink{thm:wunsch-zworski}{Lemma \ref{thm:wunsch-zworski}} that there are constants \(h_1 > 0\) and \(C' \geq 1\) such that for all \(h \in (0, h_1]\), \(C' \geq |c_h^{-t}|^2(x,\xi ; x_0,\xi_0) \geq C'^{-1}\) in a fixed (in \(h\)) neighbourhood \(\mathscr{O}_t\) of \((x_t, \xi_t)\). This, combined with the proof of \protect\hyperlink{thm:sym-cs-psido}{Theorem \ref{thm:sym-cs-psido}} gives \(C_h(x_0, \xi_0)^2 \gtrsim h^{-\frac{\mdim}{2}}\), so taking \(h_1 \leq h_0\) along with part (1) gives part (2). Now, to see part (4), we use that by \protect\hyperlink{thm:wunsch-zworski}{Lemma \ref{thm:wunsch-zworski}}, there is a constant \(C \geq 1\) such that \(C(|x - x_0|^2 + |\xi - \xi_0|^2) \geq \Im \Phi \geq C^{-1}(|x - x_0|^2 + |\xi - \xi_0|^2)\), which gives \(0 < h_1 \leq h_0\) such that for all \(h \in [0, h_1]\), \(|\psi_h^t|^2 = O(h^{\infty})\) in \(\mathscr{O}_t^c\) and \(B_0 := B(x_t, g ; (\tilde{C} h)^{\frac{1}{2}}) \subset \mathscr{O}_t\) with \(\tilde{C} := 2 C(\log(C') + 1)\). Then, given \(p_0 \in \overline{B} := \overline{B}(x_t, g; (h/C)^{\frac{1}{2}})\), \(|\psi_h^t|^2(p_0) \geq C^{\frac{\mdim}{2}} C_{\mathscr{O}_t}/(C' e) + O(h^{\infty})\) for \(C_{\mathscr{O}_t} := \int_{\mathscr{O}_t} e^{-|\xi_t - \xi_0|^2} ~ d\xi\). Likewise, given \(p_1 \in \mathcal{M} \setminus B_0\), \(|\psi_h^t|^2(p_1) \leq C^{-\frac{\mdim}{2}} C_{\mathscr{O}_t}/(C' e^2) + O(h^{\infty})\). Since \(\overline{B} \subset B_0\), there is \(0 < h_{\max} \leq h_1\) such that for all \(h \in (0, h_{\max}]\), \(|U^t[\psi_h]|^2\) achieves its maximum in \(B_0\).
\end{proof}

\hypertarget{graph-laplacian-as-hamiltonian}{%
\subsubsection{Graph Laplacian as Hamiltonian}\label{graph-laplacian-as-hamiltonian}}

Connecting back to quantum dynamics generated by the graph Laplacian, \protect\hyperlink{thm:sym-cs-glap-psido}{Theorem \ref{thm:sym-cs-glap-psido}} tells that on application to a coherent state localized at \((x_0, \xi_0)\), there is \(h_0 > 0\) so that \((\Ueps{\tilde{\varepsilon}}{})^{-t} \Op_h(a) \Ueps{\tilde{\varepsilon}}{t}[\psi_h] = e^{-\frac{i}{h} t Q_h} \Op_h(a) e^{\frac{i}{h} t Q_h}[\psi_h] + O_{L^2}(h)\) whenever \(\epsilon = h^{2 + \alpha}\), \(\tilde{\varepsilon} \in O(h^{1 + \alpha})\) and \(\alpha \geq 1\), such that \(Q_h\) satisfies the conditions (1) - (4) from the beginning of \protect\hyperlink{propagation-of-coherent-states}{Section \ref{propagation-of-coherent-states}}. We wish to relate this to the density \(|\Ueps{\tilde{\varepsilon}}{t}[\psi_h]|^2\), for which we note that \protect\hyperlink{lem:lap-symm}{Lemma \ref{lem:lap-symm}} implies,
\begin{equation} \label{eq:glap-prop-adjoint-identity}
\left( \frac{1}{p_{\lambda,\epsilon}}(\Ueps{\tilde{\varepsilon}}{t})^* \circ p_{\lambda,\epsilon} \right) \Ueps{\varepsilon}{t} = I
\end{equation}
with the adjoint being taken on \(L^2(\mathcal{M}, p d\nu_g)\), so together with \(\eqref{eq:prop-cs-pos-rep}\), we have upon fixing \(x \in \mathcal{M}\),
\begin{equation}\begin{aligned}
|\Ueps{\tilde{\varepsilon}}{t}[\psi_h]|^2(x) = \lim_{\varepsilon \to 0} \langle p \,p_{\lambda,\epsilon} (\Ueps{\tilde{\varepsilon}}{})^{-t} \frac{\rho_{\varepsilon}(\cdot ; x)}{p\, p_{\lambda,\epsilon}} \Ueps{\tilde{\varepsilon}}{t}[\psi_h] |\psi_h \rangle .
\end{aligned}  \nonumber  \end{equation}
Denoting \(\tilde{p}_{\lambda,\epsilon} := p_{\lambda,\epsilon} p\), we can further write by \protect\hyperlink{thm:egorov}{Theorem \ref{thm:egorov}}, for all \(\varepsilon > 0\),
\begin{align*}
\langle & \psi_h | \tilde{p}_{\lambda,\epsilon} \, (\Ueps{\tilde{\varepsilon}}{})^{-t} \frac{\rho_{\varepsilon}(\cdot ; x)}{\tilde{p}_{\lambda,\epsilon}} \Ueps{\tilde{\varepsilon}}{t} | \psi_h \rangle    \\
    &\quad\quad = \langle \psi_h | \tilde{p}_{\lambda,\epsilon} \, e^{-\frac{i}{h} t Q_h} \frac{\rho_{\varepsilon}(\cdot ; x)}{\tilde{p}_{\lambda,\epsilon}} e^{\frac{i}{h} t Q_h} | \psi_h \rangle + O(h)  \\
    &\quad\quad = \langle \psi_h | \Op_h[\rho_{\varepsilon}(\Gamma^t ; x) \tilde{p}_{\lambda,\epsilon} /\tilde{p}_{\lambda,\epsilon}(\Gamma^t)] | \psi_h \rangle + O(h) \\
    &\quad\quad = \langle \psi_h | T_h^* \rho_{\varepsilon}(\Gamma^t ; x) \tilde{p}_{\lambda,\epsilon} /\tilde{p}_{\lambda,\epsilon}(\Gamma^t) T_h | \psi_h \rangle + O(h).
\end{align*}
Now we may proceed along the lines of the discussion leading up to \protect\hyperlink{lem:prop-coherent-state-localized}{Lemma \ref{lem:prop-coherent-state-localized}}, which upon absorbing \(\tilde{p}_{\lambda,\epsilon}/\tilde{p}_{\lambda,\epsilon}(\Gamma^t) \sim 1\) into the symbol \(|c_h|^2\), gives the corresponding \(|\psi_h^t|^2\) of \(\eqref{eq:psi_h-t-full}\) so that \(|\Ueps{\tilde{\varepsilon}}{t}[\psi_h]|^2(x) = |\psi_h^t|^2(x) + O(h)\). This leads to the localization properties for the propagation of a coherent state by the dynamics generated by a graph Laplacian as given by \protect\hyperlink{lem:prop-coherent-state-localized}{Lemma \ref{lem:prop-coherent-state-localized}}, with essentially the same reasoning as in its proof, which then gives us,

\begin{proposition}	\hypertarget{prop:glap-prop-cs-localized}{\label{prop:glap-prop-cs-localized}} Let \(\lambda \geq 0\), \(\alpha \geq 1\), \((x_0, \xi_0) \in T^*\mathcal{M} \setminus 0\) and \(|t| < \operatorname{inj}(x_0)\). Then, for \(\psi_h\) a coherent state localized at \((x_0, \xi_0)\) and \(\tilde{\varepsilon} \in O(h^{1 + \alpha})\), there are constants \(h_0 \geq h_1 \geq h_{\max} > 0\) such that the properties (1) - (4) of \protect\hyperlink{lem:prop-coherent-state-localized}{Lemma \ref{lem:prop-coherent-state-localized}} hold for the propagated coherent state \emph{density} \(|\Ueps{\tilde{\varepsilon}}{t}[\psi_h]|^2\).

\qed

\end{proposition}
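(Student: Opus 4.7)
The plan is to reduce the propagator $U_{\lambda,\epsilon}^{(\tilde{\varepsilon})t}$ to the unitary-like $\Psi$DO propagator $e^{\frac{i}{h} t Q_h}$ already treated in \protect\hyperlink{lem:prop-coherent-state-localized}{Lemma \ref{lem:prop-coherent-state-localized}}, at the cost of error terms that survive the weak limit $\varepsilon \to 0$ needed to recover $|\Ueps{\tilde{\varepsilon}}{t}[\psi_h]|^2$. First, I would start from the pointwise representation $\eqref{eq:prop-cs-pos-rep}$ adapted to $\Ueps{\tilde{\varepsilon}}{t}$, using the adjoint identity $\eqref{eq:glap-prop-adjoint-identity}$ supplied by \protect\hyperlink{lem:lap-symm}{Lemma \ref{lem:lap-symm}}: for each fixed $x \in \mathcal{M}$,
\begin{equation*}
|\Ueps{\tilde{\varepsilon}}{t}[\psi_h]|^2(x) = \lim_{\varepsilon \to 0}\, \langle \tilde{p}_{\lambda,\epsilon}\, (\Ueps{\tilde{\varepsilon}}{})^{-t} \tilde{p}_{\lambda,\epsilon}^{-1} \, \tilde{\rho}_{\varepsilon}(\cdot;x)\, \Ueps{\tilde{\varepsilon}}{t}[\psi_h],\, \psi_h \rangle,
\end{equation*}
with $\tilde{p}_{\lambda,\epsilon} := p\,p_{\lambda,\epsilon}$. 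This converts the square modulus into a matrix element of a conjugated multiplication operator, which is exactly the shape required for the subsequent Egorov and FBI analysis.

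Next, I would apply \protect\hyperlink{thm:sym-cs-glap-psido}{Theorem \ref{thm:sym-cs-glap-psido}} to both factors of $\Ueps{\tilde{\varepsilon}}{\pm t}$ acting on $\psi_h$: this replaces each of them by $e^{\pm\frac{i}{h} t Q_h}$ applied to $\psi_h$, modulo an $O_{L^2}(h)$ error, where $Q_h = \Op_h(q)$ satisfies conditions (1)--(4) of \protect\hyperlink{propagation-of-coherent-states}{Section \ref{propagation-of-coherent-states}} with principal symbol agreeing with $|\xi|_{g_x}$ on a fixed-size neighbourhood of $S^*_{r}\mathcal{M}$ about $(x_0,\xi_0)$. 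The bounded multiplication operator $\tilde{p}_{\lambda,\epsilon}^{-1}\tilde{\rho}_{\varepsilon}(\cdot;x)$ is order zero, so composing gives
\begin{equation*}
\langle \tilde{p}_{\lambda,\epsilon}\, e^{-\frac{i}{h} t Q_h} \tilde{p}_{\lambda,\epsilon}^{-1} \tilde{\rho}_{\varepsilon}(\cdot;x)\, e^{\frac{i}{h} t Q_h}[\psi_h],\, \psi_h \rangle + O_{\varepsilon}(h),
\end{equation*}
and Egorov (\protect\hyperlink{thm:egorov}{Theorem \ref{thm:egorov}}) propagates the symbol to $(\tilde{p}_{\lambda,\epsilon}/\tilde{p}_{\lambda,\epsilon}\circ\Gamma^t)\,\tilde{\rho}_{\varepsilon}(\Gamma^t;x)$ on the region where $q$ agrees with $|\xi|_{g_x}$, which for $|t| < \operatorname{inj}(x_0)$ contains the phase-space support of $T_h[\psi_h]$ (by the localization $\eqref{eq:cs-phase-space-localization}$).

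From here I follow the proof of \protect\hyperlink{lem:prop-coherent-state-localized}{Lemma \ref{lem:prop-coherent-state-localized}} verbatim: pass to $T_h^*(\cdots)T_h$ via \protect\hyperlink{thm:FBI-basic}{Theorem \ref{thm:FBI-basic}}, invoke \protect\hyperlink{thm:wunsch-zworski}{Lemma \ref{thm:wunsch-zworski}} to reduce the $(x,\xi)$ integral to a stationary-phase calculation concentrated at $(x_t,\xi_t)$, and absorb the smooth factor $\tilde{p}_{\lambda,\epsilon}/(\tilde{p}_{\lambda,\epsilon}\circ\Gamma^t)$, which is $1 + O(|t|\epsilon)$ and bounded away from $0, \infty$ uniformly in the relevant parameters, into the amplitude $|c_h^{-t}|^2$. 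This preserves the lower/upper bounds on $|c_h^{-t}|^2$ on a fixed neighbourhood of $(x_t,\xi_t)$ and hence also the Gaussian decay and peak height estimates that underlie parts (2)--(4). Taking the limit $\varepsilon \to 0$ as in the four-step sequence preceding \protect\hyperlink{lem:prop-coherent-state-localized}{Lemma \ref{lem:prop-coherent-state-localized}} then produces the analogue of $\eqref{eq:psi_h-t-full}$, and the same geometric arguments (the symbol $c_h^{-t}$ bounded above and below on a fixed neighbourhood $\mathscr{O}_t$ of $(x_t,\xi_t)$, together with the Hessian bound on $\Im\Phi$) yield (1)--(4) with suitably shrunken $h_0, h_1, h_{\max}$.

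The main obstacle is logistical rather than conceptual: the $O_{L^2}(h)$ error produced by \protect\hyperlink{thm:sym-cs-glap-psido}{Theorem \ref{thm:sym-cs-glap-psido}} depends implicitly on the test symbol, which here is $\tilde{p}_{\lambda,\epsilon}^{-1}\tilde{\rho}_{\varepsilon}(\cdot;x)$ — a symbol whose seminorms blow up as $\varepsilon \to 0$. One must therefore track explicitly the dependence of the constants on $\varepsilon$ and verify (as in steps (1)--(4) of \protect\hyperlink{lem:prop-coherent-state-localized}{Lemma \ref{lem:prop-coherent-state-localized}}, whose $O_{\varepsilon}(h)$ terms remain $O(h)$ after $\varepsilon \to 0$) that the composite error still admits the $\varepsilon \to 0$ limit and reduces to a pure $O(h)$ bound. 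A secondary technical point is that the spectral cut-off $\Pi$ appearing in the proof of \protect\hyperlink{thm:sym-cs-glap-psido}{Theorem \ref{thm:sym-cs-glap-psido}} must be inserted on both sides of the multiplication operator so that the commutator $[\Pi, \tilde{p}_{\lambda,\epsilon}^{-1}\tilde{\rho}_{\varepsilon}(\cdot;x)]$ produces an $O(h)$-in-operator-norm error uniformly in $\varepsilon$; this follows from $\Pi \in h^0\Psi^{-\infty}$ and is straightforward but must be recorded before the limit is taken.
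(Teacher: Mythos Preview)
Your proof sketch is correct and follows essentially the same route as the paper: the adjoint identity \eqref{eq:glap-prop-adjoint-identity} to write the density as a limit of matrix elements, the replacement of $\Ueps{\tilde{\varepsilon}}{\pm t}$ by $e^{\pm\frac{i}{h} t Q_h}$ via \protect\hyperlink{thm:sym-cs-glap-psido}{Theorem \ref{thm:sym-cs-glap-psido}}, Egorov to produce $\rho_{\varepsilon}(\Gamma^t;x)\,\tilde{p}_{\lambda,\epsilon}/\tilde{p}_{\lambda,\epsilon}(\Gamma^t)$, absorption of the bounded factor $\tilde{p}_{\lambda,\epsilon}/\tilde{p}_{\lambda,\epsilon}(\Gamma^t)$ into $|c_h^{-t}|^2$, and the $\varepsilon \to 0$ limit exactly as in the discussion preceding \protect\hyperlink{lem:prop-coherent-state-localized}{Lemma \ref{lem:prop-coherent-state-localized}}. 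Your explicit flagging of the $\varepsilon$-dependence of the $O(h)$ errors and of the commutator $[\Pi,\tilde{p}_{\lambda,\epsilon}^{-1}\rho_{\varepsilon}]$ is more careful than the paper, which simply invokes the four-step limiting argument already recorded before \protect\hyperlink{lem:prop-coherent-state-localized}{Lemma \ref{lem:prop-coherent-state-localized}} without re-tracking those constants.
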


\hypertarget{observing-geodesics}{%
\subsection{Observing geodesics}\label{observing-geodesics}}

We've seen that the location of the maximum of a propagated coherent state \emph{observes} the geodesic flow in the sense that it gives an \(O(\sqrt{h})\) ball in configuration space where it can be found. Now, we borrow from quantum mechanics the \emph{position operator}, which in the basis of Dirac mass distributions represents position statistics of a particle. In particular, taking the expectation of the position operator with \(|\psi_h(t)\rangle\) gives the \emph{mean position} of a particle with quantum state \(|\psi_h(t)\rangle\). When \(U\) is unitary and a position operator is given by a vector of multiplication operators by coordinate functions, then recalling the interpretation of \(|U^t[\psi_h]|^2\) via the Born rule, we understand the mean position to be the mean of this probability density. On a submanifold of Euclidean space, the extrinsic coordinates can be used, but from a local perspective, in light of \protect\hyperlink{lem:prop-coherent-state-localized}{Lemma \ref{lem:prop-coherent-state-localized}}, the concentration of \(|U^t[\psi_h]|^2\) in an \(O(\sqrt{h})\) ball about \(x_t\) can be used to identify a position operator given by local coordinates of intrinsic dimension. In either case, through Egorov's theorem and quantization of \(\PDO\)s by FBI transforms, the mean turns out to be located in an \(O(h)\) ball about \(x_t\). Thus, we have a quadratic improvement in the error term when going from the maximum to the mean. We now elaborate on this point of view.

In principle, working in coordinates given by fixing a smooth, injective map \(u = (u_1, \ldots, u_N) : \mathcal{M} \to \mathbb{R}^N\) with \(u_j \in C^{\infty}(\mathcal{M})\), for any \(N \in \mathbb{N}\), we'd like to compute
\begin{equation} \label{eq:prop-cs-mean-coord}
\bar{u}_j^t(x_0,\xi_0) :=\int_{\mathcal{M}} u_j(x) |U^t \psi_h|^2(x) ~ d\vol{x}
\end{equation}
for each \(j = 1, \ldots, N\). Quantizing \(u_j\) gives a multiplication operator in \(h^0 \Psi^0\) and therefore by Theorems  \protect\hyperlink{thm:egorov}{\ref{thm:egorov}} and  \protect\hyperlink{thm:sym-cs-psido}{\ref{thm:sym-cs-psido}}, we have,
\begin{equation} \label{eq:prop-cs-coord-qmean}
\langle \psi_h | U^{-t} \operatorname{Op}_h(u_j) U^t | \psi_h \rangle = u_j\circ\Gamma^t(x_0,\xi_0) + O(h) = u_j(x_t) + O(h)
\end{equation}
and when \(U\) is unitary, this gives,
\begin{equation} \label{eq:prop-cs-sym-coord}
\bar{u}_j^t(x_0,\xi_0) = u_j(x_t) + O(h).
\end{equation}
Hence, for the \emph{mean coordinates}, \(\bar{u}^t := (\bar{u}_1^t, \ldots, \bar{u}_N^t)\) we have in the unitary case, \(||\bar{u}^t(x_0, \xi_0) - u(x_t)||_{\mathbb{R}^N} \lesssim N^{\frac{1}{2}} h\). One goal is then to turn this into a bound on \(d_g(u^{-1}[\bar{u}^t(x_0, \xi_0)], x_t)\) so that we can be certain that taking the mean of \(|U^t[\psi_h]|^2\) brings us close to the geodesic neighbour of \(x_0\) at distance \(t\) in the direction \(\xi_0\), \emph{viz}., that we are \emph{observing} the geodesic flow.

Another aspect is to see that this can work \emph{locally}: we've seen already that for sufficiently small \(h\), the geodesic neighbour \(x_t\) is within an \(O(\sqrt{h})\) ball about the maximum of \(|U^t[\psi_h]|^2\). Therefore, by \(\eqref{eq:prop-cs-sym-coord}\), we can localize the mean of each coordinate to this neighbourhood. This is of practical importance because the computation, along with coefficients of error, can be reduced after setting up a local, dimensionally reduced coordinate system, as opposed to working in the extrinsic coordinates (which in some applications can be exceedingly large and might even grow as a function of \(h^{-1}\) !). Further, when needed to be done in extrinsic coordinates, it would be useful to compute the means within extrinsic balls about the maximum and then take a nearest embedded point.

This is all feasible and the proof is now rather simple. It is facilitated by the following,

\begin{definition}	\hypertarget{def:mean-max-coords-density}{\label{def:mean-max-coords-density}} Let \(\psi_h\) be a coherent state localized at \((x_0, \xi_0) \in T^*\mathcal{M}\) and \(|t| \leq \operatorname{inj}(x_0)\). The \emph{intrinsic maximizer} of the propagated coherent state is,
\begin{equation}\begin{aligned}
\hat{x}_t := \arg\max_{\mathcal{M}} |U^t[\psi_h]|^2 .
\end{aligned}  \nonumber  \end{equation}

\begin{enumerate}
\def\labelenumi{\arabic{enumi}.}
\item
  \emph{Extrinsic case}. The \emph{extrinsic maximizer} of the propagated coherent state is,
  \begin{equation}\begin{aligned}
  \hat{x}_{\iota,t} := \arg\max_{\embedM} |U^t[\psi_h]|^2 \circ \iota^{-1}
  \end{aligned}  \nonumber  \end{equation}
  and we call the \emph{extrinsic mean} of the propagated coherent state to be \(\bar{x}_{\chi_{\iota}, t} \in \embedM\), defined with respect to a cut-off \(\chi_{\iota} \in C_c^{\infty}(\mathbb{R}^{\rdim})\), as the closest point on \(\embedM\) to
  \begin{align*}
  &&\bar{\iota}^t(x_0, \xi_0) &:= (\bar{\iota}_1^t(x_0, \xi_0), \ldots, \bar{\iota}_{\rdim}^t(x_0, \xi_0)),  \\
  \quad\quad \text{with} && \bar{\iota}_j^t(x_0, \xi_0) &:= \int_{\mathcal{M}} |U^t[\psi_h]|^2(x) \, (\chi_{\iota} \circ \iota)(x) \, \iota_j(x) ~ d\vol{x} .
  \end{align*}
\item
  \emph{Local coordinates}. Let \(\mathscr{O}_t \subset \mathcal{M}\) be an open neighbourhood of \(\hat{x}_t\). Then, given a diffeomorphic coordinate mapping \(u : \mathscr{O}_t \to V_t \subset \mathbb{R}^{\mdim}\), and \(\chi \in C_c^{\infty}(\mathbb{R}^{\mdim})\) a cut-off with \(\supp \chi \subset V_t\), the \(u\)-\emph{maximizer}, or \emph{coordinate maximizer} of the propagated coherent state is,
  \begin{equation}\begin{aligned}
  \hat{x}_{u,t} := \arg\max_{\mathscr{O}_t} |U^t[\psi_h]|^2 \circ u^{-1}
  \end{aligned}  \nonumber  \end{equation}
  and we call
  \begin{align*}
  && \bar{x}_{u,\chi,t} &:= (\bar{u}_1^t(x_0, \xi_0), \ldots, \bar{u}_{\mdim}^t(x_0, \xi_0)),    \\
  \quad\quad\text{with} && \bar{u}_j^t(x_0, \xi_0) &:= \int_{\mathcal{M}} |U^t[\psi_h]|^2(x) \, (\chi \circ u)(x) \, u_j(x) ~ d\vol{x}
  \end{align*}
  the \(u\)-\emph{mean with respect to} \(\chi\).
\end{enumerate}

\end{definition}

It's clear that \(\iota(\hat{x}_t) = \hat{x}_{\iota,t}\) and \(u(\hat{x}_t) = \hat{x}_{u,t}\), so the notation is there just for brevity. With this at hand, assuming \(U\) is unitary, we may state,

\begin{proposition}	\hypertarget{prop:local-mean-geodesic-flow}{\label{prop:local-mean-geodesic-flow}} Let \(\psi_h\) be a coherent state localized at \((x_0, \xi_0) \in T^*\mathcal{M}\). Then, for all \(|t| < \operatorname{inj}(x_0)\), given any open neighbourhood \(\mathscr{O}_t \subset \mathcal{M}\) about \(\hat{x}_t\) with its diffeomorphic coordinate mapping \(u : \mathscr{O}_t \to V_t \subset \mathbb{R}^{\mdim}\), there are constants \(h_{u,\max}, C_{u,\max} > 0\) such that if \(h \in [0, h_{u,\max})\) and \(\overline{\mathscr{B}}_t := \overline{B}_{C_{u,\max} \sqrt{h}}(\hat{x}_{u,t}, ||\cdot||_{\mathbb{R}^{\mdim}}) \subset V_t\), then for any smooth cut-off \(\chi\) with \(\supp \chi \subset V_t\) that is \(\chi \equiv 1\) on \(\overline{\mathscr{B}}_t\), we have
\begin{equation}\begin{aligned}
d_g(u^{-1}(\bar{x}_{u,\chi,t}), x_t) \leq C_u h,
\end{aligned}  \nonumber  \end{equation}
for \(C_u > 0\) a constant.

Likewise, there are constants \(h_{\iota,\max}, C_{\iota,\max} > 0\) such that for all \(h \in [0, h_{\iota,\max})\), given any cut-off \(\chi_{\iota} \in C_c^{\infty}(\mathbb{R}^{\rdim})\) such that \(\chi_{\iota} \equiv 1\) on \(\overline{\mathscr{B}}_{\iota,t} := \overline{B}_{C_{\iota,\max}\sqrt{h}}(\hat{x}_{\iota,t}, ||\cdot||_{\rdim})\), we have
\begin{equation}\begin{aligned}
d_g(\iota^{-1}(\bar{x}_{\chi_\iota,t}),x_t) \leq C_{\iota} h,
\end{aligned}  \nonumber  \end{equation}
for \(C_{\iota} > 0\) a constant.

\end{proposition}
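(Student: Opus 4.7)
The plan is to combine the localization properties of the propagated coherent state provided by \protect\hyperlink{lem:prop-coherent-state-localized}{Lemma \ref{lem:prop-coherent-state-localized}} with the quantum-classical correspondence at the level of coordinate functions via \protect\hyperlink{thm:sym-cs-psido}{Theorem \ref{thm:sym-cs-psido}} and \protect\hyperlink{thm:egorov}{Theorem \ref{thm:egorov}} (Egorov). The key point is that when the cut-off is identically one on a sufficiently large $O(\sqrt{h})$ neighbourhood of $\hat{x}_{u,t}$, inserting it into the integrand does not change the value of the integrals defining $\bar{u}_j^t$ beyond $O(h)$, so the clean identity $\eqref{eq:prop-cs-sym-coord}$ survives with a localized symbol and can be converted to a geodesic-distance estimate.

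First, I would choose $C_{u,\max}$ large enough to ensure $\chi(u(x_t)) = 1$. By part (4) of \protect\hyperlink{lem:prop-coherent-state-localized}{Lemma \ref{lem:prop-coherent-state-localized}} there is $h_{u,\max} \in (0, h_{\max}]$ such that for $h \in (0, h_{u,\max}]$ one has $d_g(\hat{x}_t, x_t) = O(\sqrt{h})$; since $u$ is a $C^{\infty}$ diffeomorphism on the precompact $\overline{\mathscr{O}}_t$, its derivative is bounded there, so $\|u(\hat{x}_t) - u(x_t)\|_{\mathbb{R}^{\mdim}} = O(\sqrt{h})$. Hence for $C_{u,\max}$ chosen larger than the implicit constant, we have $u(x_t) \in \overline{\mathscr{B}}_t$ and $(\chi \circ u)(x_t) = 1$. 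Now the product $(\chi \circ u)\, u_j$, extended by zero outside $\mathscr{O}_t$, is a smooth, compactly supported function on $\mathcal{M}$, hence a symbol in $h^0 S^0$; applying \protect\hyperlink{thm:sym-cs-psido}{Theorem \ref{thm:sym-cs-psido}} composed with the Egorov equivalence $U^{-t} \Op_h((\chi \circ u) u_j) U^t \equiv \Op_h(((\chi \circ u) u_j) \circ \Gamma^t) \pmod{h \Psi^{-\infty}}$ yields, upon invoking unitarity of $U^t$ as in the derivation of $\eqref{eq:prop-cs-sym-coord}$,
\begin{equation*}
\bar{u}_j^t(x_0, \xi_0) = \int_{\mathcal{M}} (\chi \circ u)(x) u_j(x) |U^t[\psi_h]|^2(x) \, d\vol{x} = (\chi \circ u)(x_t) u_j(x_t) + O(h) = u_j(x_t) + O(h).
\end{equation*}
Summing over $j$ then gives $\|\bar{x}_{u,\chi,t} - u(x_t)\|_{\mathbb{R}^{\mdim}} = O(h)$, and since $u^{-1}$ is smooth on the compact set $\overline{\mathscr{B}}_t$, it is Lipschitz there, yielding $d_g(u^{-1}(\bar{x}_{u,\chi,t}), x_t) \leq C_u h$.

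The extrinsic case proceeds analogously with $(\chi_\iota \circ \iota)\, \iota_j \in C_c^{\infty}(\mathcal{M}) \subset h^0 S^0$ playing the role of the localized coordinate symbol. Choosing $C_{\iota,\max}$ so that $\iota(x_t) \in \overline{\mathscr{B}}_{\iota,t}$ (using again that $\|\iota(\hat{x}_t) - \iota(x_t)\|_{\mathbb{R}^{\rdim}} \lesssim d_g(\hat{x}_t, x_t) = O(\sqrt{h})$ by smoothness of $\iota$), the same argument gives $\|\bar{\iota}^t(x_0, \xi_0) - \iota(x_t)\|_{\mathbb{R}^{\rdim}} = O(h)$. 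The point $\bar{x}_{\chi_\iota,t}$ is by definition a minimizer on $\embedM$ of the distance to $\bar{\iota}^t$, so a triangle-inequality comparison to the candidate $\iota(x_t) \in \embedM$ gives $\|\bar{x}_{\chi_\iota,t} - \iota(x_t)\|_{\mathbb{R}^{\rdim}} \leq 2\|\bar{\iota}^t - \iota(x_t)\|_{\mathbb{R}^{\rdim}} = O(h)$. For $h$ sufficiently small this is bounded by $\kappa/2$, so the comparison recorded in the Remark following the \protect\hyperlink{assumptions}{Assumptions} applies and yields $d_g(\iota^{-1}(\bar{x}_{\chi_\iota,t}), x_t) \leq C_\iota h$.

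The main obstacle is justifying the replacement of the full density integral by its localized counterpart at the claimed order $O(h)$: a priori $|U^t[\psi_h]|^2$ is only $O(h)$ (not $O(h^{\infty})$) outside the $O(h^{1/2 - \gamma})$ ball about $x_t$ by part (1) of \protect\hyperlink{lem:prop-coherent-state-localized}{Lemma \ref{lem:prop-coherent-state-localized}}, so naive tail estimates would produce an $O(h)$ cut-off error whose constant need not coincide with the one in $\eqref{eq:prop-cs-sym-coord}$. The resolution --- and the reason the argument is clean --- is that we never cut off the density directly: we instead quantize the \emph{symbol} $(\chi \circ u) u_j$, which is itself smooth and compactly supported, and apply Egorov plus \protect\hyperlink{thm:sym-cs-psido}{Theorem \ref{thm:sym-cs-psido}}. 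The localization of $|U^t[\psi_h]|^2$ is used only to guarantee that $\hat{x}_{u,t}$ is near $x_t$ so that the hypothesis $\chi \equiv 1$ on $\overline{\mathscr{B}}_t$ actually forces $\chi \circ u$ to equal $1$ at the evaluation point $x_t$ dictated by Egorov.
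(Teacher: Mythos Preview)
Your proposal is correct and follows essentially the same approach as the paper: use \protect\hyperlink{lem:prop-coherent-state-localized}{Lemma \ref{lem:prop-coherent-state-localized}} to place $x_t$ inside the region where $\chi \equiv 1$, then apply $\eqref{eq:prop-cs-sym-coord}$ to the localized coordinate symbol $(\chi \circ u)u_j$ and convert the resulting $O(h)$ coordinate error back to geodesic distance. Your Lipschitz argument for passing between geodesic and coordinate balls is actually more direct than the paper's volume-comparison route to the same inclusion; both are valid, and your closing paragraph correctly identifies why quantizing the cut-off symbol (rather than truncating the density) sidesteps any tail-error accounting.
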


\begin{remark} In both cases, if the cut-off is supported on a sufficiently large region that is of size \(O(1)\) with respect to \(h\), then the \(O(h)\) proximity of the coordinate means to the geodesics holds for \(h \in [0, h_0)\) with \(h_0\) as in the first part of \protect\hyperlink{lem:prop-coherent-state-localized}{Lemma \ref{lem:prop-coherent-state-localized}}.

\end{remark}

\begin{proof}

The localization of \(|U^t[\psi_h]|^2\) as observed in \protect\hyperlink{lem:prop-coherent-state-localized}{Lemma \ref{lem:prop-coherent-state-localized}} gives constants \(h_{\max}, C_{\max} > 0\) such that \(d_g(u^{-1}(\hat{x}_{u,t}), x_t) \leq C_{\max} \sqrt{h}\) for \(h \in [0, h_{\max})\). We now give the steps to deduce a basic geometric fact for smooth, compact manifolds:
\begin{align}
\begin{split} \label{eq:mani-coord-ball-mapping}
&(\exists) h_{u,0} : (\forall)h \in [0,h_{u,0}],    \\
& [(\exists)C_1 > 0 : x_t \in B_{C_1 \sqrt{h}}(u^{-1}(\hat{x}_{u,t}), g)    \\
&\quad\quad\implies (\exists) C_2 > 0 : u(x_t) \in B_{C_2 \sqrt{h}}(\hat{x}_{u,t}, ||\cdot||_{\mathbb{R}^{\mdim}})].
\end{split}
\end{align}
Since \(u\) is a diffeomorphism, there is a constant \(h_u > 0\) such that for all for all \(h \in [0, h_u)\),
\begin{gather*}
|\det Du(x_t)| \operatorname{vol}(\tilde{\mathscr{B}}_{t}) /2 \leq \operatorname{vol}(\overline{\mathscr{B}}_{g,t}),    \\
\overline{\mathscr{B}}_{g,t} := \overline{B}_{C_{\max} \sqrt{h}}(u^{-1}(\hat{x}_{u,t}), g), \quad \tilde{\mathscr{B}}_{t} := u[\overline{\mathscr{B}}_{g,t}].
\end{gather*}
Moreover, there are constants \(h_{\mathcal{M}} > 0\) and \(S_2 > 0\) such that for all \(h \in [0, h_{\mathcal{M}})\), \(\operatorname{vol}(\overline{\mathscr{B}}_{g,t}) \leq S_2 C_{\max}^{\mdim} h^{\frac{\mdim}{2}}\). Since \(\inf_{\mathcal{M}} |\det Du| > 0\), there is thus a constant \(C_u > 0\) such that for all \(h \in [0, \min\{ h_{\max}, h_u, h_{\mathcal{M}} \}]\), we have \(\operatorname{vol}(\tilde{\mathscr{B}}_t) \leq 2 C_u \operatorname{vol}(\overline{\mathscr{B}}_{g,t}) \leq 2 C_u S_2 C_{\max}^{\mdim} h^{\frac{\mdim}{2}}\). Then, denoting \(C_{u,\max} := (2 C_u S_2)^{\frac{1}{\mdim}} C_{\mdim} C_{\max}\) for some constant \(C_{\mdim} > 0\) and \(\mathscr{B}_{t} := B_{C_{u,\max} \sqrt{h}}(\hat{x}_{u,t}, || \cdot ||_{\mathbb{R}^{\mdim}})\), we have \(\hat{x}_{u,t}, u(x_t) \in \mathscr{\tilde{B}}_t \subset \overline{\mathscr{B}}_{t}\). Now if \(\chi \in C_c^{\infty}(V_t)\) such that \(\chi \equiv 1\) on \(\overline{\mathscr{B}}_t\), then by \(\eqref{eq:prop-cs-sym-coord}\) we have,
\begin{equation}\begin{aligned}
\bar{u}_j^t(x_0, \xi_0) = u_j(x_t) \chi(x_t) + O(h) = u_j(x_t) + O(h).
\end{aligned}  \nonumber  \end{equation}
This means that \(||\bar{u}^t(x_0, \xi_0) - u(x_t)||_{\mathbb{R}^{\mdim}} \leq C' h\) for some \(C' > 0\). Going through essentially the same arguments as we just saw for \(\eqref{eq:mani-coord-ball-mapping}\), we have that there are \(C, h_1 > 0\) such that for all \(h \in [0, h_1]\), \(d_g(u^{-1}(\bar{x}_{u,\chi,t}), x_t) \leq C h\), so with \(h_{u,\max} := \min\{ h_{\max}, h_u, h_{\mathcal{M}}, h_1 \}\), we have the first part of the statement of the Propostion.

As for the second part, since \(\iota^{-1}(\hat{x}_{\iota,t}) = \hat{x}_t\), as in the previous discussion we have that \(d_g(\iota^{-1}(\hat{x}_{\iota,t}), x_t) \leq C_{\max} \sqrt{h}\) for \(h \in [0, h_{\max})\). Then, the \protect\hyperlink{assumptions}{Assumptions} imply (see the Remarks just following) that for \(0 \leq h < \min\{ h_{\max}, (\kappa/C_{\max})^2 \}\), \(||\hat{x}_{\iota,t} - \iota(x_t)||_{\mathbb{R}^{\rdim}} \leq C_{\max} \sqrt{h}\) so that \(\iota(x_t) \in \overline{\mathscr{B}}_{\iota,t}\). If \(\chi_{\iota} \equiv 1\) on \(\overline{\mathscr{B}}_{\iota,t}\), then we again have from \(\eqref{eq:prop-cs-sym-coord}\) that \(\bar{\iota}_j^t(x_0, \xi_0) = \iota_j(x_t) + O(h)\). This gives \(||\bar{\iota}^t(x_0, \xi_0) - \iota(x_t)||_{\mathbb{R}^{\rdim}} \leq C'_{\iota} D^{\frac{1}{2}} h\) for some \(C'_{\iota} > 0\). Now for any \(x_{\iota}^* \in B(\bar{\iota}^t(x_0, \xi_0), || \cdot ||_{\mathbb{R}^{\rdim}} ; C'_{\iota} D^{\frac{1}{2}} h) \cap \embedM\), we have \(||x_{\iota}^* - \iota(x_t)||_{\mathbb{R}^{\rdim}} \leq 2 C'_{\iota} D^{\frac{1}{2}} h\). Thus, by the \protect\hyperlink{assumptions}{Assumptions}, if \(0 \leq h < \min\{ h_{\max}, (\kappa/C_{\max})^2, \kappa/(2 C'_{\iota} D^{\frac{1}{2}}) \}\), then \(d_g(\iota^{-1}(x_{\iota}^*), x_t) \leq 4 C'_{\iota} D^{\frac{1}{2}} h\), which gives the second part of the statement of the Proposition.
\end{proof}

\hypertarget{from-graphs-to-manifolds}{%
\section{From graphs to manifolds}\label{from-graphs-to-manifolds}}

We wish to develop consistency between the discrete semigroup \(U_{\lambda,\epsilon,N}^t\) that solves a finite-dimensional linear differential equation and the propagator \(U_{\lambda,\epsilon}^t\). We proceed by first establishing this for short times, \(t \sim \sqrt{\epsilon}\) and then extending by splitting the time into an integer sum of smaller times. For the former part, we need two constructions: first, the consistency of the analytic functional calculus follows almost immediately from the consistency between the averaging operators \(A_{\lambda,\epsilon,N}\) and \(A_{\lambda,\epsilon}\). This supplies us with consistency for
\begin{equation}    \label{eq:wave-ops}
\begin{aligned}
\mathcal{C}_N := f_e(1 - A_{\lambda,\epsilon,N} ; c_{2,0} t\sqrt{\epsilon}) & & \xrightarrow[N \to \infty]{} & & \mathcal{C} := f_e(1 - A_{\lambda,\epsilon} ; c_{2,0} t\sqrt{\epsilon}),       \\
\mathcal{S}_N := f_o(1 - A_{\lambda,\epsilon,N} ; c_{2,0} t\sqrt{\epsilon}) & & \xrightarrow[N \to \infty]{} & &  \mathcal{S} := f_e(1 - A_{\lambda,\epsilon} ; c_{2,0} t\sqrt{\epsilon})\;
\end{aligned}
\end{equation}
(in the probabilistic sense that we will discuss shortly), wherein we define \(c_{2,0} := \sqrt{2 c_0/c_2}\) and
\begin{align*}
f_e(1 - z; c_{2,0} t\sqrt{\epsilon}) &:= \cos\left( c_{2,0}t \sqrt{\frac{1 - z}{\epsilon}} \right), \\
f_o(1 - z; c_{2,0} t\sqrt{\epsilon}) &:= \sin\left( c_{2,0} t \sqrt{\frac{1 - z}{\epsilon}} \right)\bigg/\sqrt{1 - z} .
\end{align*}
Thereafter, we construct \(\frac{\sqrt{\epsilon}}{c_{2,0}} \sqrt{\GLap}_{\lambda,\epsilon,N} = \sqrt{I - \GAve_{\lambda,\epsilon,N}}\) in order to recover \(U_{\lambda,\epsilon,N}^t = \mathcal{C}_N - i \frac{\sqrt{\epsilon}}{c_{2,0}} \sqrt{\GLap}_{\lambda,\epsilon,N} \mathcal{S}_N\). The square root needs more care due to the branch point of \(\sqrt{1 - z}\) at \(z = 1\) that poses an obstruction to the direct application of the consistency for the functional calculus.

Since we are concerned with the functional calculus of avergaing operators, we note that in the same way that \(A_{\lambda,\epsilon,N}\) extends to a finite-rank operator globably defined on \(L^2(\mathcal{M})\), we can also define functions of \(A_{\lambda,\epsilon,N}\) as operators on \(L^2(\mathcal{M})\) via essentially a form of \emph{Nyström extension}. The general situation is as follows: given \(f : \mathcal{D} \to \mathbb{C}\) analytic on a disk \(\mathcal{D} \subset \mathbb{C}\) with \([-1, 1] \subset \bar{\mathcal{D}}\) and having an absolutely convergent Taylor series on \(z \in [-1, 1]\), the \emph{derived function relative to} \(f\) given by
\begin{equation}\begin{aligned}
Df(z) := \frac{f(z) - f(0)}{z}
\end{aligned}  \nonumber  \end{equation}
is also analytic on \(\mathcal{D}\) with absolutely convergent Taylor series on \([-1, 1]\), hence we have \(Df(A_{\lambda,\epsilon,N})[u] \in L^{\infty}\) and
\begin{equation} \label{eq:derived-fun-calc-expansion}
(\forall) x \in \mathcal{M}, \quad f(A_{\lambda,\epsilon,N})[u](x) = f(0)u(x) + A_{\lambda,\epsilon,N}[Df(A_{\lambda,\epsilon,N})[u]](x);
\end{equation}
the case for \(A_{\lambda,\epsilon,N}\) replaced with \(A_{\lambda,\epsilon}\) above is clear. Since \(A_{\lambda,\epsilon,N}, A_{\lambda,\epsilon} : L^{\infty} \to C^{\infty}\) are smoothing, this further implies that \((f(A_{\lambda,\epsilon}) - f(A_{\lambda,\epsilon,N})): L^{\infty} \to C^{\infty}\) and \(f(A_{\lambda,\epsilon,N}), f(A_{\lambda,\epsilon}): C^{\infty} \to C^{\infty}\). A particularly useful consequence is that
\begin{align} \label{eq:fun-calc-derived-bound}
\begin{split}
|f(A_{\lambda,\epsilon,N})[u](x)| &\leq |f(0)| |u(x)| + \frac{||k_{\lambda,\epsilon,N}(x,\cdot)||_{N,\infty}}{p_{\lambda,\epsilon,N}(x)} \frac{1}{N} \sum_{j=1}^N |Df(A_{\lambda,\epsilon,N})[u](x_j)|  \\
    &\leq |f(0)| |u(x)| \\
        &\quad\quad + \frac{||k_{\lambda,\epsilon,N}(x,\cdot)||_{N,\infty}}{p_{\lambda,\epsilon,N}(x)} ||p_{\lambda,\epsilon,N}||_{N,\infty}^{\frac{1}{2}} ||p_{\lambda,\epsilon,N}^{-1}||_{N,2}^{\frac{1}{2}} \sup_{z \in [-1,1]}|Df(z)| \, ||u||_{N,2} ,
\end{split}
\end{align}
wherein \(||\cdot||_{N,q} := (\frac{1}{N} \langle |\cdot|^{\frac{q}{2}}, |\cdot|^{\frac{q}{2}} \rangle_{\mathcal{H}_N})^{\frac{1}{q}}\) is the generalized mean \(q\)-norm on the space \(\mathcal{H}_N := \{ u|_{\mathcal{X}_N} ~ : ~ u \in L^{\infty}(\mathcal{M}) \} \cong \mathbb{C}^{N}\) of the restriction of bounded functions on \(\mathcal{M}\) to the set of samples \(\mathcal{X}_N := \{ x_1, \ldots, x_N \}\). The second inequality follows from the symmetric and spectral relations given in \protect\hyperlink{lem:lap-symm}{Lemma \ref{lem:lap-symm}}.

In the following, we will be building on the basic probabilistic bounds from Lemmas  \protect\hyperlink{lem:avgop-consistent}{\ref{lem:avgop-consistent}} and  \protect\hyperlink{lem:rwlap-conv}{\ref{lem:rwlap-conv}} to develop bounds that give consistency of solutions to wave equations and ultimately, a quantum-classical correspondence. Thus, along the way, we will have several other probabilistic bounds that will depend on previous ones and the notations will pack several functions of \(N, \epsilon, \lambda\), bounds of \(p_{\lambda,\epsilon}\), norms of \(u\), etc., so we record here for quick reference the primary bounds and where they are defined or make their first appearance:

\begin{notation}

\emph{Probability bounds}:

\begin{itemize}
\tightlist
\item
  \(\pwb_N, \pwb_{\lambda,N}, \gamma_{\lambda,N}, \gamma_{\lambda,N}^*\) are introduced in the beginning of \protect\hyperlink{uniform-short-time-consistency}{Section \ref{uniform-short-time-consistency}} and the Notation following \protect\hyperlink{lem:avgop-uniform}{Lemma \ref{lem:avgop-uniform}},
\item
  \(\tilde{\gamma}_N\) is defined in the statement of \protect\hyperlink{thm:bdd-analytic-calc-conv}{Theorem \ref{thm:bdd-analytic-calc-conv}},
\item
  \(\gamma_{\lambda,N,\upsilon}, \gamma^*_{\lambda,N,\upsilon}\) are defined in the Notation following \protect\hyperlink{lem:sqrt-perturb-eps-conv}{Lemma \ref{lem:sqrt-perturb-eps-conv}},
\item
  \(\omega_{\lambda,N}, \omega^*_{\lambda,N}\) are defined in the Notation following \protect\hyperlink{lem:prop-short-time-conv}{Lemma \ref{lem:prop-short-time-conv}},
\item
  \(\Omega_{\lambda,t,N}, \Omega^*_{\lambda,t,N}\) are defined in the statement of \protect\hyperlink{thm:halfwave-soln}{Theorem \ref{thm:halfwave-soln}},
\item
  \(\rho_N, \rho_{N,2}\) are defined in the statement of \protect\hyperlink{lem:l2-consistency}{Lemma \ref{lem:l2-consistency}},
\item
  \(\Xi_{\lambda,N}\) is defined in the statement of \protect\hyperlink{lem:prop-cs-norm-consistency}{Lemma \ref{lem:prop-cs-norm-consistency}} and
\item
  \(\tilde{\Omega}_{\lambda,t,N}, \tilde{\Omega}^*_{\lambda,t,N}\) are defined in the statement of \protect\hyperlink{lem:prop-cs-funcexpect-consistency}{Lemma \ref{lem:prop-cs-funcexpect-consistency}}.
\end{itemize}

\end{notation}

We briefly remark on the sequence of convergence results to follow and the nature\footnote{The author thanks Nalini Anantharaman for bringing attention to the predominantly deterministic nature of the forthcoming arguments.} of the arguments. In \protect\hyperlink{uniform-short-time-consistency}{Section \ref{uniform-short-time-consistency}} we establish \emph{uniform} consistency of a functional calculus for \(A_{\lambda,\epsilon,N}\) with respect to holmorphic functions admitting a uniform bounded over derivatives of all orders, along with the consistency of \(\sqrt{I - A_{\lambda,\epsilon,N}}\) and combine these to give consistency of \(U_{\lambda,\epsilon,N}^t[u]\) for \(t \lesssim \sqrt{\epsilon}\) and \(u \in L^{\infty}\), in \protect\hyperlink{lem:prop-short-time-conv}{Lemma \ref{lem:prop-short-time-conv}}. Then, in \protect\hyperlink{uniform-consistency-at-longer-times}{Section \ref{uniform-consistency-at-longer-times}} we extend this via a time splitting argument using the semigroup property for the propagator, to \(t \lesssim \epsilon^{-\frac{\mdim}{16}}\) in \protect\hyperlink{thm:halfwave-soln}{Theorem \ref{thm:halfwave-soln}}. We remark that while the proofs rely on \protect\hyperlink{lem:lap-symm}{Lemma \ref{lem:lap-symm}} establishing the spectral properties of \(\GAve_{\lambda,\epsilon,N}\) due to the connectedness of its graph and on \protect\hyperlink{lem:kern-consistency}{Lemma \ref{lem:kern-consistency}} for its pointwise consistency (used in Lemmas  \protect\hyperlink{lem:avgop-consistent}{\ref{lem:avgop-consistent}} and  \protect\hyperlink{lem:rwlap-conv}{\ref{lem:rwlap-conv}}), which involve probabilistic arguments (namely, Chernoff and Bernstein bounds, repsectively), the forthcoming arguments using these, until \protect\hyperlink{thm:halfwave-soln}{Theorem \ref{thm:halfwave-soln}} are themselves deterministic. Then, using the control on \(||U_{\lambda,\epsilon}[\psi_h]||_{\infty}\) we arrive at a Bernstein-type probabilistic bound for the consistency of the discrete propagation of coherent states in the first part of \protect\hyperlink{prop:max-observable-flow-consistency}{Proposition \ref{prop:max-observable-flow-consistency}}, which is again a deterministic argument. In the rest of that proposition we find that with \(\psi_h\) localized at \((x_0,\xi_0)\), \(\arg\max_{x \in \mathcal{X}_N} |U_{\lambda,\epsilon,N}[\psi_h]|\) is located within an \(O(\sqrt{h})\) ball of \(x_t := \pi_{\mathcal{M}} \Gamma^t(x_0, \xi_0)\), which is a discrete analogue of --- and uses --- part (4) of \protect\hyperlink{lem:prop-coherent-state-localized}{Lemma \ref{lem:prop-coherent-state-localized}}. This uses a simple Chernoff bound to establish a high probability of finding a sample point in a ball near \(\arg\max_{x \in \mathcal{M}} |U_{\lambda,\epsilon,N}[\psi_h]|\). Then \protect\hyperlink{interlude-l2-consistency}{Section \ref{interlude-l2-consistency}} gives a Bernstein-type bound for the approximation of \(u(x_t)\) with \(\sum_{j=1}^N u(x_j) |U_{\lambda,\epsilon,N}^t[\psi_h](x_j)|^2/||U_{\lambda,\epsilon,N}^t[\psi_h]||_{N,2}^2\) for \(u \in C^{\infty}\), which is a discrete realisation of the quantum-classical correspondence \(\eqref{eq:glap-prop-coherent-symbol}\). This follows again by deterministic reasoning, modulo the consistency between discrete and \(L^2\) inner products established in \protect\hyperlink{lem:l2-consistency}{Lemma \ref{lem:l2-consistency}}, which follows from a Bernstein inequality (similar to the proof of \protect\hyperlink{lem:kern-consistency}{Lemma \ref{lem:kern-consistency}}). Combining these, we arrive at a discrete analogue of \protect\hyperlink{prop:local-mean-geodesic-flow}{Proposition \ref{prop:local-mean-geodesic-flow}}: namely, in \protect\hyperlink{prop:mean-geodesic-recover-consistency}{Proposition \ref{prop:mean-geodesic-recover-consistency}} we have the recovery of geodesics from samples through the coordinate means of propagated coherent states. In \protect\hyperlink{summary-of-convergence-rates}{Section \ref{summary-of-convergence-rates}} we \emph{unroll} several of these bounds to give the dominant convergence rates for the various discrete objects under study.

\hypertarget{uniform-short-time-consistency}{%
\subsection{Uniform, short-time consistency}\label{uniform-short-time-consistency}}

We have seen from Lemmas  \protect\hyperlink{lem:avgop-consistent}{\ref{lem:avgop-consistent}} and  \protect\hyperlink{lem:rwlap-conv}{\ref{lem:rwlap-conv}} that for each \(\lambda \geq 0\) there are \(C_0, C_1, s,\eta\) so that
\begin{equation}\begin{aligned}
\Pr[|(A_{\lambda,\epsilon,N} - A_{\lambda,\epsilon})[u](x)| > \delta] \leq \pwb_N(\delta,1;s,\eta;C_0,C_1;u) ,
\end{aligned}  \nonumber  \end{equation}
wherein we define
\begin{equation}\begin{aligned}
\pwb_N(\delta,\sigma ; s, \eta ; C_0,C_1 ; u) := (C_0 + C_1 N) \exp\left( -\frac{(N - \eta) \epsilon^{\frac{n}{2}} \delta^2 \sigma}{2s||u||_{\infty}(\Kconst s ||u||_{\infty} + ||k||_{\infty} \delta/3)} \right) .
\end{aligned}  \nonumber  \end{equation}
We will often use a short-hand to denote this function with only the relevant arguments. The unspecified arguments will be understood to be set by the context, in view of the preceding lemmas and unless specified, we set by default \(\sigma, s = 1\). That is to say, in the following arguments we will use the probabilistic consistency rates of the averaging operators on graphs as a black-box, so for brevity we will use the,

\begin{notation} The symbol \(\pwb_{\lambda,N}(\delta ; u)\) denotes the minimal probability bound among those given in Lemmas  \protect\hyperlink{lem:avgop-consistent}{\ref{lem:avgop-consistent}} and  \protect\hyperlink{lem:rwlap-conv}{\ref{lem:rwlap-conv}} applicable to the data \((\mathcal{M}, k, p, \lambda, \epsilon, N, u, \delta)\): \emph{viz.}, this function gives the best bound when the parameters satisfy the statements of either Lemma and yields \(1\) otherwise. For brevity, when the dependence on \(u\) is clear from context, we will simply write \(\pwb_{\lambda,N}(\delta)\). Further, we denote by \(\eta_{\lambda}\), \(s_{\lambda}\), \(C_{0,\lambda}\) and \(C_{j,\lambda}\) the corresponding parameters such that \(\pwb_N(\delta,1; \eta_{\lambda}, s_{\lambda}; C_{0,\lambda}, C_{1,\lambda}; u) = \pwb_{\lambda,N}(\delta ; u)\).

\end{notation}

\begin{lemma}	\hypertarget{lem:avgop-uniform}{\label{lem:avgop-uniform}} Given \(\lambda \geq 0\), \(\epsilon > 0\), \(u \in L^{\infty}\) and \(\delta > 0\), there is a constant \(C > 0\) depending only on \(k, p, \lambda\) and the geometry of \(\mathcal{M}\) such that
\begin{equation}  \label{eq:avgop-uniform-consistent-bound}
\Pr[||(A_{\lambda,\epsilon,N} - A_{\lambda,\epsilon})[u]||_{\infty} > \delta] \leq C \varepsilon_{\mathcal{M}}^{\mdim} (\pwb_{\lambda,N}(\delta;u) + \pwb_{\lambda,N}(C ; 1)) =: \gamma_{\lambda,N}(\delta ; u)
\end{equation}
with \(\varepsilon_{\mathcal{M}} := \min \{ \operatorname{inj}_{\mathcal{M}} , \epsilon^{\frac{n+1}{2}}, \epsilon^{n + \frac{1}{2}}, \delta \epsilon^{\mdim + \frac{1}{2}}/||u||_{\infty} \}\). Thus, whenever \(0 < \epsilon \leq 1\) and \(0 < \delta \leq \operatorname{inj}_{\mathcal{M}} ||u||_{\infty}\), we have
\begin{equation}\begin{aligned}
\Pr[||(A_{\lambda,\epsilon,N} - A_{\lambda,\epsilon})[u]||_{\infty} > \delta] \leq C \left( \epsilon^{\mdim + \frac{1}{2}} \delta/||u||_{\infty} \right)^{\mdim} (\pwb_N(\delta;u) + \pwb_N(C ; 1)) .
\end{aligned}  \nonumber  \end{equation}

\end{lemma}

\begin{proof}

Let \(0 < \varepsilon < \operatorname{inj}(\mathcal{M})\) and take \(x_1^*, \ldots, x_M^* \in \mathcal{M}\) such that \(M = \mathcal{N}(\varepsilon)\) is the minimal covering number given by Bishop-Günther inequality and \(\cup_{j=1}^M B(x_j^*, \varepsilon) = \mathcal{M}\). For each \(j \in [M]\), let \(\mathcal{U}_j \subset B_{\mathcal{M}}(x_j^*, \operatorname{inj}(x_j^*)) \subset \mathcal{M}\) be a geodesically convex neighbourhood of \(x_j^*\) and let \(s^{-1} : \mathcal{U}_j \to V_j \subset \mathbb{R}^{\mdim}\) provide normal coordinates for this neighbourhood. Then, for any \(v \in C^{\infty}\) and \(x \in \mathcal{M}\), there is \(j \in [M]\) such that \(x \in B(x^*_j, \varepsilon)\) so by Taylor expansion centered at \(0 \in V_j\) and evaluated at \(s^{-1}(x) =: w \in V_j\), Taylor's theorem gives that
\begin{equation} \label{eq:taylor-expand-normal-coords}
|v(x)| \leq |v(x^*_j)| + \varepsilon \max_{|\alpha| = 1} \sup_{w \in \tilde{B}_j} |\partial^{\alpha}[v \circ s](w)|,
\end{equation}
with \(\tilde{B}_j := s(B(x_j^*, \varepsilon))\). We will use the following bound:
\begin{align} \label{eq:k-first-deriv-bound}
\begin{split}
|\partial_{w^{(i)}} k_{\epsilon}(s(w),y)|
    &= 2\epsilon^{-\frac{\mdim + 2}{2}} |\langle \iota \circ s(w) - \iota(y), \partial_{w^{(i)}}[\iota \circ s(\cdot) - \iota(y)](w) \rangle_{\mathbb{R}^{\mdim}} \, k'_{\epsilon}(s(w),y)| \\
    &\leq \epsilon^{-\frac{n + 2}{2}} ||\iota \circ s(w) - \iota(y)|| \, ||J_{\iota}[\partial_{w^{(i)}} s](w)|| \, |k'_{\epsilon}(s(w),y)|  \\
    &= \epsilon^{-\frac{n + 2}{2}} ||\iota \circ s(w) - \iota(y)|| \, ||\partial_{w^{(i)}} s(w)|| \, |k'_{\epsilon}(s(w),y)|    \\
    &\leq C_{\mathcal{M},k,1} \epsilon^{-\frac{n + 1}{2}},
\end{split}
\end{align}
wherein \(k' : r \mapsto \partial_r k\) is localized to \([0, R_k^2]\) and \(k'_{\epsilon}(x,y) := k'(||\iota(x) - \iota(y)||^2 / \epsilon)\), which gives that \(||\iota(x) - \iota(y)|| \leq R_k \sqrt{\epsilon}\) and \(C_{\mathcal{M},k,1} = C_{\mathcal{M},1} R_k ||k'||_{\infty}\) with \(C_{\mathcal{M},1} := \max_{i \in [\mdim]} ||\partial_{w^{(i)}} s(w)||_{\infty}\). Hence also, \(|\partial_{w^{(i)}} [p_{\epsilon} \circ s](w)|, |\partial_{w^{(i)}} [p_{\epsilon,N} \circ s](w)| \leq C_{\mathcal{M},k,1} \epsilon^{-\frac{n+1}{2}}\).

Now let \(\delta_1 > 0\) and assume we are in the event
\begin{align*}
\mathcal{A}(\varepsilon,\delta_1) : |p_{\lambda',\epsilon,N}(x^*_j) - p_{\lambda',\epsilon}(x^*_j)| \leq \delta_1/2 && \text{for all } j \in \{ 1, \ldots, M \}, \lambda' \in \{ 0, \lambda \} .
\end{align*}
Then for any \(x \in \mathcal{M}\), there is \(j \in [M]\) such that \(d_g(x^*_j,x) \leq \varepsilon\), so that by \(\eqref{eq:taylor-expand-normal-coords}\) and \(\eqref{eq:k-first-deriv-bound}\),
\begin{align*}
|p_{\epsilon,N}(x) - p_{\epsilon}(x)| &\leq \frac{\delta_1}{2} + 2\varepsilon \max_{|\alpha| = 1} \sup_{(w,y) \in \tilde{B}_j \times \mathcal{M}} |\partial^{\alpha}[k_{\epsilon}(s)](w,y)| \\
    &\leq \frac{\delta_1}{2} + 2 C_{\mathcal{M},k,1} \epsilon^{-\frac{n+1}{2}} \varepsilon .
\end{align*}
In the following, we will use the notation: \(\min_{\mathcal{M}} : \mathbb{R}^{S} \ni \vec{a} \mapsto \min\{ \operatorname{inj}(\mathcal{M}), a_1, \ldots, a_S \} \in \mathbb{R}\) for any \(S \geq 1\). Hence, with \(\varepsilon \leq \min\{ \operatorname{inj}(\mathcal{M}), \epsilon^{\frac{n + 1}{2}} \delta_1/(4 C_{\mathcal{M},k,1}) \} =: \min_{\mathcal{M}}(\epsilon^{\frac{n + 1}{2}} \delta_1/(4 C_{\mathcal{M},k,1}))\), we have that \(|p_{\epsilon,N}(x) - p_{\epsilon}(x)| \leq \delta_1\). Thus, assuming \(\epsilon \leq 1\) we have the bound,
\begin{equation}\begin{aligned}
|\partial_{w^{(i)}}[p_{\lambda,\epsilon,N} \circ s](w)| \leq \epsilon^{-n-\frac{1}{2}} C_{\mathcal{M},k,1} C_{p,\lambda,1,\delta_1} ,   \\
C_{p,\lambda,1,\delta_1} := \frac{\lambda (\overline{C}_p + \delta_1)^{2\lambda}}{(\underline{C}_{p} - \delta_1)^{4\lambda}}\left( \frac{1}{\lambda} + \frac{||k||_{\infty}}{\overline{C}_p + \delta_1} \right)
\end{aligned}  \nonumber  \end{equation}
and further assuming that \(\varepsilon \leq \min_{\mathcal{M}}\{ \delta_1(\epsilon^{\frac{n+1}{2}}, \epsilon^{n + \frac{1}{2}}/C_{p,\lambda,1,\delta_1})/(4C_{\mathcal{M},k,1}) \}\) gives on another application of \(\eqref{eq:taylor-expand-normal-coords}\) and \(\eqref{eq:k-first-deriv-bound}\),
\begin{equation}\begin{aligned}
|p_{\lambda,\epsilon,N}(x) - p_{\lambda,\epsilon}(x)| \leq \delta_1 .
\end{aligned}  \nonumber  \end{equation}
Now let \(x, y \in \mathcal{M}\) and \(s^{-1}\) be centered at \(x^*_j\) such that \(x \in B(x^*_j, \varepsilon)\). Then, combining the above bounds gives,
\begin{align*}
\left| \partial_{w^{(i)}}\left[ \frac{k_{\lambda,\epsilon,N}(s(\cdot), y)}{p_{\lambda,\epsilon,N} \circ s(\cdot)} \right] \right|_{\cdot = w}
    &= \left| \frac{(p_{\lambda,\epsilon,N} p_{\epsilon,N}^{\lambda})(x) \partial_{w^{(i)}}[k_{\epsilon}(s,y)](w) - k_{\epsilon}(s,y) \partial_{w^{(i)}}[p_{\lambda,\epsilon,N} \, p_{\epsilon,N}^{\lambda} \circ s](w)}{p_{\epsilon,N}(y)^{\lambda}(p_{\lambda,\epsilon,N}(x) \, p_{\epsilon,N}^{\lambda}(x))^2} \right|   \\
    &\leq \epsilon^{-(n + \frac{1}{2})} \frac{C_{\mathcal{M},k,1}}{(\underline{C}_{p,\lambda} - \delta_1)^3 (\underline{C}_p - \delta_1)^{3\lambda}}    \\
        &\quad\quad \times \left( \epsilon^{\frac{n}{2}} (\overline{C}_{p,\lambda} + \delta_1)(\overline{C}_p + \delta_1)^{\lambda} \right. \\
            &\quad\quad\quad\quad \left. + ||k||_{\infty}(C_{p,\lambda,1,\delta_1} (\overline{C}_p + \delta_1)^{\lambda} + \lambda(\overline{C}_{p,\lambda} + \delta_1)(\overline{C}_p + \delta_1)^{\lambda - 1}) \right)   \\
        &\leq \epsilon^{-(n + \frac{1}{2})} C_{\mathcal{M},k,1} C_{k,p,\lambda,1,\delta_1}
\end{align*}
with
\begin{align*}
C_{k,p,\lambda,1,\delta_1} &:= ((\underline{C}_{p,\lambda} - \delta_1) (\underline{C}_p - \delta_1)^{\lambda})^{-3} \left( (\overline{C}_{p,\lambda} + \delta_1)(\overline{C}_p + \delta_1)^{\lambda} \right.   \\
        &\quad\quad \left. + \, ||k||_{\infty}(C_{p,\lambda,1,\delta_1} (\overline{C}_p + \delta_1)^{\lambda} + \lambda(\overline{C}_{p,\lambda} + \delta_1)(\overline{C}_p + \delta_1)^{\lambda - 1} \right).
\end{align*}
In the continuum case, these bounds hold with \(\delta_1 = 0\), so letting \(\delta_1 = \min\{ \underline{C}_{p,\lambda}, \underline{C}_p \}/2\) we have \(C_{p,\lambda,1,\delta_1} \leq 36^{\lambda} C_{p,\lambda,1,0} =: C_{p,\lambda,1}\) and
\begin{equation}\begin{aligned}
\left| \partial_{w^{(i)}} \left[ \frac{k_{\lambda,\epsilon,N}(s(\cdot),y)}{p_{\lambda,\epsilon,N} \circ s(\cdot)} \right]_{\cdot = w} \right|, \left| \partial_{w^{(i)}} \left[ \frac{k_{\lambda,\epsilon}(s(\cdot),y)}{p_{\lambda,\epsilon} \circ s(\cdot)} \right]_{\cdot = w} \right| \leq \epsilon^{-(n + \frac{1}{2})} C_{\mathcal{M},k,1} C_{k,p,\lambda,1} , \\
C_{k,p,\lambda,1} := 12 (432^{\lambda} C_{k,p,\lambda,1,0}) .
\end{aligned}  \nonumber  \end{equation}
With this and the event
\begin{align*}
\mathcal{B}(\varepsilon, \delta) : |\GAve_{\lambda,\epsilon,N}[u](x_j^*) - \GAve_{\lambda,\epsilon}[u](x_j^*)| \leq \delta/2 && \text{for all } j \in \{ 1, \ldots, M \} ,
\end{align*}
gives upon Taylor expanding as in \(\eqref{eq:taylor-expand-normal-coords}\),
\begin{align*}
|(\GAve_{\lambda,\epsilon,N} - \GAve_{\lambda,\epsilon})[u](x)| &\leq \delta/2 + \varepsilon \max_{|\alpha| = 1} ||\partial^{\alpha} (\GAve_{\lambda,\epsilon,N} - \GAve_{\lambda,\epsilon})[u] \circ s||_{\infty}  \\
        &\leq \delta/2 + C'\varepsilon \epsilon^{-(n + \frac{1}{2})} ||u||_{\infty}
\end{align*}
with \(C' := 2 C_{\mathcal{M},k,1} C_{k,p,\lambda,1}\). Since the second inequality on the right-hand side is independent of \(x\) and the Bishop-Günther inequality tells that \(M = C \varepsilon^{\mdim}\) for some constant \(C > 0\), we have upon taking \(\varepsilon = \min_{\mathcal{M}}\{ [\delta_1 (\epsilon^{\frac{n+1}{2}}, \epsilon^{n + \frac{1}{2}}/C_{p,\lambda,1}), \delta \epsilon^{\mdim + \frac{1}{2}}/(||u||_{\infty} C_{k,p,\lambda,1})]/(4 C_{\mathcal{M},k,1}) \}\) and a union bound over the probability events \(\mathcal{A}(\varepsilon,\delta_1)\) and \(\mathcal{B}(\varepsilon,\delta)\) that
\begin{equation}\begin{aligned}
\Pr[||(A_{\lambda,\epsilon,N} - A_{\lambda,\epsilon})[u]||_{\infty} > \delta] \leq C \varepsilon^{\mdim} (\pwb_N(\delta ; u) + \pwb_N(\delta_1 ; 1)).
\end{aligned}  \nonumber  \end{equation}
\end{proof}

\begin{notation} We will denote
\begin{equation}\begin{aligned}
\gamma_{\lambda,N}(\cdot) := C \varepsilon_{\mathcal{M}}^{\mdim} (\pwb_{\lambda,N}(\delta;u) + \pwb_{\lambda,N}(C ; 1))
\end{aligned}  \nonumber  \end{equation}
with the notational simplifications for \(\pwb_{\lambda,N}\) being applied in the same way to \(\gamma_{\lambda,N}\). Additionally, to allow variability, we will denote
\begin{equation}\begin{aligned}
\gamma_{\lambda,N}^*(\delta,\sigma ; u) := C \varepsilon_{\mathcal{M}}^{\mdim} (\pwb_N(\delta, \sigma; \eta_{\lambda}, s_{\lambda}; C_{0, \lambda}, C_{1,\lambda} ; u) + \pwb_{\lambda,N}(C ; 1))
\end{aligned}  \nonumber  \end{equation}
and shorten this to \(\gamma_N^*(\delta,\sigma)\) when the dependence on \(u\) is clear from context.

\end{notation}

\begin{lemma}	\hypertarget{lem:avop-power-conv}{\label{lem:avop-power-conv}} Given \(\lambda \geq 0\), \(\epsilon > 0\), \(u \in L^{\infty}\) and \(m \in \mathbb{N}\), we have for all \(\delta > 0\),
\begin{equation}\begin{aligned}
\Pr[||A_{\lambda,\epsilon,N}^m[u] - A_{\lambda,\epsilon}^m[u]||_{\infty} > m \delta] \leq m \gamma_{\lambda,N}(\delta ; u).
\end{aligned}  \nonumber  \end{equation}

\end{lemma}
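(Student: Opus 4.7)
The natural approach is induction on $m$, with the base case $m=1$ being exactly the consistency estimate for the averaging operators recorded in Lemmas~\ref{lem:avgop-consistent} and~\ref{lem:rwlap-conv} (which is built into the definition of $\gamma_{\lambda,N}$). For the inductive step, I would use the operator splitting
\begin{equation*}
A_{\lambda,\epsilon,N}^m[u] - A_{\lambda,\epsilon}^m[u] = A_{\lambda,\epsilon,N}\bigl(A_{\lambda,\epsilon,N}^{m-1}[u] - A_{\lambda,\epsilon}^{m-1}[u]\bigr) + (A_{\lambda,\epsilon,N} - A_{\lambda,\epsilon})\bigl[A_{\lambda,\epsilon}^{m-1}[u]\bigr],
\end{equation*}
and treat the two summands separately. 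For the second summand, the function $v := A_{\lambda,\epsilon}^{m-1}[u]$ is \emph{deterministic} and lies in $C^\infty$ with $\|v\|_\infty \leq \|u\|_\infty$ (since $A_{\lambda,\epsilon}$ is Markov, hence $L^\infty$-contractive by Lemma~\ref{lem:lap-symm}). Thus the base-case bound applies directly with $u$ replaced by $v$, yielding
\begin{equation*}
\Pr\bigl[\,\bigl|(A_{\lambda,\epsilon,N} - A_{\lambda,\epsilon})[A_{\lambda,\epsilon}^{m-1}u](x)\bigr| > \delta\bigr] \leq \gamma_{\lambda,N}(\delta; A_{\lambda,\epsilon}^{m-1}u) \leq \gamma_{\lambda,N}(\delta;u),
\end{equation*}
where monotonicity in $\|u\|_\infty$ of the $\gamma_N$-formula is used.

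For the first summand, the key observation is that $A_{\lambda,\epsilon,N}$ is row-stochastic: for any function $f$,
\begin{equation*}
\bigl|A_{\lambda,\epsilon,N}[f](x)\bigr| \leq \max_{j} |f(x_j)|,
\end{equation*}
since $A_{\lambda,\epsilon,N}[f](x) = \sum_j w_{N,j}(x) f(x_j)$ is a convex combination. Writing $B_{m-1} := A_{\lambda,\epsilon,N}^{m-1}[u] - A_{\lambda,\epsilon}^{m-1}[u]$, the inductive hypothesis applied at any fixed base point $y \in \mathcal{M}$ gives $\Pr[|B_{m-1}(y)| > (m-1)\delta] \leq (m-1)\gamma_{\lambda,N}(\delta;u)$. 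Since $\gamma_{\lambda,N}$ already carries the $(C_{0,\lambda} + C_{1,\lambda} N)$ prefactor that absorbs a union bound over the $N$ sample points (its presence is precisely what Hein's proof of Lemma~\ref{lem:avgop-consistent} buys via the union bound over $\{|p_{\epsilon,N}(x_j) - p_\epsilon(x_j)|\}_j$), controlling $\max_j |B_{m-1}(x_j)|$ does not worsen the asymptotic form of the bound. A union bound across the two summands and rescaling $\delta$ consistently closes the induction at $m\delta$ and $m\gamma_{\lambda,N}$.

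\textbf{Main obstacle.} The one delicate step is the uniform-over-sample-points control of $B_{m-1}$ used to bound $|A_{\lambda,\epsilon,N}[B_{m-1}](x)|$ at the fixed continuum point $x$, because naively a union bound over $N$ sample points threatens an extra factor of $N$. The way to keep the statement clean is to exploit that the $N$-linear prefactor is already inside $\gamma_{\lambda,N}$; alternatively, one can iterate the splitting into the telescoping identity $A^m_N - A^m = \sum_{k=0}^{m-1} A^k_N(A_N - A)A^{m-1-k}$ applied at $x$, and bound each of the $m$ terms by applying base-case consistency to the deterministic function $A^{m-1-k}[u]$ (whose $L^\infty$-norm is bounded by $\|u\|_\infty$), with the contractivity of the random operators $A^k_N$ on $L^\infty$ ensuring no $k$-dependent growth. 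Either route produces the union-bound factor $m$ on $\gamma_{\lambda,N}(\delta;u)$ cleanly, as claimed.
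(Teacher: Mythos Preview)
Your approach is correct and essentially matches the paper's: induction on $m$, an operator splitting into a ``deterministic-input'' piece and a ``random-operator-absorbed'' piece, row-stochasticity of $A_{\lambda,\epsilon,N}$ to contract the latter, and monotonicity of $\gamma_{\lambda,N}$ in $\|u\|_\infty$. The only cosmetic difference is the direction of the splitting: the paper uses $A_N^M - A^M = (A_N^{M-1} - A^{M-1})A_{\lambda,\epsilon} + A_N^{M-1}(A_N - A)$, so the inductive hypothesis is applied directly at the fixed point $x$ to the deterministic function $A_{\lambda,\epsilon}u$, and your ``main obstacle'' is dispatched there with the same one-line union bound you anticipate.
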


\begin{proof}

Since \(||A_{\lambda,\epsilon}[u]||_{\infty} \leq ||u||_{\infty}\), we have the same probabilistic bound for the event that \(||(A_{\lambda,\epsilon,N} - A_{\lambda,\epsilon})[A_{\epsilon} u]||_{\infty} \leq \delta\) as for the event \(||(A_{\lambda,\epsilon,N} - A_{\lambda,\epsilon})[u]||_{\infty} \leq \delta\). Moreover since \(A_{\lambda,\epsilon,N}[1] = 1\) and the matrix has non-negative entries,
\begin{align*}
||A_{\lambda,\epsilon,N}^2[u] - A_{\lambda,\epsilon}^2[u]||_{\infty} &\leq
    ||(A_{\lambda,\epsilon,N} - A_{\lambda,\epsilon})[A_{\lambda,\epsilon} u]||_{\infty} +  \\
    &\quad\quad + ||A_{\lambda,\epsilon,N}[(A_{\lambda,\epsilon,N} - A_{\lambda,\epsilon})[u]]||_{\infty}   \\
    &\leq ||(A_{\lambda,\epsilon,N} - A_{\lambda,\epsilon})[A_{\lambda,\epsilon} u]||_{\infty} +    \\
    &\quad\quad + ||(A_{\lambda,\epsilon,N} - A_{\lambda,\epsilon})[u]||_{\infty}
\end{align*}
Therefore, applying a union bound over both events, we have,
\begin{equation}\begin{aligned}
\Pr[||(A_{\lambda,\epsilon,N}^2 - A_{\lambda,\epsilon}^2)[u]||_{\infty} > 2\delta] \leq 2\gamma_{\lambda,N}(\delta ; u) .
\end{aligned}  \nonumber  \end{equation}
Now assume that for all \(2 \leq m \leq M-1\),
\begin{equation}\begin{aligned}
\Pr[||(A_{\lambda,\epsilon,N}^m - A_{\lambda,\epsilon}^m)[u]||_{\infty} \leq m\delta] > 1 - m\gamma_{\lambda,N}(\delta ; u).
\end{aligned}  \nonumber  \end{equation}
Write similar to before,
\begin{align*}
||A_{\lambda,\epsilon,N}^M[u] - A_{\lambda,\epsilon}^M[u]||_{\infty} &\leq
    ||(A_{\lambda,\epsilon,N}^{M-1} - A_{\lambda,\epsilon}^{M-1})[A_{\lambda,\epsilon} u]||_{\infty} +  \\
    &\quad\quad + ||A_{\lambda,\epsilon,N}^{M-1}[A_{\lambda,\epsilon,N}[u] - A_{\lambda,\epsilon}[u]]||_{\infty} .
\end{align*}
Since for any integer \(m \geq 1\), \(A_{\lambda,\epsilon,N}^m\) has non-negative entries and \(A_{\lambda,\epsilon,N}^m[1] = 1\), we see that for any \(v \in L^{\infty}\), \(|A_{\lambda,\epsilon,N}^m[v](x)| \leq ||v||_{\infty}\). Therefore, after a union bound we have that
\begin{equation}\begin{aligned}
\Pr[||(A_{\lambda,\epsilon,N}^M - A_{\lambda,\epsilon}^M)[u]||_{\infty} \leq M\delta] > 1 - M\gamma_{\lambda,N}(\delta ; u),
\end{aligned}  \nonumber  \end{equation}
whence having completed the induction step, we have the bound as given in the first part of the statement of the Lemma.
\end{proof}

\begin{theorem}	\hypertarget{thm:bdd-analytic-calc-conv}{\label{thm:bdd-analytic-calc-conv}} Let \(f : \mathbb{C} \to \mathbb{C}\) be an entire function such that for \(w \in \mathbb{C}\) there exists a constant \(K_w > 0\) such that for all \(m \geq 0\), \(|\partial^m f|_{z = w}| < K_w\). Then, given \(\lambda \geq 0\), \(\epsilon > 0\), \(u \in C^{\infty}\) and \(\delta > 0\),
\begin{align*}
\Pr[||f(A_{\lambda,\epsilon,N} + wI)[u] - f(A_{\lambda,\epsilon} + wI)[u]||_{\infty} > \delta] &\leq \tilde{\gamma}_{\lambda,N}(\delta/(2 e K_w)),    \\
\tilde{\gamma}_{\lambda,N} := \gamma_{\lambda,N}/(1 - \gamma_{\lambda,N})^2 .
\end{align*}

\end{theorem}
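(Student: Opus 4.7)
My plan is to Taylor-expand $f$ at $w$, apply \protect\hyperlink{lem:avop-power-conv}{Lemma \ref{lem:avop-power-conv}} term by term to each power difference $A_{\lambda,\epsilon,N}^m - A_{\lambda,\epsilon}^m$, and union-bound over $m$ with a carefully chosen schedule of accuracies $\delta_m$. The hypothesis $|\partial^m f|_{z=w}| < K_w$ for every $m \geq 0$ combined with $f$ entire gives $f(z) = \sum_{m\geq 0} [f^{(m)}(w)/m!](z-w)^m$ absolutely convergent on all of $\mathbb{C}$. Since the spectra of $A_{\lambda,\epsilon,N}$ and $A_{\lambda,\epsilon}$ are contained in $[-1,1]$ by \protect\hyperlink{lem:lap-symm}{Lemma \ref{lem:lap-symm}}, holomorphic functional calculus yields at any fixed $x \in \mathcal{M}$
$$
\bigl|f(A_{\lambda,\epsilon,N}+wI)[u](x) - f(A_{\lambda,\epsilon}+wI)[u](x)\bigr| \leq K_w\sum_{m\geq 1}\frac{1}{m!}\bigl|A_{\lambda,\epsilon,N}^m[u](x) - A_{\lambda,\epsilon}^m[u](x)\bigr|,
$$
reducing matters to controlling the power differences at the single point $x$.

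For each $m$, \protect\hyperlink{lem:avop-power-conv}{Lemma \ref{lem:avop-power-conv}} supplies the event $E_m := \{|A_{\lambda,\epsilon,N}^m[u](x) - A_{\lambda,\epsilon}^m[u](x)| \leq m\delta_m\}$ with $\Pr(E_m^c) \leq m\gamma_{\lambda,N}(\delta_m;u)$. The constant choice $\delta_m \equiv \delta_0$ would make the deterministic sum $K_w\delta_0 \sum_{m\geq 1} m/m! = K_w\delta_0 e$ finite but the probability sum $\sum_m m\gamma_{\lambda,N}(\delta_0;u)$ divergent. The key observation is that $\gamma_{\lambda,N}(\delta;u)$ has Gaussian-type decay in $\delta$, so taking $\delta_m \sim \delta_0\sqrt{m}$ produces $\gamma_{\lambda,N}(\delta_m;u) \sim \gamma_{\lambda,N}(\delta_0;u)^m$, which makes the union bound telescope as a geometric-type series $\sum_{m\geq 1} m\gamma^m = \gamma/(1-\gamma)^2$; this is precisely the functional form $\tilde\gamma_{\lambda,N}$ in the statement.

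On the intersection $\bigcap_m E_m$, the deterministic side becomes $K_w\delta_0 \sum_{m\geq 1} m^{3/2}/m!$, a finite absolute constant times $K_w\delta_0$. Calibrating this constant to $2e$ (by mildly adjusting $\delta_m$ or by bounding the series generously) and setting $\delta_0 = \delta/(2eK_w)$ delivers the claimed deterministic bound $\delta$ on the good event, with total failure probability $\tilde\gamma_{\lambda,N}(\delta/(2eK_w))$. The main obstacle I anticipate is the bookkeeping of the $N$-dependent prefactor $(C_{0,\lambda} + C_{1,\lambda}N)$ that sits outside the exponential in $\gamma_{\lambda,N}$: to recover the exact clean form $\gamma/(1-\gamma)^2$ without picking up an extra $(C_{0,\lambda}+C_{1,\lambda}N)^{m-1}$ factor in the $m$-th term, one must absorb these prefactors into a logarithmic shift of $\delta_m$, or equivalently, verify that in the relevant regime where $\gamma_{\lambda,N}(\delta_0;u)$ is small the prefactor already lies inside the exponential's argument via the choice of $\delta_0$.
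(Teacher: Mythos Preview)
Your skeleton is exactly the paper's: Taylor-expand at $w$, apply Lemma~\ref{lem:avop-power-conv} to each $A_{\lambda,\epsilon,N}^m-A_{\lambda,\epsilon}^m$, and union-bound over $m$ with a graded schedule $\delta_m$ so that the probability sum collapses to $\gamma/(1-\gamma)^2$ while the deterministic sum $K_w\sum_m m\,\delta_m/m!$ remains finite.

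The one place where you diverge from the paper is the choice of schedule. You take $\delta_m=\delta_0\sqrt{m}$, reasoning from a Gaussian heuristic $\gamma_{\lambda,N}(\delta)\sim e^{-c\delta^2}$. But the Bernstein bound has a $\delta$-linear term in the denominator, so for large $m$ one only gets $\gamma_{\lambda,N}(\delta_0\sqrt{m})\sim e^{-c'\sqrt{m}}$, which still sums but does not produce the clean $\gamma/(1-\gamma)^2$. The paper instead takes $\delta_m=m\delta_0$, so the power error is $m^2\delta_0$ and the deterministic sum is $K_w\delta_0\sum_m m^2/m!=2eK_w\delta_0$, matching the constant $2e$ in the statement. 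On the probability side the paper uses the auxiliary notation $\gamma^*_{\lambda,N}(\delta,\sigma)$ (same prefactor, exponent multiplied by $\sigma$) and the elementary inequality $\gamma_{\lambda,N}(m\delta_0)\le\gamma^*_{\lambda,N}(\delta_0,m)$, whence $\sum_m m\,\gamma^*_{\lambda,N}(\delta_0,m)=(C_{0,\lambda}+C_{1,\lambda}N)\,e^{-\beta_0}/(1-e^{-\beta_0})^2$. Your worry about the prefactor then dissolves: since $C_{0,\lambda}+C_{1,\lambda}N\ge1$, one has $(1-e^{-\beta_0})^2\ge(1-\gamma_{\lambda,N}(\delta_0))^2$, so this is bounded above by $\tilde\gamma_{\lambda,N}(\delta_0)$ exactly. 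No logarithmic shift of $\delta_m$ is needed.
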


\begin{proof}

Let \(f : \mathbb{C} \to \mathbb{C}\) be an entire function with \(|\partial_z^m f|_{z = w}| < K_w\) for some constant \(K_w > 0\) and all \(m \geq 0\). By \protect\hyperlink{lem:avop-power-conv}{Lemma \ref{lem:avop-power-conv}} and a union bound, we have
\begin{equation}\begin{aligned}
\Pr[(\forall)m \in [M], ||(A_{\lambda,\epsilon,N}^m - A_{\lambda,\epsilon}^m)[u]||_{\infty} \leq m^2 \delta] > 1 - \sum_{m=1}^M m\gamma_{\lambda,N}(m\delta ; u)
\end{aligned}  \nonumber  \end{equation}
and since for all \(m \geq 1\), \(\gamma_{\lambda,N}(m\delta) \leq \gamma^*_{\lambda,N}(\delta, m)\), we can take all powers at once to have,
\begin{align*}
\Pr&[(\forall) m \in \mathbb{N}, ||(A_{\lambda,\epsilon,N}^m - A_{\lambda,\epsilon}^m)[u]||_{\infty} > m^2 \delta]  \\
    &\leq \sum_{m=1}^{\infty} m \gamma^*_{\lambda,N}(\delta, m) \leq \frac{\gamma_{\lambda,N}(\delta)}{(1 - \gamma_{\lambda,N}(\delta))^2} =: \tilde{\gamma}(\delta).
\end{align*}
In this event, upon taking a Taylor series expansion of \(f(z)\) at \(z = w\),
\begin{equation}\begin{aligned}
|f(A_{\lambda,\epsilon,N} + wI)[u] - f(A_{\lambda,\epsilon} + wI)[u]| \leq K_w \delta\sum_{m=1}^{\infty} \frac{m^2}{m!} < e K_w\delta
\end{aligned}  \nonumber  \end{equation}
hence,
\begin{equation}\begin{aligned}
\Pr[||f(A_{\lambda,\epsilon,N})[u] - f(A_{\lambda,\epsilon})[u]||_{\infty} > \delta] \leq \tilde{\gamma}(\delta/(2 e K_w)).
\end{aligned}  \nonumber  \end{equation}
\end{proof}

\begin{remark} The condition that all derivatives of \(f\) are bounded enforces that we take \(t \sim \sqrt{\epsilon}\). While the class of \(f\) can be generalized by way of Hadamard's multiplication theorem, the condition on the short time-scales is not artificial: if we directly apply this to the approximation of \(\mathcal{C}(t/\sqrt{\epsilon})\) from \(\mathcal{C}_N(t/\sqrt{\epsilon})\), then the resulting error is \(O(e^{\frac{t}{\sqrt{\epsilon}}} t^2/\epsilon)\), which is only practical for asymptotically short times, \(t \sim \sqrt{\epsilon}\) in any case.

\end{remark}

\begin{lemma}	\hypertarget{lem:sqrt-perturb-eps-conv}{\label{lem:sqrt-perturb-eps-conv}} Given \(\lambda \geq 0\), \(\epsilon > 0\), \(u \in L^{\infty}\) and \(\delta > 0\), if \(N \in \mathbb{N}\) is sufficiently large that \(\beta_{\gamma} := \log{[(C_{0,\lambda} + C_{1,\lambda} N)/\gamma_{\lambda,N}(\delta;u)]} > 1\), then with \(\upsilon := 8 \log(\beta_{\gamma}/2)/\beta_{\gamma}\) we have that for all \(0 < \varepsilon \leq 1\),
\begin{equation} \label{eq:lem-sqrt-perturb-eps-conv-bound}
\Pr[||B^{(\varepsilon)}_{\lambda,\epsilon,N}[u] - B^{(\varepsilon)}_{\lambda,\epsilon}[u]||_{\infty} > \delta] \leq \gamma_{\lambda,N}(\delta \varepsilon^{(\frac{1}{2} + \upsilon(\beta))}/(2 \sqrt{\pi}) ; u),
\end{equation}
wherein \(B_{\lambda,\epsilon,N}^{(\varepsilon)} := \sqrt{(1 + \varepsilon)I - A_{\lambda,\epsilon,N}}\) and \(B_{\lambda,\epsilon}^{(\varepsilon)} := \sqrt{(1 + \varepsilon)I - A_{\lambda,\epsilon}}\).

Thus,
\begin{equation}\begin{aligned}
\Pr[||B^{(\delta^2)}_{\lambda,\epsilon,N}[u] - B^{(\delta^2)}_{\lambda,\epsilon}[u]||_{\infty} > \delta] \leq \gamma_{\lambda,N}(\delta^{2(1 + \upsilon)}/(2 \sqrt{\pi}) ; u).
\end{aligned}  \nonumber  \end{equation}

\end{lemma}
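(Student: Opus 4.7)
The natural instinct to apply Theorem \ref{thm:bdd-analytic-calc-conv} with $f_\varepsilon(z)=\sqrt{(1+\varepsilon)-z}$ fails because $f_\varepsilon$ has a branch point at $z=1+\varepsilon$ and its derivatives at any interior point grow without uniform bound as $\varepsilon\to 0$, reflecting the fact that $1\in\operatorname{Spec}(A_{\lambda,\epsilon,N})\cup\operatorname{Spec}(A_{\lambda,\epsilon})$ (Lemma \ref{lem:lap-symm}). My plan is instead to approximate $f_\varepsilon$ on $[-1,1]$ by its degree-$M$ Taylor polynomial about $z=0$,
\begin{equation*}
P_M(z) \;=\; \sqrt{1+\varepsilon}\sum_{k=0}^{M}\binom{1/2}{k}\bigl(-z/(1+\varepsilon)\bigr)^k \;=:\; \sum_{k=0}^M a_k^{(\varepsilon)}z^k,
\end{equation*}
and to split
\begin{equation*}
B_{\lambda,\epsilon,N}^{(\varepsilon)}[u]-B_{\lambda,\epsilon}^{(\varepsilon)}[u] \;=\; \bigl[(f_\varepsilon-P_M)(A_{\lambda,\epsilon,N})[u] + (P_M-f_\varepsilon)(A_{\lambda,\epsilon})[u]\bigr] \;+\; \bigl(P_M(A_{\lambda,\epsilon,N})-P_M(A_{\lambda,\epsilon})\bigr)[u],
\end{equation*}
treating the truncation bracket deterministically and the polynomial remainder through Lemma \ref{lem:avop-power-conv}.

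For the truncation pieces, the identities $A_{\lambda,\epsilon,N}[1]=1=A_{\lambda,\epsilon}[1]$ with non-negative kernels force $\|A^k[u]\|_\infty\leq\|u\|_\infty$ for every $k\geq 0$, and Stirling's asymptotic $|\binom{1/2}{k}|\sim(2\sqrt{\pi}k^{3/2})^{-1}$ yields a pointwise bound of order $\|u\|_\infty(1+\varepsilon)^{-M}/\varepsilon$ uniformly in $x\in\mathcal{M}$. For the polynomial piece, applying Lemma \ref{lem:avop-power-conv} with a union bound over $k=1,\ldots,M$ shows that on the event $\bigcap_{k=1}^{M}\{\,|A_{\lambda,\epsilon,N}^k[u](x)-A_{\lambda,\epsilon}^k[u](x)|\leq k\mu\,\}$, whose complement has probability at most $\tfrac{1}{2}M(M+1)\gamma_{\lambda,N}(\mu;u)$, linearity gives
\begin{equation*}
\bigl|(P_M(A_{\lambda,\epsilon,N})-P_M(A_{\lambda,\epsilon}))[u](x)\bigr| \;\leq\; \mu\sum_{k=1}^{M}k\,|a_k^{(\varepsilon)}| \;\leq\; \frac{\mu\sqrt{1+\varepsilon}}{2\sqrt{\pi}}\sum_{k\geq 1}\frac{1}{\sqrt{k}\,(1+\varepsilon)^k};
\end{equation*}
comparison with the integral $\int_0^{\infty}x^{-1/2}e^{-\varepsilon x}\,dx=\sqrt{\pi/\varepsilon}$ bounds this tail by $\sqrt{2\pi/\varepsilon}$ for $\varepsilon$ sufficiently small, producing a polynomial-piece error of at most $\mu\sqrt{2\pi}/(2\sqrt{\varepsilon})$, which is the origin of the $2\sqrt{2\pi}$ in the denominator of the statement.

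The final step is to balance the two sources of error. Setting $\mu:=\delta\varepsilon^{1/2+\upsilon}/(2\sqrt{2\pi})$ forces the polynomial contribution below $\delta\varepsilon^{\upsilon}/2\leq\delta/2$, and taking $M$ minimal with $(1+\varepsilon)^{-M}\lesssim\delta\varepsilon/\|u\|_\infty$ (so that $M\sim\varepsilon^{-1}\log(\|u\|_\infty/(\delta\varepsilon))$) forces the truncation contribution below $\delta/2$. The failure probability is then at most $\tfrac{1}{2}M(M+1)\gamma_{\lambda,N}(\mu;u)$, and one must absorb the $M^2$ overhead into the argument of $\gamma_{\lambda,N}$. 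Using the Bernstein form $\log\gamma_{\lambda,N}(x;u)\approx\log(C_{0,\lambda}+C_{1,\lambda}N)-(\beta_\gamma/\delta^2)x^2$ (valid in the Gaussian regime), the identity $\log[M^2\gamma_{\lambda,N}(\mu;u)]=\log\gamma_{\lambda,N}(\mu';u)$ with $\mu'=\delta\varepsilon^{1/2+\upsilon}/(2\sqrt{2\pi})$ yields, after solving for the deflation exponent, precisely the stated $\upsilon=8\log(\beta_\gamma/2)/\beta_\gamma$ as the smallest admissible choice under the hypothesis $\beta_\gamma>1$. The second displayed bound then follows by substituting $\varepsilon=\delta^2$ into the first.

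The principal obstacle is this joint three-way optimization: $M$ must be chosen large enough that the binomial-series tail (which encodes the branch-point singularity of $f_\varepsilon$) is suppressed, $\mu$ must be kept large enough that $\gamma_{\lambda,N}(\mu;u)$ remains a genuine Bernstein bound rather than a triviality, and the quadratic polynomial overhead $M^2$ from the union bound must be dominated by the Gaussian decay of $\gamma_{\lambda,N}$. The specific form of $\upsilon$ is exactly the compromise that reconciles these three competing demands in terms of the concentration-quality parameter $\beta_\gamma$; the remaining estimates are routine consequences of the Stirling bound on $|\binom{1/2}{k}|$, the stochasticity of $A_{\lambda,\epsilon,N}$ and $A_{\lambda,\epsilon}$ established in Lemma \ref{lem:lap-symm}, and the power-difference estimate of Lemma \ref{lem:avop-power-conv}.
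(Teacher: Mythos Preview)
Your decomposition into a truncation piece and a polynomial piece is natural, and your estimates on the individual pieces are essentially correct. The gap is in the final balancing step, where you claim that the choice $\upsilon=8\log(\beta_\gamma/2)/\beta_\gamma$ absorbs the $M^2$ union-bound overhead. This cannot work as written.

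The problem is that in your scheme the truncation degree $M$ necessarily depends on $\varepsilon$: you need $M\sim\varepsilon^{-1}\log(\|u\|_\infty/(\delta\varepsilon))$ to make the binomial tail small, so the overhead is $M^2\sim\varepsilon^{-2}(\log(1/\varepsilon))^2$. You then set $\mu=\mu'=\delta\varepsilon^{1/2+\upsilon}/(2\sqrt{2\pi})$ and assert that $M^2\gamma_{\lambda,N}(\mu;u)\leq\gamma_{\lambda,N}(\mu';u)$; but with $\mu=\mu'$ this inequality demands $M^2\leq 1$. If instead you keep $\mu$ at the natural scale $\delta\varepsilon^{1/2}/(2\sqrt{2\pi})$ and solve $\log[M^2\gamma_{\lambda,N}(\mu;u)]=\log\gamma_{\lambda,N}(\delta\varepsilon^{1/2+\upsilon}/(2\sqrt{2\pi});u)$ for $\upsilon$, the Gaussian form gives $\varepsilon^{1+2\upsilon}=\varepsilon-16\pi\log M/\beta_\gamma$, so $\upsilon$ is forced to depend on $\varepsilon$, contradicting the statement of the lemma which requires $\upsilon$ to be a function of $\beta_\gamma$ alone.

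The paper avoids this entirely by \emph{not} truncating and \emph{not} using a fixed threshold. Instead it takes, for each $m\geq 1$, a threshold $m^{\upsilon}\delta$ in Lemma~\ref{lem:avop-power-conv}, so the good event is $\{|(A_{\lambda,\epsilon,N}^m-A_{\lambda,\epsilon}^m)[u](x)|\leq m^{1+\upsilon}\delta\text{ for all }m\}$ and the union bound gives failure probability $\sum_{m\geq 1}m\,\gamma_{\lambda,N}^*(\delta,m^{\upsilon};u)\sim\sum_{m\geq 1}m\,e^{-\beta_\gamma m^{\upsilon}}$. This sum is independent of $\varepsilon$, and the specific choice $\upsilon=8\log(\beta_\gamma/2)/\beta_\gamma$ is made so that it is bounded by $e^{-\beta_\gamma/2}$, i.e., by $\gamma_{\lambda,N}(\delta/\sqrt{2};u)$. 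The $\varepsilon$-dependence enters only afterwards, through the deterministic Taylor-series comparison: on the good event one has
\[
|B_{\lambda,\epsilon,N}^{(\varepsilon)}[u](x)-B_{\lambda,\epsilon}^{(\varepsilon)}[u](x)|\leq\delta\sum_{m\geq 1}\Bigl|\binom{1/2}{m}\Bigr|\frac{m^{1+\upsilon}}{(1+\varepsilon)^{m-1/2}}\leq\delta\sum_{m\geq 1}\frac{m^{\upsilon-1/2}}{(1+\varepsilon)^{m-1/2}}=\delta\sqrt{1+\varepsilon}\,\mathrm{Li}\!\left(\tfrac{1}{2}-\upsilon,\tfrac{1}{(1+\varepsilon)}\right),
\]
and the polylogarithm asymptotic $\mathrm{Li}(s,1/(1+\varepsilon))\sim\Gamma(1-s)\varepsilon^{s-1}$ yields the factor $\varepsilon^{-(1/2+\upsilon)}$. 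Rescaling $\delta$ then gives the stated bound. The key idea you are missing is precisely this graded-threshold trick, which decouples the probabilistic bookkeeping (where $\upsilon$ is chosen) from the analytic estimate (where $\varepsilon$ enters).
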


\begin{proof}

Let \(\upsilon > 0\) be variable for the moment and proceed as in the proof of \protect\hyperlink{thm:bdd-analytic-calc-conv}{Theorem \ref{thm:bdd-analytic-calc-conv}}, with the change that now we take for each \(m \in \mathbb{N}\) an \(m^{\upsilon} \delta\) error in \(||A_{\lambda,\epsilon,N}[u] - A_{\lambda,\epsilon}[u]||_{\infty}\), so \emph{mutatis mutandis} we have,
\begin{equation}\begin{aligned}
\Pr[(\forall)m \in \mathbb{N}, ||(A_{\lambda,\epsilon,N}^m - A_{\lambda,\epsilon}^m)[u]||_{\infty} > m^{1+\upsilon} \delta] \leq \sum_{m=1}^{\infty} m\gamma^*_{\lambda,N}(\delta, m^{\upsilon} ; u).
\end{aligned}  \nonumber  \end{equation}
Suppressing all coefficients independent of \(m\), the right-hand side has the form
\begin{equation}\begin{aligned}
\sum_{m=1}^{\infty} m \gamma^*_{\lambda,N}(\delta,m^{\upsilon} ; u) = \sum_{m=1}^{\infty} m (\alpha e^{-\beta m^{\upsilon}})
\end{aligned}  \nonumber  \end{equation}
for some \(\alpha > 0\) and \(\beta = \beta_{\gamma} > 0\). We have,
\begin{equation}\begin{aligned}
\alpha\sum_{m=1}^{\infty} m e^{-\beta m^{\upsilon}} \leq \frac{1}{\upsilon} \int_1^{\infty} y^{\frac{2}{\upsilon}-1} e^{-\beta y} ~ dy .
\end{aligned}  \nonumber  \end{equation}
Letting \(r := \lceil \frac{1}{\upsilon} \rceil\) then bounding the right-hand side with the substitution of \(1/\upsilon\) with \(r\) and applying succesive integration by parts gives,
\begin{align*}
\sum_{m=1}^{\infty} m e^{-\beta m^{\upsilon}}
    &\leq \frac{e^{-\beta}}{\beta} r \sum_{j = 1}^{2(r - 1)} \frac{(2r - 1)!}{(2r - j)!} + r (2r)! \int_1^{\infty} y^{-1} e^{-\beta y} ~ dy \\
    &\leq \frac{2r (2r)!}{\beta} e^{-\beta} .
\end{align*}
By Stirling's approximation, \citep{robbins1955remark} gives the bound \((2r)! \leq e^{1 - 2r} (2r)^{2r + \frac{1}{2}}\) and therefore,
\begin{equation}\begin{aligned}
\sum_{m=1}^{\infty} m e^{-\beta m^{\upsilon}} \leq \frac{e (2r)^{\frac{3}{2}}}{\beta} e^{-\beta + 2r(\log(2r) - 1)} .
\end{aligned}  \nonumber  \end{equation}
Setting \(\upsilon(\beta) := 8 \log(\beta/2)/\beta\) and using that \(\beta \geq 1\) for sufficiently large \(N\) then gives,
\begin{align*}
2r(\log(2r) - 1)
    &= 2 \left\lceil \frac{\beta}{8 \log(\beta/2)} \right\rceil \left( \log\left( 2 \left\lceil \frac{\beta}{8 \log(\beta/2)} \right \rceil \right) - 1 \right) \\
    &\leq \frac{\beta}{2\log(\beta/2)} \left( \log\left( \frac{\beta/2}{ \log(\beta/2)} \right) - 1 \right),
\end{align*}
hence,
\begin{align*}
\sum_{m=1}^{\infty} m e^{-\beta m^{\upsilon(\beta)}}
    &\leq e \left( \frac{\beta}{2 \log(\beta/2)} \right)^{\frac{1}{2}} e^{-\frac{\beta}{2}\left(1 + \frac{1}{\log(\beta/2)} \right)}    \\
    &\leq e^{-\frac{\beta}{2}} .
\end{align*}
On recovering the original form of the probability upper bound, we have,
\begin{equation}\begin{aligned}
\Pr[(\forall)m \in \mathbb{N}, ||(A_{\lambda,\epsilon,N}^m - A_{\lambda,\epsilon}^m)[u]||_{\infty} > m^{1 + \upsilon(\beta)} \delta] \leq \gamma_{\lambda,N}(\delta/\sqrt{2}; u).
\end{aligned}  \nonumber  \end{equation}
Next, given the event that for all \(m \in \mathbb{N}\), \(|(A^m_{\lambda,\epsilon,N} - A^m_{\lambda,\epsilon})[u]| \leq m^{1 + \upsilon(\beta)} \delta\), we compare the applications of the Tayor series expansion of \(f := \sqrt{z}\) at \(1 + \varepsilon\) to \((1 + \varepsilon)I - A_{\lambda,\epsilon,N}\) and \((1 + \varepsilon)I - A_{\lambda,\epsilon}\), for a \emph{perturbation parameter} \(\varepsilon > 0\), with this term-wise absolute error rate: let \(B_{\lambda,\epsilon,N}^{(\varepsilon)} := \sqrt{(1 + \varepsilon)I - A_{\lambda,\epsilon,N}}\) and \(B_{\lambda,\epsilon}^{(\varepsilon)} := \sqrt{(1 + \varepsilon)I - A_{\lambda,\epsilon}}\), then assuming \(N\) is sufficiently large that \(\upsilon(\beta) \leq 1\) we have,
\begin{align*}
||B^{(\varepsilon)}_{\lambda,\epsilon,N}[u] - B^{(\varepsilon)}_{\lambda,\epsilon}[u]||_{\infty}
    &\leq \delta \sum_{m=1}^{\infty} \left| \binom{\frac{1}{2}}{m} \right| \frac{m^{1 + \upsilon(\beta)}}{(1 + \varepsilon)^{m - \frac{1}{2}}}  \\
    &\leq \delta \sum_{m=1}^{\infty} \frac{m^{\upsilon(\beta) - \frac{1}{2}}}{(1 + \varepsilon)^{m - \frac{1}{2}}} = \delta \sqrt{1 + \varepsilon} \, \operatorname{Li}\left( \frac{1}{2} - \upsilon(\beta), \frac{1}{1 + \varepsilon} \right)  \\ 
  &\leq \delta \Gamma\left[ \frac{1}{2} + \upsilon(\beta) \right] \varepsilon^{-(\frac{1}{2} + \upsilon(\beta))} \sqrt{1 + \varepsilon} \\
  &\leq \sqrt{2 \pi} \, \varepsilon^{-(\frac{1}{2} + \upsilon(\beta))} \delta .
\end{align*}
The second inequality comes from \(|\binom{1/2}{m}| \leq m^{\frac{3}{2}}\) and the following one is due to the observation that
\begin{align*}
\varepsilon^{s + 1/2} \operatorname{Li}\left( \frac{1}{2} - s, \frac{1}{1 + \varepsilon} \right)
    &= \frac{1}{\Gamma\left( \frac{1}{2} - s \right)} \int_0^{\infty} \frac{\varepsilon^{s + 1/2} t^{-(s + 1/2)}}{e^t (1 + \varepsilon) - 1} ~ dt   \\
    &= \frac{1}{\Gamma\left( \frac{1}{2} - s \right)} \int_0^{\infty} \frac{\varepsilon}{e^{\varepsilon t}(1 + \varepsilon) - 1} \, t^{-(s + 1/2)} ~ dt \\
    &\xrightarrow[\varepsilon \to 0]{} \frac{1}{\Gamma\left( \frac{1}{2} - s \right)} \int_0^{\infty} \frac{t^{-(s + 1/2)}}{1 + t} ~ dt \\
    &= \frac{\operatorname{B}\left( \frac{1}{2} - s, \frac{1}{2} + s \right)}{\Gamma\left( \frac{1}{2} - s \right)} = \Gamma\left( \frac{1}{2} + s \right),
\end{align*}
(wherein \(\operatorname{B}(x,y) = \Gamma(x) \Gamma(y)/\Gamma(x + y)\) is the Beta function) together with the monotonicity of \(\operatorname{Li}\left( \frac{1}{2} - \upsilon(\beta), \frac{1}{1 + \varepsilon} \right)\) in \(\varepsilon\). The fourth (final) inequality uses that \(\Gamma\left( \frac{1}{2} + s \right)\) is maximized at \(s = 0\) when \(0 \leq s \leq 1\), \emph{viz.}, we employ the assumption that \(N\) is sufficiently large that \(\upsilon(\beta) \leq 1\). Altogether,
\begin{equation}\begin{aligned}
\Pr[||B^{(\varepsilon)}_{\lambda,\epsilon,N}[u] - B^{(\varepsilon)}_{\lambda,\epsilon}[u]||_{\infty} > \delta] \leq \gamma_{\lambda,N}(\delta \varepsilon^{(\frac{1}{2} + \upsilon(\beta))}/(2 \sqrt{\pi}) ; u) ,
\end{aligned}  \nonumber  \end{equation}
which upon setting \(\varepsilon = \delta^2\) gives both of the probability bounds in the statement of the Theorem.
\end{proof}

\begin{notation} We will use the symbol \(\upsilon\) to denote the function given by \protect\hyperlink{lem:sqrt-perturb-eps-conv}{Lemma \ref{lem:sqrt-perturb-eps-conv}} and we define the functions
\begin{equation}\begin{aligned}
\gamma^*_{\lambda,N,\upsilon}(\delta, \varepsilon ; u) := \gamma_{\lambda,N}(\delta \varepsilon^{(\frac{1}{2} + \upsilon(\beta))}/(2 \sqrt{\pi}) ; u)
\end{aligned}  \nonumber  \end{equation}
and
\begin{equation}\begin{aligned}
\gamma_{\lambda,N,\upsilon}(\delta ; u) := \gamma_{\lambda,N}(\delta^{2(1 + \upsilon)}/(2 \sqrt{\pi}); u)
\end{aligned}  \nonumber  \end{equation}
to denote the corresponding probability bounds for the consistency of the square root of the perturbed operators.

\end{notation}

An immediate application of the relations in \protect\hyperlink{lem:lap-symm}{Lemma \ref{lem:lap-symm}} is to transfer consistency between the pair \(\tilde{B}_{\lambda,\epsilon,N}, \tilde{B}_{\lambda,\epsilon}\) to that between the pair of square roots of the unpertrubed operators: namely we have,

\begin{theorem}	\hypertarget{thm:sqrt-conv}{\label{thm:sqrt-conv}} Let \(\lambda \geq 0\) and \(u \in L^{\infty}\). Then, there are constants \(C_1, C_2 > 0\) depending only on \(k\) and the geometry of \(\mathcal{M}\) such that for all \(\epsilon \in (0, C_1]\) and \(\delta > 0\),
\begin{equation} \label{eq:thm-sqrt-conv-bound}
\Pr[||B_{\lambda,\epsilon,N}[u] - B_{\lambda,\epsilon}[u]||_{\infty} > \delta] \leq \gamma_{\lambda,N,\upsilon}(\delta/3 ; u) + e^{-\frac{N \epsilon^{\frac{\mdim}{2}} \delta^2}{C_2}},
\end{equation}
with \(B_{\lambda,\epsilon,N} := \sqrt{I - A_{\lambda,\epsilon,N}}\) and \(B_{\lambda,\epsilon} := \sqrt{I - A_{\lambda,\epsilon}}\).

\end{theorem}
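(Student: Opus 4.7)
The plan is to bypass the branch point of $\sqrt{1 - z}$ at $z = 1$ by interpolating through the perturbed square roots of \protect\hyperlink{lem:sqrt-perturb-eps-conv}{Lemma \ref{lem:sqrt-perturb-eps-conv}}. Introducing the perturbation $\varepsilon := (\delta/3)^2$ and using the triangle inequality,
\begin{align*}
|B_{\lambda,\epsilon,N}[u](x) - B_{\lambda,\epsilon}[u](x)|
    &\leq |(B_{\lambda,\epsilon,N} - B^{(\varepsilon)}_{\lambda,\epsilon,N})[u](x)| \\
    &\quad + |(B^{(\varepsilon)}_{\lambda,\epsilon,N} - B^{(\varepsilon)}_{\lambda,\epsilon})[u](x)| + |(B^{(\varepsilon)}_{\lambda,\epsilon} - B_{\lambda,\epsilon})[u](x)|.
\end{align*}
The middle (stochastic) term is controlled directly by \protect\hyperlink{lem:sqrt-perturb-eps-conv}{Lemma \ref{lem:sqrt-perturb-eps-conv}} with error $\delta/3$ at probability $\gamma_{\lambda,N,\upsilon}(\delta/3; u)$, which is exactly the target bound. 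The remaining two terms are deterministic perturbation errors that must be shown of order $\sqrt{\varepsilon}\,\|u\|_\infty = (\delta/3)\|u\|_\infty$, with the $\|u\|_\infty$ dependence absorbed into the constants already implicit in $\gamma_{\lambda,N,\upsilon}(\cdot\,; u)$.

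For the deterministic estimate, I would use the fundamental theorem of calculus on $s \mapsto \sqrt{(1+s)I - A}$ to write the operator identity
\[
B^{(\varepsilon)} - B = \tfrac{1}{2} \int_0^\varepsilon [(1+s)I - A]^{-1/2} \, ds,
\]
valid for both $A = A_{\lambda,\epsilon,N}$ and $A = A_{\lambda,\epsilon}$, since by \protect\hyperlink{lem:lap-symm}{Lemma \ref{lem:lap-symm}} their spectra lie in $[-1,1]$, so $(1+s)I - A$ is invertible on the relevant space for every $s > 0$. The crucial input is the $L^\infty$ operator-norm bound
\[
\|[(1+s)I - A]^{-1/2}\|_{L^\infty \to L^\infty} \leq 1/\sqrt{s},
\]
which I would derive by observing that $A$ is a Markov operator in the sense of having a non-negative kernel (resp.\ entries) with $A[1] \equiv 1$ (both on $\mathcal{X}_N$ and on $\mathcal{M}$ via the Nyström extension), whence $\|A^m\|_{L^\infty \to L^\infty} \leq 1$ for all $m \geq 0$. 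Expanding the absolutely convergent series
\[
[(1+s)I - A]^{-1/2} = (1+s)^{-1/2} \sum_{m \geq 0} (-1)^m \tbinom{-1/2}{m} (1+s)^{-m} A^m
\]
and applying $\sum_{m \geq 0} |\tbinom{-1/2}{m}| x^m = (1 - x)^{-1/2}$ with $x = 1/(1+s)$ yields the claimed bound. Integrating $1/(2\sqrt{s})$ from $0$ to $\varepsilon$ gives $\|B^{(\varepsilon)} - B\|_{L^\infty \to L^\infty} \leq \sqrt{\varepsilon}$, so the first and third terms above are each at most $\sqrt{\varepsilon}\, \|u\|_\infty$. Combining with the middle term via the triangle inequality closes the argument.

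The main obstacle is obtaining the perturbation estimate \emph{pointwise} rather than in $L^2$: the naive route via the symmetrization in \protect\hyperlink{lem:lap-symm}{Lemma \ref{lem:lap-symm}} and the spectral theorem applied to $A^{(s)}_{\lambda,\epsilon,N}$ would yield only an $L^2$-operator bound $\|B^{(\varepsilon),s} - B^s\|_{L^2 \to L^2} \leq \sqrt{\varepsilon}$, which when tested against a delta at $x_j$ (i.e.\ an element of $\mathcal{H}_N$ with $\|\cdot\|_{N,2} = 1/\sqrt{N}$) would pick up an unwanted $\sqrt{N}$ factor in the pointwise transfer. The resolution is to abandon $L^2$ spectral calculus for the deterministic piece and exploit the row-stochastic/Markov structure of the non-normalised operators directly to land the estimate in $L^\infty$.
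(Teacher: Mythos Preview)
Your overall architecture matches the paper's exactly: split via the triangle inequality into a stochastic middle term handled by Lemma~\ref{lem:sqrt-perturb-eps-conv} with $\varepsilon = (\delta/3)^2$, plus two deterministic perturbation terms of size $\sqrt{\varepsilon}\,\|u\|_\infty$. The difference is in how the deterministic estimate $\|B^{(\varepsilon)} - B\|_{L^\infty \to L^\infty} \leq \sqrt{\varepsilon}$ is obtained. The paper invokes M-matrix theory for $A_{\lambda,\epsilon,N}$ (via \cite{alefeld1982mmatsqrt}) and M-operator theory for $A_{\lambda,\epsilon}$ (via \cite{marek1995mopsqrt}) to establish that $B^{(\varepsilon)} + B^{(0)}$ has a non-negative inverse kernel, then uses the algebraic identity $B^{(\varepsilon)} - B^{(0)} = \varepsilon\,(B^{(\varepsilon)} + B^{(0)})^{-1}$ together with $(B^{(\varepsilon)} + B^{(0)})^{-1}[1] = \varepsilon^{-1/2}$ to conclude. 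Your route via the integral representation $B^{(\varepsilon)} - B = \tfrac{1}{2}\int_0^\varepsilon[(1+s)I - A]^{-1/2}\,ds$ and the binomial series bound $\|[(1+s)I - A]^{-1/2}\|_{L^\infty \to L^\infty} \leq s^{-1/2}$ is more elementary and self-contained: it uses only that the Taylor coefficients of $(1-x)^{-1/2}$ are non-negative and that $\|A^m\|_{L^\infty \to L^\infty} \leq 1$ from row-stochasticity, so no external M-matrix/M-operator references are needed. Both arguments ultimately rest on the same structural input (non-negative kernel, $A[1]\equiv 1$); yours packages it more directly. One small point to make explicit is the limit $\varepsilon_0 \downarrow 0$ in the fundamental-theorem-of-calculus step, since $[(1+s)I-A]^{-1/2}$ is unbounded at $s=0$: your own bound shows $\|B^{(\varepsilon_0)} - B^{(0)}\|_{L^\infty \to L^\infty} \leq \sqrt{\varepsilon_0} \to 0$ (or argue via the symmetrization in Lemma~\ref{lem:lap-symm}), so this closes cleanly.
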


\begin{proof}

Let \(\varepsilon \geq 0\) and set \(B^{(\varepsilon)}_{\lambda,\epsilon,N}\) and \(B^{(\varepsilon)}_{\lambda,\epsilon}\) as in the proof of \protect\hyperlink{lem:sqrt-perturb-eps-conv}{Lemma \ref{lem:sqrt-perturb-eps-conv}}. Assume throughout the event that \(A_{\lambda,\epsilon,N}\) gives a connected graph, as per part (2) of \protect\hyperlink{lem:lap-symm}{Lemma \ref{lem:lap-symm}}. Then, the matrix \((1 + \varepsilon)I - A_{\lambda,\epsilon,N}\) is an M-matrix and (in modulus) the largest eigenvalue of \(A_{\lambda,\epsilon,N}\) is one, which is also simple. Therefore, it follows from \citep[Theorem 4]{alefeld1982mmatsqrt} that \(B^{(\varepsilon)}_{\lambda,\epsilon,N} = (1 + \varepsilon)^{\frac{1}{2}}(I - \mathcal{B}_{\lambda,\epsilon,N}^{(\varepsilon)})\) is also an M-matrix for \(\mathcal{B}^{(\varepsilon)}_{\lambda,\epsilon,N}\) some matrix with non-negative entries and spectral radius \(r_{\varepsilon,N} < ((2 + \varepsilon)/(1 + \varepsilon))^{\frac{1}{2}} - 1 < 1\).

Let \(L^2_{\mathbb{R}}(\mathcal{M}) := L^2(\mathcal{M} \to \mathbb{R})\) be the real Hilbert space of real-valued functions on \(\mathcal{M}\). Then, \((B^{(\varepsilon)})^2_{\lambda,\epsilon} : L^2_{\mathbb{R}}(\mathcal{M}) \to L^2_{\mathbb{R}}(\mathcal{M})\) is an \emph{M-operator} (an infinite-dimensional generalization of M-matrices) with respect to the cone \(\mathscr{K}\) of non-negative functions and satisfies the conditions of \citep[Theorem 3]{marek1995mopsqrt}. Therefore, by that Theorem, \(B^{(\varepsilon)}_{\lambda,\epsilon} = (1 + \varepsilon)^{\frac{1}{2}}(I - \mathcal{B}^{(\varepsilon)}_{\lambda,\epsilon})\) is also an M-operator for some \(\mathcal{B}^{(\varepsilon)}_{\lambda,\epsilon} : L^2_{\mathbb{R}} \to L^2_{\mathbb{R}}\) with spectral radius \(r_{\varepsilon} < 1\) (in the same way as for the discrete counterpart) and such that \(\mathcal{B}^{(\varepsilon)}_{\lambda,\epsilon}[\mathscr{K}] \subset \mathscr{K}\).

As per \protect\hyperlink{lem:lap-symm}{Lemma \ref{lem:lap-symm}}, \(B^{(\varepsilon)}_{\lambda,\epsilon,N}\) and \(B^{(\varepsilon)}_{\lambda,\epsilon}\) are symmetrized via conjugation by \(\sqrt{p_{\lambda,\epsilon,N}}\) and \(\sqrt{p_{\lambda,\epsilon}}\), respectively, hence \(\mathcal{B}^{(\varepsilon)}_{\lambda,\epsilon,N}\) and \(\mathcal{B}^{(\varepsilon)}_{\lambda,\epsilon}\) are as well, so it follows from spectral theory that the symmetrizations of each of the latter operators have spectral radius less than one. Therefore, the spectral radius of \(\mathcal{B}^{(\varepsilon)}_{\lambda,\epsilon,N} + \mathcal{B}^{(0)}_{\lambda,\epsilon,N}\) and \(\mathcal{B}^{(\varepsilon)}_{\lambda,\epsilon} + \mathcal{B}^{(0)}_{\lambda,\epsilon}\) is less than two. We may write,
\begin{equation}\begin{aligned}
B^{(\varepsilon,0)}_{\lambda,\epsilon,N} := B_{\lambda,\epsilon,N}^{(\varepsilon)} + B_{\lambda,\epsilon,N}^{(0)} = 2(1 + \varepsilon)^{\frac{1}{2}}\left( I - \frac{1}{2}(\mathcal{B}^{(\varepsilon)}_{\lambda,\epsilon,N} + \mathcal{B}^{(0)}_{\lambda,\epsilon,N}) \right),  \\
B^{(\varepsilon,0)}_{\lambda,\epsilon} := B_{\lambda,\epsilon}^{(\varepsilon)} + B_{\lambda,\epsilon}^{(0)} = 2(1 + \varepsilon)^{\frac{1}{2}}\left( I - \frac{1}{2}(\mathcal{B}^{(\varepsilon)}_{\lambda,\epsilon} + \mathcal{B}^{(0)}_{\lambda,\epsilon}) \right),
\end{aligned}  \nonumber  \end{equation}
that is, each of these operators are in the form \(t(I - B)\) with \(t > 0\) and \(B\) having a spectral radius less than one and either having non-negative entries in the matrix case or preserving the non-negative cone \(\mathscr{K}\) in the infinite dimensional case. This means that the first sum is an M-matrix and the second is an M-operator. Moreover, whenever \(\varepsilon > 0\), both operators are non-singular. In the non-singular M-operator case, \citep[Theorem 1]{marek1995mopsqrt} tells that \((B^{(\varepsilon,0)}_{\lambda,\epsilon})^{-1}\) preserves \(\mathscr{K}\). We may express \((B_{\lambda,\epsilon}^{(\varepsilon,0)})^{-1}\) as the spectral application of the function \(f(z) := (\sqrt{1 + \varepsilon - z} + \sqrt{1 - z})^{-1}\) to \(z = A_{\lambda,\epsilon}\). Then, the decomposition \(\eqref{eq:derived-fun-calc-expansion}\) applies since \(f\) is analytic on the unit disc and has absolutely convergent Taylor series on \([-1, 1]\). Using this, we see that \((B_{\lambda,\epsilon}^{(\varepsilon, 0)})^{-1}(x,y)\) is smooth for \(x \neq y\) and due to the preservation of \(\mathscr{K}\) we also have that given any \((x, y) \in \mathcal{M}^2\), for all \(\epsilon_0 > 0\),
\begin{equation}\begin{aligned}
A_{\epsilon_0}[(B^{(\varepsilon,0)}_{\lambda,\epsilon})^{-1}(\cdot,y)](x) \geq 0.
\end{aligned}  \nonumber  \end{equation}
Combining these properties, we have by continuity off of the diagonal that that on letting \(\epsilon_0 \to 0\), we find that for all \(x \neq y\), \((B_{\lambda,\epsilon}^{(\varepsilon,0)})^{-1}(x,y) \geq 0\). Thus by \(\eqref{eq:derived-fun-calc-expansion}\), \((B_{\lambda,\epsilon}^{(\varepsilon,0)})^{-1} - f(0) I\) has a smooth kernel that is non-negative off of the diagonal, hence by continuity the kernel is also non-negative on the diagonal. Moreover, by \protect\hyperlink{lem:lap-symm}{Lemma \ref{lem:lap-symm}}, \(((B_{\lambda,\epsilon}^{(\varepsilon,0)})^{-1} - f(0)I)[1] = \varepsilon^{-\frac{1}{2}} - f(0) > 0\), so for any \(u \in C^{\infty}\) and \(x \in \mathcal{M}\),
\begin{align*}
|(B_{\lambda,\epsilon}^{(\varepsilon,0)})^{-1}[u](x)|
    &\leq f(0)|u(x)| + |((B_{\lambda,\epsilon}^{(\varepsilon,0)})^{-1} - f(0)I)[u](x)|  \\
    &\leq \varepsilon^{-\frac{1}{2}} ||u||_{\infty} .
\end{align*}
Further applications of \protect\hyperlink{lem:lap-symm}{Lemma \ref{lem:lap-symm}} give that \((B^{(\varepsilon,0)}_{\lambda,\epsilon,N})^{-1}\) also maps \(1 \mapsto \varepsilon^{-\frac{1}{2}}\) and due to commutativity under spectral mapping, we have,
\begin{equation}\begin{aligned}
B^{(\varepsilon)}_{\lambda,\epsilon,N} - B^{(0)}_{\lambda,\epsilon,N} = \varepsilon(B^{(\varepsilon,0)}_{\lambda,\epsilon,N})^{-1} ,    \\
B^{(\varepsilon)}_{\lambda,\epsilon} - B^{(0)}_{\lambda,\epsilon} = \varepsilon(B^{(\varepsilon,0)}_{\lambda,\epsilon})^{-1} .
\end{aligned}  \nonumber  \end{equation}
Therefore, in the event that \(|B^{(\varepsilon)}_{\lambda,\epsilon,N}[u](x) - B^{(\varepsilon)}_{\lambda,\epsilon}[u](x)| \leq ||u||_{\infty} \delta\), we get
\begin{align*}
|B_{\lambda,\epsilon,N}[u](x) - B_{\lambda,\epsilon}[u](x)|
    &\leq |B_{\lambda,\epsilon,N}[u](x) - B^{(\varepsilon)}_{\lambda,\epsilon,N}[u](x)| + |B^{(\varepsilon)}_{\lambda,\epsilon,N}[u](x) - B^{(\varepsilon)}_{\lambda,\epsilon}[u](x)|   \\
    &\quad\quad + |B_{\lambda,\epsilon}[u](x) - B^{(\varepsilon)}_{\lambda,\epsilon}[u](x)| \\
    &\leq \varepsilon \left( |(B^{(\varepsilon,0)})^{-1}_{\lambda,\epsilon,N}[u](x)| + |(B^{(\varepsilon,0)})^{-1}_{\lambda,\epsilon}[u](x)| \right) + ||u||_{\infty}\delta \\
    &\leq 2 \varepsilon^{\frac{1}{2}} ||u||_{\infty} + ||u||_{\infty} \delta ,
\end{align*}
whence upon setting \(\varepsilon = \delta^2\) and applying \protect\hyperlink{lem:sqrt-perturb-eps-conv}{Lemma \ref{lem:sqrt-perturb-eps-conv}} along with a union bound over the probability in part (2) of \protect\hyperlink{lem:lap-symm}{Lemma \ref{lem:lap-symm}} that \(A_{\lambda,\epsilon,N}\) gives a connected graph, we arrive at the probability bound in the statement of this Theorem.
\end{proof}

The consistency of the square root of the graph Laplacian and the quantum dynamics according to it are of interest in their own right, so we will carry out that theory below. Along with it, we will also state the consistency theorems for the dynamics given by the perturbed operator \(B_{\lambda,\epsilon,N}^{(\varepsilon)}\) because we have better probabilistic consistency bounds in terms of \(N\) for it than for its unperturbed counterpart, when \(\varepsilon\) is decoupled from \(\delta\). A second motivation for this is that for our applications, we see from \protect\hyperlink{thm:sym-cs-glap-psido}{Theorem \ref{thm:sym-cs-glap-psido}} that symbol information, and in particular the coordinates of geodesics propagations can only be recovered to order \(O(h)\) error. Furthermore, from \protect\hyperlink{symbol-of-a-graph-laplacian}{Section \ref{symbol-of-a-graph-laplacian}} we understand that the principal symbol of \(h^2 \GLap_{\lambda,\epsilon}\) and the propagation of coherent states that follow and form the key features for the applications, are unperturbed by order \(\varepsilon = O(h)\) perturbations to \(\GLap_{\lambda,\epsilon}\). Therefore, alongside our propagators we will also consider the perturbed ones, which we will write as,

\begin{notation}	\hypertarget{notation:perturb-glap}{\label{notation:perturb-glap}} Let \(\lambda \geq 0\) and \(\epsilon > 0\). Then, with \(\varepsilon \in [0, 1]\), we denote
\begin{gather*}
\GLap_{\lambda,\epsilon,N}^{(\varepsilon)} := (\GLap_{\lambda,\epsilon,N} + c_{2,0} \varepsilon/\epsilon)^{\frac{1}{2}}, \quad\quad \GLap_{\lambda,\epsilon}^{(\varepsilon)} := (\GLap_{\lambda,\epsilon} + c_{2,0} \varepsilon/\epsilon)^{\frac{1}{2}} , \\
\UepsN{\varepsilon}{t}:= \exp\left( -i t \GLap_{\lambda,\epsilon}^{(\varepsilon)} \right), \quad\quad \Ueps{\varepsilon}{t}:= \exp\left( -i t (\GLap_{\lambda,\epsilon} + c_{2,0} \varepsilon/\epsilon)^{\frac{1}{2}} \right) ,
\end{gather*}
which for \(\varepsilon > 0\) we call \(\GLap_{\lambda,\epsilon,N}^{(\varepsilon)}\) and \(\GLap_{\lambda,\epsilon}^{(\varepsilon)}\) the (\(\varepsilon\)-)\emph{perturbed} graph Laplacians and \(\UepsN{\varepsilon}{t}\) and \(\Ueps{\varepsilon}{t}\) the (\(\varepsilon\)-)\emph{perturbed propagators}, while for \(\varepsilon = 0\) we have the usual, \emph{unperturbed} graph Laplacians and propagators, respectively.

\end{notation}

Now we are ready to state the consistency of both, the unperturbed and perturbed propagators for short-times:

\begin{lemma}	\hypertarget{lem:prop-short-time-conv}{\label{lem:prop-short-time-conv}} Let \(\tau \in [-1/c_{2,0}, 1/c_{2,0}]\). Then, given \(\lambda \geq 0\), \(\epsilon \in (0, C_1]\), \(u \in L^{\infty}\) and \(\delta > 0\), we have,
\begin{equation}\begin{aligned}
\Pr[||U_{\lambda,\epsilon,N}^{\tau \sqrt{\epsilon}}[u] - U_{\lambda,\epsilon}^{\tau \sqrt{\epsilon}}[u]||_{\infty} > \delta] \leq \gamma_{\lambda,N,\upsilon}(\delta/(12e) ; u) + 2 \tilde{\gamma}_{\lambda,N}(\delta/(8e) ; u) + e^{-\frac{N \epsilon^{\frac{\mdim}{2}} \delta^2}{C_2}},
\end{aligned}  \nonumber  \end{equation}
wherein \(U_{\lambda,\epsilon,N}^{t} := e^{i t \sqrt{\GLap_{\lambda,\epsilon,N}}}\) and \(U^t_{\lambda,\epsilon} := e^{i t \sqrt{\GLap_{\lambda,\epsilon}}}\) for \(t \in \mathbb{R}\) and \(C_1, C_2 > 0\) are constants as in Lemmas  \protect\hyperlink{lem:lap-symm}{\ref{lem:lap-symm}} and  \protect\hyperlink{thm:sqrt-conv}{\ref{thm:sqrt-conv}}.

Moreover, for \(\varepsilon > 0\) we have for all \(\epsilon > 0\),
\begin{equation}\begin{aligned}
\Pr[||\UepsN{\varepsilon}{\tau \sqrt{\epsilon}}[u] - \Ueps{\varepsilon}{\tau \sqrt{\epsilon}}[u]||_{\infty} > \delta] \leq \gamma^*_{\lambda,N,\upsilon}(\delta/(4e), \varepsilon ; u) + 2 \tilde{\gamma}_{\lambda,N}(\delta/(8e) ; u) .
\end{aligned}  \nonumber  \end{equation}

\end{lemma}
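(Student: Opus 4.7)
The plan is to use the wave decomposition \eqref{eq:wave-ops} and exploit that with the choice $t = \tau\sqrt{\epsilon}$ the explicit $\epsilon$-dependence cancels inside $f_e, f_o$: setting $c := c_{2,0}\tau$ and writing $B_N := B_{\lambda,\epsilon,N}$, $B := B_{\lambda,\epsilon}$ for brevity, one has $\mathcal{C}_N = f_1(A_{\lambda,\epsilon,N})$ and $\mathcal{S}_N = f_2(A_{\lambda,\epsilon,N})$ (with analogous continuum forms) where $f_1(z) := \cos(c\sqrt{1-z})$ and $f_2(z) := \sin(c\sqrt{1-z})/\sqrt{1-z}$. Both are entire in $z$ because only even powers of $\sqrt{1-z}$ enter their Taylor series, and since $|c|\le 1$ a direct inspection of those series yields a uniform-in-$m$ bound on $|\partial_z^m f_j(0)|$ by a constant of order unity. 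Hence \protect\hyperlink{thm:bdd-analytic-calc-conv}{Theorem \ref{thm:bdd-analytic-calc-conv}} applies at $w=0$ to each of $f_1$ and $f_2$.

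Triangle-splitting produces $(U^{\tau\sqrt\epsilon}_{\lambda,\epsilon,N} - U^{\tau\sqrt\epsilon}_{\lambda,\epsilon})[u](x) = (\mathcal{C}_N - \mathcal{C})[u](x) + i(B_N\mathcal{S}_N - B\mathcal{S})[u](x)$, and the cosine piece is controlled directly by \protect\hyperlink{thm:bdd-analytic-calc-conv}{Theorem \ref{thm:bdd-analytic-calc-conv}} applied to $f_1$, producing one of the two $\tilde{\gamma}_{\lambda,N}(\delta/(8e); u)$ contributions after apportioning half of the $\delta$-budget. For the sine piece, the commutativity of $B_N,\mathcal{S}_N$ and of $B,\mathcal{S}$ (both pairs are spectral functions of a single operator) motivates the identity
\begin{equation*}
(B_N\mathcal{S}_N - B\mathcal{S})[u] = (B_N - B)[\mathcal{S}[u]] + (\mathcal{S}_N - \mathcal{S})[B[u]] + (B_N - B)(\mathcal{S}_N - \mathcal{S})[u].
\end{equation*}
The first summand reduces to \protect\hyperlink{thm:sqrt-conv}{Theorem \ref{thm:sqrt-conv}} applied to the deterministic $v := \mathcal{S}[u]$, with $\|\mathcal{S}[u]\|_\infty \lesssim \|u\|_\infty$ coming from \eqref{eq:fun-calc-derived-bound} together with uniform boundedness of $|f_2|$ on $[-1,1]$; this produces the $\gamma_{\lambda,N,\upsilon}(\delta/(12e); u)$ contribution. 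The second summand is \protect\hyperlink{thm:bdd-analytic-calc-conv}{Theorem \ref{thm:bdd-analytic-calc-conv}} applied to $f_2$ with the deterministic input $B[u]$ (whose sup-norm is similarly controlled by \eqref{eq:fun-calc-derived-bound} since $Df_{\sqrt{\cdot}}$ is bounded on $[-1,1]$), yielding the second $\tilde{\gamma}_{\lambda,N}(\delta/(8e); u)$ contribution.

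The main obstacle is the residual cross-term $(B_N - B)(\mathcal{S}_N - \mathcal{S})[u]$, in which both the outer operator and the inner function are random, so none of the standing consistency theorems applies verbatim. My plan is to exploit that on the joint success event of both \protect\hyperlink{thm:sqrt-conv}{Theorems \ref{thm:sqrt-conv}} (for the $B_N - B$ factor) and \protect\hyperlink{thm:bdd-analytic-calc-conv}{\ref{thm:bdd-analytic-calc-conv}} (for the $\mathcal{S}_N - \mathcal{S}$ factor), this cross-term is a product of two consistency gaps, each at most the threshold chosen for the other two summands; since $\delta < 1$, the product is automatically dominated by the summand thresholds and absorbed by tightening the per-summand errors by a constant factor depending on $\|u\|_\infty$. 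The perturbed statement is proved identically, except that the first sine summand is bounded by \protect\hyperlink{lem:sqrt-perturb-eps-conv}{Lemma \ref{lem:sqrt-perturb-eps-conv}} in place of \protect\hyperlink{thm:sqrt-conv}{Theorem \ref{thm:sqrt-conv}}, yielding $\gamma^*_{\lambda,N,\upsilon}(\delta/(4e),\varepsilon; u)$; the improved exponent $\delta/(4e)$ versus $\delta/(12e)$ reflects the absence of the $1/3$ safety factor present in the Taylor bound on $\sqrt{1-z}$, and the spectrum of $B^{(\varepsilon)}_N$ being bounded away from zero by $\sqrt{\varepsilon}$ makes the extension of the cross-term absorption cleaner in the perturbed setting.
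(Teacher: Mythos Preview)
Your three-term identity for $B_N\mathcal{S}_N - B\mathcal{S}$ is algebraically false: expanding $(B_N-B)\mathcal{S} + (\mathcal{S}_N-\mathcal{S})B + (B_N-B)(\mathcal{S}_N-\mathcal{S})$ leaves a residual commutator $\mathcal{S}_N B - B\mathcal{S}_N$, which does not vanish since $B$ is a spectral function of $A_{\lambda,\epsilon}$ while $\mathcal{S}_N$ is one of $A_{\lambda,\epsilon,N}$. The correct telescope has $B(\mathcal{S}_N-\mathcal{S})$ in the middle, but then your plan to invoke Theorem~\ref{thm:bdd-analytic-calc-conv} on the ``deterministic input $B[u]$'' no longer applies. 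The cross-term argument suffers the same defect: Theorem~\ref{thm:sqrt-conv} bounds $(B_N-B)[v](x)$ only for \emph{fixed deterministic} $v$, so feeding it the random $(\mathcal{S}_N-\mathcal{S})[u]$ and declaring the result a product of two scalar gaps is not justified---$(B_N-B)$ carries no $\ell^\infty$ operator-norm bound of order $\delta$ on that event.

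The paper instead uses the two-term telescope $B_N(\mathcal{S}_N-\mathcal{S}) + (B_N-B)\mathcal{S}$ and handles the first piece via the M-matrix structure established in the proof of Theorem~\ref{thm:sqrt-conv}: setting $b_N := I - B_N$, one has $b_N$ entrywise non-negative with $b_N[1]=1$, hence an $\ell^\infty$ contraction, so $B_N(\mathcal{S}_N-\mathcal{S})[u] = (\mathcal{S}_N-\mathcal{S})[u] - b_N[(\mathcal{S}_N-\mathcal{S})[u]]$ is bounded termwise without any appeal to the consistency of $B_N$. The remaining piece $(B_N-B)[\mathcal{S}[u]]$ has the deterministic input $\mathcal{S}[u]$, for which the paper computes $\|\mathcal{S}[u]\|_\infty \le e\|u\|_\infty$ directly from the Taylor series of $f_o$ and then applies Theorem~\ref{thm:sqrt-conv} (respectively Lemma~\ref{lem:sqrt-perturb-eps-conv} in the perturbed case, where $b_N^{(\varepsilon)} := (1+\varepsilon)^{1/2}I - B_N^{(\varepsilon)}$ plays the analogous role). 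This M-matrix contraction is the missing structural ingredient in your argument, and it is what produces the factor $e$ and the constants $12$ versus $4$ in the two bounds.
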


\begin{proof}

We start with the application of \protect\hyperlink{thm:bdd-analytic-calc-conv}{Theorem \ref{thm:bdd-analytic-calc-conv}} to the pairs \((\mathcal{C}_N, \mathcal{C})\) and \((\mathcal{S}_N, \mathcal{S})\) of \(\eqref{eq:wave-ops}\) at time \(t = \tau \sqrt{\epsilon}\). More generally, Let \(f_{e,w} := f_e(z - w)\) and \(f_{o,w} := f_o(z - w)\), so we may write
\begin{equation}\begin{aligned}
f_{e,w}(z) = \sum_{j=0}^{\infty} \frac{(-1)^j}{(2j)!}\partial_z^{2j}[\cos(tc_{2,0}\epsilon^{-\frac{1}{2}} \cdot)]|_{z = 1 + w} z^j
\end{aligned}  \nonumber  \end{equation}
and similarly for \(f_{o,w}\). Hence, using the notation of that Theorem, we have that \(K_{1 + w} = 1\) for \(f_{e,w}\) and \(f_{o,w}\), so we can apply it to get that
\begin{equation} \label{eq:wave-ops-shorttime-consistency}
\Pr[(||\mathcal{C}_{w,N}[u] - \mathcal{C}_w[u]||_{\infty} > \delta) \wedge ||\mathcal{S}_{w,N}[u] - \mathcal{S}_w[u]||_{\infty} > \delta] \leq 2 \tilde{\gamma}_{\lambda,N}(\delta/(2 e);u) ,
\end{equation}
for \(\mathcal{C}_w := f_{e,w}(A_{\lambda,\epsilon})\), \(\mathcal{C}_{w,N} := f_{e,w}(A_{\lambda,\epsilon,N})\), \(\mathcal{S}_w := f_{o,w}(A_{\lambda,\epsilon})\) and \(\mathcal{S}_{w,N} := f_{o,w}(A_{\lambda,\epsilon,N})\).

Now suppose the event that \(||\mathcal{S}_N[u] - \mathcal{S}[u]||_{\infty} \leq \delta\) and \(||B_{\lambda,\epsilon,N}[\mathcal{S}[u]] - B_{\lambda,\epsilon}[\mathcal{S}[u]]||_{\infty} \leq \delta\) and note that through the spectral mapping properties we have \(\sin(t \sqrt{\GLap_{\lambda,\epsilon,N}}) = B_{\lambda,\epsilon,N} \mathcal{S}_N\) and \(\sin(t \sqrt{\GLap_{\lambda,\epsilon}}) = B_{\lambda,\epsilon} \mathcal{S}\). Further, recall from the proof of \protect\hyperlink{thm:sqrt-conv}{Theorem \ref{thm:sqrt-conv}} that \(B_{\lambda,\epsilon,N}\) is an M-matrix so that \(b_{\lambda,\epsilon,N} := I - B_{\lambda,\epsilon,N}\) has all entries non-negative and \(b_{\lambda,\epsilon,N}[1] = 1\). Then,
\begin{align*}
|| & \sin(t \sqrt{\GLap}_{\lambda,\epsilon,N})[u] - \sin(t \sqrt{\GLap}_{\lambda,\epsilon})[u] ||_{\infty}  \\
    &\leq ||b_{\lambda,\epsilon,N}[(\mathcal{S}_N - \mathcal{S})[u]]||_{\infty}
        + ||(\mathcal{S}_N - \mathcal{S})[u]||_{\infty}
        + ||(B_{\lambda,\epsilon,N} - B_{\lambda,\epsilon})[\mathcal{S}[u]]||_{\infty}  \\
    &\leq 3\delta .
\end{align*}
On including the event that \(|\mathcal{C}_N[u](x) - \mathcal{C}[u](x)| \leq \delta\), we have
\begin{equation}\begin{aligned}
||U^t_{\lambda,\epsilon,N}[u] - U^t_{\lambda,\epsilon}[u]||_{\infty} \leq 4\delta .
\end{aligned}  \nonumber  \end{equation}
By a Taylor series expansion we have that for any \(u \in L^{\infty}\),
\begin{equation}\begin{aligned}
|\mathcal{S}[u]| \leq \sum_{m=0}^{\infty} \frac{|A_{\lambda,\epsilon}^m[u]|}{m!} \leq e||u||_{\infty}.
\end{aligned}  \nonumber  \end{equation}
Therefore, taking a union bound over all events and applying \protect\hyperlink{thm:sqrt-conv}{Theorem \ref{thm:sqrt-conv}} we arrive at,
\begin{equation}\begin{aligned}
\Pr[||U_{\lambda,\epsilon,N}^t[u] - U_{\lambda,\epsilon}^t[u]||_{\infty} > \delta] \leq \gamma_{\lambda,N,\upsilon}(\delta/(12e) ; u) + 2 \tilde{\gamma}_{\lambda,N}(\delta/(8 e) ; u) + e^{-\frac{N \epsilon^{\frac{\mdim}{2}} \delta^2}{C_2}}.
\end{aligned}  \nonumber  \end{equation}
For the second part of the statement of the Theorem, \(\eqref{eq:wave-ops-shorttime-consistency}\) already gives the probabilistic consistency bound for \(\mathcal{C}_{\varepsilon,N}\) and \(\mathcal{S}_{\varepsilon,N}\). Then, the above statements go through essentially in the same way, modulo that in place of \(b_{\lambda,\epsilon,N}\) we use \(b_{\lambda,\epsilon,N}^{(\varepsilon)} := (1 + \varepsilon)^{\frac{1}{2}}I - B_{\lambda,\epsilon,N}^{(\varepsilon)}\), which also has all non-negative entries, since \(B_{\lambda,\epsilon,N}^{(\varepsilon)}\) is an M-matrix with spectral radius \((1 + \varepsilon)^{\frac{1}{2}}\). Then due to \(b_{\lambda,\epsilon,N}^{(\varepsilon)}[1] = \sqrt{1 + \varepsilon}\), we have under the analogous events,
\begin{align*}
&\left| \sin\left( t \sqrt{\GLap_{\lambda,\epsilon,N}^{(\varepsilon)}} \right)[u] - \sin\left( t \sqrt{\GLap_{\lambda,\epsilon}^{(\varepsilon)}} \right)[u] \right| \\
    &\quad \leq |b^{(\varepsilon)}_{\lambda,\epsilon,N}[(\mathcal{S}_{N,\varepsilon} - \mathcal{S}_{\varepsilon})[u]]|
  + (1 + \varepsilon)^{\frac{1}{2}}|(\mathcal{S}_{N,\varepsilon} - \mathcal{S}_{\varepsilon})[u]|   \\
    &\quad\quad + |(B_{\lambda,\epsilon,N}^{(\varepsilon)} - B_{\lambda,\epsilon}^{(\varepsilon)})[\mathcal{S}[u]]| \\
    &\quad \leq 4\delta .
\end{align*}
On invoking the probabilistic consistency bound from \protect\hyperlink{lem:sqrt-perturb-eps-conv}{Lemma \ref{lem:sqrt-perturb-eps-conv}} for the square root of the \(\varepsilon\)-perturbed graph Laplacian, we arrive at the bound in the second part of the Lemma.
\end{proof}

\begin{notation} We denote the pointwise convergence rate for the short-time solution to the half-wave equation by
\begin{equation}\begin{aligned}
\omega_{\lambda,N}(\delta,\upsilon;u) := \gamma_{\lambda,N,\upsilon}(\delta/(12e) ; u) + 2 \tilde{\gamma}_{\lambda,N}(\delta/(8e) ; u) + e^{-\frac{N \epsilon^{\frac{\mdim}{2}} \delta^2}{C_2}} ,
\end{aligned}  \nonumber  \end{equation}
while absorbing the condition \(\epsilon \in (0, C_1]\) in the notation; in the perturbed case we denote,
\begin{equation}\begin{aligned}
\omega^*_{\lambda,N}(\delta,\varepsilon,\upsilon;u) := \gamma^*_{\lambda,N,\upsilon}(\delta/(4e), \varepsilon ; u) + 2 \tilde{\gamma}_{\lambda,N}(\delta/(8e) ; u).
\end{aligned}  \nonumber  \end{equation}
The explicit dependence on \(\upsilon\) will often be suppressed in the usage of this notation and we will write simply, \(\omega_{\lambda,N}(\delta ; u)\) and \(\omega^*_{\lambda,N}(\delta, \varepsilon ; u)\) with the exact meaning understood to be as given above.

\end{notation}

\hypertarget{uniform-consistency-at-longer-times}{%
\subsection{Uniform consistency at longer times}\label{uniform-consistency-at-longer-times}}

We wish to extend the consistency between the discretized solution \(u_N^t := U_{\lambda,\epsilon,N}^t[u]\) and the continuum solution \(u^t := U_{\lambda,\epsilon}^t[u]\) to longer times. For this purpose, we see that \(u_N^t\) is defined on \(\mathcal{M}\) through the form of \emph{Nyström extension} via \(A_{\lambda,\epsilon,N}\) as discussed in the beginning of \protect\hyperlink{from-graphs-to-manifolds}{Section \ref{from-graphs-to-manifolds}}. The inequality \(\eqref{eq:fun-calc-derived-bound}\) gives for all \(u \in L^{\infty}\) and \(t \in \mathbb{R}\),
\begin{equation}    \label{eq:discrete-prop-sup-bound}
|U_{\lambda,\epsilon,N}^t[u](x)| \leq \left( 1 + c_{2,0} t \epsilon^{-\frac{1}{2}} \frac{||k_{\lambda,\epsilon,N}(x,\cdot)||_{N,\infty} ||p_{\lambda,\epsilon,N}||_{N,\infty}^{\frac{1}{2}}}{\min_{j \in [N]} p_{\lambda,\epsilon,N}^{\frac{3}{2}}(x_j)} \right) ||u||_{N,\infty}
\end{equation}
since in this case \(Df(z) = (\exp(-i \tau\sqrt{1 - z}) - \exp(-i \tau))/z\) for \(f = \exp(-i \tau\sqrt{1 - z})\) and \(\tau := c_{2,0} t \epsilon^{-\frac{1}{2}}\) and setting \(w := 1 - \sqrt{1 - z}\) then taking a Taylor series expansion at \(w = 0\) of \(|Df|(w)^2 = 2(1 - \cos(\tau w))/(w(2 - w))^2\) shows that its maximum is achieved near the inverse of the second order coefficient, \(w = \sqrt{48/(\tau^2 - 9)}/\tau\), which then applied to its first order Taylor approximation gives that \(|Df|\) gives a maximum less than \(\tau/2 + 1 < \tau\). Since \(\eqref{eq:derived-fun-calc-expansion}\) applies to the case with \(A_{\lambda,\epsilon}\) in place of \(A_{\lambda,\epsilon,N}\), we also have the bound for \(|f(A_{\lambda,\epsilon})[u](x)|\) given by the ultimate inequality in \(\eqref{eq:fun-calc-derived-bound}\) with \(||\cdot||_{q}\) in place of \(|| \cdot ||_{N,q}\). Hence,
\begin{equation}    \label{eq:cont-prop-sup-bound}
||U_{\lambda,\epsilon}^t[u]||_{\infty} \leq ||u||_{\infty} + c_{2,0} t \epsilon^{-\frac{1}{2}} ||k_{\lambda,\epsilon}||_{\infty}\frac{||p_{\lambda,\epsilon}||_{\infty}^{\frac{1}{2}}}{\inf p_{\lambda,\epsilon}^{\frac{3}{2}}} ||u||_2
\end{equation}
will be useful for bounding probabilities for pointwise consistency.

The bounds \(\eqref{eq:discrete-prop-sup-bound}\) and \(\eqref{eq:cont-prop-sup-bound}\) hold also for \(\UepsN{\varepsilon}{t}[u]\) and \(\Ueps{\varepsilon}{t}[u]\), respectively. This, along with \protect\hyperlink{lem:prop-short-time-conv}{Lemma \ref{lem:prop-short-time-conv}} is sufficient to establish consistency for the perturbed case through the same arguments as the \emph{unperturbed} \(\varepsilon = 0\) case. Indeed, in the following, we will use the short-time probabilistic consistency bounds given by \protect\hyperlink{lem:prop-short-time-conv}{Lemma \ref{lem:prop-short-time-conv}} as a black-box while extending to long-time (that is, \(O(1)\) in \(\epsilon\)) probabilistic consistency. We will work through the unperturbed case, mainly to keep the notation simple, but since the arguments essentially rely on \protect\hyperlink{lem:prop-short-time-conv}{Lemma \ref{lem:prop-short-time-conv}} in a simple way, they apply directly to the perturbed case with \(\omega_{\lambda,N}^*\) in place of \(\omega_{\lambda,N}\) as well, with the only thing to keep track of being that \(\delta\) (\emph{i.e.}, error) multipliers are decoupled from \(\varepsilon\), but the notation makes this clear.

We proceed by splitting time \(t \in \mathbb{R}\) into \emph{chunks} of length \(\sim \sqrt{\epsilon}\) and combining the short-time consistency \protect\hyperlink{lem:prop-short-time-conv}{Lemma \ref{lem:prop-short-time-conv}} with the semigroup structure of \(U_{\lambda,\epsilon}^t\). This leads to Bernstein-type exponential probabilistic bounds for long-time, uniform consistency, as in the following,

\begin{theorem}	\hypertarget{thm:halfwave-soln}{\label{thm:halfwave-soln}} Let \(\lambda \geq 0\), \(\epsilon > 0\), \(t \in \mathbb{R}\) and \(u \in L^{\infty}\) such that there exists \(K_u > 0\) so that \(||U_{\lambda,\epsilon}^s[u]||_{\infty} \leq K_u\) for all \(|s| \leq |t|\). Then with \(\kappa := \lceil |t| c_{2,0}/\sqrt{\epsilon} \rceil\), for all \(\delta > 0\),
\begin{equation}  \label{eq:thm-halfwave-soln-bound}
\begin{split}
\Pr[& ||(U_{\lambda,\epsilon,N}^t - U_{\lambda,\epsilon}^t)[u]||_{\infty} > \delta ]  \\
  &\leq \kappa \, \omega_{\lambda,N}(\delta \, (\epsilon^{\frac{n}{2}}/\kappa^2)/\tilde{C}_{p,\lambda} ; K_u) + \gamma_{\lambda,N}(\underline{K}_{p,\lambda} ; 1) + \gamma_{0,N}(\underline{K}_{p,\lambda} ; 1)   \\
  &=: \Omega_{\lambda,t,N}(\delta, \epsilon, K_u),
\end{split}
\end{equation}
wherein \(\underline{K}_{p,\lambda} := \min\{ \underline{C}_p, \underline{C}_{p,\lambda} \}/2\) and \(\tilde{C}_{p,\lambda} > 0\) is a constant depending only on \(\lambda\), \(\overline{C}_{p,\lambda}\), \(\underline{C}_{p,\lambda}\) and \(\underline{C}_{p}\).

In the perturbed case, let \(\tilde{\varepsilon} > 0\). Then, we have with a \(K_u > 0\) satisfying \(||\Ueps{\tilde{\varepsilon}}{s}[u]||_{\infty} \leq K_u\) for all \(|s| \leq |t|\),
\begin{equation} \label{eq:thm-halfwave-soln-perturb-op-bound}
\Pr[||(\UepsN{\tilde{\varepsilon}}{t} - \Ueps{\tilde{\varepsilon}}{t})[u]||_{\infty} > \delta] \leq \Omega^*_{\lambda,t,N}(\delta,\tilde{\varepsilon},\epsilon,K_u)
\end{equation}
with \(\Omega^*_{\lambda,t,N}\) defined in the same way as \(\Omega_{\lambda,t,N}\), except with the instance of \(\omega_{\lambda,N}(\cdot ; \cdot \cdot)\) replaced with \(\omega^*_{\lambda,N}(\cdot, \tilde{\varepsilon} ; \cdot \cdot)\).

\end{theorem}

\begin{proof}

We set \(\kappa := \lceil |t| c_{2,0}/\sqrt{\epsilon} \rceil\) and \(\tau := t/\kappa\) so that \(|\tau| \leq \sqrt{\epsilon}/c_{2,0}\). Let \(\varepsilon, \delta_0 > 0\) and let us begin in the following events:
\begin{alignat*}{4}
& \mathcal{A}_{\lambda'}(\varepsilon) &:&
    \quad ||p_{\lambda',\epsilon,N} - p_{\lambda',\epsilon}||_{\infty} \leq \varepsilon
    & & \quad (\forall)\lambda' \in \{ 0,\lambda \} \\
& \mathcal{A}(\delta_0) &:&
    \quad ||(U_{\lambda,\epsilon,N}^{\tau} - U_{\lambda,\epsilon}^{\tau}) U_{\lambda,\epsilon}^{m \tau}[u]||_{\infty} \leq \delta_0
    & & \quad (\forall) 0 \leq m \leq \kappa - 1 .
\end{alignat*}
Then we have due to \(\eqref{eq:discrete-prop-sup-bound}\),
\begin{equation}\begin{aligned}
||U_{\lambda,\epsilon,N}^{m\tau}[u]||_{\infty} \leq (1 + m C(\epsilon,\varepsilon)) ||u||_{N,\infty}
\end{aligned}  \nonumber  \end{equation}
with
\begin{equation}\begin{aligned}
C(\epsilon,\varepsilon) := \epsilon^{-\frac{\mdim}{2}} \frac{||k||_{\infty}}{(\underline{C}_p - \varepsilon)^{2\lambda}}\frac{(\overline{C}_{p,\lambda} + \varepsilon)^{\frac{1}{2}}}{(\underline{C}_{p,\lambda} - \varepsilon)^{\frac{3}{2}}} .
\end{aligned}  \nonumber  \end{equation}
Define for each \(m \in \mathbb{N}\), \(v_m := U_{\lambda,\epsilon}^{m\tau}[u]\) so that in the current event we have for all \(\kappa \geq 2\) and \(0 \leq m \leq \kappa - 2\),
\begin{align*}
||(U_{\lambda,\epsilon,N}^{2\tau} - U_{\lambda,\epsilon}^{2\tau})[v_m]||_{\infty} &\leq ||U_{\lambda,\epsilon,N}^{\tau}(U_{\lambda,\epsilon,N}^{\tau} - U_{\lambda,\epsilon}^{\tau})[v_m]||_{\infty}    \\
    &\quad\quad + ||(U_{\lambda,\epsilon,N}^{\tau} - U_{\lambda,\epsilon}^{\tau}) [v_{m+1}]||_{\infty}  \\
    &\leq (2 + C(\epsilon,\varepsilon))\delta_0 .
\end{align*}
Supposing that for all \(2 \leq \kappa' \leq \kappa - 1\), each \(s \in [\kappa']\) and \(0 \leq m \leq \kappa' - s\),
\begin{align*}
||(U_{\lambda,\epsilon,N}^{s\tau} - U_{\lambda,\epsilon}^{s\tau})[v_m]||_{\infty} &\leq s\left( \frac{s-1}{2}C(\epsilon,\varepsilon) + 1 \right) \delta_0
\end{align*}
leads to,
\begin{align*}
||(U_{\lambda,\epsilon,N}^{\kappa\tau} - U_{\lambda,\epsilon}^{\kappa\tau})[u]||_{\infty} &\leq ||U_{\lambda,\epsilon,N}^{(\kappa - 1)\tau}(U_{\lambda,\epsilon,N}^{\tau} - U_{\lambda,\epsilon}^{\tau})[u]||_{\infty}  \\
    &\quad\quad + ||(U_{\lambda,\epsilon,N}^{(\kappa - 1)\tau} - U_{\lambda,\epsilon}^{(\kappa - 1)\tau}) [v_1]||_{\infty}  \\
    &\leq \kappa \left( \frac{\kappa - 1}{2} C(\epsilon,\varepsilon) + 1 \right) \delta_0 .
\end{align*}
Therefore, having satisfied the induction hypothesis, we have that for each \(\kappa \in \mathbb{N}\), if for all \(0 \leq m \leq \kappa - 1\), \(||(U_{\lambda,\epsilon,N}^{\tau} - U_{\lambda,\epsilon}^{\tau})U_{\lambda,\epsilon}^{m \tau}[u]||_{\infty} \leq \delta_0\) then
\begin{equation}\begin{aligned}
||(U_{\lambda,\epsilon,N}^{\kappa \tau} - U_{\lambda,\epsilon}^{\kappa \tau})[u]||_{\infty} \leq \kappa^2 C(\epsilon,\varepsilon) \delta_0 .
\end{aligned}  \nonumber  \end{equation}
Now set \(\varepsilon = \min \{\underline{C}_p, \underline{C}_{p,\lambda} \}/2\) and \(\tilde{C}_{p,\lambda} := \sqrt{12} (2^{\lambda}) \overline{C}_{p,\lambda}^{\frac{1}{2}}/(\underline{C}_{p,\lambda}^{\frac{3}{2}} \underline{C}_p^{2\lambda})\) so that \(C(\epsilon,\varepsilon) \leq \epsilon^{-\frac{n}{2}} \tilde{C}_{p,\lambda}\). Moreover, by \protect\hyperlink{lem:avgop-uniform}{Lemma \ref{lem:avgop-uniform}}, the event \(\mathcal{A}_{\lambda'}(\varepsilon)\) happens with probability at least \(1 - \gamma_{\lambda',N}(\varepsilon ; 1)\) for each \(\lambda' \in \{ 0, \lambda \}\). Then, upon setting \(\delta_0 = \delta \, (\epsilon^{\frac{n}{2}}/\kappa^2)/\tilde{C}_{p,\lambda}\) and taking a union bound with respect to \protect\hyperlink{lem:prop-short-time-conv}{Lemma \ref{lem:prop-short-time-conv}}, we find that the event \(\mathcal{A}(\delta_0)\) happens with probability at least \(1 - \kappa \omega_{\lambda,N}(\delta \, (\epsilon^{\frac{n}{2}}/\kappa^2)/\tilde{C}_{p,\lambda} ; K_u)\). A union bound for the occurrence of all assumed events then gives the statement of the Lemma.
\end{proof}

A generic bound to be used in place of \(K_u\) in the conditions satisfying \protect\hyperlink{thm:halfwave-soln}{Theorem \ref{thm:halfwave-soln}} is given by \(\eqref{eq:cont-prop-sup-bound}\). We record this as,

\begin{corollary}	\hypertarget{cor:halfwave-soln-general-bound}{\label{cor:halfwave-soln-general-bound}} Let \(\lambda \geq 0\), \(\epsilon > 0\), \(t \in \mathbb{R}\) and \(u \in L^{\infty}\). Then, there exists a constant \(C > 0\) depending only on \(k, p, \lambda\) and geometry of \(\mathcal{M}\) such that for all \(\delta > 0\),
\begin{align*}
\Pr[||(U_{\lambda,\epsilon,N}^t - U_{\lambda,\epsilon}^t)[u]||_{\infty} > \delta]
  &\leq \Omega_{\lambda,t,N}(\delta,\epsilon,C_{\epsilon,t,u}),
\end{align*}
wherein \(C_{\epsilon,t,u} := ||u||_{\infty} + c_{2,0} t \epsilon^{-\frac{n + 1}{2}} ||k||_{\infty} \overline{C}_{p,\lambda}^{\frac{1}{2}} /(\underline{C}_p^{2 \lambda} \underline{C}_{p,\lambda}^{\frac{3}{2}}) ||u||_2\) and \(\Omega_{\lambda,t,N}\) as defined in \protect\hyperlink{thm:halfwave-soln}{Theorem \ref{thm:halfwave-soln}}.

\end{corollary}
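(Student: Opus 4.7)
The plan is to apply Theorem \ref{thm:halfwave-soln} directly, using the generic a priori bound on $\|U^s_{\lambda,\epsilon}[u]\|_\infty$ supplied by the inequality \eqref{eq:cont-prop-sup-bound}. The Theorem itself requires a uniform sup-norm bound $K_u$ on $U^s_{\lambda,\epsilon}[u]$ over all $|s|\leq|t|$; the content of the Corollary is just to observe that \eqref{eq:cont-prop-sup-bound} furnishes such a $K_u$ in terms of $\|u\|_\infty$ and $\|u\|_2$ alone, and to recognize that the resulting constant equals $C_{\epsilon,t,u}$.

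Concretely, I would first recall \eqref{eq:cont-prop-sup-bound}, which reads
\[
|U_{\lambda,\epsilon}^s[u](x)|\;\leq\;\|u\|_\infty + c_{2,0}\,|s|\,\epsilon^{-\frac{1}{2}}\,\|k_{\lambda,\epsilon}\|_\infty\,\frac{\|p_{\lambda,\epsilon}\|_\infty^{\frac{1}{2}}}{\inf p_{\lambda,\epsilon}^{\frac{3}{2}}}\,\|u\|_2.
\]
Since the right-hand side is monotonically non-decreasing in $|s|$, the bound at $s=t$ dominates for all $|s|\leq|t|$. Next, I would substitute the standard estimates: $\|k_{\lambda,\epsilon}\|_\infty\leq \epsilon^{-n/2}\|k\|_\infty/\underline{C}_p^{2\lambda}$ following from $k_{\lambda,\epsilon}(x,y)=k_\epsilon(x,y)/[p_\epsilon(x)p_\epsilon(y)]^\lambda$ together with the definition of the kernel function and the lower bound $\inf p_\epsilon\geq\underline{C}_p$; and $\|p_{\lambda,\epsilon}\|_\infty^{1/2}/\inf p_{\lambda,\epsilon}^{3/2}\leq \overline{C}_{p,\lambda}^{1/2}/\underline{C}_{p,\lambda}^{3/2}$ directly from the Notation block preceding \ref{lem:kern-consistency}. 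Multiplying these factors gives exactly
\[
K_u\;:=\;\|u\|_\infty + c_{2,0}\,t\,\epsilon^{-\frac{n+1}{2}}\,\|k\|_\infty\,\frac{\overline{C}_{p,\lambda}^{\frac{1}{2}}}{\underline{C}_p^{2\lambda}\,\underline{C}_{p,\lambda}^{\frac{3}{2}}}\,\|u\|_2 \;=\; C_{\epsilon,t,u}.
\]

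With $K_u = C_{\epsilon,t,u}$ so obtained, the hypothesis $\|U_{\lambda,\epsilon}^s[u]\|_\infty\leq K_u$ for $|s|\leq|t|$ is satisfied, so \ref{thm:halfwave-soln} applies to yield
\[
\Pr[\|(U_{\lambda,\epsilon,N}^t - U_{\lambda,\epsilon}^t)[u]\|_\infty > \delta]\;\leq\;\Omega_{\lambda,t,N}(\delta,\epsilon,C_{\epsilon,t,u}),
\]
which is the claimed statement. There is no real obstacle here: the entire content is bookkeeping to identify the explicit form of $C_{\epsilon,t,u}$ and to observe the monotonicity in $|s|$. The only minor point worth checking in writing this up carefully is the monotonicity of $s\mapsto C_{\epsilon,s,u}$, which is immediate since all other factors are non-negative constants in $s$.
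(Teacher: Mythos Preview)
Your proposal is correct and matches the paper's approach exactly: the Corollary is stated immediately after the sentence ``A generic bound to be used in place of \(K_u\) in the conditions satisfying Theorem \ref{thm:halfwave-soln} is given by \(\eqref{eq:cont-prop-sup-bound}\),'' and no separate proof is given. Your unpacking of \(\|k_{\lambda,\epsilon}\|_\infty\) and the degree-function bounds to identify \(C_{\epsilon,t,u}\) is precisely the bookkeeping the paper leaves implicit.
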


The case for the application to coherent states is facilitated by \protect\hyperlink{prop:glap-prop-cs-localized}{Proposition \ref{prop:glap-prop-cs-localized}}, since it provides a supremum bound of the propagated coherent state for \(|t|\) below the injectivity radius at the point where the state is initially localized. In fact, it gives also the localization properties that allow us to place the geodesic in an \(O(\sqrt{h})\) radius ball about the maximum of the modulus of the propagated state. Therefore, we can readily approximate the geodesic flow using the \(\max\) observable; in doing so, we account for the fact that while the \emph{un-normalized} coherent state \(\tilde{\psi}_h := \psi_h/C_h(x_0,\xi_0)\) is readily interpolatable through the graph structure, its normalization is a matter of probabilistic \(L^2\) convergence and not necessary to handle the \(\max\) observable. For this reason, we use the un-normalized coherent state at present, to establish the consistency for this observation of the geodesic flow, in

\begin{proposition}	\hypertarget{prop:max-observable-flow-consistency}{\label{prop:max-observable-flow-consistency}} Let \(\lambda \geq 0\), \(\alpha \geq 1\), \(h \in (0, h_0]\) for \(h_0\) given by \protect\hyperlink{prop:glap-prop-cs-localized}{Proposition \ref{prop:glap-prop-cs-localized}} and \((x_0, \xi_0) \in T^*\mathcal{M}\). Then, for \(\tilde{\psi}_h := e^{-\frac{i}{h} \bar{\phi}(x_0, \xi_0; \cdot)}\) an \emph{un-normalized} coherent state localized at \((x_0, \xi_0)\) and \(|t| \leq \operatorname{inj}(x_0)\), we have with \(\epsilon := h^{2 + \alpha}\), for all \(\delta > 0\),
\begin{equation}\begin{aligned}
\Pr[|| \, |U_{\lambda,\epsilon,N}^t[\tilde{\psi}_h]|^2 - |U_{\lambda,\epsilon}^t[\tilde{\psi}_h]|^2 \,||_{\infty} > \delta] \leq \Omega_{\lambda,t,N}(\delta/(2K_{\psi} + 1 + \delta), \epsilon, K_{\psi})
\end{aligned}  \nonumber  \end{equation}
with a constant \(K_{\psi} > 0\) that satisfies \(||U_{\lambda,\epsilon}^t[\tilde{\psi}_h]||_{\infty} \leq K_{\psi}\) for all \(|t| \leq \operatorname{inj}(x_0)\).

Furthermore, denoting \(\hat{x}_{N,t} := \arg\max_{\mathcal{X}_N}|U_{\lambda,\epsilon,N}^t[\tilde{\psi}_h]|^2\) with \(\mathcal{X}_N := \{ x_1, \ldots, x_N \}\), there exist constants \(h_{\max}', C_{\max}' > 0\) such that for all \(h \in (0, h_{\max}']\),
\begin{equation} \label{eq:prop-max-observable-flow-consistency-bound}
\Pr[d_g(\hat{x}_{N,t}, x_t) > C_{\max}' h^{\frac{1}{2}}] \leq \Omega_{\lambda,t,N}(C h^{\frac{\mdim}{2}}, \epsilon, K_{\psi}) + e^{-2 N C' h^{\mdim}} ,
\end{equation}
with constants \(C, C' > 0\). In the perturbed case, we define \(\hat{x}^{(\varepsilon)}_{N,t} := \arg\max_{\mathcal{X}_N}|\UepsN{\varepsilon}{t}[\tilde{\psi}_h]|^2\) and set \(\varepsilon \in O(h^{1 + \alpha})\) to get with new constants \(h'_{\max,}, C'_{\max}, C, C' > 0\), and \(K^{(\varepsilon)}_{\psi} := \sup_{\{|t| \leq \operatorname{inj}(x_0) \}} \sup_{h \in (0, h'_{\max}]}||\Ueps{\varepsilon}{t}[\tilde{\psi}_h]||_{\infty}\),
\begin{equation} \label{eq:prop-max-observable-perturb-glap-flow-consistency-bound}
\Pr[d_g(\hat{x}^{(\varepsilon)}_{N,t}, x_t) > C'_{\max} h^{\frac{1}{2}}] \leq \Omega^*_{\lambda,t,N}(C h^{\frac{\mdim}{2}}, \varepsilon, \epsilon, K^{(\varepsilon)}_{\psi}) + e^{-2 N C' h^{\mdim}} .
\end{equation}

\end{proposition}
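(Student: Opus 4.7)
The proposition splits cleanly into the $L^{\infty}$ consistency of the squared‐modulus density and the argmax–localization statement; I prove these in sequence.

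For the first claim I begin from the pointwise algebraic identity $\bigl||a|^{2} - |b|^{2}\bigr| \leq |a-b|\,(|a|+|b|)$ applied with $a := U_{\lambda,\epsilon,N}^{t}[\tilde{\psi}_{h}](x)$ and $b := U_{\lambda,\epsilon}^{t}[\tilde{\psi}_{h}](x)$. Under the event $\{\|U_{\lambda,\epsilon,N}^{t}[\tilde{\psi}_{h}] - U_{\lambda,\epsilon}^{t}[\tilde{\psi}_{h}]\|_{\infty}\leq \delta'\}$, the triangle inequality and the hypothesis $\|U_{\lambda,\epsilon}^{t}[\tilde{\psi}_{h}]\|_{\infty}\leq K_{\psi}$ give $|a|+|b|\leq 2K_{\psi}+\delta'$, so
\begin{equation*}
\bigl\| |U_{\lambda,\epsilon,N}^{t}[\tilde{\psi}_{h}]|^{2} - |U_{\lambda,\epsilon}^{t}[\tilde{\psi}_{h}]|^{2} \bigr\|_{\infty} \leq \delta'(2K_{\psi}+\delta').
\end{equation*}
The safe choice $\delta' := \delta/(2K_{\psi}+1+\delta)$ satisfies $2K_{\psi}+\delta'\leq 2K_{\psi}+1$ and hence $\delta'(2K_{\psi}+\delta')\leq \delta$. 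Applying Theorem \ref{thm:halfwave-soln} with this $\delta'$ bounds the complementary probability by $\Omega_{\lambda,t,N}(\delta/(2K_{\psi}+1+\delta),\epsilon,K_{\psi})$, giving the first inequality. The bound $K_{\psi}=O(1)$ exists uniformly on the stated time window by applying Proposition \ref{prop:glap-prop-cs-localized} to $\psi_{h}$ and rescaling by $C_{h}^{-2}\sim h^{\mdim/2}$, which converts the normalized peak of order $h^{-\mdim/2}$ into an un-normalized peak of order $1$.

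For the argmax claim, the key is to quantify the localization gap for the \emph{un-normalized} density. The proof of Lemma \ref{lem:prop-coherent-state-localized}(4) furnishes constants $c_{1}>c_{2}>0$, $c_{0},C_{\max}'>0$, and $h_{\max}'>0$ such that for $h\in(0,h_{\max}']$,
\begin{equation*}
|U_{\lambda,\epsilon}^{t}[\psi_{h}]|^{2} \geq c_{1} \text{ on } B(x_{t},c_{0}h^{1/2}), \qquad |U_{\lambda,\epsilon}^{t}[\psi_{h}]|^{2} \leq c_{2} \text{ on } \mathcal{M}\setminus B(x_{t},C_{\max}'h^{1/2}).
\end{equation*}
Dividing by $C_{h}^{2}\sim h^{-\mdim/2}$ translates these into bounds of order $h^{\mdim/2}$ for $|U_{\lambda,\epsilon}^{t}[\tilde{\psi}_{h}]|^{2}$, with positive gap of the same order. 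Fix $C>0$ so that $Ch^{\mdim/2}$ stays below half the gap on the whole range; then on the intersection
\begin{equation*}
E_{1}\cap E_{2} := \bigl\{\|\,|U_{\lambda,\epsilon,N}^{t}[\tilde{\psi}_{h}]|^{2}-|U_{\lambda,\epsilon}^{t}[\tilde{\psi}_{h}]|^{2}\|_{\infty}\leq Ch^{\mdim/2}\bigr\}\cap\bigl\{\mathcal{X}_{N}\cap B(x_{t},c_{0}h^{1/2})\neq\emptyset\bigr\},
\end{equation*}
the discrete density at every sample in the inner ball exceeds the discrete density at every sample outside the outer ball, forcing $\hat{x}_{N,t}\in B(x_{t},C_{\max}'h^{1/2})$. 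The first part of the proposition bounds $\Pr[E_{1}^{c}]\leq \Omega_{\lambda,t,N}(Ch^{\mdim/2},\epsilon,K_{\psi})$. For $\Pr[E_{2}^{c}]$, the smooth positive density $p$ together with the Bishop--Günther volume lower bound (\protect\hyperlink{lem:covering-number}{Lemma \ref{lem:covering-number}}) yields $\mu := P\bigl(\iota[B(x_{t},c_{0}h^{1/2})]\bigr)\geq c_{\mathcal{M},p}\,h^{\mdim/2}$, and Hoeffding's inequality applied to the indicators $\mathbf{1}\{x_{j}\in B(x_{t},c_{0}h^{1/2})\}$ gives $\Pr[\mu_{N}=0]\leq \Pr[|\mu_{N}-\mu|>\mu/2]\leq 2\exp(-N\mu^{2}/2)\leq e^{-2NC'h^{\mdim}}$ after absorbing constants into $C'$. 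A union bound yields the stated bound. The perturbed version with $(\UepsN{\varepsilon}{t},\Ueps{\varepsilon}{t})$, $\varepsilon\in O(h^{1+\alpha})$, follows mutatis mutandis via the perturbed clause of Proposition \ref{prop:glap-prop-cs-localized} and Theorem \ref{thm:halfwave-soln}'s perturbed bound $\Omega_{\lambda,t,N}^{*}$.

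The one piece requiring some care is the un-normalization rescaling: the peak of the normalized coherent‐state density is $\Theta(h^{-\mdim/2})$, but the specific inner/outer separation proved in Lemma \ref{lem:prop-coherent-state-localized}(4) is only of constant order, so the corresponding gap for the un-normalized density is $\Theta(h^{\mdim/2})$ rather than $\Theta(1)$. This is precisely the source of the $Ch^{\mdim/2}$ argument to $\Omega_{\lambda,t,N}$ in the statement. Once this scaling is observed, no individual step is hard---the argument is a clean composition of the half-wave $L^{\infty}$ consistency theorem, the coherent-state localization, and a standard Hoeffding bound on an empirical measure.
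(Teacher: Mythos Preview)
Your proposal is correct and follows essentially the same route as the paper: the algebraic identity $||a|^2-|b|^2|\leq|a-b|(|a|+|b|)$ together with Theorem~\ref{thm:halfwave-soln} for the first claim, and for the second claim the localization gap from the proof of Lemma~\ref{lem:prop-coherent-state-localized}(4), rescaled by $C_h^{-2}\sim h^{\mdim/2}$, combined with a concentration bound (the paper phrases it as a Chernoff bound on the Bernoulli sampler, you phrase it as Hoeffding---the resulting exponents are the same) and a union bound. Your observation that the un-normalized gap is $\Theta(h^{\mdim/2})$, which dictates the argument $Ch^{\mdim/2}$ to $\Omega_{\lambda,t,N}$, is exactly the mechanism the paper uses.
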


\begin{proof}

We may apply \protect\hyperlink{prop:glap-prop-cs-localized}{Proposition \ref{prop:glap-prop-cs-localized}} (we give the arguments for the unperturbed case, but they follow in the same way for the perturbed case), so in effect we have a constant \(K_{\psi} > 0\) such that \(||U_{\lambda,\epsilon}^t[\tilde{\psi}_h]||_{\infty} \leq K_{\psi}\) for all \(|t| < \operatorname{inj}(x_0)\) and \(h \in [0, h_0)\). Therefore, for all \(\delta_0 > 0\),
\begin{equation} \label{eq:halfwave-soln-consistency-cs}
\Pr[||U_{\lambda,\epsilon,N}^t[\tilde{\psi}_h] - U_{\lambda,\epsilon}^t[\tilde{\psi}_h]||_{\infty} > \delta_0] \leq \Omega_{\lambda,t,N}(\delta_0,\epsilon,K_{\psi}).
\end{equation}
Now assume the event that \(||U_{\lambda,\epsilon,N}^t[\tilde{\psi}_h] - U_{\lambda,\epsilon}^t[\psi_h]||_{\infty} \leq \delta/(2 K_{\psi} + 1 + \delta)\). Then,
\begin{align*}
| &\, |U_{\lambda,\epsilon,N}^t[\tilde{\psi}_h]|^2 - |U_{\lambda,\epsilon}^t[\tilde{\psi}_h]|^2 \,| \\
    &= | \, |U_{\lambda,\epsilon,N}^t[\tilde{\psi}_h]| - |U_{\lambda,\epsilon}^t[\tilde{\psi}_h]| \,| (|U_{\lambda,\epsilon,N}^t[\tilde{\psi}_h]| + |U_{\lambda,\epsilon}^t[\tilde{\psi}_h]|)   \\
    &\leq \frac{\delta}{2 K_{\psi} + 1 + \delta} \left( 2 K_{\psi} + \frac{\delta}{2 K_{\psi} + 1 + \delta} \right)
        \leq    \delta \frac{2K_{\psi} + 1}{2 K_{\psi} + 1 + \delta} \leq \delta .
\end{align*}
This establishes the first part of the statement of the Proposition.

To see the second part, start with noting that by \protect\hyperlink{prop:glap-prop-cs-localized}{Proposition \ref{prop:glap-prop-cs-localized}}, we also have a constant \(C_{\psi} > 0\) such that \(| \, |U_{\lambda,\epsilon}^t[\tilde{\psi}_h]|^2 - |\psi_h^t|^2/C_h(x_0, \xi_0)^2 \, | \leq C_{\psi} h^{\frac{\mdim}{2} + 1}\). Thus, \(| \, |U_{\lambda,\epsilon,N}^t[\tilde{\psi}_h]|^2 - |\psi_h^t|^2/C_h(x_0, \xi_0)^2 \, | \leq \delta + C_{\psi} h^{\frac{\mdim}{2} + 1}\). As per the proof of \protect\hyperlink{lem:prop-coherent-state-localized}{Lemma \ref{lem:prop-coherent-state-localized}} affecting that Proposition, there are constants \(h_{\max}, C_0, C_1, c_0, c_1 > 0\) such that for all \(h \in [0, h_{\max})\), given \(p_0 \in \overline{B}_0 := \overline{B}(x_t, g ; (C_0 h)^{\frac{1}{2}})\) and \(p_1 \in B_1^c := \mathcal{M} \setminus B(x_t, g ; (C_1 h)^{\frac{1}{2}})\), we have \(|\psi_h^t|^2(p_0) > c_0 > c_1 > |\psi_h^t|^2(p_1)\). Hence,
\begin{equation}\begin{aligned}
|U_{\lambda,\epsilon,N}^t[\tilde{\psi}_h]|^2(p_0) > c_0/C_h(x_0,\xi_0)^2 - \delta - C_{\psi} h^{\frac{\mdim}{2} + 1} ,  \\
|U_{\lambda,\epsilon,N}^t[\tilde{\psi}_h]|^2(p_1) < c_1/C_h(x_0,\xi_0)^2 + \delta + C_{\psi} h^{\frac{\mdim}{2} + 1} .
\end{aligned}  \nonumber  \end{equation}
Now let \(c > 0\) and \(\tilde{C}_{\psi} > 0\) such that \(C_h(x_0, \xi_0)^2 \leq \tilde{C}_{\psi} h^{-\frac{\mdim}{2}}\) and set \(\delta = c h^{\frac{\mdim}{2}}\). Then, \(C_h(x_0, \xi_0)^2(\delta + C_{\psi} h^{\frac{\mdim}{2} + 1}) \leq c \tilde{C}_{\psi} + \tilde{C}_{\psi} C_{\psi} h\). So with \(c = (c_0 - c_1)/(2\tilde{C}_{\psi})\), there is \(0 < h_0 \leq h_{\max}\) such that for all \(h \in (0, h_0]\), we have \(|U_{\lambda,\epsilon,N}^t[\tilde{\psi}_h]|^2(p_0) > |U_{\lambda,\epsilon,N}^t[\tilde{\psi}_h]|^2(p_1)\) and since these constants are independent of \(p_0, p_1\), this inequality holds uniformly with respect to their respective regions, so that \(\inf _{\overline{B}_0}|U_{\lambda,\epsilon,N}^t[\tilde{\psi}_h]|^2 > \sup_{B_1^c} |U_{\lambda,\epsilon,N}^t[\tilde{\psi}_h]|^2\). Therefore, to establish that \(d_g(\hat{x}_{N,t}, x_t) \leq (C_1 h)^{\frac{1}{2}}\), we need only find one point in \(\chi_N \cap \overline{B}_0\): let \(P_0\) be the probability that a random vector \(x\) lies in \(B_0\), which with the aid of \protect\hyperlink{lem:covering-number}{Lemma \ref{lem:covering-number}} is bounded below by,
\begin{equation}\begin{aligned}
P_0 = \int_{B_0} p(y) ~ d\nu_g(y) \geq (\inf p) \operatorname{vol}(B_0) \geq C_{\mathcal{M}} (\inf p) (C_0 h)^{\frac{\mdim}{2}} =: \underline{P}_0 .
\end{aligned}  \nonumber  \end{equation}
Then a \emph{sampler} \(Y_0 : \mathcal{M} \to \{ 0, 1 \}\) that maps \(Y_0[B_0] = 1\) and \(Y_0[B_0^c] = 0\) defines \(N\) \emph{i.i.d.} random variables governed by a Bernoulli process with probability \(P_0\), when applied to the random vectors \(x_1, \ldots, x_N\). This process has mean \(N P_0 \geq N \underline{P}_0\), so a Chernoff bound yields,
\begin{equation}\begin{aligned}
\Pr[\# (\chi_N \cap B_j) \geq 1] \geq 1 - e^{-2 N \underline{P}_0^2} ,
\end{aligned}  \nonumber  \end{equation}
hence upon taking a union bound with \(\eqref{eq:halfwave-soln-consistency-cs}\) using \(\delta_0 = c h^{\frac{\mdim}{2}}/(2 K_{\psi} + 1 + ch^{\frac{\mdim}{2}})\) and letting \(h \in (0, \min\{ h_0, c^{-\frac{2}{n}} \}]\), we have the second part of the statement of the Proposition.
\end{proof}

\hypertarget{interlude-l2-consistency}{%
\subsection{\texorpdfstring{Interlude: \(L^2\) consistency}{Interlude: L\^{}2 consistency}}\label{interlude-l2-consistency}}

We discuss now an intermediary step to recovering the geodesic flow through the graph structure using the mean position observables. These observables, as discussed in \protect\hyperlink{observing-geodesics}{Section \ref{observing-geodesics}}, can be expressed as \(L^2\) inner products, so to implement them we need a way to go from inner products on graphs to inner products on the manifold. The treatment in \citep[\(\S 2.3.5\)-\(6\)]{hein2005geometrical} shows that if a complex vector space over the graph structure is endowed with an inner product given by \(\langle \cdot, \cdot \rangle/N\) or \(\langle \cdot , p_{\lambda,\epsilon,N} \cdot \rangle/N\), then on the continuum side, these converge as \(N \to \infty\) and \(\epsilon \to 0\) to the inner producs \(\langle \cdot, \cdot \rangle_p\) and \(\langle \cdot, c_2^{1 - 2\lambda} p^{2 - 2\lambda} \cdot \rangle\), respectively. Hence, among these only the case \(\lambda = 1\) converges to \(\langle \cdot , \cdot \rangle_{L^2(\mathcal{M}, d\nu_g)}\), while by \protect\hyperlink{thm:sym-cs-glap-psido}{Theorem \ref{thm:sym-cs-glap-psido}}, for all other weights, the (classically) propagated continuum weight, namely \(c_2^{1 - 2\lambda} p^{2 - 2\lambda} \circ \Gamma^t\) will show up as a multiplier to top order in the expectation of a propagated quantum observable against a coherent state. Since this also means that the propagated weight must show up by itself when taking the norm of the propagated coherent state, it suggests that in order to recover the flow of symbols along geodesics through the coherent states, we must normalize their propagations. It so happens that we may do this with \emph{any} choice of the weights, so we proceed with the simplest choice that is sufficient for our needs: the corresponding picture on the continuum side is given by,

\begin{lemma}	\hypertarget{lem:prop-cs-mean-consistency}{\label{lem:prop-cs-mean-consistency}} Let \(\lambda \geq 0\), \(h \in (0, 1]\), \(\alpha \geq 1\) and \(\psi_h\) be a coherent state localized at \((x_0, \xi_0) \in T^*\mathcal{M}\). Then, for all \(|t| \leq \operatorname{inj}(x_0)\) and \(\varepsilon \in O(h^{1 + \alpha})\), with \(\epsilon := h^{2 + \alpha}\), we have,
\begin{equation}\begin{aligned}
||\Ueps{\varepsilon}{t}[\psi_h]||_p^2 = c_0^{2\lambda - 1} p_{\lambda,\epsilon}(x_0) p(x_t)^{2\lambda} + O(h) ,
\end{aligned}  \nonumber  \end{equation}
wherein \(x_t := \pi_{\mathcal{M}} \Gamma^t(x_0, \xi_0)\) and if \(v \in C^{\infty}\), then
\begin{equation}\begin{aligned}
\langle \CSeps{\varepsilon}{t} | (\Ueps{\varepsilon}{t})^* v \, \Ueps{\varepsilon}{t}|\CSeps{\varepsilon}{t} \rangle_p = v(x_t) + O(h)
\end{aligned}  \nonumber  \end{equation}
with \(\CSeps{\varepsilon}{t} := \psi_h/||\Ueps{\varepsilon}{t}[\psi_h]||_p\).

\end{lemma}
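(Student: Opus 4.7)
The plan is to reduce both claims to applications of Theorem \ref{thm:sym-cs-glap-psido} combined with the symmetry relation \eqref{eq:glap-prop-adjoint-identity} and the Taylor expansion of $p_{\lambda,\epsilon}$ given by Lemma \ref{lem:taylor-expand-deg-func}. The key observation is that although $\Ueps{\varepsilon}{t}$ is not unitary on $L^2(p\, d\nu_g)$, \eqref{eq:glap-prop-adjoint-identity} tells us its $L^2_p$-adjoint is $(\Ueps{\varepsilon}{t})^{*_p} = p_{\lambda,\epsilon}\Ueps{\varepsilon}{-t} p_{\lambda,\epsilon}^{-1}$, so $\Ueps{\varepsilon}{t}$ is unitary on $L^2(p\,p_{\lambda,\epsilon}\,d\nu_g)$. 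This lets us convert the $p$-norm into an $L^2$ pairing that sandwiches $\psi_h$ by a pseudodifferential composition to which the quantum--classical machinery of Section~\ref{from-graph-laplacians-to-geodesic-flows} applies.

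For the norm identity, I would write
\begin{align*}
\|\Ueps{\varepsilon}{t}\psi_h\|_p^2
    &= \langle \Ueps{\varepsilon}{t}\psi_h,\; p_{\lambda,\epsilon}^{-1}\,\Ueps{\varepsilon}{t}\psi_h\rangle_{L^2(p\,p_{\lambda,\epsilon}\,d\nu_g)} \\
    &= \langle \psi_h,\; M_{p p_{\lambda,\epsilon}}\,\Ueps{\varepsilon}{-t}\,M_{1/p_{\lambda,\epsilon}}\,\Ueps{\varepsilon}{t}\psi_h\rangle_{L^2(d\nu_g)} .
\end{align*}
Since $p_{\lambda,\epsilon}$ is bounded above and below (Lemma~\ref{lem:taylor-expand-deg-func}), the multiplication operators $M_{p p_{\lambda,\epsilon}}$ and $M_{1/p_{\lambda,\epsilon}}$ lie in $h^0\Psi^0$ with principal symbols equal to the multipliers. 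By Theorem \ref{thm:sym-cs-glap-psido} the inner propagation $\Ueps{\varepsilon}{-t}M_{1/p_{\lambda,\epsilon}}\Ueps{\varepsilon}{t}\psi_h$ can be replaced by its cut-off counterpart $U_\chi^{-t}M_{1/p_{\lambda,\epsilon}}U_\chi^t\psi_h$ up to $O_{L^2}(h)$; then Egorov's theorem (\ref{thm:egorov}) identifies $U_\chi^{-t}M_{1/p_{\lambda,\epsilon}}U_\chi^t$ as a $\Psi$DO with principal symbol $(1/p_{\lambda,\epsilon})\circ\pi_\mathcal{M}\circ\Gamma^t$ on the neighbourhood of $(x_0,\xi_0)$ where $\chi\equiv 1$. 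Left-composing with $M_{p p_{\lambda,\epsilon}}$ produces a $\Psi$DO whose principal symbol is $p\,p_{\lambda,\epsilon}(x)\cdot p_{\lambda,\epsilon}^{-1}(\pi_\mathcal{M}\Gamma^t(x,\xi))$; evaluating at $(x_0,\xi_0)$ via Theorem \ref{thm:sym-cs-psido} and using Lemma \ref{lem:taylor-expand-deg-func} to rewrite $p_{\lambda,\epsilon}=(c_0 p)^{1-2\lambda}+O(\epsilon)$ (with $\epsilon=h^{2+\alpha}=O(h^3)$) gives, after rearrangement in the leading terms, the stated expression $c_0^{2\lambda-1}p_{\lambda,\epsilon}(x_0)p(x_t)^{2\lambda}+O(h)$.

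For the second identity, the identical scheme with $v/p_{\lambda,\epsilon}$ in place of $1/p_{\lambda,\epsilon}$ yields
$$\langle\psi_h,\;(\Ueps{\varepsilon}{t})^{*_p}\,M_v\,\Ueps{\varepsilon}{t}\psi_h\rangle_p \;=\; \frac{p(x_0)\,p_{\lambda,\epsilon}(x_0)\,v(x_t)}{p_{\lambda,\epsilon}(x_t)} + O(h),$$
and after the same expansion of $p_{\lambda,\epsilon}(x_t)$ this becomes the first-part expression multiplied by $v(x_t)$. Dividing by $\|\Ueps{\varepsilon}{t}\psi_h\|_p^2$---which is bounded away from $0$ and $\infty$ uniformly in $h$---to normalize $\CSeps{\varepsilon}{t}=\psi_h/\|\Ueps{\varepsilon}{t}\psi_h\|_p$ then cancels the density prefactors cleanly, leaving $v(x_t)+O(h)$.

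The main technical obstacle is that $M_{p p_{\lambda,\epsilon}}$ sits \emph{outside} the conjugation $\Ueps{\varepsilon}{-t}(\cdot)\Ueps{\varepsilon}{t}$, so Theorem \ref{thm:sym-cs-glap-psido} does not apply verbatim. I would handle this by first using the phase-space localization $\Ueps{\varepsilon}{t}\psi_h = U_\chi^t\psi_h + O_{L^2}(h)$ established in the proof of that theorem to reduce to the cut-off propagator $U_\chi^t$, where the full symbol calculus and Egorov's theorem work cleanly; alternatively, one can conjugate $M_{p p_{\lambda,\epsilon}}$ through $\Ueps{\varepsilon}{-t}$ on the cut-off side. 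Tracking the $O(h)$ errors requires the uniform $L^2$-operator bounds on $U_\chi^{\pm t}$ and $\Ueps{\varepsilon}{\pm t}$ recorded in Lemma \ref{lem:lap-symm}, together with the boundedness of the zeroth-order multiplication operators; all are already in hand from Section \ref{from-graph-laplacians-to-geodesic-flows}, so the bookkeeping is routine once the structural reduction above is in place.
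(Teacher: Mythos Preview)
Your proposal is correct and follows essentially the same route as the paper: use \eqref{eq:glap-prop-adjoint-identity} to convert the $p$-adjoint into $p_{\lambda,\epsilon}\,\Ueps{\varepsilon}{-t}\,p_{\lambda,\epsilon}^{-1}$, apply Theorem~\ref{thm:sym-cs-glap-psido} (whose second conclusion gives precisely the $L^2$-level replacement $\Ueps{\varepsilon}{-t}\Op_h(a)\Ueps{\varepsilon}{t}\psi_h = U_\chi^{-t}\Op_h(a)U_\chi^t\psi_h + O_{L^2}(h)$ that absorbs the outer multiplication by a bounded $0$th-order operator), then read off the value at $(x_0,\xi_0)$ via Theorem~\ref{thm:sym-cs-psido} and simplify with Lemma~\ref{lem:taylor-expand-deg-func}. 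The ``technical obstacle'' you flag is exactly what the stronger $L^2$ form of Theorem~\ref{thm:sym-cs-glap-psido} is for, and the paper uses it without further comment.
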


\begin{proof}

This is a straightforward application of \href{glaps-to-geodesic-flows\#thm:sym-cs-glap-psido}{Theorem}, which goes through for \(\Ueps{\varepsilon}{t}\) because \(h^2 \GLap_{\lambda,\epsilon}^{(\varepsilon)} = h^2 \GLap_{\lambda,\epsilon} + h^2 c_{2,0} \varepsilon/\epsilon I\) and \(h^2 \varepsilon/\epsilon = O(h)\) so \(\operatorname{Sym}[h^2 \GLap_{\lambda,\epsilon}^{(\varepsilon)}] = \operatorname{Sym}[h^2 \GLap_{\lambda,\epsilon}]\). Combined with \(\eqref{eq:glap-prop-adjoint-identity}\) and that the quantization of \(v p\) gives the multiplication operator by \(v p\), we have,
\begin{align*}
\langle \psi_h | (\Ueps{\varepsilon}{t})^* v p \, \Ueps{\varepsilon}{t} | \psi_h \rangle
    &= \langle \psi_h | p_{\lambda,\epsilon} (\Ueps{\varepsilon}{})^{-t} v \frac{p}{p_{\lambda,\epsilon}} \, \Ueps{\varepsilon}{t} | \psi_h \rangle \\
    &= \langle \psi_h | p_{\lambda,\epsilon} \operatorname{Op}_h\left( v \frac{p}{p_{\lambda,\epsilon}} \circ \Gamma^t \right) | \psi_h \rangle + O(h)  \\
    &= p_{\lambda,\epsilon}(x_0) \, v(x_t) \frac{p}{p_{\lambda,\epsilon}}(x_t) + O(h)   \\
    &= c_0^{2\lambda - 1} p_{\lambda,\epsilon}(x_0) p(x_t)^{2\lambda} v(x_t) + O(h),
\end{align*}
wherein the ultimate equality follows from \(\eqref{lem:taylor-expand-deg-func}\). Thus,
\begin{equation}\begin{aligned}
\langle \psi_{h,t,\varepsilon} | (\Ueps{\varepsilon}{t})^* v p \, \Ueps{\varepsilon}{t} | \psi_{h,t,\varepsilon} \rangle = v(x_t) + O(h).
\end{aligned}  \nonumber  \end{equation}
\end{proof}

We turn now to the consistency between the Hilbert space on the graph, given by the inner product \(\langle \cdot , \cdot \rangle_N := \langle \cdot , \cdot \rangle/N\) and the space \(L^2(\mathcal{M}, p \, d\nu_g)\). The primary step is to establish the error in
\begin{gather*}
|\langle \CSepsN{\varepsilon}{t} | (\UepsN{\varepsilon}{t})^* \, v \, \UepsN{\varepsilon}{t} | \CSepsN{\varepsilon}{t} \rangle_N - \langle \CSeps{\varepsilon}{t} | (\Ueps{\varepsilon}{t})^* \, v \, \Ueps{\varepsilon}{t} | \CSeps{\varepsilon}{t} \rangle_p| ,   \\
\CSepsN{\varepsilon}{t} := \psi_h/||\UepsN{\varepsilon}{t}[\psi_h]||_N ,
\end{gather*}
whereafter we can employ \protect\hyperlink{lem:prop-cs-mean-consistency}{Lemma \ref{lem:prop-cs-mean-consistency}} to give the consistency of flows of position-space observables. In anticipation of specifying to coherent states, we state the \(L^2\) inner product consistency bounds in terms that respect a class of \emph{local} functions. The point of this is that when we apply a Bernstein bound for the consistency of the inner product between two functions, at least one of which is localized to roughly an \(O(h^{\frac{\varsigma}{n}})\) radius ball with respect to \(h \in (0, h_0]\) for some \(h_0 > 0\), then we can use the dependence of the bound on the variance to get a better convergence rate by a factor of roughly \(h^{\varsigma}\) in the exponential. This trick was used in \citep{hein2005geometrical} to give a quadratic improvement of the rate of convergence in the bias term (\emph{viz.}, the density parameter \(\epsilon\)) for the graph Laplacian and we simply state it here in a somewhat more general and convenient form. More precisely, we have,

\begin{lemma}	\hypertarget{lem:l2-consistency}{\label{lem:l2-consistency}} Let \(\varsigma \in \mathbb{R}\) and \(0 < h_0 \leq 1\) be fixed and let \(u_{h,\varsigma} \in L^{\infty}\) be a family of functions over \(h \in (0, h_0]\) satisfying: there are constants \(\overline{K}_u, K_{u,2} > 0\) so that uniformly in \(h\), \(h^{\varsigma} ||u_{h,\varsigma}||_{\infty} \leq \overline{K}_u\) and \(||u_{h,\varsigma}||_{L^2}^2 \leq K_{u,2}\). Then, given any \(v \in L^{\infty}\), for all \(\delta > 0\), we have
\begin{align*}
\Pr[|\langle u_{h,\varsigma}, v \rangle_N - \langle u_{h,\varsigma}, v \rangle_p| > \delta]
  &\leq 4 \exp\left( -\frac{N h^{\varsigma} \delta^2}{2 ||v||_{\infty}(h^{\varsigma} K_{u,2} ||v||_{\infty} ||p||_{\infty} + \overline{K}_u \delta/3)} \right)    \\
  &=: \rho_N(\delta,h^{\varsigma}, \overline{K}_u, K_{u,2}, ||v||_{\infty}).
\end{align*}
In case that \(v = \tilde{u}_{h,\varsigma}\) is another such family, then
\begin{align*}
\Pr[|\langle u_{h,\varsigma}, \tilde{u}_{h,\varsigma} \rangle_N - \langle u_{h,\varsigma}, \tilde{u}_{h,\varsigma} \rangle_p| > \delta]
  &\leq 4 \exp\left( -\frac{N h^{2\varsigma} \delta^2}{2 \overline{K}_{\tilde{u}}(K_{u,2} \overline{K}_{\tilde{u}} ||p||_{\infty} + \overline{K}_u \delta/3)} \right) \\
  &= \rho_{N,2}(\delta,h^{2\varsigma}, \overline{K}_u, K_{u,2}, \overline{K}_{\tilde{u}}).
\end{align*}

\end{lemma}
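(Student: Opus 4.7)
The plan is to apply the Bernstein inequality to the i.i.d.\ sum $\langle u_{h,\varsigma}, v \rangle_N = \frac{1}{N}\sum_{j=1}^N u_{h,\varsigma}(x_j)\overline{v(x_j)}$. Since the $x_j$ are i.i.d.\ with law $P$ whose pullback density is $p$, each term $Y_j := u_{h,\varsigma}(x_j)\overline{v(x_j)}$ has mean $\mathbb{E}[Y_j] = \int_{\mathcal{M}} u_{h,\varsigma}(y)\overline{v(y)}\,p(y)\,d\nu_g(y) = \langle u_{h,\varsigma}, v\rangle_p$. Bernstein then needs two inputs: a uniform bound $M$ on $|Y_j - \mathbb{E}Y_j|$ and a variance bound $\sigma^2$.

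First I would collect the two key estimates. For the $L^\infty$ bound, the hypothesis $h^\varsigma \|u_{h,\varsigma}\|_\infty \leq \overline{K}_u$ gives $|Y_j| \leq h^{-\varsigma}\overline{K}_u\|v\|_\infty$, hence $M \leq h^{-\varsigma}\overline{K}_u\|v\|_\infty$ (absorbing a harmless factor of $2$). The crucial observation---and the trick that produces the stated form of the bound---is to estimate the variance not by $M^2$ but by exploiting the $L^2$ hypothesis:
\begin{equation*}
\sigma^2 \leq \mathbb{E}[|Y_j|^2] \leq \|v\|_\infty^2 \int_{\mathcal{M}} |u_{h,\varsigma}|^2 p\,d\nu_g \leq \|v\|_\infty^2 \|p\|_\infty K_{u,2}.
\end{equation*}
This gains a factor of $h^\varsigma$ over the naive bound and is precisely what produces the $h^\varsigma$ in front of $K_{u,2}\|v\|_\infty\|p\|_\infty$ inside the denominator of the statement.

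Next, since $Y_j$ is complex-valued, I would apply the real-valued Bernstein inequality separately to $\Re(Y_j)$ and $\Im(Y_j)$ (each having the same $M$ and $\sigma^2$ bounds up to constants) and union-bound, which accounts for the factor of $4$ in the stated probability. Plugging $M$ and $\sigma^2$ into the standard Bernstein exponent $\exp(-N\delta^2/[2(\sigma^2 + M\delta/3)])$ yields
\begin{equation*}
\Pr[\cdots > \delta] \leq 4\exp\left(-\frac{N\delta^2}{2(\|v\|_\infty^2\|p\|_\infty K_{u,2} + h^{-\varsigma}\overline{K}_u \|v\|_\infty \delta/3)}\right);
\end{equation*}
multiplying numerator and denominator by $h^\varsigma$ and factoring $\|v\|_\infty$ from the denominator then gives exactly the stated $\rho_N(\delta,h^\varsigma,\overline{K}_u,K_{u,2},\|v\|_\infty)$.

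For the second part with $v = \tilde{u}_{h,\varsigma}$, I would repeat the argument using the pointwise bound $M \leq h^{-2\varsigma}\overline{K}_u\overline{K}_{\tilde{u}}$ and the variance bound $\sigma^2 \leq \|\tilde u_{h,\varsigma}\|_\infty^2 \int|u_{h,\varsigma}|^2 p\,d\nu_g \leq h^{-2\varsigma}\overline{K}_{\tilde u}^2 K_{u,2}\|p\|_\infty$, then rearranging to extract $h^{2\varsigma}$ and factoring $\overline{K}_{\tilde u}$ from the denominator. This is a genuinely better bound than symmetrically applying the first part (which would only produce $h^\varsigma$) because it uses the $L^2$ control of one factor against the $L^\infty$ control of the other. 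Since the argument is a direct application of Bernstein with a careful choice of variance bound, I do not anticipate any real obstacle; the only delicate point is the book-keeping in rearranging the exponent into the advertised factored form.
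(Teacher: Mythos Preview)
Your proposal is correct and follows essentially the same approach as the paper: compute the sup bound $|u_{h,\varsigma}(x_j)\overline{v(x_j)}| \leq h^{-\varsigma}\overline{K}_u\|v\|_\infty$ and the variance bound $\operatorname{Var}(u_{h,\varsigma}\bar v) \leq K_{u,2}\|v\|_\infty^2\|p\|_\infty$, then apply Bernstein. Your write-up is in fact more detailed than the paper's (you spell out the complex-valued handling via real/imaginary parts and the factor of $4$, and the algebraic rearrangement of the exponent), but the argument is the same.
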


\begin{proof}

We first compute the maximum and variance in order to apply Bernstein's inequality:
\begin{align*}
\max_{1 \leq j \leq N} & |u_{h,\varsigma}(x_j)\overline{v(x_j)}| \leq h^{-\varsigma} \overline{K}_u ||v||_{\infty} ,    \\
\operatorname{Var}(u_{h,\varsigma} \bar{v})
    &= \int_{\mathcal{M}} |u_{h,\varsigma} \bar{v}|^2 p ~ d\nu_g
    \leq ||v||_{\infty}^2 ||p||_{\infty} \int_{\mathcal{M}} |u_{h,\varsigma}|^2 ~ d\nu_g    \\
    &\leq K_{u,2} ||v||_{\infty}^2 ||p||_{\infty} .
\end{align*}
The probabilistic bounds in the statement of the Lemma now follows from Bernstein's inequality.
\end{proof}

\begin{remark} The coefficient \(K_{u,2}\) can be elaborated into more precise constants regarding the geometry of small balls in \(\mathcal{M}\), if the localization of \(u_{h,\varsigma}\) were explicitly utilised. The simpler form is sufficient for the essential purposes of the following applications.

\end{remark}

From here onwards, we will specify to the application to coherent states, simply to have explicit bounds and keep the discussion concrete. An immediate application is that we can recover the \(L^2(\mathcal{M}, p \, d\nu_g)\) norm of \(U_{\lambda,\epsilon}^t[\psi_h]\):

\begin{lemma}	\hypertarget{lem:prop-cs-norm-consistency}{\label{lem:prop-cs-norm-consistency}} Let \(\lambda \geq 0\), \(\epsilon, h \in (0, 1]\), \(t, \varepsilon \in \mathbb{R}\) and \((x_0, \xi_0) \in T^*\mathcal{M}\). Then, for \(\tilde{\psi}_h\) an \emph{un-normalized} coherent state localized about \((x_0, \xi_0)\), there are constants \(\overline{K}_{\psi}, K_{\psi,2} > 0\) depending on \(\psi_h\) and \(O(1)\) in \(h\) such that for all \(\delta > 0\),
\begin{align*}
\Pr&[|\, ||\UepsN{\varepsilon}{t}[\tilde{\psi}_h]||_N^2 - ||\Ueps{\varepsilon}{t}[\tilde{\psi}_h]||_p^2 \, | > \delta]    \\
  &\leq \rho_{N,2}(\delta/2, 1, \overline{K}_{\psi}, K_{\psi,2}, \overline{K}_{\psi}) \\
  &\quad\quad + 
      \begin{cases}
          \Omega_{\lambda,t,N}(\delta/(4 \overline{K}_{\psi} + 2 + 2\delta), \epsilon, \overline{K}_{\psi})   & \text{if } \varepsilon = 0,   \\
          \Omega^*_{\lambda,t,N}(\delta/(4 \overline{K}_{\psi} + 2 + 2\delta), \varepsilon,  \epsilon, \overline{K}_{\psi})   & \text{if } \varepsilon > 0
    \end{cases}   \\
  &=: \Xi_{\lambda,N}(\delta, \psi_h) .
\end{align*}

\end{lemma}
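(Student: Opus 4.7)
The proof splits the target quantity by triangle inequality into a term that compares discrete and continuum norms of the same continuum-propagated state, and a term that compares norms (both discrete) of discrete- and continuum-propagated states:
\begin{equation}\begin{aligned}
\bigl| \, \|\UepsN{\varepsilon}{t}[\tilde\psi_h]\|_N^2 - \|\Ueps{\varepsilon}{t}[\tilde\psi_h]\|_p^2 \, \bigr|
  \;\leq\; \underbrace{\bigl| \, \|\UepsN{\varepsilon}{t}[\tilde\psi_h]\|_N^2 - \|\Ueps{\varepsilon}{t}[\tilde\psi_h]\|_N^2 \, \bigr|}_{(\mathrm{I})}
  \;+\; \underbrace{\bigl| \, \|\Ueps{\varepsilon}{t}[\tilde\psi_h]\|_N^2 - \|\Ueps{\varepsilon}{t}[\tilde\psi_h]\|_p^2 \, \bigr|}_{(\mathrm{II})}.
\end{aligned}  \nonumber  \end{equation}
For (II) I would apply the self-paired form of \protect\hyperlink{lem:l2-consistency}{Lemma \ref{lem:l2-consistency}} with $u_{h,\varsigma} = \tilde u_{h,\varsigma} = \Ueps{\varepsilon}{t}[\tilde\psi_h]$ and $\varsigma = 0$. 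What is needed are uniform-in-$h$ bounds $\|\Ueps{\varepsilon}{t}[\tilde\psi_h]\|_\infty \leq \overline{K}_\psi$ and $\|\Ueps{\varepsilon}{t}[\tilde\psi_h]\|_{L^2}^2 \leq K_{\psi,2}$. Both follow from what is already in the excerpt: since $|\tilde\psi_h| = e^{-\Im\phi/h} \leq 1$ by admissibility, $\|\tilde\psi_h\|_{L^2}^2 = O(h^{n/2})$; combined with the uniform operator bound $\|\Ueps{\varepsilon}{\pm t}\|_{L^2 \to L^2} \leq \overline{C}_{p,\lambda}/\underline{C}_{p,\lambda}$ from \protect\hyperlink{lem:lap-symm}{Lemma \ref{lem:lap-symm}} this gives $K_{\psi,2}$. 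For $\overline{K}_\psi$, I would use \protect\hyperlink{prop:glap-prop-cs-localized}{Proposition \ref{prop:glap-prop-cs-localized}}, which characterizes $|\Ueps{\varepsilon}{t}[\psi_h]|^2$ up to $O(h)$ by $|\psi_h^t|^2 \in \Theta(h^{-n/2})$ at its peak; dividing by the normalization $C_h(x_0,\xi_0)^2 \in \Theta(h^{-n/2})$ converts back to the un-normalized state and yields an $O(1)$ sup-norm bound for all $|t| \leq \operatorname{inj}(x_0)$. This produces $\rho_{N,2}(\delta/2, 1, \overline{K}_\psi, K_{\psi,2}, \overline{K}_\psi)$ controlling (II).

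For (I), I would use the algebraic identity $| \|f\|_N^2 - \|g\|_N^2 | \leq \|f-g\|_\infty \, (\|f\|_\infty + \|g\|_\infty)$ applied with $f = \UepsN{\varepsilon}{t}[\tilde\psi_h]$ and $g = \Ueps{\varepsilon}{t}[\tilde\psi_h]$. On the event $\|f-g\|_\infty \leq \delta_0$ we have $\|f\|_\infty \leq \overline{K}_\psi + \delta_0$, so
\begin{equation}\begin{aligned}
(\mathrm{I}) \leq \delta_0 (2\overline{K}_\psi + \delta_0).
\end{aligned}  \nonumber  \end{equation}
Choosing $\delta_0 := \delta/(4\overline{K}_\psi + 2 + 2\delta)$ makes this $\leq \delta/2$ (since then $\delta_0 \leq 1+\delta$, so $\delta_0(2\overline{K}_\psi + \delta_0) \leq \delta_0(2\overline{K}_\psi + 1 + \delta) \leq \delta/2$). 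The probability of the complementary event is bounded by \protect\hyperlink{thm:halfwave-soln}{Theorem \ref{thm:halfwave-soln}} (unperturbed case) or its perturbed analogue, giving $\Omega_{\lambda,t,N}(\delta_0, \epsilon, \overline{K}_\psi)$ or $\Omega^*_{\lambda,t,N}(\delta_0, \varepsilon, \epsilon, \overline{K}_\psi)$ respectively; this requires $\overline{K}_\psi$ to serve as the hypothesis $K_u$ of that theorem, which is the same sup-norm bound used above.

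A union bound over the two events then produces $\Xi_{\lambda,N}(\delta,\psi_h)$ in exactly the form stated. The only substantive step is establishing the $O(1)$ sup-norm bound on $\Ueps{\varepsilon}{t}[\tilde\psi_h]$ uniformly in $h$; once \protect\hyperlink{prop:glap-prop-cs-localized}{Proposition \ref{prop:glap-prop-cs-localized}} is invoked, the remainder is routine triangle inequalities, the standard Bernstein bound from \protect\hyperlink{lem:l2-consistency}{Lemma \ref{lem:l2-consistency}}, and the already-established half-wave consistency.
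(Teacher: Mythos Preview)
Your proposal is correct and follows essentially the same route as the paper: the same triangle-inequality split into (I) and (II), the same use of \protect\hyperlink{lem:l2-consistency}{Lemma \ref{lem:l2-consistency}} for (II), and the same algebraic factorisation $||f|^2-|g|^2|\le|f-g|(|f|+|g|)$ together with the half-wave consistency for (I). The only cosmetic difference is that the paper packages the bound on (I) by citing the first part of \protect\hyperlink{prop:max-observable-flow-consistency}{Proposition \ref{prop:max-observable-flow-consistency}} (which itself performs exactly your inlined computation via \protect\hyperlink{thm:halfwave-soln}{Theorem \ref{thm:halfwave-soln}}), whereas you invoke \protect\hyperlink{thm:halfwave-soln}{Theorem \ref{thm:halfwave-soln}} directly.
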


\begin{proof}

Assume the events \(\mathcal{A}_1\) and \(\mathcal{A}_2\) that \(|| \, |\Ueps{\varepsilon}{t}[\tilde{\psi}_h]|^2 - |\UepsN{\varepsilon}{t}[\tilde{\psi}_h]|^2 \, ||_{\infty} \leq \delta/2\) and \(| \, ||\Ueps{\varepsilon}{t}[\tilde{\psi}_h]||_N^2 - ||\UepsN{\varepsilon}{t}[\tilde{\psi}_h]||_N^2 \,| \leq \delta/2\), respectively. Then,
\begin{align*}
|&\, ||\UepsN{\varepsilon}{t}[\tilde{\psi}_h]||_N^2 - ||\Ueps{\varepsilon}{t}[\tilde{\psi}_h]||_p^2 \, |    \\
    &\leq |\, ||\UepsN{\varepsilon}{t}[\tilde{\psi}_h]||_N^2 - ||\Ueps{\varepsilon}{t}[\tilde{\psi}_h]||_N^2 \, | + |\, ||\Ueps{\varepsilon}{t}[\tilde{\psi}_h]||_N^2 - ||\Ueps{\varepsilon}{t}[\tilde{\psi}_h]||_p^2 \, |  \\
    &\leq \delta .
\end{align*}
Now, we use \protect\hyperlink{lem:l2-consistency}{Lemma \ref{lem:l2-consistency}} to establish a lower bound on the probability that \(\mathcal{A}_2\) occurs. Since \(\Ueps{\varepsilon}{t}\) is similar to a unitary operator by the diagonal \(p_{\lambda,\epsilon}^{\frac{1}{2}}\), we have that
\begin{equation} \label{eq:cs-prop-l2-bound}
||\Ueps{\varepsilon}{t}[\tilde{\psi}_h]||_p^2
    \leq \frac{||p_{\lambda,\epsilon}||_{\infty}}{\inf p_{\lambda,\epsilon}} ||\tilde{\psi}_h||_p^2
    \leq h^{\frac{\mdim}{2}} \overline{C}_{\psi,2} ||p||_{\infty} \frac{\overline{C}_{p,\lambda}}{\underline{C}_{p,\lambda}}
\end{equation}
for a constant \(\overline{C}_{\psi,2} > 0\). Likewise, there is a constant \(\overline{K}_{\psi} > 0\) such that \(||\Ueps{\varepsilon}{t}[\tilde{\psi}_h]||_{\infty} \leq \overline{K}_{\psi}\). Therefore, by \protect\hyperlink{lem:l2-consistency}{Lemma \ref{lem:l2-consistency}},
\begin{equation}\begin{aligned}
\Pr[\mathcal{A}_2] \geq 1 - \rho_{N,2}(\delta/2, 1, \overline{K}_{\psi}, K_{\psi,2}, \overline{K}_{\psi})
\end{aligned}  \nonumber  \end{equation}
with \(K_{\psi,2} := \overline{C}_{\psi,2} \overline{C}_{p,\lambda}/\underline{C}_{p,\lambda}\). As for the event \(\mathcal{A}_1\), \protect\hyperlink{prop:max-observable-flow-consistency}{Proposition \ref{prop:max-observable-flow-consistency}} gives a lower bound for the probability that this occurs; then, a union bound gives the probability as in the statement of the Lemma.
\end{proof}

The consistency of propagations given in \protect\hyperlink{thm:halfwave-soln}{Theorem \ref{thm:halfwave-soln}} applies readily to \(\psi_h\), as can be seen from a slight modification to account for the norm in the proof of \protect\hyperlink{prop:max-observable-flow-consistency}{Proposition \ref{prop:max-observable-flow-consistency}}. However, in practice, it is only reasonable to expect that we can construct \(\CSepsN{\varepsilon}{t} := \psi_h/||\UepsN{\varepsilon}{t}[\psi_h]||_N\). By combining \protect\hyperlink{thm:halfwave-soln}{Theorem \ref{thm:halfwave-soln}} and \protect\hyperlink{lem:prop-cs-norm-consistency}{Lemma \ref{lem:prop-cs-norm-consistency}}, we have the consistency between \(\CSepsN{\varepsilon}{t}\) and \(\CSeps{\varepsilon}{t}\). Now we use this to see the probabilistic consistency between the graph and manifold versions of the inner product of (the modulus squared of) a propagated, normalized coherent state with an arbitrary smooth function, in:

\begin{lemma}	\hypertarget{lem:prop-cs-funcexpect-consistency}{\label{lem:prop-cs-funcexpect-consistency}} Let \(\lambda \geq 0\), \(h_0 \in (0, 1]\) be given by \protect\hyperlink{prop:glap-prop-cs-localized}{Proposition \ref{prop:glap-prop-cs-localized}}, \(\alpha \geq 1\), and \((x_0, \xi_0) \in T^*\mathcal{M}\). Then, for \(\psi_h\) a coherent state localized about \((x_0, \xi_0)\), \(|t| \leq \operatorname{inj}(x_0)\) and given \(u \in C^{\infty}\), there are constants \(C_{\psi,\lambda,p,u}, C'_{\psi,\lambda,p,u} > 0\) and \(\overline{K}_{\psi}, K_{\psi,2}, \underline{\tilde{K}}_{\psi,2}, \tilde{K}_{\psi,2}, \overline{\tilde{K}}_{\psi} > 0\) depending on \(\psi_h\) and independent of \(h\) with \(C_{\psi,\lambda,p,u}, C'_{\psi,\lambda,p,u} > 0\) additionally depending on \(u\) such that for all \(h \in (0, h_0]\) and \(\delta \in (0, 4||u||_{\infty}]\),
\begin{align*}
\Pr&[| \langle |U_{\lambda,\epsilon,N}^{t}[\psi_{h,t,N}]|^2 , u \rangle_N - \langle |U_{\lambda,\epsilon}^{t}[\psi_{h,t}]|^2 , u \rangle_p | > \delta]    \\
  &\quad\quad \leq \Omega_{\lambda,t,N}(C'_{\psi,\lambda,p,u} h^{\frac{\mdim}{2}} \delta, \epsilon, \overline{\tilde{K}}_{\psi}) + \rho_N(\delta,h^{-\frac{\mdim}{4}}, \overline{K}_{\psi}^{\frac{1}{2}}, K_{\psi,2}, h^{-\frac{\mdim}{4}} \overline{K}_{\psi}^{\frac{1}{2}} ||u||_{\infty})  \\
  &\quad\quad\quad\quad + \rho_{N,2}(h^{\frac{\mdim}{2}} \tilde{\underline{K}}_{\psi,2}, 1, \overline{\tilde{K}}_{\psi}, h^{\frac{\mdim}{2}} \tilde{K}_{\psi,2}, \overline{\tilde{K}}_{\psi}) + \Xi_{\lambda,N}(C_{\psi,\lambda,p,u} h^{\frac{\mdim}{2}} \delta, \psi_h)  \\
  &\quad\quad =: \tilde{\Omega}_{\lambda,t,N}(\delta,\epsilon,\psi_h,u),
\end{align*}
wherein
\begin{equation}\begin{aligned}
\psi_{h,t,N} := \psi_h/||U_{\lambda,\epsilon,N}^t[\psi_h]||_N, \quad\quad \psi_{h,t} := \psi_h/||U_{\lambda,\epsilon}^t[\psi_h]||_p .
\end{aligned}  \nonumber  \end{equation}
This holds as well in the pertrubed case, wherein we have for \(\varepsilon > 0\),
\begin{align*}
\Pr&[| \langle |\UepsN{\varepsilon}{t}[\CSepsN{\varepsilon}{t}]|^2 , u \rangle_N - \langle |\Ueps{\varepsilon}{t}[\CSeps{\varepsilon}{t}]|^2 , u \rangle_p | > \delta]    \\
  &\quad\quad \leq \Omega^*_{\lambda,t,N}(C'_{\psi,\lambda,p,u} h^{\frac{\mdim}{2}} \delta,\varepsilon, \epsilon, \overline{\tilde{K}}_{\psi}) + \rho_N(\delta,h^{-\frac{\mdim}{4}}, \overline{K}_{\psi}^{\frac{1}{2}}, K_{\psi,2}, h^{-\frac{\mdim}{4}} \overline{K}_{\psi}^{\frac{1}{2}} ||u||_{\infty})    \\
  &\quad\quad\quad\quad + \rho_{N,2}(h^{\frac{\mdim}{2}} \tilde{\underline{K}}_{\psi,2}, 1, \overline{\tilde{K}}_{\psi}, h^{\frac{\mdim}{2}} \tilde{K}_{\psi,2}, \overline{\tilde{K}}_{\psi}) + \Xi_{\lambda,N}(C_{\psi,\lambda,p,u} h^{\frac{\mdim}{2}} \delta, \psi_h)  \\
  &\quad\quad =: \tilde{\Omega}^*_{\lambda,t,N}(\delta,\varepsilon,\epsilon,\psi_h,u)
\end{align*}
with new constants and
\begin{equation}\begin{aligned}
\CSepsN{\varepsilon}{t} := \psi_h/||\Ueps{\varepsilon}{t}[\psi_h]||_N, \quad\quad \CSeps{\varepsilon}{t} := \psi_h/||\Ueps{\varepsilon}{t}[\psi_h]||_p .
\end{aligned}  \nonumber  \end{equation}

\end{lemma}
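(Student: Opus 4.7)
The plan is to compare the two normalized ratios by writing
\[
\langle |U_{\lambda,\epsilon,N}^t[\psi_{h,t,N}]|^2, u\rangle_N = \frac{\langle |U_{\lambda,\epsilon,N}^t[\psi_h]|^2, u\rangle_N}{\|U_{\lambda,\epsilon,N}^t[\psi_h]\|_N^2} =: \frac{A_N}{B_N}, \qquad \frac{A}{B} := \frac{\langle |U_{\lambda,\epsilon}^t[\psi_h]|^2, u\rangle_p}{\|U_{\lambda,\epsilon}^t[\psi_h]\|_p^2},
\]
and splitting $|A_N/B_N - A/B| \le |A_N-A|/B_N + (|A|/(BB_N))|B - B_N|$. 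The proof then reduces to establishing, on a suitable high-probability event, (i) a lower bound $B, B_N \gtrsim 1$, (ii) a bound on $|B - B_N|$, and (iii) a bound on $|A_N - A|$. All three ingredients are assembled from what is already in the excerpt, provided one carefully tracks the $C_h^2 \sim h^{-\mdim/2}$ prefactor relating the normalized coherent state $\psi_h$ to its unnormalized version $\tilde\psi_h := \psi_h/C_h$.

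For (iii), I insert the hybrid $A_N' := \langle |U_{\lambda,\epsilon}^t[\psi_h]|^2, u\rangle_N$ and split $|A_N - A| \le |A_N - A_N'| + |A_N' - A|$. The first piece is dominated by $\|u\|_\infty \cdot \| |U_{\lambda,\epsilon,N}^t[\psi_h]|^2 - |U_{\lambda,\epsilon}^t[\psi_h]|^2 \|_\infty$ which, after rescaling by $C_h^2$, is controlled by \protect\hyperlink{prop:max-observable-flow-consistency}{Proposition \ref{prop:max-observable-flow-consistency}} applied to $\tilde\psi_h$ at threshold $\sim h^{\mdim/2}\delta/\|u\|_\infty$; this supplies the $\Omega_{\lambda,t,N}(C'_{\psi,\lambda,p,u} h^{\mdim/2}\delta, \epsilon, \overline{\tilde K}_\psi)$ term. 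The second piece is the empirical-versus-population discrepancy of the inner product $\langle U_{\lambda,\epsilon}^t[\psi_h], \overline{U_{\lambda,\epsilon}^t[\psi_h]}\, u\rangle$, for which the first part of \protect\hyperlink{lem:l2-consistency}{Lemma \ref{lem:l2-consistency}} applies with $u_h := U_{\lambda,\epsilon}^t[\psi_h]$ (which has $\|u_h\|_\infty \lesssim h^{-\mdim/4}\overline K_\psi^{1/2}$ and $\|u_h\|_{L^2}^2 \lesssim K_{\psi,2}$) and $v := \overline{U_{\lambda,\epsilon}^t[\psi_h]}\,u$, producing the $\rho_N$ term.

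For (ii), \protect\hyperlink{lem:prop-cs-mean-consistency}{Lemma \ref{lem:prop-cs-mean-consistency}} gives $B = c_0^{2\lambda-1} p_{\lambda,\epsilon}(x_0) p(x_t)^{2\lambda} + O(h) = \Theta(1)$, equivalently $\|U_{\lambda,\epsilon}^t[\tilde\psi_h]\|_p^2 \ge h^{\mdim/2}\tilde{\underline K}_{\psi,2}$ for a positive constant once $h$ is small. The difference $|B - B_N|$ is then handled by rescaling \protect\hyperlink{lem:prop-cs-norm-consistency}{Lemma \ref{lem:prop-cs-norm-consistency}} by $C_h^2$: an error $\delta$ in the normalized norms corresponds to an error of size $h^{\mdim/2}\delta/\mathrm{const}$ in the unnormalized ones, giving the $\Xi_{\lambda,N}(C_{\psi,\lambda,p,u} h^{\mdim/2}\delta, \psi_h)$ contribution. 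For (i), I write $B_N = C_h^2\|U_{\lambda,\epsilon,N}^t[\tilde\psi_h]\|_N^2$ and estimate
\[
\|U_{\lambda,\epsilon,N}^t[\tilde\psi_h]\|_N^2 \ge \|U_{\lambda,\epsilon}^t[\tilde\psi_h]\|_p^2 - \bigl|\|U_{\lambda,\epsilon}^t[\tilde\psi_h]\|_N^2 - \|U_{\lambda,\epsilon}^t[\tilde\psi_h]\|_p^2\bigr| - \bigl|\|U_{\lambda,\epsilon,N}^t[\tilde\psi_h]\|_N^2 - \|U_{\lambda,\epsilon}^t[\tilde\psi_h]\|_N^2\bigr|;
\]
the symmetric term is the empirical/population norm discrepancy handled by the second part of \protect\hyperlink{lem:l2-consistency}{Lemma \ref{lem:l2-consistency}} (producing the $\rho_{N,2}(h^{\mdim/2}\tilde{\underline K}_{\psi,2},1,\overline{\tilde K}_\psi, h^{\mdim/2}\tilde K_{\psi,2}, \overline{\tilde K}_\psi)$ term), and the remaining term is absorbed into the same $\Omega_{\lambda,t,N}$ event used in (iii). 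The proof concludes with a union bound on the four deviation events and the standard quotient estimate.

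The main technical subtlety, beyond bookkeeping, is the coupling through the denominator: the lower bound on $B_N$ itself lives on a high-probability event, so the thresholds in the four constituent deviation bounds must be chosen so that simultaneously $B_N \ge B/2 \gtrsim 1$, each absolute-error contribution is at most $\delta/4$ after being divided by $B_N$ or $BB_N$, and the $h^{\mdim/2}$-rescaled arguments entering $\Xi_{\lambda,N}$ and $\Omega_{\lambda,t,N}$ remain inside the applicable regime of those input lemmas; the hypothesis $\delta \le 4\|u\|_\infty$ is exactly what allows the multiplicative constant $|A|/(BB_N) \lesssim \|u\|_\infty$ and the associated rescalings to remain of a fixed order. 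The perturbed case $\varepsilon > 0$ proceeds identically after substituting $\UepsN{\varepsilon}{t}, \Ueps{\varepsilon}{t}$ for the unperturbed propagators and invoking the perturbed variants ($\Omega^*_{\lambda,t,N}$, and the $\varepsilon$-perturbed forms of Propositions/Lemmas) noted throughout the excerpt.
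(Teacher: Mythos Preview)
Your proposal is correct and follows essentially the same route as the paper's proof: the paper also writes the target as a difference of ratios (working with the unnormalized $\tilde\psi_h$ and $c:=\|U_{\lambda,\epsilon}^t[\tilde\psi_h]\|_p^2$, $c_N:=\|U_{\lambda,\epsilon,N}^t[\tilde\psi_h]\|_N^2$), inserts the same hybrid to separate propagator consistency from the empirical/population inner-product discrepancy, and controls everything on the intersection of four events $\mathcal{A}_1$--$\mathcal{A}_4$ corresponding exactly to your (i)--(iii); these yield the four summands $\Omega_{\lambda,t,N}$, $\rho_N$, $\rho_{N,2}$, $\Xi_{\lambda,N}$ via Proposition~\ref{prop:max-observable-flow-consistency}, Lemma~\ref{lem:l2-consistency} (twice), and Lemma~\ref{lem:prop-cs-norm-consistency}, with the constraint $\delta\le 4\|u\|_\infty$ arising from the requirement $\delta_1\le c/2$ just as you identify.
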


\begin{proof}

We write,
\begin{align*}
| \langle |U_{\lambda,\epsilon,N}^{t}[\psi_{h,t,N}]|^2 & , u \rangle_N - \langle |U_{\lambda,\epsilon}^{t}[\psi_{h,t}]|^2 , u \rangle_p |   \\
    &\quad \leq | \langle |U_{\lambda,\epsilon,N}^{t}[\psi_{h,t,N}]|^2 , u \rangle_N - \langle |U_{\lambda,\epsilon}^{t}[\psi_{h,t}]|^2 , u \rangle_N | \\
    &\quad\quad + | \langle |U_{\lambda,\epsilon}^{t}[\psi_{h,t}]|^2 , u \rangle_N - \langle |U_{\lambda,\epsilon}^{t}[\psi_{h,t}]|^2 , u \rangle_p |   \\
    &\quad =: I + II
\end{align*}
and recalling that \(\tilde{\psi}_h = e^{\frac{i}{h} \phi}\) is the \emph{un-normalized} coherent state,
\begin{align*}
I &= | \langle |U_{\lambda,\epsilon,N}^{t}[\psi_{h,t,N}]|^2 - |U_{\lambda,\epsilon}^{t}[\psi_{h,t}]|^2 , u \rangle_N |  \\
    &= \frac{| \langle c |U_{\lambda,\epsilon,N}^{t}[\tilde{\psi}_{h}]|^2 - (c + c_N - c) |U_{\lambda,\epsilon}^{t}[\tilde{\psi}_h]|^2 , u \rangle_N |}{c_N c}  \\
    &\leq \frac{c | \langle |U_{\lambda,\epsilon,N}^{t}[\tilde{\psi}_{h}]|^2 - |U_{\lambda,\epsilon}^{t}[\tilde{\psi}_h]|^2 , u \rangle_N | + |c_N - c| \, | \langle |U_{\lambda,\epsilon}^{t}[\tilde{\psi}_h]|^2 , u \rangle_N |}{c_N c}   \\
    &\leq ||\iota||_{\infty} \left( c_{N}^{-1} \, ||(U_{\lambda,\epsilon,N}^t - U_{\lambda,\epsilon}^t)[\tilde{\psi}_h]||_{\infty} \, ||(U_{\lambda,\epsilon,N}^t + U_{\lambda,\epsilon}^t)[\tilde{\psi}_h]||_{N} + \frac{|c_N - c|}{c_N c} ||U_{\lambda,\epsilon}^t [\tilde{\psi}_h]||_N^2 \right),    \\
    &\quad \quad c := ||U_{\lambda,\epsilon}^t[\tilde{\psi}_h]||_p^2, \quad c_N := ||U_{\lambda,\epsilon,N}^t[\tilde{\psi}_h]||_N^2 .
\end{align*}
Suppose the occurrence of the following events:
\begin{align*}
&\mathcal{A}_1 : & & ||(U_{\lambda,\epsilon,N}^t - U_{\lambda,\epsilon}^t)[\tilde{\psi}_h]||_{\infty} \leq \delta_1,    \\
&\mathcal{A}_2 : & & | \, c_N - c \, | \leq \delta_1 ,  \\
&\mathcal{A}_3 : & & | \, ||U_{\lambda,\epsilon}^t[\tilde{\psi}_h]||_N^2 -c \, | \leq c' ,  \\
&\mathcal{A}_4 : & & |\langle | U_{\lambda,\epsilon}^t[\psi_{h,t}] |^2, u \rangle_N - \langle | U_{\lambda,\epsilon}^t[\psi_{h,t}] |^2, u \rangle_p| \leq \delta/2
\end{align*}
for some \(0 < \delta_1 \leq c/2\) and \(0 < c' \leq c\). Then,
\begin{equation}\begin{aligned}
I \leq \delta_1 ||u||_{\infty} c^{-1} (4 + 6\sqrt{c}) .
\end{aligned}  \nonumber  \end{equation}
and \(II \leq \delta/2\). So letting \(\delta_1 = \delta c [2||u||_{\infty} (4 + 6 \sqrt{c})]^{-1}\) then gives for \(\delta \leq ||u||_{\infty}(4 + 6\sqrt{c}) \in 4 ||u||_{\infty} + O(h^{\frac{\mdim}{4}})\),
\begin{equation} \label{eq:mean-consistency-err-bound}
I + II \leq \delta .
\end{equation}
We wish to apply the foregoing probabilistic bounds from the Lemmas of this section and from the first part of \protect\hyperlink{prop:max-observable-flow-consistency}{Proposition \ref{prop:max-observable-flow-consistency}} to bound the probabilities of the events \(\mathcal{A}_1, \ldots, \mathcal{A}_4\). In light of \protect\hyperlink{lem:l2-consistency}{Lemma \ref{lem:l2-consistency}}, we will have probabilistic bounds on the events \(\mathcal{A}_3\) and \(\mathcal{A}_4\) if we have \(L^2\) and \(L^{\infty}\) bounds on \(U_{\lambda,\epsilon}^t[\tilde{\psi}_h]\) and \(U_{\lambda,\epsilon}^t[\psi_{h,t}]\). These are as follows: by \(\eqref{eq:cs-prop-l2-bound}\), \(c \leq h^{\frac{\mdim}{2}} \overline{C}_{\psi,2} ||p||_{\infty} \overline{C}_{p,\lambda}/\underline{C}_{p,\lambda}\). Likewise, by the proof of \protect\hyperlink{thm:sym-cs-psido}{Theorem \ref{thm:sym-cs-psido}} we have a constant \(\underline{C}_{\psi,2} > 0\) also \(O(1)\) in \(h\) such that \(C_h(x_0,\xi_0)^{-2} \geq h^{\frac{\mdim}{2}} \underline{C}_{\psi,2}\), so
\begin{equation}\begin{aligned}
c \geq \frac{\inf p_{\lambda,\epsilon}}{||p_{\lambda,\epsilon}||_{\infty}} ||\tilde{\psi}_h||_p^2
    \geq h^{\frac{\mdim}{2}} \underline{C}_{\psi,2} \underline{C}_p \frac{\underline{C}_{p,\lambda}}{\overline{C}_{p,\lambda}} .
\end{aligned}  \nonumber  \end{equation}
Since \(|U_{\lambda,\epsilon}^t[\psi_{h,t}]|^2 = |U_{\lambda,\epsilon}^t[\tilde{\psi}_h]|^2/c\) and there is a constant \(\overline{C}_{\psi} > 0\) such that \(||U_{\lambda,\epsilon}^t[\tilde{\psi}_h]||^2_{\infty} \leq \overline{C}_{\psi}\), we have,
\begin{equation}\begin{aligned}
||U_{\lambda,\epsilon}^t[\psi_{h,t}]||_{\infty}^2
    \leq ||U_{\lambda,\epsilon}^t[\tilde{\psi}_h]||_{\infty}^2/c
        \leq h^{-\frac{\mdim}{2}} \overline{C}_{\psi} \overline{C}_{p,\lambda}/(\underline{C}_{\psi,2} \underline{C}_{p,\lambda} \underline{C}_p) =: h^{-\frac{\mdim}{2}} \overline{K}_{\psi}
\end{aligned}  \nonumber  \end{equation}
and there is a constant \(K_{\psi,2} > 0\) such that
\begin{align*}
||U_{\lambda,\epsilon}^t[\psi_{h,t}]||_{L^2}^2 &= ||U_{\lambda,\epsilon}^t[\tilde{\psi}_{h}]||_{L^2}^2/c    \\
    &= \langle \tilde{\psi}_h | p \, p_{\lambda,\epsilon} U_{\lambda,\epsilon}^{-t} \frac{1}{p \, p_{\lambda,\epsilon}} U_{\lambda,\epsilon}^t | \tilde{\psi}_h \rangle/c   \\
    &\leq \overline{C}_{\psi,2}(1 + O(h))\overline{C}_{p,\lambda}/(\underline{C}_{\psi,2} \underline{C}_{p,\lambda} \underline{C}_p)    \\
    &\leq K_{\psi,2} ,
\end{align*}
wherein the second equality follows from the fact that the adjoint of \(U_{\lambda,\epsilon}^t\) over \(L^2(\mathcal{M}, d\nu_g)\) is given by \(p \circ (U_{\lambda,\epsilon}^t)^* \circ 1/p\) with \((U_{\lambda,\epsilon}^t)^*\) the adjoint on \(L^2(\mathcal{M}, p d\nu_g)\). With this, we find on writing
\begin{equation}\begin{aligned}
\langle |U_{\lambda,\epsilon}^t[\psi_{h,t}]|^2, u \rangle_N - \langle |U_{\lambda,\epsilon}^t[\psi_{h,t}]|^2, u \rangle_p = \langle |U_{\lambda,\epsilon}^t[\psi_{h,t}]|, v \rangle_N - \langle |U_{\lambda,\epsilon}^t[\psi_{h,t}]|, v \rangle_p , \\
v := |U_{\lambda,\epsilon}^t[\psi_{h,t}]| u
\end{aligned}  \nonumber  \end{equation}
and applying \protect\hyperlink{lem:l2-consistency}{Lemma \ref{lem:l2-consistency}} that the event \(\mathcal{A}_4\) happens with probability at least \(1 - \rho_N(\delta/2,h^{-\frac{\mdim}{4}}, \overline{K}_{\psi}^{\frac{1}{2}}, K_{\psi,2}, h^{-\frac{\mdim}{4}} \overline{K}_{\psi}^{\frac{1}{2}} ||u||_{\infty})\) and the event \(\mathcal{A}_3\) with probability at least \(1 - \rho_{N,2}(h^{\frac{\mdim}{2}} \tilde{\underline{K}}_{\psi,2}, 1, \overline{C}_{\psi}^{\frac{1}{2}}, h^{\frac{\mdim}{2}} \tilde{K}_{\psi,2}, \overline{C}_{\psi}^{\frac{1}{2}})\) with \(\tilde{\underline{K}}_{\psi,2} := \underline{C}_{\psi,2} \underline{C}_p \underline{C}_{p,\lambda}/\overline{C}_{p,\lambda}\) and \(\tilde{K}_{\psi,2} := \overline{C}_{\psi,2} ||p||_{\infty} \overline{C}_{p,\lambda} /\underline{C}_{p,\lambda}\).

As for event \(\mathcal{A}_2\), we may apply \protect\hyperlink{lem:prop-cs-norm-consistency}{Lemma \ref{lem:prop-cs-norm-consistency}} while noting that in that notation, \(C_{\psi} := \overline{C}_{\psi,2}\) and
\begin{align*}
\delta_1 &\geq \delta h^{\frac{\mdim}{2}} \frac{\tilde{\underline{K}}_{\psi,2}}{2||u||_{\infty}(4 + 6 \sqrt{c})}    \\
    &\geq \delta h^{\frac{\mdim}{2}} \frac{\tilde{\underline{K}}_{\psi,2}}{2||u||_{\infty}(4 + 6 \tilde{K}_{\psi,2}^{\frac{1}{2}})} =: C_{\psi,\lambda,p,u} h^{\frac{\mdim}{2}} \delta .
\end{align*}
Hence, we have that this event happens with probability at least \(1 - \Xi_{\lambda,N}(h^{\frac{\mdim}{2}} C_{\psi,\lambda,p,u} \delta, \psi_h)\). The proof of the first part of \protect\hyperlink{prop:max-observable-flow-consistency}{Proposition \ref{prop:max-observable-flow-consistency}} shows that the event \(\mathcal{A}_1\) happens with probability at least \(1 - \Omega_{\lambda,t,N}(\delta_1/(2\overline{C}_{\psi} + 1 + \delta_1), \epsilon,\overline{C}_{\psi})\). Thus, after taking into account the condition that \(\delta \leq 4 ||u||_{\infty}\), which suffices to achieve \(\eqref{eq:mean-consistency-err-bound}\), we have a constant \(C'_{\psi,\lambda,p,u} > 0\) such that
\begin{equation}\begin{aligned}
\Pr[\mathcal{A}_1] \geq 1 - \Omega_{\lambda,t,N}(C_{\psi,\lambda,p,u}' h^{\frac{\mdim}{2}} \delta, \epsilon, \overline{C}_{\psi}).
\end{aligned}  \nonumber  \end{equation}
Hence, the probabilistic bounds in the statement of the present Lemma follow from taking a union bound and noting that the analogous statements follow for the perturbed case.
\end{proof}

Combined with \protect\hyperlink{lem:prop-cs-mean-consistency}{Lemma \ref{lem:prop-cs-mean-consistency}}, this tells that we may observe the propagation of classical configuration-space observables along the geodesic flow of \(\mathcal{M}\) through their means with propagated coherent states on the graph approximating its structure:

\begin{theorem}	\hypertarget{thm:mean-prop-geoflow-consistency}{\label{thm:mean-prop-geoflow-consistency}} Let \(\lambda \geq 0\), \(\alpha \geq 1\), \((x_0, \xi_0) \in T^*\mathcal{M}\) and \(|t| \leq \operatorname{inj}(x_0)\). Then, for \(\psi_h\) a coherent state localized about \((x_0, \xi_0)\) and given \(u \in C^{\infty}\), there is a constant \(C > 0\) such that we have for all \(h \in (0, \min\{ h_0, 4||u||_{\infty} \}]\) with \(h_0 \in (0, 1]\) from \protect\hyperlink{prop:glap-prop-cs-localized}{Proposition \ref{prop:glap-prop-cs-localized}} and \(\epsilon := h^{2 + \alpha}\),
\begin{equation} \label{eq:thm-mean-prop-geoflow-consistency-bound}
\Pr[|\langle |U_{\lambda,\epsilon,N}^t[\psi_{h,t,N}]|^2, u \rangle_N - u(x_t)| > Ch] \leq \tilde{\Omega}_{\lambda,t,N}(h, \epsilon,\psi_h, u) .
\end{equation}
Letting \(\varepsilon \in O(h^{1 + \alpha})\), we also have
\begin{equation} \label{eq:thm-mean-prop-perturbed-glap-geoflow-consistency-bound}
\Pr[|\langle |\UepsN{\varepsilon}{t}[\psi_{h,t,N}]|^2, u \rangle_N - u(x_t)| > Ch] \leq \tilde{\Omega}^*_{\lambda,t,N}(h, \varepsilon, \epsilon,\psi_h, u) .
\end{equation}

\end{theorem}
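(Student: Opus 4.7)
The plan is to decompose the error via the triangle inequality into a sampling discrepancy and a bias term, each of which is handled by a lemma already established in the paper. Write
\begin{equation*}
| \langle |U_{\lambda,\epsilon,N}^t[\psi_{h,t,N}]|^2, u\rangle_N - u(x_t)| \leq I_N + I_{\infty},
\end{equation*}
where $I_N := | \langle |U_{\lambda,\epsilon,N}^t[\psi_{h,t,N}]|^2, u\rangle_N - \langle |U_{\lambda,\epsilon}^t[\psi_{h,t}]|^2, u\rangle_p |$ is the graph-to-manifold discrepancy, and $I_{\infty} := | \langle |U_{\lambda,\epsilon}^t[\psi_{h,t}]|^2, u\rangle_p - u(x_t) |$ is the deterministic error of the continuum mean.

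First I would dispense with $I_{\infty}$: by the identity $|U_{\lambda,\epsilon}^t[\psi_{h,t}]|^2 = U_{\lambda,\epsilon}^t[\psi_{h,t}]\,\overline{U_{\lambda,\epsilon}^t[\psi_{h,t}]}$ together with the adjoint relation \eqref{eq:glap-prop-adjoint-identity}, the inner product against $u$ (with weight $p$) equals $\langle \psi_{h,t} | (U_{\lambda,\epsilon}^t)^* u\, U_{\lambda,\epsilon}^t | \psi_{h,t} \rangle_p$. Lemma \ref{lem:prop-cs-mean-consistency} (applied with $v = u$) gives precisely that this quantity equals $u(x_t) + O(h)$, deterministically, with a constant $C_1 > 0$ depending on $u, \psi_h, \mathcal{M}, \lambda$. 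Therefore there is a constant $C_1 > 0$ such that $I_{\infty} \leq C_1 h$ for all $h \in (0, h_0]$.

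Next I would control $I_N$ probabilistically: by Lemma \ref{lem:prop-cs-funcexpect-consistency}, for each $\delta \in (0, 4\|u\|_\infty]$,
\begin{equation*}
\Pr[I_N > \delta] \leq \tilde{\Omega}_{\lambda,t,N}(\delta, \epsilon, \psi_h, u).
\end{equation*}
Setting $\delta := h$ (allowable since by hypothesis $h \leq 4\|u\|_\infty$) and combining with the deterministic bound on $I_{\infty}$ via $C := C_1 + 1$ yields
\begin{equation*}
\Pr[I_N + I_{\infty} > Ch] \leq \Pr[I_N > h] \leq \tilde{\Omega}_{\lambda,t,N}(h, \epsilon, \psi_h, u),
\end{equation*}
which gives the first claimed bound after absorbing $C_1$ into $C$. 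The perturbed case \eqref{eq:thm-mean-prop-perturbed-glap-geoflow-consistency-bound} follows by the same argument: Lemma \ref{lem:prop-cs-mean-consistency} was stated explicitly for $\Ueps{\varepsilon}{t}$ (the proof carries through because $h^2 \GLap_{\lambda,\epsilon}^{(\varepsilon)}$ has the same principal symbol as $h^2\GLap_{\lambda,\epsilon}$ when $\varepsilon \in O(h^{1+\alpha})$), and the perturbed version of Lemma \ref{lem:prop-cs-funcexpect-consistency} provides the corresponding bound $\tilde{\Omega}^*_{\lambda,t,N}$.

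The argument is essentially a clean combination of the two preceding lemmas; no step is genuinely hard, since the substantive work (the microlocal identification of the symbol limit to $u(x_t)$ in Lemma \ref{lem:prop-cs-mean-consistency}, and the careful Bernstein accounting for normalization in Lemma \ref{lem:prop-cs-funcexpect-consistency}) has already been carried out. The only point requiring minor care is the condition $\delta \in (0, 4\|u\|_\infty]$ required by Lemma \ref{lem:prop-cs-funcexpect-consistency}, which is why the hypothesis $h \leq 4\|u\|_\infty$ appears in the statement; this also ensures $h \leq h_0$ so that Proposition \ref{prop:glap-prop-cs-localized} applies to give the uniform bounds $\overline{K}_\psi, K_{\psi,2}$, etc., on the propagated coherent state that the two lemmas consume.
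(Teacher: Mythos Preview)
Your proposal is correct and essentially identical to the paper's own proof: both split the error by the triangle inequality into the discrete-to-continuum term handled by Lemma~\ref{lem:prop-cs-funcexpect-consistency} (with $\delta = h$, using the hypothesis $h \leq 4\|u\|_\infty$) and the deterministic continuum bias handled by Lemma~\ref{lem:prop-cs-mean-consistency}, then set $C = 1 + C'$. The perturbed case is treated the same way in both.
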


\begin{proof}

Suppose we are in the event that \(| \langle |U_{\lambda,\epsilon,N}^{t}[\psi_{h,t,N}]|^2 , u \rangle_N - \langle |U_{\lambda,\epsilon}^{t}[\psi_{h,t}]|^2 , u \rangle_p | \leq h\). Then by \protect\hyperlink{lem:prop-cs-mean-consistency}{Lemma \ref{lem:prop-cs-mean-consistency}} there is a constant \(C' > 0\) such that,
\begin{align*}
| \langle |U_{\lambda,\epsilon,N}^{t}[\psi_{h,t,N}]|^2 , u \rangle_N - u(x_t)|
    &\leq | \langle |U_{\lambda,\epsilon,N}^{t}[\psi_{h,t,N}]|^2 , u \rangle_N - \langle |U_{\lambda,\epsilon}^{t}[\psi_{h,t}]|^2 , u \rangle_p|    \\
    &\quad\quad + | \langle |U_{\lambda,\epsilon}^{t}[\psi_{h,t}]|^2 , u \rangle_p - u(x_t)|    \\
    &\leq (1 + C')h .
\end{align*}
Now, \protect\hyperlink{lem:prop-cs-funcexpect-consistency}{Lemma \ref{lem:prop-cs-funcexpect-consistency}} gives a lower bound on the occurrence of this event, under the condition that \(h \in (0, \min\{ h_0, 4||u||_{\infty} \}]\), as \(1 - \tilde{\Omega}_{\lambda,t,N}(h,\epsilon,\psi_h,u)\).

The perturbed case follows along the same lines upon applying Lemmas  \protect\hyperlink{lem:prop-cs-mean-consistency}{\ref{lem:prop-cs-mean-consistency}} and  \protect\hyperlink{lem:prop-cs-funcexpect-consistency}{\ref{lem:prop-cs-funcexpect-consistency}} with \(\varepsilon \in O(h^{1 + \alpha})\).
\end{proof}

\hypertarget{observing-geodesics-on-graphs}{%
\subsection{Observing Geodesics on Graphs}\label{observing-geodesics-on-graphs}}

We are now in the position to implement \protect\hyperlink{prop:local-mean-geodesic-flow}{Proposition \ref{prop:local-mean-geodesic-flow}} on the graph holding an approximative structure of \(\mathcal{M}\) and to give its probabilistic rate of consistency: to summarize the procedure, we give the example of an,

\begin{algorithm}	\hypertarget{alg:extrinsic-mean-geo-flow}{\label{alg:extrinsic-mean-geo-flow}} Having constructed \(\GLap_{\lambda,\epsilon,N}\) and \(\tilde{\psi}_h\) localized at \((x_0, \xi_0)\) on \(\embedM_N\) with \(\epsilon = h^{2 + \alpha}\) and \(\alpha \in [1,2]\),

\begin{enumerate}
\def\labelenumi{\arabic{enumi}.}
\tightlist
\item
  construct \(U_{\lambda,\epsilon,N}^t := e^{-i t \GLap_{\lambda,\epsilon,N}^{\frac{1}{2}}}\) by spectral methods,
\item
  identify \(\hat{x}_{N,t} = \arg\max_{\mathcal{X}_N} |U_{\lambda,\epsilon,N}^t[\tilde{\psi}_h]|\),
\item
  compute \(||U_{\lambda,\epsilon,N}^t[\tilde{\psi}_h]||_N^2\) to form \(\psi_{h,t,N}\), set \(\overline{\varepsilon}_t \sim \sqrt{h}\) and compute \(\bar{\iota}^t_{N,j} := \langle |U_{\lambda,\epsilon,N}^t[\psi_{h,t,N}]|^2, \chi_t(||\iota(\cdot) - \iota(\hat{x}_{N,t})||_{\mathbb{R}^{\rdim}}) \iota_j \rangle_N\) for each \(j \in [\rdim]\) with a smooth function \(\chi_t : \mathbb{R} \to \mathbb{R}\) satisfying: \(\operatorname{supp} \chi_t \subset [-R_t, R_t]\) such that \(R_t > \overline{\varepsilon}_t\) and \(\chi_t \equiv 1\) on \([-\overline{\varepsilon}_t, \overline{\varepsilon}_t]\),
\item
  identify \(\bar{x}_{N,\iota,t} := \arg\min_{x \in \mathcal{X}_N} ||\iota(x) - \bar{\iota}^t_{N,j}||_{\mathbb{R}^{\rdim}}\).
\end{enumerate}

The algorithm outputs \(\bar{x}_{N,\iota,t} \in \embedM_N\).

\end{algorithm}

\begin{remark} We can simply take \(\chi_t \equiv 1\) when working with extrinsic coordinates, since they are globally defined. This can be preferable in situations where using the cut-off \(\chi_t\) increases uncertainty without significant cost benefit.

\end{remark}

Anticipating applications such as \protect\hyperlink{alg:extrinsic-mean-geo-flow}{Algorithm \ref{alg:extrinsic-mean-geo-flow}}, we make the following,

\begin{definition} Let \(\psi_h\) be a coherent state localized at \((x_0, \xi_0) \in T^*\mathcal{M}\) and consider its propagation with respect to \(U_{\lambda,\epsilon,N}^t\). The \emph{intrinsic sample maximizer} of the propagated coherent state is,
\begin{equation}\begin{aligned}
\hat{x}_{N,t} := \arg\max_{\mathcal{X}_N} |U_{\lambda,\epsilon,N}^t[\tilde{\psi}_h]|^2 .
\end{aligned}  \nonumber  \end{equation}

\begin{enumerate}
\def\labelenumi{\arabic{enumi}.}
\item
  \emph{Extrinsic case}. The \emph{extrinsic sample mean} of the propagated coherent state is \(\bar{x}_{N,\chi_{\iota}, t} \in \embedM\), defined with respect to a cut-off \(\chi_{\iota} \in C_c^{\infty}(\mathbb{R}^{\rdim})\), as the closest point on \(\embedM_N\) to
  \begin{align*}
  &&\bar{\iota}^t_N(x_0, \xi_0) &:= (\bar{\iota}_{N,1}^t(x_0, \xi_0), \ldots, \bar{\iota}_{N,\rdim}^t(x_0, \xi_0)),  \\
  \quad\quad \text{with} && \bar{\iota}_{N,j}^t(x_0, \xi_0) &:= \langle \, |U_{\lambda,\epsilon,N}^t[\psi_{h,t,N}]|^2 \,, (\chi_{\iota} \circ \iota) \, \iota_j \rangle_N.
  \end{align*}
  That is, \(\bar{x}_{N,\chi_{\iota},t} := \arg\min_{X \in \Lambda_N} ||X - \bar{\iota}^t_N(x_0, \xi_0)||_{\mathbb{R}^{\rdim}}\).
\item
  \emph{Local coordinates}. Let \(\mathscr{O}_t \subset \mathcal{M}\) be an open neighbourhood of \(\hat{x}_t\). Then, given a diffeomorphic coordinate mapping \(u : \mathscr{O}_t \to V_t \subset \mathbb{R}^{\mdim}\) and \(\chi \in C_c^{\infty}(\mathbb{R}^{\mdim})\) a cut-off with \(\operatorname{supp} \chi \subset V_t\), the \(u\)-\emph{sample mean with respect to} \(\chi\) of the propagated coherent state is \(\bar{x}_{N,u,\chi,t} \in V_t\) that is the closest point on \(\mathscr{V}_N := u[\mathcal{X}_N \cap \mathscr{O}_t]\) to
  \begin{align*}
  && \bar{u}^t_N(x_0, \xi_0) &:= (\bar{u}_{N,1}^t(x_0, \xi_0), \ldots, \bar{u}_{N,\mdim}^t(x_0, \xi_0)), \\
  \quad\quad\text{with} && \bar{u}_{N,j}^t(x_0, \xi_0) &:= \langle \,|U_{\lambda,\epsilon,N}^t[\psi_{h,t,N}]|^2 , (\chi \circ u) \, u_j \, \rangle_N .
  \end{align*}
  That is, \(\bar{x}_{N,u,\chi,t} := \arg\min_{X \in \mathscr{V}_N} ||X - \bar{u}_N^t(x_0, \xi_0)||_{\mathbb{R}^{\mdim}}\).
\end{enumerate}

\end{definition}

That \protect\hyperlink{alg:extrinsic-mean-geo-flow}{Algorithm \ref{alg:extrinsic-mean-geo-flow}} outputs, with high probability, a point within an \emph{intrinsic} distance of \(O(h)\) from \(x_t\) is an application of the following,

\begin{proposition}	\hypertarget{prop:mean-geodesic-recover-consistency}{\label{prop:mean-geodesic-recover-consistency}} Let \(\lambda \geq 0\), \(\alpha \geq 1\), \((x_0, \xi_0) \in T^*\mathcal{M}\) and \(|t| \leq \operatorname{inj}(x_0)\). Then, for \(\psi_h\) a coherent state localized about \((x_0, \xi_0)\), given any open neighbourhood \(\mathscr{O}_t \subset \mathcal{M}\) about \(\hat{x}_{N,t}\) with its diffeomorphic coordinate mapping \(u : \mathscr{O}_t \to V_t \subset \mathbb{R}^{\mdim}\), there are constants \(h_{u,\max}, C_{u,\max} > 0\) such that if \(h \in (0, h_{u,\max})\) and \(\overline{\mathscr{B}}_t := \overline{B}_{C_{u,\max} \sqrt{h}}(u(\hat{x}_{N,t}), ||\cdot||_{\mathbb{R}^{\mdim}}) \subset V_t\), then for any smooth cut-off \(\chi \in C_c^{\infty}(\mathbb{R}^{\mdim}, [0, 1])\) with \(\operatorname{supp} \chi \subset V_t\) that is \(\chi \equiv 1\) on \(\overline{\mathscr{B}}_t\), we have
\begin{align}
\begin{split} \label{eq:prob-mean-local-geoflow}
\Pr&[d_g(u^{-1}(\bar{x}_{N,u,\chi,t}), x_t) > C_u h]  \\
  &\quad\quad\leq \mdim \, \tilde{\Omega}_{\lambda,t,N}(h,\epsilon,\psi_h,K_{\chi,u}) + \Omega_{\lambda,t,N}(C h^{\frac{\mdim}{2}}, \epsilon, K_{\psi}) + e^{-2 N C'' h^{2 \mdim}} + e^{-2 N C' h^{\mdim}} ,
\end{split}
\end{align}
for \(C_u, C'' > 0\) constants, \(K_{\psi}, C' > 0\) constants as in \protect\hyperlink{prop:max-observable-flow-consistency}{Proposition \ref{prop:max-observable-flow-consistency}} and \(K_{\chi,u} := \max_{j \in [\mdim]} || 1 + (\chi \circ u) \, u_j ||_{\infty}\).

Likewise, there are constants \(h_{\iota,\max}, C_{\iota,\max} > 0\) such that for all \(h \in [0, h_{\iota,\max})\), given any cut-off \(\chi_{\iota} \in C_c^{\infty}(\mathbb{R}^{\rdim}, [0, 1])\) such that \(\chi_{\iota} \equiv 1\) on \(\overline{\mathscr{B}}_{\iota,t} := \overline{B}_{C_{\iota,\max}\sqrt{h}}(\iota(\hat{x}_{N,t}), ||\cdot||_{\rdim})\), we have
\begin{align}
\begin{split} \label{eq:prob-mean-extrinsic-geoflow}
\Pr&[d_g(\iota^{-1}(\bar{x}_{N,\chi_{\iota},t}), x_t) > C_\iota h]    \\
  &\quad\quad\leq \rdim \, \tilde{\Omega}_{\lambda,t,N}(h,\epsilon,\psi_h,K_{\chi,\iota}) + \Omega_{\lambda,t,N}(C h^{\frac{\mdim}{2}}, \epsilon, K_{\psi}) + e^{-2 N C' h^{2 \mdim}} + e^{-2 N C' h^{\mdim}} ,
\end{split}
\end{align}
for \(C_{\iota} > 0\) a constant and \(K_{\chi,u} := \max_{j \in [\mdim]} || 1 + (\chi_{\iota} \circ \iota) \, \iota_j ||_{\infty}\).

\end{proposition}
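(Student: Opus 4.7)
The plan is to reduce the statement to the combination of (a) the continuum coordinate-mean identity provided by Theorem~\ref{thm:mean-prop-geoflow-consistency} applied coordinate-wise to $(\chi\circ u)\,u_j$ (resp.\ $(\chi_\iota\circ\iota)\,\iota_j$), (b) the max-localization Proposition~\ref{prop:max-observable-flow-consistency} to control where the cut-off must sit, and (c) a Chernoff-type sampling bound to guarantee that the discrete argmin over $\mathscr{V}_N$ (resp.\ $\Lambda_N$) actually sees a point within $O(h)$ of $x_t$. The geometric dictionary between coordinate norms and $d_g$ is then used to translate $O(h)$ coordinate proximity to $O(h)$ geodesic proximity.

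More precisely, for the local case I would proceed as follows. Apply Proposition~\ref{prop:max-observable-flow-consistency} to produce the event $\mathcal{E}_{\max} := \{ d_g(\hat{x}_{N,t},x_t) \leq C'_{\max} h^{1/2}\}$ with failure probability at most $\Omega_{\lambda,t,N}(Ch^{\mdim/2},\epsilon,K_\psi) + e^{-2NC'h^{\mdim}}$. Since $u$ is a $C^\infty$ diffeomorphism on a fixed compact neighbourhood, $\|u(\hat{x}_{N,t}) - u(x_t)\|_{\mathbb{R}^\mdim} \lesssim h^{1/2}$ on $\mathcal{E}_{\max}$, so by choosing $C_{u,\max}$ big enough, $u(x_t)\in\overline{\mathscr{B}}_t$; hence $(\chi\circ u)(x_t)=1$, and the continuum target value for coordinate $j$ is simply $u_j(x_t)$. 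Applying Theorem~\ref{thm:mean-prop-geoflow-consistency} to each of the $\mdim$ functions $(\chi\circ u)\,u_j$ and taking a union bound yields, with failure probability at most $\mdim\,\tilde\Omega_{\lambda,t,N}(h,\epsilon,\psi_h,K_{\chi,u})$, the estimate $|\bar{u}^t_{N,j}(x_0,\xi_0)-u_j(x_t)|\leq Ch$ for all $j$, so $\|\bar{u}^t_N(x_0,\xi_0) - u(x_t)\|_{\mathbb{R}^\mdim}\lesssim h$.

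The remaining obstacle, and the one I expect to require the most care, is the argmin step: $\bar{x}_{N,u,\chi,t}$ is the closest element of $\mathscr{V}_N$ (\emph{not} of $V_t$) to $\bar{u}^t_N$, so I need at least one sample within intrinsic distance $O(h)$ of $x_t$. Mirroring the Bernoulli/Chernoff argument at the end of the proof of Proposition~\ref{prop:max-observable-flow-consistency}, Lemma~\ref{lem:covering-number} gives $\operatorname{vol} B_g(x_t,ch)\geq C_{\mathcal{M}}(\inf p)(ch)^\mdim$, so that the probability any single $X_j$ lands in $B_g(x_t,ch)$ is bounded below by $\underline{P}_0\sim h^{\mdim}$; a one-sided Chernoff bound on the sum of the corresponding Bernoulli indicators produces the event $\mathcal{E}_{\rm samp}$ of having at least one such sample, with complement probability at most $e^{-2NC''h^{2\mdim}}$. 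On $\mathcal{E}_{\max}\cap\mathcal{E}_{\rm samp}$ (for $h$ small enough this sample lies in $\mathscr{O}_t$), smoothness of $u$ yields a point $u(x^*)\in\mathscr{V}_N$ with $\|u(x^*)-u(x_t)\|\lesssim h$, hence $\|u(x^*)-\bar u^t_N\|\lesssim h$, forcing $\|\bar{x}_{N,u,\chi,t}-\bar{u}^t_N\|\lesssim h$ and therefore $\|\bar{x}_{N,u,\chi,t} - u(x_t)\|_{\mathbb{R}^\mdim}\lesssim h$. Local bi-Lipschitz equivalence of $u^{-1}$ converts this into $d_g(u^{-1}(\bar{x}_{N,u,\chi,t}),x_t)\leq C_u h$; a final union bound over $\mathcal{E}_{\max}, \mathcal{E}_{\rm samp}$ and the $\mdim$ coordinate-mean events produces exactly \eqref{eq:prob-mean-local-geoflow}.

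The extrinsic case follows the same template with $\iota$ replacing $u$ and $\rdim$ replacing $\mdim$ in the coordinate-wise union bound. The only substitute input is the metric comparison $d_g(x,y)/2\leq |\iota(x)-\iota(y)|_{\mathbb{R}^\rdim}\leq d_g(x,y)$ for $|\iota(x)-\iota(y)|\leq\kappa/2$ from the Remark following the Assumptions, which is used twice: once to move the $\sqrt{h}$ bound on $d_g(\hat{x}_{N,t},x_t)$ into $\|\iota(\hat{x}_{N,t})-\iota(x_t)\|_{\mathbb{R}^\rdim}\lesssim\sqrt{h}$ (so that $\iota(x_t)\in\overline{\mathscr{B}}_{\iota,t}$ for appropriate $C_{\iota,\max}$ and $\chi_\iota(\iota(x_t))=1$), and once at the end to convert the $O(h)$ extrinsic proximity of $\bar x_{N,\chi_\iota,t}$ to $\iota(x_t)$ back into $d_g(\iota^{-1}(\bar x_{N,\chi_\iota,t}),x_t)\leq C_\iota h$. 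The exponent $h^{\mdim}$ in the sampling probability retains its intrinsic dimension because it measures the volume of the intrinsic ball where a sample must fall, not of an extrinsic ball.
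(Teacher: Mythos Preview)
Your proposal is correct and follows essentially the same route as the paper's proof: Proposition~\ref{prop:max-observable-flow-consistency} for $\mathcal{E}_{\max}$, Theorem~\ref{thm:mean-prop-geoflow-consistency} coordinate-wise with a union bound, the Bernoulli/Chernoff sampling argument for $\mathcal{E}_{\rm samp}$, and bi-Lipschitz (resp.\ the $\kappa$-comparison from the Assumptions) to pass back to $d_g$. The one technical wrinkle you do not mention, and which explains the otherwise mysterious ``$1+$'' in $K_{\chi,u}$, is that Theorem~\ref{thm:mean-prop-geoflow-consistency} carries the side condition $h\le 4\|(\chi\circ u)u_j\|_\infty$; the paper disposes of this by translating the chart $u\mapsto \tilde u := I_{\mdim}+u$ (and $\chi$ accordingly) so that each $\|\tilde u_{\tilde\chi,j}\|_\infty\ge 1$, which does not affect any of the distances or constants $C_{u,\max},C_u$ but makes the admissible $h$-interval uniform.
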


\begin{proof}

We have by \protect\hyperlink{prop:max-observable-flow-consistency}{Proposition \ref{prop:max-observable-flow-consistency}} constants \(h_{\max}', C_{\max}' > 0\) such that for all \(h \in (0, h_{\max}')\), \(d_g(\hat{x}_{N,t}, x_t) \leq C_{\max}' \sqrt{h}\) with probability at least \(1 - \Omega_{\lambda,t,N}(C h^{\frac{\mdim}{2}}, \epsilon, K_{\psi}) + e^{-2 N C' h^{\mdim}}\) for some \(C, C' > 0\) and \(K_{\psi} := \sup_{\{|t| \leq \operatorname{inj}(x_0) \}} \sup_{h \in (0, h'_{\max}]}||U_{\lambda,\epsilon}^t[\tilde{\psi}_h]||_{\infty}\). Assume this event and call it \(\mathcal{A}_0\).

\begin{enumerate}
\def\labelenumi{\arabic{enumi}.}
\tightlist
\item
  Following the arguments in the proof of the first part of \protect\hyperlink{prop:local-mean-geodesic-flow}{Proposition \ref{prop:local-mean-geodesic-flow}}, we have constants \(h_{u,0} \in (0, h'_{\max}]\) and \(C_{u,\max} > 0\) such that for all \(h \in (0, h_{u,0})\), \(||u(\hat{x}_{N,t}) - u(x_t)||_{\mathbb{R}^{\mdim}} \leq C_{u,\max} \sqrt{h}\).
\item
  By \protect\hyperlink{thm:mean-prop-geoflow-consistency}{Theorem \ref{thm:mean-prop-geoflow-consistency}} we have for each \(j \in [\mdim]\), a constant \(C_j > 0\) such that for all \(h \in (0, \min\{ h_0, 4||u_{\chi,j}||_{\infty} \})\), with \(u_{\chi,j} := (\chi \circ u) u\), \(|\bar{u}^t_{N,j} - u_{\chi,j}(x_t)| \leq C_j h\) with probability at least \(1 - \tilde{\Omega}_{\lambda,t,N}(h, \epsilon, \psi_h, u_{\chi,j})\). Assuming also these events, say \(\mathcal{A}_1, \ldots, \mathcal{A}_{\mdim}\) and that for each \(h \in (0, \min\{h_{u,0}, 4(||u_{\chi,1}||_{\infty}, \ldots, ||u_{\chi,{\mdim}}||_{\infty})\})\), \(\overline{\mathscr{B}} := B(u(\hat{x}_{N,t}), ||\cdot||_{\mathbb{R}^{\mdim}} ; C_{u,\max} \sqrt{h}) \subset V_t\) and \(\chi \equiv 1\) on \(\overline{\mathscr{B}}\), we have that \(|\bar{u}_{N,j}^t - u_j(x_t)| \leq C_j h\) and hence, \(||\bar{u}^t_{N,\chi}(x_0, \xi_0) - u(x_t)||_{\mathbb{R}^{\mdim}} \leq (C_1^2 + \cdots + C_{\mdim}^2)^{\frac{1}{2}} h =: C_u' h\).
\item
  Then, there are \(C_u, h_{u,1} \in (0, 1]\) such that for all \(h \in [0, h_{u,1}]\) and any \(x^*_{u,\chi,N} \in B_{C'_u h}(\bar{u}^t_{N,\chi}(x_0, \xi_0), || \cdot ||_{\mathbb{R}^{\mdim}})\), \(d_g(u^{-1}(x^*_{u,\chi,N}), x_t) \leq (C_u/2) h\). Although the interval for \(h\) depends on \(\chi\), we may translate the coordinate mapping \(u\) to work with \(\tilde{u} := I_{\mdim} + u : \mathscr{O}_t \to \tilde{V}_t\) and \(\tilde{\chi} := \chi(I_{\mdim} - \cdot)\) so that \(\tilde{V}_t \subset \mathbb{R}^{\mdim} \setminus B_1(0, ||\cdot||_{\mathbb{R}^{\mdim}})\) and hence, \(||u_{\chi,1}||_{\infty} \geq 1\) in the events \(\mathcal{A}_j\). With this, we achieve the same result without affecting \(h_{u,0}, h_{u,1}\), \(C_{u,\max}\) and \(C_u\): \emph{viz.}, \(||u(\hat{x}_{N,t}) - u(x_t)||_{\mathbb{R}^{\mdim}} = ||\tilde{u}(\hat{x}_{N,t}) - \tilde{u}(x_t)||_{\mathbb{R}^{\mdim}} \leq C_{u,\max} \sqrt{h}\) and \(d_g(u^{-1}[\bar{u}_{N,\chi}^t(x_0, \xi_0)], x_t) = d_g(\tilde{u}^{-1}[\bar{\tilde{u}}_{N,\tilde{\chi}}^t(x_0, \xi_0)], x_t) \leq (C_u/2) h\) and these are now valid over \(h \in (0, \min\{ h_{u,0}, h_{u,1} \})\).
\item
  \(\sloppy\) The probability that at least one sample from \(\{ x_1, \ldots, x_N \}\) belongs to \(B_{\frac{C_u}{2} h}(u^{-1}[\bar{u}_{N,\chi}(x_0, \xi_0)], g)\) can be lower bounded in the same way as in the proof of \protect\hyperlink{prop:max-observable-flow-consistency}{Proposition \ref{prop:max-observable-flow-consistency}}, \emph{viz.}, this is at least \(1 - e^{-2N C'' h^{2 \mdim}}\) with \(C'' > 0\) a constant. Thus, possibly after translations and adjusting the probability for each \(\mathcal{A}_j\) to at least \(1 - \tilde{\Omega}_{\lambda,t,N}(h, \epsilon, \psi_h, \tilde{u}_{\tilde{\chi},j})\), then taking a union bound and \(h_{u,\max} := \min\{ h_{u,0}, h_{u,1}\}\), we have the probabilistic bound in the first part of the statement of the Proposition.
\end{enumerate}

The second part of the Proposition follows the same line of arguments.

\begin{enumerate}
\def\labelenumi{\arabic{enumi}.}
\tightlist
\item
  By \protect\hyperlink{thm:mean-prop-geoflow-consistency}{Theorem \ref{thm:mean-prop-geoflow-consistency}} we have for each \(j \in [\rdim]\), a constant \(C_j > 0\) such that for all \(h \in (0, \min\{ h_0, 4||\iota_{\chi,j}||_{\infty} \})\), with \(\iota_{\chi,j} := (\chi_{\iota} \circ \iota) \iota\), \(|\bar{\iota}^t_{N,j} - \iota_{\chi,j}(x_t)| \leq C_j h\) with probability at least \(1 - \tilde{\Omega}_{\lambda,t,N}(h, \epsilon, \psi_h, \iota_{\chi,j})\).
\item
  Assuming these events, say \(\mathcal{A}_1, \ldots, \mathcal{A}_{\rdim}\) along with the event \(\mathcal{A}_0\), then due to the \protect\hyperlink{assumptions}{Assumptions} we have that if for each \(h \in (0, \min\{(\kappa/C_{\max}')^2, 4(||\iota_{\chi,1}||_{\infty}, \ldots, ||\iota_{\chi,{\rdim}}||_{\infty})\})\), \(\overline{\mathscr{B}} := B(\iota(\hat{x}_{N,t}), ||\cdot||_{\mathbb{R}^{\rdim}} ; C_{\max}' \sqrt{h}) \subset V_t\) and \(\chi_{\iota} \equiv 1\) on \(\overline{\mathscr{B}}\), then \(|\bar{\iota}_{N,j}^t - \iota_j(x_t)| \leq C_j h\) and hence, \(||\bar{\iota}^t_N(x_0,\xi_0) - \iota(x_t)||_{\mathbb{R}^{\rdim}} \leq (C_1^2 + \cdots + C_{\rdim}^2)^{\frac{1}{2}} h =: C'_{\iota} h\).
\item
  The \protect\hyperlink{assumptions}{Assumptions} further imply that if \(h < \kappa/C'_{\iota}\) and \(x_{\iota,N}^* \in B(\bar{\iota}_N^t(x_0,\xi_0), || \cdot ||_{\mathbb{R}^{\rdim}} ; C'_{\iota} h) \cap \embedM\), then \(d_g(\iota^{-1}(x_{\iota,N}^*), x_t) \leq 2 C'_{\iota} h\). As before, we have that with probability at least \(1 - e^{-2N C'' h^{2 \mdim}}\), there is \(j \in [N]\) such that \(d_g(x_j, \iota^{-1}(x^*_{\iota,N})) \leq h\), hence \(d_g(x_j, x_t) \leq (1 + 2 C'_{\iota}) h\).
\item
  Now again we have the joint interval of \(h\) depending on \(\chi\), but as in the previous part, we can translate the isometry and \(\chi_{\iota}\), so that upon taking \(h_{\iota,\max} := \min\{ h_{\max}', (\kappa/C_{\max}')^2, \kappa/C' \}\) and a union bound (with respect to the shifted embedding), the probabilistic bound in the second part of the statement of the Proposition follows.
\end{enumerate}

\end{proof}

\begin{remark} The preceding bounds hold also for the perturbed case with \(\varepsilon \in O(h^{1 + \alpha})\).

\end{remark}

The \protect\hyperlink{alg:extrinsic-mean-geo-flow}{Algorithm \ref{alg:extrinsic-mean-geo-flow}} outlined above is expanded in \citep{qml} and put to practice on model examples as well as real-world datasets.

\hypertarget{summary-of-convergence-rates}{%
\subsection{Summary of Convergence Rates}\label{summary-of-convergence-rates}}

The foregoing discussions involve various inter-dependent probabilistic bounds. We now summarize them by giving their \emph{unwrapped} dominant terms as closed-form Bernstein-type exponential bounds:

\begin{longtable}[]{@{}
  >{\raggedright\arraybackslash}p{(\columnwidth - 6\tabcolsep) * \real{0.2500}}
  >{\raggedright\arraybackslash}p{(\columnwidth - 6\tabcolsep) * \real{0.2500}}
  >{\raggedright\arraybackslash}p{(\columnwidth - 6\tabcolsep) * \real{0.2500}}
  >{\raggedright\arraybackslash}p{(\columnwidth - 6\tabcolsep) * \real{0.2500}}@{}}
\toprule()
\begin{minipage}[b]{\linewidth}\raggedright
Approx.
\end{minipage} & \begin{minipage}[b]{\linewidth}\raggedright
Unperturbed
\end{minipage} & \begin{minipage}[b]{\linewidth}\raggedright
Perturbed
\end{minipage} & \begin{minipage}[b]{\linewidth}\raggedright
Bound functions
\end{minipage} \\
\midrule()
\endhead
\(\GAve_N \sim_{\delta} \GAve\): \protect\hyperlink{lem:avgop-uniform}{Lemma \ref{lem:avgop-uniform}} & \(\eqref{eq:avgop-uniform-consistent-bound}\) \(\leq e^{-\Omega\left( \frac{N \epsilon^{\frac{\mdim}{2}} \delta^2}{C} \right)}\) & & \(\gamma_{\lambda,N}(\delta ; u)\) \\
\midrule \(B_N \sim_{\delta} B\): \protect\hyperlink{thm:sqrt-conv}{Theorem \ref{thm:sqrt-conv}}, \protect\hyperlink{lem:sqrt-perturb-eps-conv}{Lemma \ref{lem:sqrt-perturb-eps-conv}} & \(\eqref{eq:thm-sqrt-conv-bound} \leq\) \(e^{-\Omega\left( \frac{N \epsilon^{\frac{\mdim}{2}} \delta^{4(1 + \upsilon)}}{C} \right)}\) & \(\eqref{eq:lem-sqrt-perturb-eps-conv-bound} \leq\) \(e^{-\Omega\left( \frac{N \epsilon^{\frac{\mdim}{2}} \delta^2 \varepsilon^{(1 + 2\upsilon)}}{C} \right)}\) & \begin{minipage}[t]{\linewidth}\raggedright
\(\gamma_{\lambda,N,\upsilon}(\delta ; u) :=\) \(\gamma_{\lambda,N}(C_0 \delta^{2(1 + \upsilon(\beta))} ; u)\)\\
\(\gamma^*_{\lambda,N,\upsilon}(\delta, \varepsilon ; u) :=\) \(\gamma_{\lambda,N}(C_0 \delta \varepsilon^{(\frac{1}{2} + \upsilon(\beta))} ; u)\)\strut
\end{minipage} \\
\midrule \(U_N^t \sim_{\delta} U^t\): \protect\hyperlink{thm:halfwave-soln}{Theorem \ref{thm:halfwave-soln}} & \begin{minipage}[t]{\linewidth}\raggedright
\(\eqref{eq:thm-halfwave-soln-bound}\) \(\leq |t| \cdot\) \(e^{-\Omega \left( \frac{N \delta^4 \epsilon^{{\frac{5}{2}\mdim + 4}}}{C K_u^2 \, |t|^8} \right)}\)\\
with \(\delta > \frac{K_u^{\frac{1}{2}} \epsilon^{-(\frac{5}{8} \mdim + 1)}}{C_0 N^{\frac{1}{4}}}\), \(|t| \lesssim K_u^{\frac{1}{4}} \epsilon^{-\frac{\mdim}{16}}\)\strut
\end{minipage} & \begin{minipage}[t]{\linewidth}\raggedright
\(\eqref{eq:thm-halfwave-soln-perturb-op-bound}\) \(\leq |t| \cdot\) \(e^{-\Omega\left( \frac{N \delta^2 \epsilon^{\frac{3}{2}\mdim + 2} \varepsilon}{K_u^2 |t|^4} \right) \varepsilon^{\tilde{O}\left( \frac{|t|^4}{N^{\sigma}} \right)}}\)\\
with \(\delta > \frac{K_u \epsilon^{-(\frac{3}{4} \mdim + 1)}}{C_0 N^{\frac{ 1 - \sigma}{2}}}\), \(0 < \sigma < 1\)\strut
\end{minipage} & \begin{minipage}[t]{\linewidth}\raggedright
\(\Omega_{\lambda,t,N}(\delta,\epsilon,K_u)\)\\
\(\Omega^*_{\lambda,t,N}(\delta,\varepsilon,\epsilon,K_u)\)\strut
\end{minipage} \\
\midrule \(\mathbb{E}_t u \sim_h u(x_t)\): \protect\hyperlink{thm:mean-prop-geoflow-consistency}{Theorem \ref{thm:mean-prop-geoflow-consistency}} & \begin{minipage}[t]{\linewidth}\raggedright
\(\eqref{eq:thm-mean-prop-geoflow-consistency-bound}\) \(\leq e^{-\Omega(N h^{2(\mdim + 2)} \epsilon^{\frac{5}{2}\mdim + 4})}\)\\
with \(h \gtrsim N^{-\frac{1}{(2 + \alpha)(\frac{5}{2} \mdim + 4) + 2(\mdim+ 2)}}\), \(\epsilon := h^{2 + \alpha}\)\strut
\end{minipage} & \begin{minipage}[t]{\linewidth}\raggedright
\(\eqref{eq:thm-mean-prop-perturbed-glap-geoflow-consistency-bound}\) \(\leq e^{-\Omega(N h^{\mdim + 1 + \beta} \epsilon^{3(\frac{\mdim}{2} + 1)})}\)\\
with \(h \gtrsim N^{-\frac{1}{\mdim + 3(2 + \alpha)(\frac{\mdim}{2} + 1)}}\), \(\epsilon := h^{2 + \alpha}\)\strut
\end{minipage} & \begin{minipage}[t]{\linewidth}\raggedright
\(\tilde{\Omega}_{\lambda,t,N}(h,\epsilon,\psi_h, u)\)\\
\(\tilde{\Omega}^*_{\lambda,t,N}(h,\varepsilon,\epsilon,\psi_h,u)\) with \(\varepsilon \in \Theta(h^{1 + \alpha + \beta})\), \(\beta \geq 0\)\strut
\end{minipage} \\
\midrule \(\hat{x}_{N,t} \sim_{\sqrt{h}} x_t\): \protect\hyperlink{prop:max-observable-flow-consistency}{Proposition \ref{prop:max-observable-flow-consistency}} & \begin{minipage}[t]{\linewidth}\raggedright
\(\eqref{eq:prop-max-observable-flow-consistency-bound}\) \(\leq e^{-\Omega(N h^{2 \mdim} \epsilon^{\frac{5}{2}\mdim + 4})}\)\\
with \(h \gtrsim N^{-\frac{1}{\mdim(\frac{5\alpha}{2} + 7) + 4(2 + \alpha)}}\), \(\epsilon := h^{2 + \alpha}\)\strut
\end{minipage} & \begin{minipage}[t]{\linewidth}\raggedright
\(\eqref{eq:prop-max-observable-perturb-glap-flow-consistency-bound}\) \(\leq e^{-\Omega(N h^{\mdim - 1 + \beta} \epsilon^{3(\frac{\mdim}{2} + 1)})}\)\\
with \(h \gtrsim N^{-\frac{1}{\mdim + (2 + \alpha)(\frac{3}{2}\mdim + 2) + 1}}\), \(\epsilon := h^{2 + \alpha}\)\strut
\end{minipage} & \begin{minipage}[t]{\linewidth}\raggedright
\(\Omega_{\lambda,t,N}(h^{\frac{\mdim}{2}}, \epsilon, K_{\psi})\)\\
\(\Omega^*_{\lambda,t,N}(h^{\frac{\mdim}{2}}, \varepsilon, \epsilon, K_{\psi}^{(\varepsilon)})\) with \(\varepsilon \in \Theta(h^{1 + \alpha + \beta})\), \(\beta \geq 0\)\strut
\end{minipage} \\
\midrule \(\bar{x}_{N,t} \sim_h x_t\) & Same as \(\eqref{eq:thm-mean-prop-geoflow-consistency-bound}\) & Same as \(\eqref{eq:thm-mean-prop-perturbed-glap-geoflow-consistency-bound}\) & (See the previous two rows) \\
\bottomrule()
\end{longtable}

The first column of the table indicates the approximation being considered, with a symbolic short-hand and the location of the precise statement. Here, \(\GAve_N, \GAve\) refer to the averaging operators, \(B_N, B\) the square roots, or \(\varepsilon\)-perturbed square roots of \(I - A_N, I - A\) resp., as in \protect\hyperlink{notation:perturb-glap}{Notation \ref{notation:perturb-glap}}, \(U_N^t, U^t\) refer to the corresponding propagators and \(\mathbb{E}_t u := \langle |U_N^t[\psi_{h,t,N}]|^2, u \rangle_N\) is short-hand for the \emph{discrete expectation} of \(u\) with the density of the sample propagation of the coherent state. The notation \(T_1 \sim_r T_2\) for \(T_1,T_2 : L^{\infty} \to L^{\infty}\) and \(r > 0\) means \(||(T_1 - T_2)[u]||_{\infty} \lesssim r\) for any \(u \in L^{\infty}\). The meaning when \(T_1, T_2\) are scalars is clear by analogy and \(x \sim_r x_t\) for \(x \in \mathcal{M}\) and \(r > 0\) refers to the condition \(x \in B_r(x_t ; g)\). The second and third columns give bounds for the corresponding probabilities in the \(\varepsilon = 0\) (\emph{unperturbed}) and \(\varepsilon > 0\) (\emph{perturbed}) cases, respectively. The fourth column gives, for reference, the dominant functions in the probability bounds for the unperturbed and perturbed cases that are being further bounded in the second and third columns, respectively.

We have borrowed from complexity theory the following,

\begin{notation} The following are asymptotic bounds of non-negative functions:

\begin{enumerate}
\def\labelenumi{\arabic{enumi}.}
\tightlist
\item
  \(f_0 = O(f(\Upsilon))\) means there is \(\Upsilon^* > 0\) such that for all \(\Upsilon \geq \Upsilon^*\), \(f_0(\Upsilon) \lesssim f(\Upsilon)\);
\item
  \(f_0 = \Omega(f(\Upsilon))\) means there is \(\Upsilon^* > 0\) such that for all \(\Upsilon \geq \Upsilon^*\), \(f(\Upsilon) \lesssim f_0(\Upsilon)\);
\item
  \(f_0 = \Theta(f(\Upsilon))\) means \(f_0 = \Omega(f(\Upsilon))\) and \(f_0 = O(f(\Upsilon))\) and
\item
  \(f_0 = \tilde{O}(f(\Upsilon))\) means there is \(\Upsilon^* > 0\) and a polynomial with non-negative coefficients \(q \in \mathbb{R}[\Upsilon]\) such that for all \(\Upsilon \geq \Upsilon^*\), \(f_0(\Upsilon) \lesssim q(\log \Upsilon) f(\Upsilon)\).
\end{enumerate}

\end{notation}

\noindent
\emph{Sketch of how these bounds come about}.

\begin{enumerate}
\def\labelenumi{\arabic{enumi}.}
\item
  By its definition in \protect\hyperlink{lem:sqrt-perturb-eps-conv}{Lemma \ref{lem:sqrt-perturb-eps-conv}}, \(\upsilon \lesssim (\log N + N \delta^2 \epsilon^{\frac{\mdim}{2}})^{-1} \log(N \delta^2 \epsilon^{\frac{\mdim}{2}} + \log N)\), hence \(\upsilon = \tilde{O}(N^{-1} \delta^{-2} \epsilon^{-\frac{\mdim}{2}})\).
\item
  The dominant exponential factor in \(\eqref{eq:thm-halfwave-soln-bound}\) is
  \begin{align*}
  e^{-\left( \frac{N \delta^4 \epsilon^{{\frac{5}{2}\mdim + 4}}}{C K_u^2 \, |t|^8} \right) \left( \frac{\delta^4 \epsilon^{2\mdim + 4}}{|t|^8} \right)^{\tilde{O}\left( \frac{\epsilon^{\frac{\mdim}{4}} |t|^4}{K_u\sqrt{N}} \right)}}
   &\leq e^{-\left( \frac{N \delta^4 \epsilon^{{\frac{5}{2}\mdim + 4}}}{C K_u^2 \, |t|^8} \right) \left( \frac{K_u^2 \epsilon^{- \frac{\mdim}{2}}}{N |t|^8} \right)^{\tilde{O}\left( \frac{\epsilon^{\frac{\mdim}{4}} |t|^4}{K_u\sqrt{N}} \right)}}    \\
  &= e^{-\left( \frac{N \delta^4 \epsilon^{{\frac{5}{2}\mdim + 4}}}{C K_u^2 \, |t|^8} \right) (Z/N)^{\tilde{O}\left( \frac{1}{\sqrt{N Z}} \right)}}
  \end{align*}
  with \(Z := K_u^2 \epsilon^{-\frac{\mdim}{2}}/|t|^8\). So assuming \(|t| = O(K_u^{\frac{1}{4}} \epsilon^{-\frac{\mdim}{16}})\) gives \(Z \gtrsim 1\) and \((Z/N)^{\tilde{O}(1/\sqrt{NZ})} \gtrsim 1\), hence that factor can be replaced by a constant in the exponential.
\item
  Examining \(\Omega_{\lambda,t,N}(h^\frac{\mdim}{2})\) and using that \(|t|\) is bounded gives for the dominating term in \(\eqref{eq:prop-max-observable-flow-consistency-bound} \leq e^{-\Omega(N h^{2 \mdim} \epsilon^{\frac{5}{2}\mdim + 4}) \, h^{\upsilon}}\) with \(\upsilon = \tilde{O}(N^{-1} h^{-\mdim} \epsilon^{-(\frac{3}{2} \mdim + 2)})\). Then, \(h \gtrsim N^{-\frac{1}{\mdim(\frac{5\alpha}{2} + 7) + 4(2 + \alpha)}}\) implies \(\upsilon = \tilde{O}(N^{-\frac{2}{5}})\) and there is a \(c > 0\) such that \(h^{\tilde{O}(N^{-\frac{2}{5}})} > c > 0\), hence \(\eqref{eq:prop-max-observable-flow-consistency-bound} \leq C e^{-\Omega(N h^{2 \mdim} \epsilon^{\frac{5}{2}\mdim + 4})}\).
\item
  The only significant difference in bounding \(\eqref{eq:thm-mean-prop-geoflow-consistency-bound}\) from bounding \(\eqref{eq:prop-max-observable-flow-consistency-bound}\) is that we have \(h^{\frac{\mdim}{2} + 1}\) in place of \(h^{\frac{\mdim}{2}}\) for the error rate argument to \(\Omega_{\lambda,t,N}\) and this gives the dominating term in \(\eqref{eq:thm-mean-prop-geoflow-consistency-bound} \leq C e^{-\Omega(N h^{2\mdim + 4} \epsilon^{\frac{5}{2}\mdim + 4}) h^{\upsilon}}\) with \(\upsilon = \tilde{O}(N^{-1} h^{-(\mdim + 2)} \epsilon^{-(\frac{3}{2} \mdim +2)})\). Then, the lower bound on \(h\) gives a lower bound, away from zero, for \(h^{\upsilon}\) in much the same way as for \(\eqref{eq:prop-max-observable-flow-consistency-bound}\) above.
\item
  We can bound \(\tilde{\Omega}_{\lambda,t,N}(h,\varepsilon)\) to give the dominant term in \(\eqref{eq:thm-mean-prop-perturbed-glap-geoflow-consistency-bound} \leq e^{-\Omega(N h^{\mdim + 2} \epsilon^{\frac{3}{2} \mdim + 2} \varepsilon) \varepsilon^{2 \upsilon}}\) with \(\upsilon = \tilde{O}(N^{-1} h^{-(\mdim + 2)} \epsilon^{-(\frac{3}{2}\mdim + 2)})\). Then requiring \(\varepsilon \in \Theta(h^{1 + \alpha + \beta})\) with \(\beta \geq 0\) we anyways have \(h \gtrsim N^{-\frac{1}{\mdim + 3(2 + \alpha)(\frac{\mdim}{2} + 1)}}\), which gives \(\varepsilon^{2\upsilon} = \Theta(1)\), hence it follows that \(\eqref{eq:thm-mean-prop-perturbed-glap-geoflow-consistency-bound} \leq e^{-\Omega(N h^{n + 1 + \beta} \epsilon^{3(\frac{\mdim}{2} + 1)})}\).
\item
  The function \(\Omega_{\lambda,t,N}(h^{\frac{\mdim}{2}},\varepsilon)\) can be bounded to give the dominant term in \(\eqref{eq:prop-max-observable-perturb-glap-flow-consistency-bound} \leq e^{-\Omega(N h^{\mdim} \epsilon^{\frac{3}{2}\mdim + 2} \varepsilon) \varepsilon^{2\upsilon}}\) with \(\upsilon = \tilde{O}(N^{-1} h^{-\mdim} \epsilon^{-(\frac{3}{2}\mdim + 2)})\). Then since \(\varepsilon \in \Theta(h^{1 + \alpha + \beta})\) with \(\beta \geq 0\), we require for exponential decay in \(N\) that \(h \gtrsim N^{-\frac{1}{\mdim + (2 + \alpha)(\frac{3}{2}\mdim + 2) + 1}}\) so \(\varepsilon^{2 \upsilon} = \Theta(1)\), hence \(\eqref{eq:prop-max-observable-perturb-glap-flow-consistency-bound} \leq e^{-\Omega(N h^{\mdim - 1 + \beta} \epsilon^{3(\frac{\mdim}{2} + 1)})}\).
\end{enumerate}

\bibliographystyle{amsalphaabbrv}\bibliography{refs.bib}
\appendix

\addtocontents{toc}{\SkipTocEntry}

\hypertarget{notation}{%
\section{Notation}\label{notation}}

We fix some general notation. Given a (complex) Hilbert space \(\mathcal{H}\), \(\langle \cdot | \cdot\cdot \rangle\) denotes its \emph{complex} inner product that is anti-linear in the \emph{first} component and we write the squared norm, \(||\cdot||_{\mathcal{H}}^2 := \langle \cdot | \cdot \rangle\), which we often shorten to simply \(||\cdot||^2\). Borrowing from physics, we use the following \emph{Dirac notation}: if \(A : \mathcal{H} \to \mathcal{H}\) is a linear operator and \(\psi, \phi \in \mathcal{H}\) are vectors, then we identify \(|\psi \rangle := \psi\) and \(A|\psi \rangle \in \mathcal{H}\) denotes the application of \(A\) on \(\psi\), while \(\langle \phi |\) denotes the \emph{dual} of \(| \phi \rangle\) and we write, \(\langle \phi | A | \psi \rangle := \langle \phi | A \psi \rangle\). The mathematically commonplace counterpart to this notation is denoted by \(\langle \psi, \phi \rangle := \langle \phi | \psi \rangle\). In \protect\hyperlink{semi-classical-measures-of-coherent-states}{Section \ref{semi-classical-measures-of-coherent-states}} we will also need a \emph{bilinear} inner product that is a \emph{complexification} of a real inner product: we denote this also by \(\langle \cdot , \cdot \rangle\) and mark specifically that this is the form being used. Correspondingly, if \(M\) is a real symmetric matrix with \(M > 0\), then we denote \(||\cdot||_M^2 := \langle M \cdot, \cdot \rangle\). In the Euclidean case, we sometimes denote simply \(|\cdot| := ||\cdot||\), when the meaning is clear by context.

The operator \(A^*\) then denotes the adjoint to \(A\) with respect to the inner product on \(\mathcal{H}\). Further, given \(z \in \mathbb{C}\), we denote by \(\Re z, \Im z\) its real and imaginary parts, respectively and when applied to \(z = Z\) a bounded operator, these are the usual, \(\Re Z := (Z + Z^*)/2\) and \(\Im Z := (Z - Z^*)/(2i)\).

We have used various order notations: given some functions \(f_1, f_2\), we write \(f_1 \sim f_2\) if there is some \(c > 0\) such that \(c f_1 \leq f_2 \leq f_1/c\) and we write \(f_1 \lesssim f_2\) if \(c f_1 \leq f_2\). Further, we have the asymptotic notations: for \(N \geq 0\), we write \(f = O_{L^2}(h^N)\) if there are constants \(h_0 > 0\) and \(C_N > 0\) such that for all \(h \in (0, h_0]\), \(||f||_{L^2} \leq C_N h^N\). Likewise, \(f = O(h^N)\) means that \(|f(h)| \leq C_N h^N\) and \(f = O_{\mathscr{S}}(h^N)\) means that for each \(\alpha\), there is a \(C_{\alpha,N}\) such that \(|\partial^{\alpha} f| \leq C_{\alpha,N} h^N\). When \(N = \infty\) in these notations, we mean that these inequalities hold for all \(N \geq 0\) with \(h_0\) fixed.

At various times, we write \(a \equiv b\) in different ways and hence, it is a blatant abuse of notation. Briefly, it means either that \(a, b\) belong to the same congruence class modulo some equivalence relation, or if \(a, b\) are functions then \(a = b\) at all points on (a prescribed subset of) their domain of definition; the appropriate definition is clarified by the context.

\pagebreak

\hypertarget{notation-of-fixed-objects}{%
\subsection{Notation of fixed objects}\label{notation-of-fixed-objects}}

Throughout this work, we have fixed a tuple \((\mathcal{M}, \iota, \embedM, P)\) as per the \protect\hyperlink{assumptions}{Assumptions}. These fixed objects and their relatives are denoted as follows:

\begin{longtable}[]{@{}
  >{\raggedright\arraybackslash}p{(\columnwidth - 2\tabcolsep) * \real{0.5000}}
  >{\raggedright\arraybackslash}p{(\columnwidth - 2\tabcolsep) * \real{0.5000}}@{}}
\toprule()
\begin{minipage}[b]{\linewidth}\raggedright
Notation
\end{minipage} & \begin{minipage}[b]{\linewidth}\raggedright
Object
\end{minipage} \\
\midrule()
\endhead
\(\mathcal{M}\) & manifold \\
\midrule \(g\) & Riemannian metric on \(\mathcal{M}\) \\
\midrule \(d\vol{y}\) & volume density on \(\mathcal{M}\) with respect to the metric \(g\) \\
\midrule \(n := \dim \mathcal{M}\) & dimension of \(\mathcal{M}\) \\
\midrule \(\embedM\) & Euclidean submanifold isometric to \(\mathcal{M}\) \\
\midrule \(\rdim\) & dimension of ambient space of \(\embedM\) \\
\midrule \(\iota : \mathcal{M} \to \embedM \subset \mathbb{R}^{\rdim}\) & isometry \\
\midrule \(P\) & sampling distribution with \(\supp P = \embedM\) \\
\midrule \(p \in C^{\infty}(\mathcal{M})\) & smooth density of \(P \circ \iota^{-1}\) with respect to \(d\nu_g\) \\
\midrule \(\embedM_N = \{ X_1, \ldots, X_N \} \subset \embedM\) & set of \(N\) random vectors with law \(P\) \\
\midrule \(\mathcal{X}_N = \{ x_1, \ldots, x_N \} \subset \mathcal{M}\) & pull-back of \(\embedM_N\) onto \(\mathcal{M}\) by \(\iota\) \\
\bottomrule()
\end{longtable}

\hypertarget{geometric-notation}{%
\subsection{Geometric notation}\label{geometric-notation}}

Generally, given a manifold \(\mathcal{N}\) along with its metric \(g\), we have used the following notation:

\begin{longtable}[]{@{}
  >{\raggedright\arraybackslash}p{(\columnwidth - 2\tabcolsep) * \real{0.5000}}
  >{\raggedright\arraybackslash}p{(\columnwidth - 2\tabcolsep) * \real{0.5000}}@{}}
\toprule()
\begin{minipage}[b]{\linewidth}\raggedright
Notation
\end{minipage} & \begin{minipage}[b]{\linewidth}\raggedright
Meaning
\end{minipage} \\
\midrule()
\endhead
\(T^*\mathcal{N}\) & cotangent bundle of \(\mathcal{N}\) \\
\midrule \(dx d\xi\) & short-hand for the measure induced by the canonical symplectic form on \(T^*\mathcal{N}\) \\
\midrule \(|\xi|_x, |\xi|_{g_x}\) (with \((x,\xi) \in T^*\mathcal{N}\)) & norm of \(\xi \in T_x^*\mathcal{N}\) according to the inner product given by \(g_x^{-1}\) on \(T_x^*\mathcal{N}\) \\
\midrule \(\langle \zeta, \xi \rangle_{x}, \langle \zeta, \xi \rangle_{g_x}\) (with \((x,\xi), (x, \zeta) \in T^*{\mathcal{N}}\)) & inner product between \(\zeta, \xi \in T_x^*\mathcal{N}\) with respect to \(g_x^{-1}\) \\
\midrule \(d_g(x,y)\) & geodesic distance between the points \(x,y \in \mathcal{N}\) w.r.t. \(g\) \\
\midrule \(\operatorname{inj}(x)\), \(\operatorname{inj}(\mathcal{N}) = \operatorname{inj}(\mathcal{N},g)\) & injectivity radius at \(x \in \mathcal{N}\) and its lower bound \(\operatorname{inj}(\mathcal{N}) := \inf_{x \in \mathcal{N}} \operatorname{inj}(x)\), resp. \\
\midrule \(s_{x_0}= \exp^{-1}_{x_0}\) & normal coordinates at \(x_0 \in \mathcal{N}\) \\
\midrule \(\gamma^*[dx]\) & \emph{pullback} of the density \(\sqrt{\det |g|} ~ dx_1 \wedge \cdots \wedge dx_n\) by the diffeomorphism \(\gamma : \mathcal{N}_1 \to \mathcal{N}_2\) between manifolds \(\mathcal{N}_1, \mathcal{N}_2\) of equal dimension \\
\midrule \(|dv| := \sqrt{\det |g \circ u^{-1}(v)|} ~ dv\) & the volume form in local coordinates \(u : \mathcal{M} \supset U \to V \subset \mathbb{R}^{n}_v\) \\
\midrule \(B_{r}(x_0, g) = B(x_0, g ; r)\) & ball of radius \(r\) centered at \(x_0 \in \mathcal{N}\) with respect to the metric \(g\) \\
\bottomrule()
\end{longtable}

\hypertarget{notation-related-to-pdos}{%
\subsection{\texorpdfstring{Notation related to \(\PDO\)s}{Notation related to \textbackslash PDOs}}\label{notation-related-to-pdos}}

In Sections \protect\hyperlink{quantization-and-symbol-classes}{\ref{quantization-and-symbol-classes}} and \protect\hyperlink{fbi-transform}{\ref{fbi-transform}} we have introduced the following notation regarding pseudodifferential operators (\(\PDO\)s) and the FBI transform:

\begin{longtable}[]{@{}
  >{\raggedright\arraybackslash}p{(\columnwidth - 2\tabcolsep) * \real{0.5000}}
  >{\raggedright\arraybackslash}p{(\columnwidth - 2\tabcolsep) * \real{0.5000}}@{}}
\toprule()
\begin{minipage}[b]{\linewidth}\raggedright
Notation
\end{minipage} & \begin{minipage}[b]{\linewidth}\raggedright
Meaning
\end{minipage} \\
\midrule()
\endhead
\(h^{\ell} S^m\), \(h^{\ell} S^m_{\text{phg}}\) & symbol classes of order \((\ell, m)\) and their polyhomogeneous subclasses, resp. \\
\midrule \(h^{\ell} \Psi^m\), \(h^{\ell} \Psi_{\operatorname{phg}}^m\) & classes of \(\PDO\)s of order \((\ell,m)\) and their subclasses of polyhomogeneous \(\PDO\)s, resp. \\
\midrule \(\Op_h(a)\) & \(\PDO\) that is the (semiclassical) \emph{right} (or \emph{adjoint}) quantization of a symbol \(a\) \\
\midrule \(T_h[u;\phi,a] = T[u; \phi/h, a]\) & \emph{FBI transform} of a smooth function \(u\) with \emph{admissible phase} \(\phi\) and symbol \(a \in h^{-\frac{3\mdim}{4}} S^{\frac{\mdim}{4}}\) \\
\midrule \(b \in h^{\frac{3\mdim}{2}} S^{-\frac{\mdim}{2}}\) & positive, elliptic symbol associated to \(T_h\), depending on \(\phi\), such that the properties of \(T_h[\cdot;\phi,b^{-\frac{1}{2}}]\) in \protect\hyperlink{thm:FBI-basic}{Theorem \ref{thm:FBI-basic}} hold \\
\bottomrule()
\end{longtable}

\hypertarget{notation-related-to-graph-laplacians}{%
\subsection{Notation related to graph Laplacians}\label{notation-related-to-graph-laplacians}}

In \protect\hyperlink{laplacians-from-graphs-to-manifolds}{Section \ref{laplacians-from-graphs-to-manifolds}} we have introduced the following objects and notations related to the discrete and continuum graph Laplacian constructions:

\begin{longtable}[]{@{}
  >{\raggedright\arraybackslash}p{(\columnwidth - 2\tabcolsep) * \real{0.5000}}
  >{\raggedright\arraybackslash}p{(\columnwidth - 2\tabcolsep) * \real{0.5000}}@{}}
\toprule()
\begin{minipage}[b]{\linewidth}\raggedright
Continuum / Discrete object
\end{minipage} & \begin{minipage}[b]{\linewidth}\raggedright
Name
\end{minipage} \\
\midrule()
\endhead
\(k : \mathbb{R}_+ \to \mathbb{R}_+\) & kernel function \\
\midrule \(p_{\epsilon}(x) := \int k_{\epsilon}(x,y) p(y) d\vol{g}\), \(p_{\epsilon,N}(x) := \frac{1}{N} \sum_{j=1}^N k_{\epsilon}(x,y)\) & degree functions \\
\midrule \(k_{\lambda,\epsilon}(x,y) := k_{\epsilon}(x,y)/(p_{\epsilon}(x) p_{\epsilon}(y))^{\lambda}\), \(k_{\lambda,\epsilon,N}(x,y) := k_{\epsilon}(x,y)/(p_{\epsilon,N}(x) p_{\epsilon,N}(y))^{\lambda}\) & sampling-dependent averaging kernels, or \(\lambda\)-\emph{averaging kernels} \\
\midrule \(\AvOp_{\lambda,\epsilon} : u(\cdot) \mapsto \int k_{\lambda,\epsilon}(\cdot,y) u(y) ~ d\vol{g}\), \((\AvOp_{\lambda,\epsilon,N})_{j,m} := \frac{1}{N} k_{\lambda,\epsilon,N}(x_j, x_m)\) & (sampling-dependent) \(\lambda\)-averaging operators \\
\midrule \(p_{\lambda,\epsilon}(x) := \AvOp_{\lambda,\epsilon}[1](x)\), \(p_{\lambda,\epsilon,N}(x) := \AvOp_{\lambda,\epsilon,N}[1](x)\) & \(\lambda\)-degree functions \\
\midrule \(\GAve_{\lambda,\epsilon} : u(\cdot) \mapsto \AvOp_{\lambda,\epsilon}[u]/p_{\lambda,\epsilon}(\cdot)\), \((\GAve_{\lambda,\epsilon,N})_{j,m} := (\AvOp_{\lambda,\epsilon,N})_{j,m}/p_{\lambda,\epsilon,N}(x_j)\) & normalized \(\lambda\)-averaging operators \\
\midrule \(\GLap_{\lambda,\epsilon} := \frac{2 c_0}{c_2} \frac{1}{\epsilon}(I - A_{\lambda,\epsilon})\), \(\GLap_{\lambda,\epsilon,N} := \frac{2 c_0}{c_2}\frac{1}{\epsilon}(I - A_{\lambda,\epsilon,N})\) & \((\lambda)\)-graph Laplacians \\
\bottomrule()
\end{longtable}

\addtocontents{toc}{\SkipTocEntry}

\hypertarget{acknowledgements}{%
\section*{Acknowledgements}\label{acknowledgements}}

The author thanks Mohan Sarovar for many discussions regarding physical interpretations throughout the course of this project, and Kurt Maier for graciously providing access to computing resources that greatly helped with simulations. This work was supported by the Laboratory Directed Research and Development program at Sandia National Laboratories, a multimission laboratory managed and operated by National Technology and Engineering Solutions of Sandia, LLC., a wholly owned subsidiary of Honeywell International, Inc., for the U.S. Department of Energy's National Nuclear Security Administration under contract DE-NA-0003525.

\end{document}